\title{On 3-terminal positions in Hex}
\author{Eric Demer, UCLA\\
  Peter Selinger, Dalhousie University}
\date{}
\begin{document}

\maketitle

\begin{abstract}
  This paper is about 3-terminal regions in Hex. A 3-terminal region
  is a region of the Hex board that is completely surrounded by black
  and white stones, in such a way that the black boundary stones form
  3 connected components. We characterize Hex as the universal planar
  Shannon game of degree 3. This ensures that every Hex position can
  be decomposed into 3-terminal regions. We then investigate the
  combinatorial game theory of 3-terminal regions. We show that there
  are infinitely many distinct Hex-realizable values for such
  regions. We introduce an infinite family of 3-terminal positions
  called superswitches and investigate their properties. We also
  present a database of Hex-realizable 3-terminal values, and
  illustrate its utility as a problem-solving tool by giving various
  applications. The applications include the automated verification of
  connects-both templates and pivoting templates, a new handicap
  strategy for $11\times 11$ Hex, and a method for constructing
  witnesses for the non-inferiority of probes in many Hex templates.
  These methods allow us to disprove a conjecture by Henderson and
  Hayward.
\end{abstract}

\section{Introduction}

Hex is a perfect information game for two players. It was invented by
Piet Hein in 1942 {\cite{Hein}}, and is usually played on an $n\times
n$ rhombic grid of hexagonal cells like this:
\begin{equation}
  \label{eqn:hex-5x5}
  \m{$
    \begin{hexboard}
      \board(5,5)
    \end{hexboard}
    $}
\end{equation}
Two opposing board edges are colored black, and the other two are
colored white. The players, called Black and White, alternately place
a stone of their color on an empty cell, with Black going first. The
winner is the player who connects their two board edges. It is an
interesting property of Hex that draws are not possible: there is
always exactly one winner {\cite{Hein}}. Another interesting property
is that the first player has a theoretical winning strategy on all
boards of size $n\times n$; however, the proof is non-constructive,
and no concrete winning strategy is known except for some very small
board sizes {\cite{Nash}}.

In principle, the game ends as soon as one player connects their
edges. In practice, most actual games end long before this happens,
with the losing player resigning. However, for theoretical purposes,
it is often simpler to assume that the game continues until the board
is completely filled; since the surplus moves cannot change the
winner, this assumption is without loss of generality. Also, since
Black has a theoretical winning strategy and a significant practical
advantage, in competitive play the \emph{swap rule} is used: right
after Black makes the first move, White has the option to switch
colors. The swap rule makes the game more fair, but since it only
affects the first two moves of the game, it is not relevant to most of
the results of this paper. Except for one application in
Section~\ref{ssec:handicap-strategy}, we do not consider the swap rule
here.

Combinatorial game theory is a general theory of two-player sequential
games with perfect information. It was introduced by Conway
{\cite{ONAG}} and Berlekamp, Conway, and Guy {\cite{WinningWays}}.
Combinatorial game theory was initially developed for \emph{normal
play} games, in which a player loses when they cannot make a move.
Because Hex is not a normal play game, it requires an adaptation of
combinatorial game theory; such an adaptation was given in
{\cite{S2022-hex-cgt}}.

This paper is about 3-terminal regions in Hex. A \emph{region} of the
Hex board is just a subset of its cells. A region that is completely
surrounded by black and white stones is called an \emph{$n$-terminal
region} when its boundary has $n$ black components and $n$ white
components. We call these boundary components the black and white
\emph{terminals}. Figure~\ref{fig:3terminal-example1} shows an example
of a 3-terminal region.
\begin{figure}
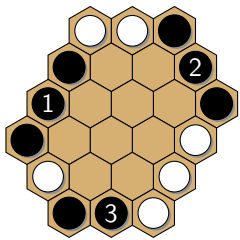

  \[
  \begin{hexboard}[scale=0.8]
    \shadows
    \rotation{-150}
    \foreach\i in {3,...,5} {\hex(1,\i)}
    \foreach\i in {2,...,5} {\hex(2,\i)}
    \foreach\i in {1,...,5} {\hex(3,\i)}
    \foreach\i in {1,...,5} {\hex(4,\i)}
    \foreach\i in {1,...,4} {\hex(5,\i)}
    \foreach\i in {1,...,3} {\hex(6,\i)}
    \black(3,1)
    \black(2,2)\sflabel{2}
    \black(1,3)
    \white(1,4)
    \white(1,5)
    \black(2,5)
    \black(3,5)\sflabel{1}
    \black(4,5)
    \white(5,4)
    \black(6,3)
    \black(6,2)\sflabel{3}
    \white(6,1)
    \white(5,1)
    \white(4,1)    
  \end{hexboard}
  \]
  \caption{A 3-terminal region}
  \label{fig:3terminal-example1}
\end{figure}

When Hex is played on the whole board, there are only two possible
outcomes at the end of a game: either Black wins or White
wins. However, when we consider play inside a 3-terminal region, the
situation is more complicated. There are five possible outcomes within
the region: either Black connects all three black terminals, or White
connects all three white terminals, or Black and White each connect
exactly two of their terminals, which can happen in three different
ways.

\subsection*{Outline of the paper}

In Section~\ref{sec:hex-universal}, we explain why 3-terminal regions
(and not, say, 4-terminal regions or 5-terminal regions) are
fundamental for Hex: we show that every Hex position can be decomposed
into 3-terminal regions. We do this by showing that Hex is equivalent
to a class of games called planar Shannon games of
degree~3. Section~\ref{sec:hex-universal} does not require
combinatorial game theory, but the rest of the paper does.

In Section~\ref{sec:background-cgt}, we provide background material on
some basic definitions and results from the combinatorial game theory
of Hex.

In Section~\ref{sec:superswitches}, we introduce some interesting
families of 3-terminal positions called \emph{superswitches},
\emph{simpleswitches}, and \emph{tripleswitches}. We use this to prove
that there are infinitely many non-equivalent 3-terminal Hex
positions. As an application, we show how these switches can be used,
in conjunction with a Hex solver, to verify a certain kind of Hex
template called a \emph{connects-both template}.

In Section~\ref{sec:database}, we present a database of Hex-realizable
3-terminal values. We explain how we generated the database and how it
can be used.

In Section~\ref{sec:pivoting}, we apply our theory of 3-terminal
positions to the verification of another kind of Hex template, called
a \emph{pivoting template}. We characterize both sente and gote
pivoting templates in terms of combinatorial game theory, and we use
the database from Section~\ref{sec:database} to find a particular
3-terminal context that can be used, in conjunction with a Hex solver,
to verify pivoting templates. As another application, we describe a
new handicap winning strategy for $11\times 11$ Hex.

Finally, in Section~\ref{sec:witnesses}, we use our theory to solve an
open problem. A move in a Hex region is called \emph{inferior} if it
can never be the unique winning move. Henderson and Hayward
{\cite{Henderson-Hayward}} conjectured that $5$ of the $8$ possible
intrusions into the region called the \emph{ziggurat} or \emph{4-3-2
edge template} are inferior. We disprove the conjecture by exhibiting
specific Hex positions in which each of the $8$ intrusions is the
unique winning move. We describe a general method for computing such
witnessing positions, and also settle the (non-)inferiority of
intrusions into many other Hex templates.
  
\section{Hex is the universal 3-planar Shannon game}
\label{sec:hex-universal}

In this section, we prove that Hex has a nice mathematical
characterization as the universal 3-planar Shannon game.  We start by
explaining what this means.

\subsection{Set coloring games and Shannon games}
\label{sec:set-coloring}

We first define a general class of games called \emph{set coloring
games}. When $X$ and $Y$ are sets, we write $Y^X$ for the set of all
functions from $X$ to $Y$.

\begin{definition}
  Let $\Bool=\s{\black,\white}$. A \emph{set coloring game} over
  $\Bool$ is given by a finite set $X$, whose elements are called
  \emph{cells}, and a function $\pi:\Bool^X\to\Bool$, called the
  \emph{payoff function}. It is played as follows: there is a game
  board, initially empty, whose cells are in one-to-one correspondence
  with the elements of $X$. The players, Black and White, take turns
  choosing an empty cell and placing a stone of their color on it. The
  game ends when the board is completely filled, at which point the
  coloring of the cells determines a function $f:X\to\Bool$. The
  winner is determined by the payoff function: Black wins if
  $\pi(f)=\black$, and White wins otherwise.
\end{definition}

We say that two set coloring games $(X,\pi)$ and $(X',\pi')$ are
\emph{isomorphic} if there is a bijection $\phi:X'\to X$ respecting
the payoff function, i.e., such that for all $f:X\to\Bool$, we have
$\pi(f) = \pi'(f\circ\phi)$. A \emph{position} in a set coloring game
is an assignment of black and/or white stones to some subset of the
cells. Each such position can itself be regarded as a set coloring
game on the remaining empty cells.

Hex is evidently a set coloring game. It will also be useful to
consider another class of set coloring games, the so-called
\emph{Shannon games}. The Shannon games we consider in this paper are
vertex Shannon games on hypergraphs.  Recall that a \emph{hypergraph}
is a pair $(V,E)$, where $V$ is a set whose elements are called
\emph{vertices}, and $E$ is a set of subsets of $V$, called
\emph{hyperedges}.  A hypergraph where each hyperedge contains exactly
two vertices is also called a \emph{graph}, and in this case, the
hyperedges are usually just called \emph{edges}. In pictures, we will
represent vertices as circles, edges as lines connecting two vertices,
and hyperedges as clusters of lines joining any number of vertices, as
in Figure~\ref{fig:hypergraph-picture}.
\begin{figure}
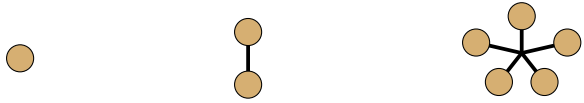

  \[
  \begin{hexboard}
    \vertex(0,0)
    \gedge(3,2)(2,3)
    \vertex(3,2)\vertex(2,3)
    \gedge(5.3,4.7)(5.6,5.4)
    \gedge(4.8,5.7)(5.6,5.4)
    \gedge(5.3,6.2)(5.6,5.4)
    \gedge(6.3,5.7)(5.6,5.4)
    \gedge(6.3,4.7)(5.6,5.4)
    \vertex(5.3,4.7)
    \vertex(4.8,5.7)
    \vertex(5.3,6.2)
    \vertex(6.3,5.7)
    \vertex(6.3,4.7)
  \end{hexboard}
  \]
  \caption{A vertex, an edge, and a hyperedge.}
  \label{fig:hypergraph-picture}
\end{figure}

The Shannon game is played on a finite hypergraph with two
distinguished vertices called \emph{terminals}. The cells are the
non-terminal vertices. As usual, the players take turns coloring the
cells, and at the end of the game, Black wins if and only if the two
terminals are connected by an uninterrupted path of black vertices,
i.e., if there exists a sequence of vertices $v_1,\ldots,v_n$ such
that $v_1$ and $v_n$ are the terminals, each $v_i$ is connected to
$v_{i+1}$ by a hyperedge, and $v_2,\ldots,v_{n-1}$ are colored black.
If Black does not have such a connection, then White wins. We note in
passing that the class of Shannon games is not self-dual, i.e.,
exchanging the roles of Black and White in a Shannon game does not
necessarily result in a Shannon game.

To see why Hex is a special case of the Shannon game, consider
the following graph, which corresponds to $5\times 5$ Hex:
\begin{equation}
  \label{eqn:shannon-5x5}
  \m{$\begin{hexboard}
      {
        \definecolor{linecolor}{HTML}{B0B0B0}
        \definecolor{cellcolorflat}{HTML}{FFFFFF}
        \tikzset{flatboard/.style={draw=linecolor,fill=cellcolorflat,very thin}}
        \foreach\i in {1,...,5} {
          \foreach\j in {1,...,5} {
            \hex(\i,\j)
          }
        }
      }
      \foreach\i in {1,...,4} {
        \foreach\j in {1,...,5} {
          \gedge(\i,\j)(\i+1,\j);
        }
      }
      \foreach\i in {1,...,5} {
        \foreach\j in {1,...,4} {
          \gedge(\i,\j)(\i,\j+1);
        }
      }
      \foreach\i in {1,...,4} {
        \foreach\j in {2,...,5} {
          \gedge(\i,\j)(\i+1,\j-1);
        }
      }
      \foreach\i in {1,...,5} {
        \gedge(\i,1)(3,-0.5);
        \gedge(\i,5)(3,6.5);
      }
      \foreach\i in {1,...,5} {
        \foreach\j in {1,...,5} {
          \vertex(\i,\j);
        }
      }
      \terminal(3,-0.5);
      \terminal(3,6.5);
    \end{hexboard}
    $}
\end{equation}
Here, the brown circles represent the non-terminal vertices and
correspond to the cells of the Hex board, which are faintly shown in
the background. The two black squares are the terminals and correspond
to Black's board edges. Note that two vertices are adjacent in the
graph if and only if the corresponding cells or board edges are
adjacent on the Hex board. Therefore, the games in
{\eqref{eqn:hex-5x5}} and {\eqref{eqn:shannon-5x5}} are isomorphic.

A Shannon game is \emph{planar} if its underlying hypergraph is
planar, with terminals on the outside. Recall that a hypergraph is
\emph{planar} if it can be drawn in the 2-dimensional plane without
any hyperedges crossing. The spaces between the hyperedges are called
\emph{faces}, and every finite planar hypergraph has an \emph{outside
face}, i.e., a face that stretches to infinity. When we require the
terminals to be \emph{on the outside}, we mean that they must be
adjacent to this outside face. For example, the Shannon game in
{\eqref{eqn:shannon-5x5}} is planar.

The \emph{degree} of a vertex in a hypergraph is the number of
hyperedges that contain it. We say that a Shannon game has degree $n$
if every vertex has degree at most $n$. For example, the game in
{\eqref{eqn:shannon-5x5}} has degree $6$. If a game is planar and of
degree $n$, we also say that the game is \emph{$n$-planar}.

\begin{remark}\label{rem:contract}
  In the literature, vertex Shannon games are usually played on
  graphs, rather than on hypergraphs. The reason we consider
  hypergraphs instead of graphs is that it yields a more convenient
  definition of planarity and degree. However, for games of degree at
  least 3, the two definitions are equivalent if we permit games where
  some of the cells have already been colored at the start. Here is an
  example:
  \begin{equation}
    \label{eqn:shannon-position}
    \m{$
      \begin{hexboard}
        \foreach\i in {1,...,4} {
          \foreach\j in {1,...,5} {
            \gedge(\i,\j)(\i+1,\j);
          }
        }
        \foreach\i in {1,...,5} {
          \foreach\j in {1,...,4} {
            \gedge(\i,\j)(\i,\j+1);
          }
        }
        \foreach\i in {1,...,4} {
          \foreach\j in {2,...,5} {
            \gedge(\i,\j)(\i+1,\j-1);
          }
        }
        \foreach\i in {1,...,5} {
          \gedge(\i,1)(3,-0.5);
          \gedge(\i,5)(3,6.5);
        }
        \foreach\i in {1,...,5} {
          \foreach\j in {1,...,5} {
            \vertex(\i,\j);
          }
        }
        \terminal(3,-0.5);
        \terminal(3,6.5);
        \whitevertex(2,2);
        \whitevertex(2,3);
        \whitevertex(2,4);
        \blackvertex(4,2);
        \blackvertex(4,3);
        \blackvertex(4,4);
      \end{hexboard}
      $}
  \end{equation}
  In this game, some of the cells are already occupied by black or
  white stones. Such a game can be simplified by deleting white
  vertices and contracting black (non-terminal) vertices. When
  deleting a white vertex, we also remove the vertex from any
  hyperedges that contained it. To contract a black vertex, we remove
  the vertex, and merge all of its incident hyperedges into a single
  hyperedge. We do this repeatedly until no more black or white cells
  are left. This results in an isomorphic game; for example, the
  position in {\eqref{eqn:shannon-position}} is isomorphic to the
  following:
  \begin{equation}
    \label{eqn:shannon-hypergraph}
    \m{$
      \begin{hexboard}
        \foreach\i in {1,...,5} {
          \gedge(\i,1)(3,-0.5);
          \gedge(\i,5)(3,6.5);
        }
        \foreach\i in {1,...,4} {
          \gedge(\i,1)(\i+1,1);
          \gedge(\i,5)(\i+1,5);
          \gedge(1,\i)(1,\i+1);
          \gedge(3,\i)(3,\i+1);
          \gedge(5,\i)(5,\i+1);
        }
        \foreach\i in {1,...,4} {
          \gedge(3,\i+1)(4,3);
          \gedge(5,\i)(4,3);
        }
        \gedge(4,1)(4,3);
        \gedge(4,5)(4,3);
        \gedge(5,4)(4,5);
        \gedge(3,2)(4,1);
        \vertex(1,1);
        \vertex(1,2);
        \vertex(1,3);
        \vertex(1,4);
        \vertex(1,5);
        \vertex(2,1);
        \vertex(2,5);
        \vertex(3,1);
        \vertex(3,2);
        \vertex(3,3);
        \vertex(3,4);
        \vertex(3,5);
        \vertex(4,1);
        \vertex(4,5);
        \vertex(5,1);
        \vertex(5,2);
        \vertex(5,3);
        \vertex(5,4);
        \vertex(5,5);
        \terminal(3,-0.5);
        \terminal(3,6.5);
      \end{hexboard}
      $}
  \end{equation}
  Note that the operations of deleting a white vertex or contracting a
  black one preserve the planarity of the hypergraph, and do not
  increase the degree. (Remember that the degree measures the number
  of hyperedges at a given vertex, not the number of vertices at a
  given hyperedge). Therefore, any position in a (planar) graph
  Shannon game is isomorphic to a (planar) hypergraph Shannon game of
  the same degree.

  Conversely, any (planar) Shannon hypergraph game of degree $3$ or
  higher is isomorphic to a position in a (planar) Shannon graph game
  of the same degree in which some cells have possibly already been
  colored black. Namely, each hyperedge with $k$ vertices, as shown on
  the left, can be replaced by a cluster of $k-2$ additional
  black-colored cells, as shown on the right:
  \[
  \begin{hexboard}
    \gedge(1.6,0.4)(2.2,1.8)
    \gedge(0.6,2.4)(2.2,1.8)
    \gedge(1.6,3.4)(2.2,1.8)
    \gedge(3.6,2.4)(2.2,1.8)
    \gedge(3.6,0.4)(2.2,1.8)
    \vertex(1.6,0.4)
    \vertex(0.6,2.4)
    \vertex(1.6,3.4)
    \vertex(3.6,2.4)
    \vertex(3.6,0.4)
  \end{hexboard}
  \qquad
  \qquad
  \begin{hexboard}
    \gedge(1.6,0.4)(1.5,1.5)
    \gedge(0.6,2.4)(1.5,1.5)
    \gedge(1.6,3.4)(2.5,2.5)
    \gedge(3.6,2.4)(2.5,2.5)
    \gedge(3.6,0.4)(2.5,1.5)
    \gedge(1.5,1.5)(2.5,1.5)
    \gedge(2.5,2.5)(2.5,1.5)
    \vertex(1.6,0.4)
    \vertex(0.6,2.4)
    \vertex(1.6,3.4)
    \vertex(3.6,2.4)
    \vertex(3.6,0.4)
    \vertex(1.5,1.5)\blackvertex(1.5,1.5)
    \vertex(2.5,1.5)\blackvertex(2.5,1.5)
    \vertex(2.5,2.5)\blackvertex(2.5,2.5)
  \end{hexboard}
  \]
\end{remark}

\subsection{Hex is the universal 3-planar Shannon game}

We are now ready to state the main theorem of this section:

\begin{theorem}\label{thm:hex-universal}
  Every 3-planar Shannon game is isomorphic to a Hex position.
  Conversely, every Hex position is isomorphic to a 3-planar Shannon
  game.
\end{theorem}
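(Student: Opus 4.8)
The plan is to prove the two directions separately, and in each direction to build the isomorphism explicitly by a local drawing/replacement argument.

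For the easier direction — every Hex position is isomorphic to a 3-planar Shannon game — I would start from the observation already made in the excerpt that $n\times n$ Hex is isomorphic to the Shannon graph game in \eqref{eqn:shannon-5x5}, whose graph is the triangular grid with the two black edges contracted to terminals. That graph is planar with terminals on the outside, but it has degree $6$, so it is not yet $3$-planar. However, a Hex \emph{position} has some cells already colored. By Remark~\ref{rem:contract}, deleting the white vertices and contracting the black vertices yields an isomorphic Shannon \emph{hypergraph} game, and these operations preserve planarity and do not increase degree. So the real content is: the resulting hypergraph has degree at most $3$. I would argue this combinatorially on the triangular grid: after contracting a maximal connected black region $R$ (together with any board edges it touches) to a single vertex, the hyperedges at that vertex correspond to the connected components of the ``empty-or-relevant'' neighbourhood of $R$; one checks, using the fact that in a Hex position every empty cell sees at most... — more carefully, I would instead invoke the topological picture: contract each black component to a point in the plane; the faces around it are cut by the white components, and a classical fact about the triangular grid (every vertex has $6$ neighbours, white and black components alternate around the boundary of a region) gives that an uncontracted empty cell has at most $3$ hyperedges through it, since a $6$-cycle of neighbours, once the non-empty ones are deleted/contracted, breaks into at most $3$ arcs. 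This is the step I expect to require the most care.

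For the harder direction — every $3$-planar Shannon game is isomorphic to a Hex position — I would proceed in two stages. First, by the converse half of Remark~\ref{rem:contract}, every $3$-planar Shannon \emph{hypergraph} game of degree $3$ is isomorphic to a position in a $3$-planar Shannon \emph{graph} game (replace each $k$-vertex hyperedge by a small tree of $k-2$ black cells, as pictured). So it suffices to embed an arbitrary $3$-regular planar graph Shannon game, with terminals on the outer face, into the Hex board. Second — and this is the heart of the argument — I would show how to ``draw'' any such graph on a sufficiently large Hex board: realize each non-terminal vertex of the graph by a single empty Hex cell, route each graph edge as a \emph{chain of black stones} connecting two cells (a black chain makes its two endpoints adjacent in the Shannon sense), and fill \emph{all remaining cells with white stones} so that no unintended connections are created. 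The degree-$3$, planarity, and terminals-on-the-outside hypotheses are exactly what make this routing possible: on the Hex board one can route three disjoint black ``wires'' out of a single empty cell (a hex cell has six neighbours, enough room for three wires separated by white insulation), and planarity means the wires need never cross; the two terminals, being on the outer face, can be routed to the two black board edges. I would make ``sufficiently large'' precise by first drawing the planar graph on a fine grid and then simulating each grid segment by a standard ``black wire'' gadget and each grid vertex of degree $\le 3$ by a ``junction'' gadget, checking that white stones everywhere else block every path that does not follow the wires.

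The main obstacle, and where I would spend the most effort, is verifying correctness of the wiring construction in the hard direction: one must check that the induced set coloring game on the empty cells (the Hex position) has \emph{exactly} the same payoff function as the original Shannon game — i.e., Black connects the two terminals in the Hex position if and only if Black connects them in the Shannon game. The ``if'' direction is easy (follow the wires), but the ``only if'' direction requires ruling out all spurious black connections through the empty cells, which is precisely where the white insulation and the degree-$3$ budget (no room for a fourth wire, hence no room for a shortcut) must be used carefully; I would phrase this as: every black path in the Hex position, restricted to the empty cells, visits a sequence of ``vertex cells'' consecutive along wires, hence projects to a black path in the Shannon graph. A secondary obstacle is bookkeeping the degree-$3$ claim in the first direction, which I sketched above.
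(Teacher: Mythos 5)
Your first direction (embedding a 3-planar Shannon game into a large Hex board by realizing each hyperedge as a connected group of black stones, insulating with white stones, and routing the two outside terminals to the black board edges, with the degree-$3$ hypothesis guaranteeing that the three ``wires'' fit around a hexagonal cell) is essentially the paper's argument for that half, and the intermediate reduction from hyperedges to black trees is harmless.

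The converse direction, however, has a genuine gap, and it is precisely the direction you label ``easier.'' You propose to start from the degree-$6$ triangular-grid representation of Hex and to bring the degree down to $3$ by deleting the white vertices and contracting the black ones, as in Remark~\ref{rem:contract}. This cannot work: those operations only remove or merge \emph{already colored} cells, so the degree of an empty cell in the resulting hypergraph is (number of empty neighbours) plus (number of distinct adjacent black components). A Hex position may have no stones at all --- the empty $n\times n$ board is a Hex position, and it is exactly the case the theorem is claiming is isomorphic to a $3$-planar game --- and then nothing is deleted or contracted and every interior cell keeps degree $6$; more generally any empty cell with four or more empty neighbours already violates the degree bound. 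Your heuristic ``the $6$-cycle of neighbours breaks into at most $3$ arcs'' is only relevant when \emph{all} six neighbours are colored. The missing idea is that one must change the adjacency structure itself, not just simplify colored cells: the paper inserts small black and white polygons between the cells (Figure~\ref{fig:invisible-terminals}), so that any two adjacent cells of the original board are joined through both a shared black polygon and a shared white polygon. This preserves each player's connectivity, hence the payoff function, and in the new picture every cell meets only three black polygons, i.e.\ the game is a $3$-planar hypergraph Shannon game; only \emph{after} this rewriting does one apply the deletion/contraction of Remark~\ref{rem:contract} to handle positions with stones. Without some such re-encoding of Hex adjacency by degree-$3$ hyperedges, your plan for the converse cannot be repaired.
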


The reader is invited to try to prove this theorem now, before looking
at the proof. Note that the second part is not obvious, since Hex, as
shown in {\eqref{eqn:shannon-5x5}}, appears to be of degree 6, rather
than 3. Although the proof is extremely simple, to our knowledge, this
result was not previously known.

\begin{proof}[Proof of Theorem~\ref{thm:hex-universal}]
  For the first claim, consider any 3-planar Shannon game, such as the
  one shown in Figure~\ref{fig:planar}(a). It is obvious that this
  game can be embedded in a sufficiently large Hex board in such a way
  that every non-terminal vertex corresponds to an empty cell, every
  terminal vertex corresponds to a black board edge, every hyperedge
  corresponds to a connected group of black cells, and everything else
  (i.e., the space between the hyperedges) is filled with white
  cells. An example of such an embedding is shown in
  Figure~\ref{fig:planar}(b). Specifically, since each vertex has
  degree at most 3, there are at most 3 hyperedges that must be
  attached to it, and this is possible because the corresponding cell
  on the Hex board is a hexagon. Also, since the terminals are on the
  outside of the hypergraph, they can be connected to opposite black
  board edges.

  For the converse, refer to Figure~\ref{fig:invisible-terminals}.
  Consider a Hex board, such as the one in (a). Insert black and white
  polygons between the cells, as shown in (b). Observe that for any
  pair of adjacent cells of (a), the corresponding cells in (b) are
  connected by both a black and a white polygon. Therefore, for any
  position in which Black's cells are connected in (a), Black's cells
  are also connected in (b), and dually for White. It follows that the
  game in (a) is isomorphic to the one in (b). Moreover, in (b), it is
  obvious that each cell has degree at most 3. In fact, the game in
  (b) is isomorphic to the 3-planar Shannon game shown in (c). This
  works for boards of any size. Moreover, if we start from a Hex
  position, rather than an empty board, we can color the corresponding
  vertices in (c), and then delete the white ones and contract the
  black ones as in Remark~\ref{rem:contract}, which still yields a
  3-planar Shannon game.
\end{proof}

\begin{remark}
  Due to Theorem~\ref{thm:hex-universal}, every problem about Hex
  positions is equivalent to a problem about 3-planar Shannon games,
  and vice versa. But since Hex is usually played starting from an
  empty board, and not from some arbitrary initial position, there are
  many elements of Hex strategy that are specific to Hex, such as
  templates, ladders, etc. In other words, a good Hex player is not
  necessarily a good player of arbitrary 3-planar Shannon
  games. Naturally, positions such as the one in
  Figure~\ref{fig:planar} do not normally arise in actual Hex games.
\end{remark}

\subsection{Application: Embedding Hex in itself}

The proof of Theorem~\ref{thm:hex-universal} immediately implies that
Hex can be isomorphically embedded in a larger version of itself. For
example, Figure~\ref{fig:hex-auto} shows two different ways of
embedding $4\times 4$ Hex in larger Hex boards. In fact, the second of
these embeddings is an iterated version of the first.
\begin{figure}
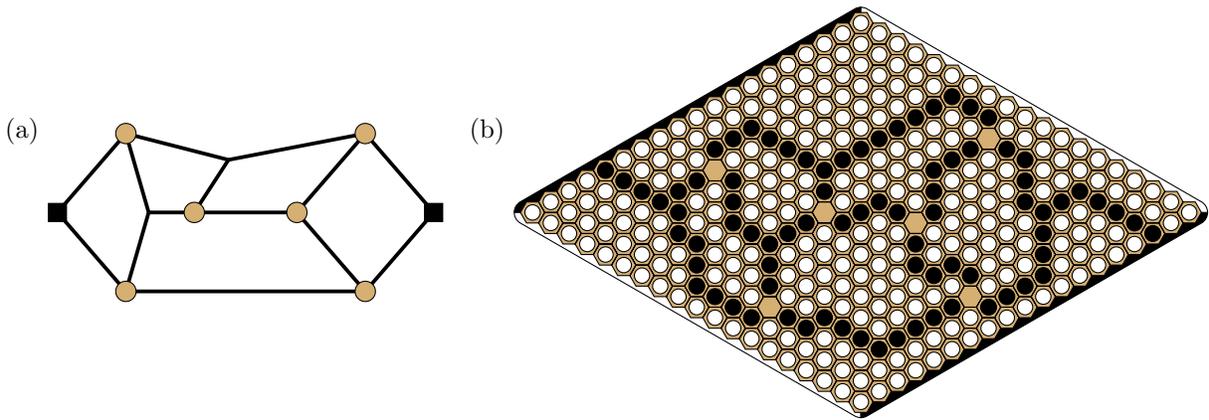

  \[
  \raisebox{1cm}{(a)}~
  \m{$
    \begin{hexboard}[scale=0.75]
      \gedge(1,1)(4,0);
      \gedge(0,4)(2.3333,2.3333);
      \gedge(4,0)(2.3333,2.3333);
      \gedge(3,3)(2.3333,2.3333);
      \gedge(4.5,4.5)(7.5,3.5);
      \gedge(3.5,7.5)(6.5,6.5);
      \gedge(1,1)(0,4);
      \gedge(4,0)(4.8333,2.1667);
      \gedge(3,3)(4.8333,2.1667);
      \gedge(7.5,3.5)(4.8333,2.1667);
      \gedge(4.5,4.5)(3.5,7.5);
      \gedge(7.5,3.5)(6.5,6.5);
      \gedge(3,3)(4.5,4.5);
      \gedge(0,4)(3.5,7.5);
      \terminal(1,1);
      \vertex(3,3);
      \vertex(4.5,4.5);
      \terminal(6.5,6.5);
      \vertex(4,0);
      \vertex(0,4);
      \vertex(7.5,3.5);
      \vertex(3.5,7.5);
    \end{hexboard}
    $}
  \quad
  \raisebox{1cm}{(b)}~
  \m{$
    \begin{hexboard}[scale=0.4]
      \board(19,19)
      \orange(3,12)\orange(9,9)\orange(8,4)\orange(11,12)\orange(9,17)
      \orange(17,10)
      \white(1,1)\white(2,1)\white(3,1)\white(4,1)
      \black(5,1)\white(6,1)\white(7,1)\white(8,1)
      \white(9,1)\white(10,1)\white(11,1)\white(12,1)
      \white(13,1)\white(14,1)\white(15,1)\white(16,1)
      \white(17,1)\white(18,1)\white(19,1)\white(1,2)
      \white(2,2)\white(3,2)\white(4,2)\black(5,2)
      \white(6,2)\white(7,2)\white(8,2)\white(9,2)
      \white(10,2)\white(11,2)\white(12,2)\white(13,2)
      \white(14,2)\white(15,2)\white(16,2)\white(17,2)
      \white(18,2)\white(19,2)\white(1,3)\white(2,3)
      \white(3,3)\white(4,3)\black(5,3)\white(6,3)
      \white(7,3)\white(8,3)\black(9,3)\black(10,3)
      \black(11,3)\white(12,3)\white(13,3)\white(14,3)
      \white(15,3)\white(16,3)\white(17,3)\white(18,3)
      \white(19,3)\white(1,4)\white(2,4)\white(3,4)
      \white(4,4)\black(5,4)\black(6,4)\black(7,4)
      \white(9,4)\white(10,4)\black(11,4)\white(12,4)
      \white(13,4)\white(14,4)\white(15,4)\white(16,4)
      \white(17,4)\white(18,4)\white(19,4)\white(1,5)
      \white(2,5)\white(3,5)\white(4,5)\black(5,5)
      \white(6,5)\white(7,5)\black(8,5)\white(9,5)
      \white(10,5)\black(11,5)\white(12,5)\white(13,5)
      \white(14,5)\white(15,5)\white(16,5)\white(17,5)
      \white(18,5)\white(19,5)\white(1,6)\white(2,6)
      \white(3,6)\black(4,6)\white(5,6)\white(6,6)
      \black(7,6)\white(8,6)\white(9,6)\white(10,6)
      \black(11,6)\white(12,6)\white(13,6)\white(14,6)
      \white(15,6)\white(16,6)\white(17,6)\white(18,6)
      \white(19,6)\white(1,7)\white(2,7)\white(3,7)
      \black(4,7)\white(5,7)\black(6,7)\white(7,7)
      \white(8,7)\white(9,7)\white(10,7)\black(11,7)
      \black(12,7)\black(13,7)\black(14,7)\black(15,7)
      \black(16,7)\black(17,7)\black(18,7)\white(19,7)
      \white(1,8)\white(2,8)\black(3,8)\white(4,8)
      \white(5,8)\black(6,8)\white(7,8)\white(8,8)
      \white(9,8)\black(10,8)\white(11,8)\white(12,8)
      \white(13,8)\white(14,8)\white(15,8)\white(16,8)
      \white(17,8)\black(18,8)\white(19,8)\white(1,9)
      \black(2,9)\white(3,9)\white(4,9)\white(5,9)
      \black(6,9)\black(7,9)\black(8,9)\white(10,9)
      \white(11,9)\white(12,9)\white(13,9)\white(14,9)
      \white(15,9)\white(16,9)\white(17,9)\black(18,9)
      \white(19,9)\white(1,10)\black(2,10)\white(3,10)
      \white(4,10)\black(5,10)\white(6,10)\white(7,10)
      \white(8,10)\black(9,10)\black(10,10)\black(11,10)
      \white(12,10)\white(13,10)\black(14,10)\black(15,10)
      \black(16,10)\white(18,10)\white(19,10)\white(1,11)
      \black(2,11)\white(3,11)\black(4,11)\white(5,11)
      \white(6,11)\white(7,11)\white(8,11)\white(9,11)
      \white(10,11)\black(11,11)\white(12,11)\black(13,11)
      \white(14,11)\white(15,11)\white(16,11)\black(17,11)
      \white(18,11)\white(19,11)\white(1,12)\black(2,12)
      \white(4,12)\white(5,12)\white(6,12)\white(7,12)
      \white(8,12)\white(9,12)\white(10,12)\black(12,12)
      \white(13,12)\white(14,12)\white(15,12)\white(16,12)
      \black(17,12)\white(18,12)\white(19,12)\white(1,13)
      \white(2,13)\black(3,13)\white(4,13)\white(5,13)
      \white(6,13)\white(7,13)\white(8,13)\white(9,13)
      \black(10,13)\white(11,13)\white(12,13)\white(13,13)
      \white(14,13)\white(15,13)\black(16,13)\white(17,13)
      \white(18,13)\white(19,13)\white(1,14)\white(2,14)
      \black(3,14)\black(4,14)\white(5,14)\white(6,14)
      \white(7,14)\white(8,14)\black(9,14)\white(10,14)
      \white(11,14)\white(12,14)\white(13,14)\white(14,14)
      \black(15,14)\white(16,14)\white(17,14)\white(18,14)
      \white(19,14)\white(1,15)\white(2,15)\white(3,15)
      \black(4,15)\white(5,15)\white(6,15)\white(7,15)
      \white(8,15)\black(9,15)\black(10,15)\white(11,15)
      \white(12,15)\white(13,15)\white(14,15)\black(15,15)
      \black(16,15)\black(17,15)\white(18,15)\white(19,15)
      \white(1,16)\white(2,16)\white(3,16)\black(4,16)
      \white(5,16)\white(6,16)\white(7,16)\white(8,16)
      \white(9,16)\black(10,16)\white(11,16)\white(12,16)
      \white(13,16)\black(14,16)\white(15,16)\white(16,16)
      \black(17,16)\white(18,16)\white(19,16)\white(1,17)
      \white(2,17)\white(3,17)\black(4,17)\black(5,17)
      \black(6,17)\black(7,17)\black(8,17)\white(10,17)
      \white(11,17)\white(12,17)\black(13,17)\white(14,17)
      \white(15,17)\white(16,17)\black(17,17)\white(18,17)
      \white(19,17)\white(1,18)\white(2,18)\white(3,18)
      \white(4,18)\white(5,18)\white(6,18)\white(7,18)
      \white(8,18)\black(9,18)\black(10,18)\black(11,18)
      \black(12,18)\white(13,18)\white(14,18)\white(15,18)
      \white(16,18)\black(17,18)\white(18,18)\white(19,18)
      \white(1,19)\white(2,19)\white(3,19)\white(4,19)
      \white(5,19)\white(6,19)\white(7,19)\white(8,19)
      \white(9,19)\white(10,19)\white(11,19)\white(12,19)
      \white(13,19)\white(14,19)\white(15,19)\white(16,19)
      \black(17,19)\white(18,19)\white(19,19)
    \end{hexboard}
    $}
  \]
  \caption{Converting a 3-planar Shannon game to a Hex position.}
  \label{fig:planar}
\end{figure}

\begin{figure}
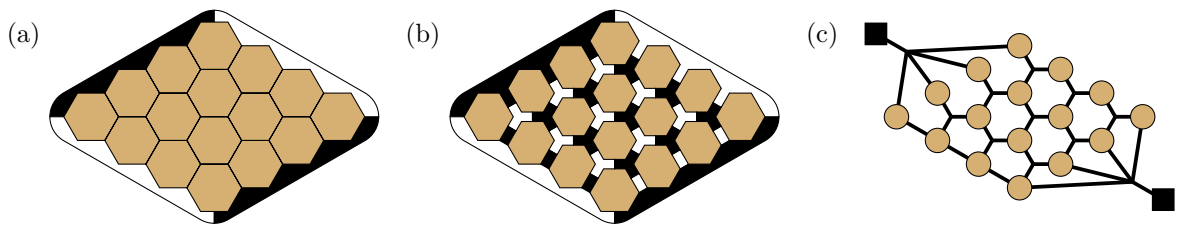

  \[
  \raisebox{1cm}{(a)}~
  \m{$
    \begin{hexboard}[scale=0.9]
      \board(4,4)
    \end{hexboard}
    $}
  \quad
  \raisebox{1cm}{(b)}~
  \m{$
    \begin{hexboard}[scale=0.9]
      \board(4,4)
      \foreach\i in {1,...,3} {
        \foreach\j in {1,...,3} {
          \tripodwhite(\i,\j);
        }
      }
      \foreach\i in {2,...,4} {
        \foreach\j in {2,...,4} {
          \tripodblack(\i,\j);
        }
      }
      \foreach\i in {1,...,3} {
        \monopodwhite(\i,4);
      }
      \foreach\i in {2,...,4} {
        \monopodblack(\i,1);
      }
      \foreach\i in {1,...,3} {
        \monopodwhiteb(4,\i);
      }
      \foreach\i in {2,...,4} {
        \monopodblackb(1,\i);
      }
    \end{hexboard}
    $}
  \quad
  \raisebox{1cm}{(c)}\quad
  \m{$\begin{hexboard}[scale=0.9]
      \foreach\i in {2,...,4} {
        \foreach\j in {2,...,4} {
          \gedge(\i,\j)(\i-0.3333,\j-0.3333);
        }
      }
      \foreach\i in {1,...,3} {
        \foreach\j in {2,...,4} {
          \gedge(\i,\j)(\i+0.6667,\j-0.3333);
        }
      }
      \foreach\i in {2,...,4} {
        \foreach\j in {1,...,3} {
          \gedge(\i,\j)(\i-0.3333,\j+0.6667);
        }
      }
      \foreach\i in {1,...,3} {
        \gedge(1,\i)(1,\i+1);
      }
      \foreach\i in {1,...,4} {
        \gedge(\i,1)(2.5,-0.25);
        \gedge(\i,4)(2.5,5.25);
      }
      \gedge(2.5,-0.25)(2.5,-0.75);
      \gedge(2.5,5.25)(2.5,5.75);
      \foreach\i in {1,...,4} {
        \foreach\j in {1,...,4} {
          \vertex(\i,\j);
        }
      }
      \terminal(2.5,-1);
      \terminal(2.5,6);
    \end{hexboard}
    $}
  \]
  \caption{Converting a Hex position to a 3-planar Shannon game.}
  \label{fig:invisible-terminals}
\end{figure}

\subsection{{\BridgIt} is the universal 2-planar Shannon game}

{\BridgIt} is a connection game that was invented by David Gale. It was
described in Martin Gardner's 1958 Scientific American column
{\cite{Gardner-Gale}} under the name ``Gale''. It is played on a
square board such as the one shown in Figure~\ref{fig:bridg-it}(a).
\begin{figure}
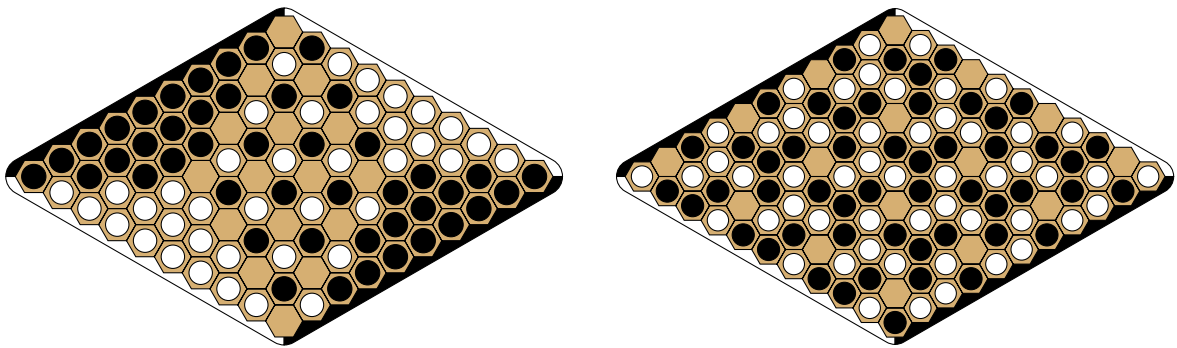

  \[
  \begin{hexboard}[scale=0.61,baseline={($(current bounding box.center)-(0,1ex)$)}]
    \board(10,10)
    \foreach\i in {0,...,3} {
      \foreach\j in {0,...,3} {
        \orange(4+2*\i-\j,4-\i+2*\j)
      }
    }
    \black(1,1)\black(2,1)\black(3,1)\black(4,1)\black(5,1)\black(6,1)
    \black(7,1)\black(8,1)\black(9,1)\white(1,2)\black(2,2)\black(3,2)
    \black(4,2)\black(5,2)\black(6,2)\black(7,2)\white(9,2)\black(10,2)
    \white(1,3)\white(2,3)\black(3,3)\black(4,3)\black(5,3)\white(7,3)
    \black(8,3)\white(10,3)\white(1,4)\white(2,4)\white(3,4)\white(5,4)
    \black(6,4)\white(8,4)\black(9,4)\white(10,4)\white(1,5)\white(2,5)
    \white(3,5)\black(4,5)\white(6,5)\black(7,5)\white(9,5)\white(10,5)
    \white(1,6)\white(2,6)\white(4,6)\black(5,6)\white(7,6)\black(8,6)
    \white(9,6)\white(10,6)\white(1,7)\white(2,7)\black(3,7)\white(5,7)
    \black(6,7)\white(8,7)\white(9,7)\white(10,7)\white(1,8)\white(3,8)
    \black(4,8)\white(6,8)\black(7,8)\black(8,8)\white(9,8)\white(10,8)
    \white(1,9)\black(2,9)\white(4,9)\black(5,9)\black(6,9)\black(7,9)
    \black(8,9)\black(9,9)\white(10,9)\white(2,10)\black(3,10)\black(4,10)
    \black(5,10)\black(6,10)\black(7,10)\black(8,10)\black(9,10)\black(10,10)
  \end{hexboard}
  \qquad
  \begin{hexboard}[scale=0.555,baseline={($(current bounding box.center)-(0,1ex)$)}]
    \board(11,11)
    \foreach\i in {2,5,8,11} {
      \foreach\j in {1,4,7,10} {
        \orange(\i,\j)
      }
    }
    \white(1,1)\nofil(2,1)\black(3,1)\white(4,1)\nofil(5,1)\black(6,1)\white(7,1)\nofil(8,1)\black(9,1)\white(10,1)\nofil(11,1)
    \black(1,2)\white(2,2)\white(3,2)\black(4,2)\white(5,2)\white(6,2)\black(7,2)\white(8,2)\white(9,2)\black(10,2)\white(11,2)
    \black(1,3)\black(2,3)\white(3,3)\black(4,3)\black(5,3)\white(6,3)\black(7,3)\black(8,3)\white(9,3)\black(10,3)\black(11,3)
    \white(1,4)\nofil(2,4)\black(3,4)\white(4,4)\nofil(5,4)\black(6,4)\white(7,4)\nofil(8,4)\black(9,4)\white(10,4)\nofil(11,4)
    \black(1,5)\white(2,5)\white(3,5)\black(4,5)\white(5,5)\white(6,5)\black(7,5)\white(8,5)\white(9,5)\black(10,5)\white(11,5)
    \black(1,6)\black(2,6)\white(3,6)\black(4,6)\black(5,6)\white(6,6)\black(7,6)\black(8,6)\white(9,6)\black(10,6)\black(11,6)
    \white(1,7)\nofil(2,7)\black(3,7)\white(4,7)\nofil(5,7)\black(6,7)\white(7,7)\nofil(8,7)\black(9,7)\white(10,7)\nofil(11,7)
    \black(1,8)\white(2,8)\white(3,8)\black(4,8)\white(5,8)\white(6,8)\black(7,8)\white(8,8)\white(9,8)\black(10,8)\white(11,8)
    \black(1,9)\black(2,9)\white(3,9)\black(4,9)\black(5,9)\white(6,9)\black(7,9)\black(8,9)\white(9,9)\black(10,9)\black(11,9)
    \white(1,10)\nofil(2,10)\black(3,10)\white(4,10)\nofil(5,10)\black(6,10)\white(7,10)\nofil(8,10)\black(9,10)\white(10,10)\nofil(11,10)
    \black(1,11)\white(2,11)\white(3,11)\black(4,11)\white(5,11)\white(6,11)\black(7,11)\white(8,11)\white(9,11)\black(10,11)\white(11,11)
  \end{hexboard}
  \]
  \caption{Two embeddings of $4\times 4$ Hex in larger boards.}
  \label{fig:hex-auto}
\end{figure}

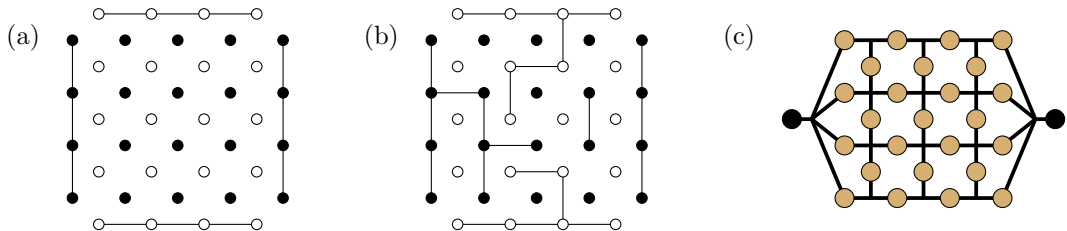
\begin{figure}
  \[
  \raisebox{1cm}{(a)}\quad
  \m{$
    \begin{tikzpicture}[scale=0.7]
      \draw (0,0.5) -- (0,3.5);
      \draw (4,0.5) -- (4,3.5);
      \draw (0.5,0) -- (3.5,0);
      \draw (0.5,4) -- (3.5,4);
      \foreach\i in {0,...,4} {
        \foreach\j in {0,...,3} {
          \draw[fill=black] (\i,\j+0.5) circle (0.1);
          \draw[fill=white] (\j+0.5,\i) circle (0.1);
        }
      }
    \end{tikzpicture}
    $}
  \hspace{1cm}
  \raisebox{1cm}{(b)}\quad
  \m{$
    \begin{tikzpicture}[scale=0.7]
      \draw (0,0.5) -- (0,3.5);
      \draw (4,0.5) -- (4,3.5);
      \draw (0.5,0) -- (3.5,0);
      \draw (0.5,4) -- (3.5,4);
      \draw (0,2.5) -- (1,2.5) -- (1,1.5) -- (2,1.5);
      \draw (3,1.5) -- (3,2.5);
      \draw (1,1.5) -- (1,0.5);
      \draw (2.5,0) -- (2.5,1) -- (1.5,1);
      \draw (2.5,4) -- (2.5,3) -- (1.5,3) -- (1.5,2);
      \foreach\i in {0,...,4} {
        \foreach\j in {0,...,3} {
          \draw[fill=black] (\i,\j+0.5) circle (0.1);
          \draw[fill=white] (\j+0.5,\i) circle (0.1);
        }
      }
    \end{tikzpicture}
    $}
  \hspace{1cm}
  \raisebox{1cm}{(c)}\quad
  \m{$
    \begin{tikzpicture}[scale=0.7]
      \draw[tterminal] (-0.5,2) -- (-0.125,2);
      \draw[tterminal] (4.125,2) -- (4.5,2);
      \foreach\i in {0.5,...,3.5} {
        \draw[tterminal] (-0.125,2) -- (0.5,\i);
        \draw[tterminal] (0.5,\i) -- (3.5,\i);
        \draw[tterminal] (3.5,\i) -- (4.125,2);
      }
      \foreach\i in {1,...,3} {
        \draw[tterminal] (\i,0.5) -- (\i,3.5);
      }
      \draw[fill=black] (-0.5,2) circle (0.18);
      \draw[fill=black] (4.5,2) circle (0.18);
      \foreach\i in {1,...,3} {
        \foreach\j in {1,...,3} {
          \draw[fill=vertexcolor] (\i,\j) circle (0.18);
        }
      }
      \foreach\i in {0.5,...,3.5} {
        \foreach\j in {0.5,...,3.5} {
          \draw[fill=vertexcolor] (\i,\j) circle (0.18);
        }
      }
    \end{tikzpicture}
    $}
  \]
  \caption{(a) The game of {\BridgIt}. (b) A partially completed
    game. (c) An isomorphic Shannon game.}
  \label{fig:bridg-it}
\end{figure}
On Black's turn, Black connects two adjacent black dots by a
straight line. On White's turn, White similarly connects two adjacent
white dots. Crossing lines are not permitted. Black's goal is to
connect the black dots on the left with the ones on the right, and
White's goal is to connect the white dots at the top with the ones at
the bottom. Figure~\ref{fig:bridg-it}(b) shows what the game may look
like after a few moves.

Like Hex, {\BridgIt} has the property that one player always wins:
draws are not possible. In fact, {\BridgIt} is a 2-planar Shannon
game. Its hypergraph is shown in Figure~\ref{fig:bridg-it}(c).  Note
that the black dots of the {\BridgIt} board become hyperedges of the
Shannon game, and the potential black edges of the {\BridgIt} board
become vertices. Moreover, {\BridgIt} is the universal 2-planar
Shannon game. On the one hand, the game in
Figure~\ref{fig:bridg-it}(c) is obviously of degree 2. On the other
hand, any 2-planar Shannon game can be embedded on a sufficiently
large {\BridgIt} board, in a way analogous to how it was done for Hex
in Figure~\ref{fig:planar}.

Since every planar Shannon game of degree 2 is also of degree 3, it
follows that {\BridgIt} can be isomorphically embedded in Hex. Indeed,
Figure~\ref{fig:bridgit-in-hex} shows a Hex realization of the game
from Figure~\ref{fig:bridg-it}(a). This realization previously
appeared in {\cite[Fig.~6.5]{Hayward-full-story}}.
\begin{figure}
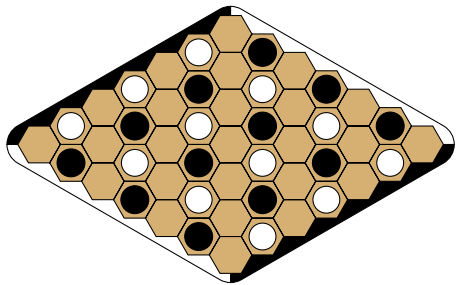

  \[
  \begin{hexboard}[scale=0.7]
    \board(7,7)
    \foreach\i in {1,3,...,7} {
      \foreach\j in {2,4,...,6} {
        \black(\i,\j);
        \white(\j,\i);
      }
    }
    \foreach\i in {1,3,...,7} {
      \foreach\j in {1,3,...,7} {
        \orange(\i,\j);
      }
    }
    \foreach\i in {2,4,...,6} {
      \foreach\j in {2,4,...,6} {
        \orange(\i,\j);
      }
    }
  \end{hexboard}
  \]
  \caption{A Hex realization of {\BridgIt}}
  \label{fig:bridgit-in-hex}
\end{figure}

The converse is not true: Hex cannot be isomorphically embedded in
{\BridgIt}.  In fact, there is a qualitative difference between the
two games. In {\BridgIt}, there exists a computationally efficient
algorithm for determining the winner of any position
{\cite{edge-Shannon-solved}}. The corresponding problem in Hex is
PSPACE-complete {\cite{Reisch}}. Therefore, the passage from degree 2
to degree 3 is non-trivial.

We finish this subsection by remarking that there is a variant of
Shannon games called \emph{edge Shannon games}. These games are played
on a graph, and the players alternately color \emph{edges}, rather
than vertices. As usual, Black wins if at the end of the game, two
distinguished terminals are connected by black edges. Note that every
hypergraph has a \emph{dual}, where the vertices of the dual
hypergraph are the hyperedges of the original hypergraph, and vice
versa. A hypergraph of degree 2 is just the dual of an ordinary
graph. In this way, an edge Shannon game is essentially the same thing
as a hypergraph vertex Shannon game of degree 2, and {\BridgIt} is the
universal planar edge Shannon game. In fact, like {\BridgIt}, the edge
Shannon game is also solved: there exists an efficient algorithm for
determining which player has a winning strategy
{\cite{edge-Shannon-solved}}

\subsection{Planar Shannon games of degree greater than 3}

We just saw that {\BridgIt} is the universal 2-planar Shannon game,
Hex is the universal 3-planar Shannon game, and the passage from
degree 2 to degree 3 has a dramatic effect on the complexity of planar
Shannon games. It is therefore natural to ask whether games of degree
4 or higher get even more complex or interesting. We may also ask what
class of games is universal for $n$-planar Shannon games when $n\geq
4$.

The answer to the first question is no: Hex is already
PSPACE-complete, and the problem of deciding a winning strategy for
any Shannon game is in PSPACE. Therefore, planar Shannon games of
degree 4 and higher are not more complex than Hex from a computational
point of view.

The answer to the second question is also easy: a universal planar
Shannon game of degree 4 is Hex with octagons. More precisely, it is
not necessary for \emph{all} cells to be octagons; there just need to
be an unbounded number of them as the board size increases. One way to
make a (topological) octagon on a Hex board is to merge two adjacent
cells into a single cell. Having an unbounded number of octagons is
then easily seen to be sufficient for embedding any planar Shannon
game of degree 4, in a manner analogous to
Figure~\ref{fig:planar}. Similarly, a universal planar Shannon game of
degree 5 is Hex with sufficiently many decagons, and so on.

\section{A quick primer on the combinatorial game theory of Hex}
\label{sec:background-cgt}

The fundamentals of the combinatorial game theory of Hex were
developed in {\cite{S2022-hex-cgt}}. Here, we briefly summarize some
definitions and results that are useful in the rest of this paper. For
a more complete and thorough introduction, see
{\cite{S2022-hex-cgt}}. For combinatorial game theory in general, see
{\cite{ONAG, WinningWays}}. In this section, we will mostly be
talking about Hex, but the same concepts also apply to more general
Shannon games.

\subsection{Regions and outcome posets}
\label{ssec:regions}

As mentioned in the introduction, a \emph{region} of a Hex board is a
subset of its cells; some or all of the cells may already be occupied
by black and/or white stones. By a \emph{completion} of the region, we
mean any possible way of filling its empty cells with black and/or
white stones. If $a$ and $b$ are completions of the same region, we
say that $a\leq b$ if for every way of filling the rest of the board
(i.e., the outside of the region) with stones, if $a$ is a win for
Black then so is $b$. We say that $a$ and $b$ are \emph{equivalent} if
$a\leq b$ and $b\leq a$. An \emph{outcome} for the region is an
equivalence class of its completions. The outcomes are partially
ordered via $\leq$, and we refer to this partial order as the
\emph{outcome poset} for the region.

The outcome poset for an arbitrary Hex region can in general be very
complicated. But in this paper, we are primarily interested in
3-terminal regions, and as mentioned in the introduction, there are
only five possible outcomes for such a region. Specifically, if the
region's black terminals are labelled $1$, $2$, and $3$, we name the
outcomes as follows:
\begin{itemize}
\item $\bot$: None of the black terminals are connected.
\item $a$: Terminals $2$ and $3$ are connected, but terminal $1$ is
  not.
\item $b$: Terminals $1$ and $3$ are connected, but terminal $2$ is
  not.
\item $c$: Terminals $1$ and $2$ are connected, but terminal $3$ is
  not.
\item $\top$: All of the black terminals are connected.
\end{itemize}
An example of each outcome is shown in
Figure~\ref{fig:3-terminal-outcomes}(a). Let $P_3 =
\s{\bot,a,b,c,\top}$ be this outcome poset, with the partial order
shown in Figure~\ref{fig:3-terminal-outcomes}(b).
\begin{figure}
  \[
  \def\scale{0.6}
  \raisebox{6em}{(a)}
  ~
  \begin{hexboard}[scale=\scale,baseline=(current bounding box.south)]
    \rotation{-30}
    \foreach\i in {3,...,5} {\hex(\i,1)}
    \foreach\i in {2,...,5} {\hex(\i,2)}
    \foreach\i in {1,...,4} {\hex(\i,3)}
    \foreach\i in {1,...,3} {\hex(\i,4)}
    \black(3,1)\sflabel{1}
    \white(4,1)
    \white(5,1)
    \white(5,2)
    \black(4,3)\sflabel{2}
    \white(3,4)
    \white(2,4)
    \black(1,4)\sflabel{3}
    \white(1,3)
    \white(2,2)
    \black(3,2)\black(4,2)\black(2,3)\white(3,3)
    \node at \coord(1,6) {$\top$};
    \node(center) at \coord(3,2.5) {};
  \end{hexboard}
  \quad
  \begin{hexboard}[scale=\scale,baseline=(current bounding box.south)]
    \rotation{-30}
    \foreach\i in {3,...,5} {\hex(\i,1)}
    \foreach\i in {2,...,5} {\hex(\i,2)}
    \foreach\i in {1,...,4} {\hex(\i,3)}
    \foreach\i in {1,...,3} {\hex(\i,4)}
    \black(3,1)\sflabel{1}
    \white(4,1)
    \white(5,1)
    \white(5,2)
    \black(4,3)\sflabel{2}
    \white(3,4)
    \white(2,4)
    \black(1,4)\sflabel{3}
    \white(1,3)
    \white(2,2)
    \white(3,2)\white(4,2)\black(2,3)\black(3,3)
    \node at \coord(1,6) {$a$};
    \node(center) at \coord(3,2.5) {};
  \end{hexboard}
  \quad
  \begin{hexboard}[scale=\scale,baseline=(current bounding box.south)]
    \rotation{-30}
    \foreach\i in {3,...,5} {\hex(\i,1)}
    \foreach\i in {2,...,5} {\hex(\i,2)}
    \foreach\i in {1,...,4} {\hex(\i,3)}
    \foreach\i in {1,...,3} {\hex(\i,4)}
    \black(3,1)\sflabel{1}
    \white(4,1)
    \white(5,1)
    \white(5,2)
    \black(4,3)\sflabel{2}
    \white(3,4)
    \white(2,4)
    \black(1,4)\sflabel{3}
    \white(1,3)
    \white(2,2)
    \black(3,2)\white(4,2)\black(2,3)\white(3,3)
    \node at \coord(1,6) {$b$};
    \node(center) at \coord(3,2.5) {};
  \end{hexboard}
  \quad
  \begin{hexboard}[scale=\scale,baseline=(current bounding box.south)]
    \rotation{-30}
    \foreach\i in {3,...,5} {\hex(\i,1)}
    \foreach\i in {2,...,5} {\hex(\i,2)}
    \foreach\i in {1,...,4} {\hex(\i,3)}
    \foreach\i in {1,...,3} {\hex(\i,4)}
    \black(3,1)\sflabel{1}
    \white(4,1)
    \white(5,1)
    \white(5,2)
    \black(4,3)\sflabel{2}
    \white(3,4)
    \white(2,4)
    \black(1,4)\sflabel{3}
    \white(1,3)
    \white(2,2)
    \black(3,2)\black(4,2)\white(2,3)\white(3,3)
    \node at \coord(1,6) {$c$};
    \node(center) at \coord(3,2.5) {};
  \end{hexboard}
  \quad
  \begin{hexboard}[scale=\scale,baseline=(current bounding box.south)]
    \rotation{-30}
    \foreach\i in {3,...,5} {\hex(\i,1)}
    \foreach\i in {2,...,5} {\hex(\i,2)}
    \foreach\i in {1,...,4} {\hex(\i,3)}
    \foreach\i in {1,...,3} {\hex(\i,4)}
    \black(3,1)\sflabel{1}
    \white(4,1)
    \white(5,1)
    \white(5,2)
    \black(4,3)\sflabel{2}
    \white(3,4)
    \white(2,4)
    \black(1,4)\sflabel{3}
    \white(1,3)
    \white(2,2)
    \white(3,2)\white(4,2)\white(2,3)\white(3,3)
    \node at \coord(1,6) {$\bot$};
    \node(center) at \coord(3,2.5) {};
  \end{hexboard}
  \hspace{0.8cm}
  \raisebox{6em}{(b)}
  \begin{tikzpicture}[xscale=0.45,yscale=0.49,baseline=(current bounding box.south)]
    \node(top) at (0,2) {$\top$};
    \node(a) at (-3,0) {$a$};
    \node(b) at (0,0) {$b$};
    \node(c) at (3,0) {$c$};
    \node(bot) at (0,-2) {$\bot$};
    \draw (bot) -- (a) -- (top);
    \draw (bot) -- (b) -- (top);
    \draw (bot) -- (c) -- (top);
  \end{tikzpicture}
  \]
  \caption{(a) The five possible outcomes of a 3-terminal region. (b)
    The outcome poset for 3-terminal regions.}
  \label{fig:3-terminal-outcomes}
\end{figure}
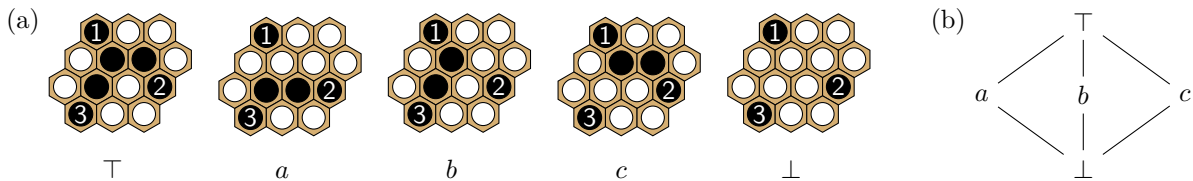

\subsection{Game forms}

An outcome represents the state of a region when it is completely
filled with stones, i.e., at the end of the game. To understand the
value of a region that still has some empty cells in it, we need the
concept of an abstract game form.

\begin{definition}[Game forms over an outcome poset {\cite[Def.~4.1]{S2022-hex-cgt}}]
  Fix an outcome poset $A$. The \emph{game forms over $A$} are
  defined as follows:
  \begin{itemize}
  \item Whenever $a\in A$, then $[a]$ is a game form, called an
    \emph{atomic} game.
  \item Whenever $L$ and $R$ are non-empty sets of game forms, $\g{L|R}$ is
    a game form, called a \emph{composite} game.
  \end{itemize}
  Moreover, the class of game forms is the smallest class generated by
  the above two rules.
\end{definition}

We often shorten ``game form'' to ``game'' when there is no potential
for confusion.  We use some customary notations from combinatorial
game theory. For an atomic game, we often write $a$ instead of $[a]$.
If $G=\g{L|R}$ is a composite game, the elements of $L$ and $R$ are
called the \emph{left} and \emph{right options} of $G$,
respectively. The represent the positions that the left, respectively
right, player can reach in the next move. Atomic games have no
options. If $G$ is any game, we write $G^L$ for a typical left option
and $G^R$ for a typical right option of $G$. Thus, when we write ``for
all $G^L$'', we mean ``for all left options $G^L$ of $G$''; such a
statement is vacuously true when $G$ is atomic. Similarly, ``there
exists $G^L$'' means ``there exists a left option $G^L$ of $G$'', and
such a statement is trivially false when $G$ is atomic. We also use
the notation $G^{(L)}$ to mean $G^L$ if $G$ is composite, and $G$
itself if $G$ is atomic, and similarly for $G^{(R)}$.

The \emph{followers} of a game $G$ are $G$ itself, all of its options,
the options of the options, and so on. A game is \emph{short} if it
has finitely many followers. In this paper, we only consider short
games, because all Hex positions are naturally short. The \emph{depth}
of a short game is defined in the obvious way: atomic games have depth
0, and the depth of a composite game is one more than the maximum of
the depths of its options.

Every Hex region can be converted to a game form over its outcome
poset. This may best be seen in an example. Consider the region in
Figure~\ref{fig:value-example}.
\begin{figure}
    \def\G#1{\lift{$\begin{hexboard}[scale=\scale]
    \rotation{-30}
    \foreach\i in {2,...,4} {\hex(\i,1)}
    \foreach\i in {1,...,4} {\hex(\i,2)}
    \foreach\i in {1,...,3} {\hex(\i,3)}
    \black(1,2)\sflabel{1}
    \black(3,1)\sflabel{2}
    \black(2,3)\sflabel{3}
    \white(1,3)
    \white(3,3)
    \white(4,2)
    \white(4,1)
    \white(2,1)
    #1
  \end{hexboard}$}}
  \[
  \def\lift{\mp{0.45}}
  \def\scale{1}
  G ~~=~~ \G{\cell(2,2)\label{$x$}\cell(3,2)\label{$y$}}
  \]
  \vspace{-0.25ex}
  \[
  \def\lift{\mp{0.4}}
  \def\scale{0.65}
  \def\xpace{\hspace{2.8ex}}
  \begin{array}{lll}
     &=&
    \left\{
    \G{\black(2,2)}\,,~
    \G{\black(3,2)}~\middle|~
    \G{\white(2,2)}\,,~
    \G{\white(3,2)}~\right\}
    \\\\ &=& ~
    \g{
      \xpace\g{\top|\top},\xpace
      \xpace\g{\top|a}\xpace|
      \xpace\g{a|\bot},\xpace
      \xpace\g{\top|\bot}\xpace
    }
  \end{array}
  \]
  \caption{The game form of a Hex position}
  \label{fig:value-example}
\end{figure}
We identify Black with Left and White with Right. The game $G$ has two
left options, corresponding to Black making a move at $x$ or $y$. The
game $G$ also has two right options, corresponding to White making a
move at $x$ or $y$. The resulting positions can be recursively
converted to game forms, with atomic positions acting as the base
case. For example, if Black occupies both $x$ and $y$, all three black
terminals are connected, so the outcome is $\top$. If Black occupies
$y$ and White occupies $x$, terminals 2 and 3 are connected but
terminal 1 is not, so the outcome is $a$, etc. Thus, by recursively
expanding $G$, we find that its game form is
$\g{\g{\top|\top},\g{\top|a}|\g{a|\bot},\g{\top|\bot}}$.

Note that, as always in combinatorial game theory, we do not enforce
alternating turns. This is because if Black makes a move in the
region, White may choose to make a move elsewhere on the board
(outside the region), and then Black may make another move in the
region. Thus, although the players alternate in the global game, it is
possible for a player to make consecutive moves within a region.

\subsection{The order relations}

To be able to compare game forms, we define two relations $\leq$
and $\tri$. Intuitively, $G\leq H$ means that $H$ is at least as good
for Black as $G$, whereas $G\tri H$ means that moving first in
$H$ is at least as good for Black as moving second in $G$.

\begin{definition}[Order relations {\cite[Def.~4.3]{S2022-hex-cgt}}]
  The relations $\leq$ and $\tri$ on game forms are defined by mutual
  recursion as follows:
  \begin{itemize}
  \item $G\leq H$ if both of the following hold:
    \begin{itemize}
    \item for all $G^{(L)}$, $G^{(L)}\tri H$, and
    \item for all $H^{(R)}$, $G\tri H^{(R)}$.
    \end{itemize}
  \item $G\tri H$ if at least one of the following holds:
    \begin{itemize}
    \item there exists $G^R$ with $G^R\leq H$, or
    \item there exists $H^L$ with $G\leq H^L$, or
    \item $G=[a]$ and $H=[b]$ are both atomic and $a\leq b$.
    \end{itemize}
  \end{itemize}
\end{definition}

We say that two game forms $G,H$ (over the same outcome poset) are
\emph{equivalent}, in symbols $G\eq H$, if $G\leq H$ and $H\leq G$.
Many useful properties of the order and of equivalence are proved in
{\cite{S2022-hex-cgt}}. Among them are several transitivity
properties {\cite[Lem.~4.6]{S2022-hex-cgt}}:
\begin{itemize}
  \item If $G\leq H$ and $H\leq K$, then $G\leq K$.
  \item If $G\tri H$ and $H\leq K$, then $G\tri K$.
  \item If $G\leq H$ and $H\tri K$, then $G\tri K$.
\end{itemize}

For short games, each equivalence class has a unique simplest member
called the game's \emph{canonical form}
{\cite[Lem.~4.23]{S2022-hex-cgt}}. Moreover, canonical forms can be
efficiently computed. From now on, by a \emph{value}, we mean an
equivalence class of game forms. Values are usually given in
canonical form.

\subsection{Monotone and passable games}

Among all of the game forms, those that are realizable as Hex
positions have an important additional property. In Hex, an additional
black stone can only improve Black's position, and dually, an
additional white stone can only improve White's position. This is the
property of \emph{monotonicity}, which we can formulate for
game forms as follows: for all $G^L$ and $G^R$, we have $G^R\leq G\leq
G^L$. Unfortunately, it turns out that the class of monotone 
game forms is not very robust; for example, the canonical form of a
monotone game need not be monotone (see
{\cite[Example~4.29]{S2022-hex-cgt}}). This problem is solved by
introducing the class of \emph{passable} games in the following
definition.

\begin{definition}[Monotone and passable games {\cite[Defs.~4.24 and 6.2]{S2022-hex-cgt}}]
  A game form $G$ is called \emph{monotone} if all left options
  satisfy $G\leq G^L$, all right options satisfy $G^R\leq G$, and
  recursively all options are monotone. A game form $G$ is called
  \emph{passable} if $G\tri G$ and recursively all options are passable.
\end{definition}

The concept of passable games is justified by the following
theorem.

\begin{theorem}[Fundamental theorem of monotone games {\cite[Cor.~6.6]{S2022-hex-cgt}}]
  \label{thm:fundamental}
  Let $A$ be an atom poset with top and bottom elements, and let $G$
  be a short game over $A$. Then $G$ is equivalent to a monotone game
  if and only if $G$ is equivalent to a passable game, if and only if
  the canonical form of $G$ is passable.
\end{theorem}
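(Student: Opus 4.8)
The plan is to prove a cycle of implications: (monotone-equivalent) $\imp$ (passable-equivalent) $\imp$ (canonical form passable) $\imp$ (monotone-equivalent), with the last implication being nearly immediate once we observe that a passable game is a fortiori... no, actually that direction needs thought, so let me restructure. I would instead prove: (i) every monotone game is passable; (ii) passability is preserved under the canonicalization procedure (i.e., if $G$ is passable then its canonical form is passable); (iii) every passable game is equivalent to a monotone game. Granting these three facts, the biconditionals follow: if $G$ is monotone-equivalent, say $G\eq H$ with $H$ monotone, then $H$ is passable by (i), so $G$ is passable-equivalent; if $G$ is passable-equivalent, say $G\eq H$ with $H$ passable, then the canonical form of $H$ — which equals the canonical form of $G$ — is passable by (ii); and if the canonical form of $G$ is passable, then $G$ is equivalent to a monotone game by (iii) applied to the canonical form.

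For step (i), I would argue by induction on depth. If $G$ is monotone, then for every left option $G^L$ we have $G\leq G^L$, and for every right option $G^R$ we have $G^R\leq G$. To show $G\tri G$, it suffices by the definition of $\tri$ to exhibit either a $G^R$ with $G^R\leq G$ or a $G^L$ with $G\leq G^L$ (or, in the atomic case, to note $a\leq a$, which holds by reflexivity of the outcome poset's order). If $G$ is composite it has at least one option of each kind (the option sets are non-empty by definition of composite game), and monotonicity supplies exactly such a witness. Combined with the inductive hypothesis that all options of $G$ are passable (they are monotone, hence passable by induction), we get that $G$ is passable. The atomic case uses that $A$ has — in fact we just need reflexivity, which every poset has.

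Step (iii) is the substantive direction and I expect it to be the main obstacle. The idea is: given a passable game $G$, define a "monotonization" $\hat G$ by recursively replacing each left option $G^L$ by $\g{G \mid \{G^L\}}$-style constructions, or more precisely by adding $G$ itself as an option to close up monotonicity — concretely, set $\hat G = \g{\{\hat{G^L} : L\} \cup \{\hat G\text{-ish}\} \mid \dots}$; one must be careful to avoid an ill-founded definition. The cleaner approach, which I believe is what \cite{S2022-hex-cgt} does, is to add the game itself (or a passable stand-in) as both a left and a right option and show that passability ($G\tri G$) is exactly the hypothesis needed to prove this enlarged game is equivalent to $G$ — the new left option $G$ does not hurt Black because $G\tri G$ says moving in the new copy is no better for Black than the second-player situation in $G$, and symmetrically the new right option does not hurt White. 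One then checks the enlarged game is monotone (the added options witness $G\leq G^L_{\text{new}}$ and $G^R_{\text{new}}\leq G$ trivially since those options \emph{are} $G$), and that recursively the construction on the options preserves passability and equivalence, using the transitivity properties quoted from \cite[Lem.~4.6]{S2022-hex-cgt}. The delicate point is the well-foundedness of the recursion and verifying the equivalence at each level, which is where the $\tri$-reflexivity (passability) is consumed.

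Finally, step (ii) — passability of the canonical form — I would either cite directly from \cite{S2022-hex-cgt} (the canonicalization procedure there consists of removing dominated options and bypassing reversible ones, and one checks each such move preserves $G\tri G$ and preserves passability of all options) or reduce it to the observation that the canonical form of $G$ is equivalent to $G$, which by (i)$\Leftarrow$(iii) applied in the other order is equivalent to a monotone game, hence... but that risks circularity, so I would keep (ii) as a separate inductive verification that the two canonicalization steps (eliminating a dominated option, bypassing a reversible option) each take passable games to passable games. The elimination step is easy since removing an option cannot destroy $G\tri G$ as long as some witnessing option survives — and one must check a witnessing option is never the one eliminated, which follows because a dominated option can always be chosen to be eliminated only when another option dominates it. The reversibility-bypass step requires checking that replacing $G^L$ by the left options of $(G^L)^R$ preserves $G\tri G$; this is the one remaining computation, routine but requiring care, and it leans again on Lem.~4.6.
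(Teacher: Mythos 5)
A preliminary remark on the comparison itself: this paper never proves Theorem~\ref{thm:fundamental}; it is imported verbatim as background from {\cite[Cor.~6.6]{S2022-hex-cgt}}, so there is no in-paper proof to measure your argument against, and your proposal has to stand on its own. Your overall architecture is reasonable, and your step (i) is correct: a composite monotone game has (by non-emptiness of both option sets) some right option $G^R\leq G$, which witnesses $G\tri G$, the atomic case uses reflexivity, and induction handles the options. Step (ii) is plausible, and your dominance argument is fine (if the witnessing right option $G^R\leq G$ is deleted because some $G^{R'}\leq G^R$, then $G^{R'}$ is again a witness), but the reversible-bypass case is exactly the part that cannot be waved through as ``routine''; as written, that half of (ii) is a promise rather than a proof.

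The genuine gap is in step (iii), and it is not the well-foundedness worry you flag but a conflation of an existential condition with a universal one. Monotonicity demands that \emph{every} left option satisfy $G\leq G^L$ and \emph{every} right option satisfy $G^R\leq G$; passability only supplies one witnessing option. Adjoining $G$ itself as a new left and right option (even granting the gift-horse-style claim that $G\tri G$ makes this value-preserving, which itself needs proof in this setting) leaves all the original options in place, so any pre-existing left option with $G\nleq G^L$ still violates monotonicity, and the added copies of $G$ repair nothing. To monotonize a passable game one must remove or replace the offending options while preserving the value, and that is where the hypothesis that $A$ has top and bottom elements is expected to do work (for instance via wrapper games such as $\g{\top|H}$ and $\g{H|\bot}$); your proposal never invokes $\top$ or $\bot$ at all, which is a strong sign that the construction you sketch is not yet the right one. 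Until a concrete monotonization is defined, shown well-founded, shown equivalent to $G$, and shown monotone at every level, the key implication ``canonical form passable $\Rightarrow$ equivalent to a monotone game'' remains unproved.
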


The following lemma is simple but useful.

\begin{lemma}\label{lem:topleqG}
  If $G$ is monotone and composite and all $G^R$ satisfy $\top\tri
  G^R$, then $\top\leq G$.
\end{lemma}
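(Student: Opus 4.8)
The plan is to prove $\top \leq G$ directly from the definition of $\leq$, which requires establishing two things: first, that for all left options, $\top^{(L)} \tri G$, and second, that for all right options $G^{(R)}$, $\top \tri G^{(R)}$. Since $\top$ is an atomic game form (an atom of the poset $A$), it has no left options, so the first condition is vacuously satisfied. The entire content of the lemma therefore lies in the second condition, which — because $G$ is composite — unwinds to: for every right option $G^R$ of $G$, we have $\top \tri G^R$. But this is exactly the hypothesis of the lemma. So at first glance the proof is immediate.

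The only subtlety to check is that $G$ is genuinely composite, so that ``for all $G^{(R)}$'' really does range over the right options $G^R$ of $G$ rather than over $G$ itself; this is given by hypothesis. Thus the definition of $G\leq H$ with $H = G$ reduces, after noting $\top$ is atomic, to precisely the stated condition ``$\top \tri G^R$ for all $G^R$.''

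The monotonicity hypothesis on $G$ is not actually needed for this argument, but it is harmless to state it (and presumably it is included to match the hypotheses of the context in which the lemma is applied, or it anticipates a slightly different formulation). I would present the proof in one or two sentences: observe $\top$ is atomic so the left-option condition of $\top \leq G$ is vacuous; observe $G$ is composite so the right-option condition of $\top \leq G$ is ``$\top \tri G^R$ for all right options $G^R$ of $G$''; this holds by hypothesis; therefore $\top \leq G$.

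There is essentially no obstacle here — the lemma is purely a matter of unfolding one level of the mutual recursion defining $\leq$ and $\tri$, and the only thing worth double-checking is the edge case that makes the left-option clause vacuous, namely that $\top$, being an element of the atom poset, is represented by the atomic game form $[\top]$ with no options.
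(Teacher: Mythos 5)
There is a genuine gap, and it comes from a misreading of the definition of $\leq$. In this paper the first clause of $G\leq H$ quantifies over $G^{(L)}$, not over the left options $G^L$: by the stated convention, when $G$ is atomic, $G^{(L)}$ denotes $G$ itself. So for $\top\leq G$ the first clause is \emph{not} vacuous; it requires $\top\tri G$. Your proposal declares this clause vacuously true because ``$\top$ has no left options,'' and consequently the whole content of the lemma collapses, in your reading, to the hypothesis. That reading cannot be right, and in fact the statement you actually prove (the lemma without the monotonicity hypothesis) is false: take $G=\g{\bot\mid\top}$. Its only right option is $\top$, and $\top\tri\top$ holds, yet $\top\nleq G$, since $\top\tri G$ would require some $G^L$ with $\top\leq G^L$, and the only left option is $\bot$.

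This is exactly where the hypotheses you dismissed as ``not actually needed'' enter in the paper's proof. Since $G$ is composite, it has some right option $G^R$; since $G$ is monotone, $G^R\leq G$; by hypothesis $\top\tri G^R$; and by the transitivity property ($\tri$ composed with $\leq$ gives $\tri$) one gets $\top\tri G$, which is the missing first clause. Together with the second clause (for all $G^{(R)}=G^R$, $\top\tri G^R$, which is the hypothesis), this yields $\top\leq G$. So your handling of the right-option clause is fine, but you must add the argument for $\top\tri G$; without it the proof does not go through.
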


\begin{proof}
  To show $\top\leq G$, we only need to show two things: first, that
  all $G^R$ satisfy $\top\tri G^R$, which holds by assumption, and
  second that $\top\tri G$. Since $G$ is composite, it has some right
  option $G^R$, and since $G$ is monotone, we have $G^R\leq G$. By
  assumption, $\top\tri G^R\leq G$, which implies $\top\tri G$ by
  transitivity.
\end{proof}

\subsection{Disjunctive sums}

We will also need the notion of disjunctive sum of games. Informally,
the sum of two or more Hex positions is obtained by putting them side
by side and combining them into a larger position in a prescribed way.
For an example, consider equation~\ref{eqn:plusc} below, which
illustrates one particular way of combining two 3-terminal positions
into a single 3-terminal position.

Because there can be multiple different ways of summing two games, our
definition of the sum of abstract game forms is parameterized by a
function $f$ that determines how to combine atomic outcomes. For
example, Figure~\ref{fig:concat-atoms} illustrates several cases of
how the sum operation of equation~\ref{eqn:plusc} acts on atomic
outcomes. We have the following definition for the sum of game forms:

\begin{definition}[Sum]\label{def:sum}
  Let $A,B,C$ be posets and let $f:A\times B\to C$ be a monotone
  function, i.e., such that $a\leq a'$ and $b\leq b'$ implies
  $f(a,b)\leq f(a',b')$. Given a game $G$ over $A$ and a game $H$ over
  $B$, their sum $G\plusf H$ is a game over $C$, defined recursively as
  follows:
  \begin{itemize}
  \item $G\plusf  H = \g{G^L\plusf H, G\plusf H^L|G^R\plusf H, G\plusf
    H^R}$, when at least one of $G$ or $H$ is composite, and
  \item $[a]\plusf[b] = [f(a,b)]$.
  \end{itemize}
\end{definition}

Note that the situation is different from normal play games
{\cite{ONAG,WinningWays}}, where there is a single disjunctive sum
operation. In our setting, there are many alternative ways of summing
games, depending on the function $f$.

\subsection{Special 3-terminal regions: corners and forks}
\label{ssec:corners-and-forks}

As mentioned in Section~\ref{ssec:regions}, the outcome poset for a
general 3-terminal region is $P_3=\s{\bot,a,b,c,\top}$ with the
partial order shown in
Figure~\ref{fig:3-terminal-outcomes}(b). However, some special kinds
of 3-terminal regions have more specialized outcome posets. One way
in which this happens is when two or more of the region's terminals
are board edges. A 3-terminal region whose terminals include adjacent
black and white board edges is called a \emph{corner}, as in
Figure~\ref{fig:corner-fork}(a). A 3-terminal region whose terminals
include two opposite white board edges is called a \emph{fork}, as in
Figure~\ref{fig:corner-fork}(b). As before, outcome $a$ means that
terminals $2$ and $3$ are connected, outcome $b$ means that terminals
$1$ and $3$ are connected, and so on. But compared to a generic
3-terminal region, a corner has the additional property that $c\leq
a$. Indeed, the rest of the board is a 4-terminal region, and the
reader can verify that for each of the 14 possible outcomes for the
rest of the board, if $c$ is winning for Black in the corner, then so
is $a$.  For a fork, the situation is even more constrained: in this
case, outcome $c$ is equivalent to $\bot$, since in both cases,
White's edges are connected within the region and White wins
regardless of what happens on the rest of the board. The outcome
posets for a corner and a fork are shown in
Figure~\ref{fig:corner-fork}. Note that both of these posets are
quotients of $P_3$.
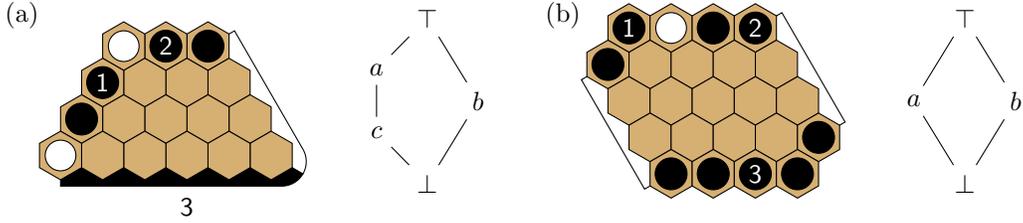
\begin{figure}
  \[
  \def\scale{0.8}
  \raisebox{3em}{(a)}
  \begin{hexboard}[scale=\scale,baseline=(center)]
    \rotation{-30}
    \foreach\i in {5,...,3} {\hex(\i,1)}
    \foreach\i in {5,...,2} {\hex(\i,2)}
    \foreach\i in {5,...,1} {\hex(\i,3)}
    \foreach\i in {5,...,0} {\hex(\i,4)}
    \edge[\noobtusecorner](0.5,4)(5,4)
    \edge[\noobtusecorner](5,1.5)(5,4)
    \white(0,4)
    \black(1,3)
    \black(2,2)\sflabel{1}
    \white(3,1)
    \black(4,1)\sflabel{2}
    \black(5,1)
    \cell(2.3,5.4)\sflabel{3}
    \node(center) at \coord(3,2.5) {};
  \end{hexboard}
  \qquad
  \begin{tikzpicture}[xscale=0.45,yscale=0.55,baseline=(current bounding box.center)]
    \node(top) at (0,2) {$\top$};
    \node(a) at (-1.5,-0.75) {$c$};
    \node(b) at (-1.5,0.75) {$a$};
    \node(c) at (1.5,0) {$b$};
    \node(bot) at (0,-2) {$\bot$};
    \draw (bot) -- (a) -- (b) -- (top);
    \draw (bot) -- (c) -- (top);
  \end{tikzpicture}
  \qquad
  \raisebox{3em}{(b)}
  \begin{hexboard}[scale=\scale,baseline=(center)]
    \rotation{-30}
    \foreach\i in {2,...,5} {\hex(\i,1)}
    \foreach\i in {1,...,5} {\hex(\i,2)}
    \foreach\i in {1,...,5} {\hex(\i,3)}
    \foreach\i in {1,...,5} {\hex(\i,4)}
    \foreach\i in {1,...,4} {\hex(\i,5)}
    \edge[\noobtusecorner\noacutecorner](5,1.5)(5,3.5)
    \edge[\noobtusecorner\noacutecorner](1,4.5)(1,2.5)
    \black(1,2)
    \black(2,1)\sflabel{1}
    \white(3,1)
    \black(4,1)
    \black(5,1)\sflabel{2}
    \black(1,5)
    \black(2,5)
    \black(3,5)\sflabel{3}
    \black(4,5)
    \black(5,4)
    \node(center) at \coord(3,3) {};
  \end{hexboard}
  \qquad
  \begin{tikzpicture}[xscale=0.45,yscale=0.55,baseline=(current bounding box.center)]
    \node(top) at (0,2) {$\top$};
    \node(a) at (-1.5,0) {$a$};
    \node(b) at (1.5,0) {$b$};
    \node(bot) at (0,-2) {$\bot$};
    \draw (bot) -- (a) -- (top);
    \draw (bot) -- (b) -- (top);
  \end{tikzpicture}
  \]
  \caption{(a) A corner and its outcome poset. (b) A fork and its
    outcome poset.}
  \label{fig:corner-fork}
\end{figure}

\section{Infinitely many non-equivalent 3-terminal positions}
\label{sec:superswitches}

Because there are only five possible outcomes for a 3-terminal
position, one may be tempted to think that there is not very much
going on in such a position. Maybe there are only finitely many
3-terminal positions up to equivalence? In this section, we show that
this is not the case. We will consider several infinite families of
3-terminal positions with interesting properties.

\subsection{Superswitches}

Consider an outcome poset with two incomparable atoms $a$ and $b$. It
was shown in {\cite[Prop.~10.2]{S2022-hex-cgt}} that there are infinitely
many passable game values over these outcomes. Specifically, it was
shown that the sequence of games defined by $G_0=a$ and $G_{n+1} =
\g{a,b|G_n}$ for all $n\geq 0$ is an infinite, strictly increasing
sequence of passable game values. The first few values in the sequence
are:
\[
\begin{array}{l}
  G_0 = a, \\
  G_1 = \g{a,b|a}, \\
  G_2 = \g{a,b|\g{a,b|a}}, \\
  G_3 = \g{a,b|\g{a,b|\g{a,b|a}}}, \\
  \ldots
\end{array}
\]
We call these games \emph{superswitches}. The idea is that the default
outcome is initially $a$, but the left player gets $n$ chances to
change (``switch'') the outcome to $b$. In other words, even if the
right player gets $n-1$ moves first, the left player can still choose
between outcomes $a$ and $b$.

The question was left open in {\cite{S2022-hex-cgt}} whether the
superswitches are realizable as Hex positions, and if so, whether they
are still distinct when regarded as Hex positions. We give positive
answers to these questions below.

As before, we assume that the black terminals of all 3-terminal
positions are numbered $1,2,3$. Recall that we write $a$ for the
outcome ``Black connects terminals 2 and 3'', $b$ for the outcome
``Black connects terminals 1 and 3'', and $c$ for the outcome ``Black
connects terminals 1 and 2''. Our goal is to realize the
superswitches as 3-terminal Hex positions.

We begin by considering the following operation on 3-terminal
positions. If $G$ and $H$ are 3-terminal positions, their
\emph{concatenation}, written $G\plusc H$, is the 3-terminal position
shown schematically in the following diagram:
\begin{equation}\label{eqn:plusc}
  G\plusc H \quad = \quad
  \begin{tikzpicture}[scale=0.75,baseline={(a.south)}]
    \draw[fill=black!10] (0,0) rectangle node{$G$} (3,2);
    \draw[fill=black!10] (4,0) rectangle node{$H$} (7,2);
    \draw[terminal] (3,1) node[left]{$2$} -- (4,1) node[right]{$3$};
    \draw[terminal] (-1,1) node[left]{$3$} -- (0,1) node[right]{$3$};
    \draw[terminal] (7,1) node[left]{$2$} -- (8,1) node[right]{$2$};
    \draw[terminal] (1.5,2) node[below]{$1$} -- (1.5,2.5) -- (5.5,2.5)
    -- (5.5,2) node[below]{$1$};
    \draw[terminal] (3.5,2.5) -- (3.5,3) node[above]{$1$};
    \path (0,-0.3);
    \node (a) at (0,1) {};
  \end{tikzpicture}
\end{equation}
In words, $G\plusc H$ is obtained by connecting terminal 1 of $G$ to
terminal 1 of $H$ and terminal 2 of $G$ to terminal 3 of
$H$. Terminals 1, 2, and 3 of the combined position are terminals 1 of
$G$ and $H$, terminal 2 of $H$, and terminal 3 of $G$, respectively.
As illustrated in Figure~\ref{fig:concat-atoms},
the atoms $a$ and $b$ satisfy the following identities:
\[
a\plusc a = a, \quad
a\plusc b = b, \quad
b\plusc a = b, \quad
b\plusc b = b.
\]
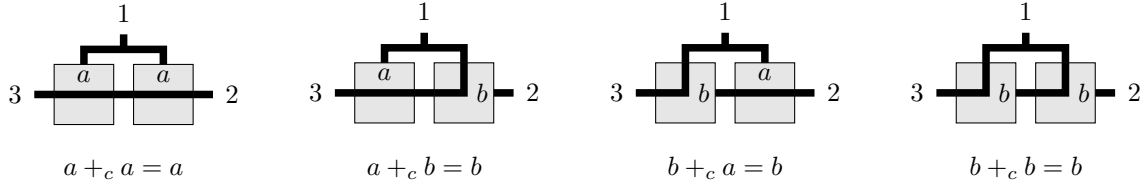
\begin{figure}
  \[
  \begin{tikzpicture}[scale=0.4,xscale=0.66]
    \draw[fill=black!10] (0,0) rectangle (3,2);
    \draw[fill=black!10] (4,0) rectangle (7,2);
    \draw[terminal] (0,1) -- node[above]{$a$} (3,1);
    \draw[terminal] (4,1) -- node[above]{$a$} (7,1);
    \draw[terminal] (3,1) -- (4,1);
    \draw[terminal] (-1,1) node[left]{$3$} -- (0,1);
    \draw[terminal] (7,1) -- (8,1) node[right]{$2$};
    \draw[terminal] (1.5,2) -- (1.5,2.5) -- (5.5,2.5) -- (5.5,2);
    \draw[terminal] (3.5,2.5) -- (3.5,3) node[above]{$1$};
    \path (3.5,-1.5) node{$a\plusc a = a$};
  \end{tikzpicture}
  \qquad
  \begin{tikzpicture}[scale=0.4,xscale=0.66]
    \draw[fill=black!10] (0,0) rectangle (3,2);
    \draw[fill=black!10] (4,0) rectangle (7,2);
    \draw[terminal] (0,1) -- node[above]{$a$} (3,1);
    \draw[terminal] (4,1) -- (5.5,1) node[right]{$b$} -- (5.5,2);
    \draw[terminal] (3,1) -- (4,1);
    \draw[terminal] (-1,1) node[left]{$3$} -- (0,1);
    \draw[terminal] (7,1) -- (8,1) node[right]{$2$};
    \draw[terminal] (1.5,2) -- (1.5,2.5) -- (5.5,2.5) -- (5.5,2);
    \draw[terminal] (3.5,2.5) -- (3.5,3) node[above]{$1$};
    \path (3.5,-1.5) node{$a\plusc b = b$};
  \end{tikzpicture}
  \qquad
  \begin{tikzpicture}[scale=0.4,xscale=0.66]
    \draw[fill=black!10] (0,0) rectangle (3,2);
    \draw[fill=black!10] (4,0) rectangle (7,2);
    \draw[terminal] (0,1) -- (1.5,1) node[right]{$b$} -- (1.5,2);
    \draw[terminal] (4,1) -- node[above]{$a$} (7,1);
    \draw[terminal] (3,1) -- (4,1);
    \draw[terminal] (-1,1) node[left]{$3$} -- (0,1);
    \draw[terminal] (7,1) -- (8,1) node[right]{$2$};
    \draw[terminal] (1.5,2) -- (1.5,2.5) -- (5.5,2.5) -- (5.5,2);
    \draw[terminal] (3.5,2.5) -- (3.5,3) node[above]{$1$};
    \path (3.5,-1.5) node{$b\plusc a = b$};
  \end{tikzpicture}
  \qquad
  \begin{tikzpicture}[scale=0.4,xscale=0.66]
    \draw[fill=black!10] (0,0) rectangle (3,2);
    \draw[fill=black!10] (4,0) rectangle (7,2);
    \draw[terminal] (0,1) -- (1.5,1) node[right]{$b$} -- (1.5,2);
    \draw[terminal] (4,1) -- (5.5,1) node[right]{$b$} -- (5.5,2);
    \draw[terminal] (3,1) -- (4,1);
    \draw[terminal] (-1,1) node[left]{$3$} -- (0,1);
    \draw[terminal] (7,1) -- (8,1) node[right]{$2$};
    \draw[terminal] (1.5,2) -- (1.5,2.5) -- (5.5,2.5) -- (5.5,2);
    \draw[terminal] (3.5,2.5) -- (3.5,3) node[above]{$1$};
    \path (3.5,-1.5) node{$b\plusc b = b$};
  \end{tikzpicture}
  \]
  \caption{Some identities for the concatenation of atomic positions}
  \label{fig:concat-atoms}
\end{figure}

\begin{lemma}\label{lem:G-n+1}
  For all $n$, we have $G_n \plusc G_1 \eq G_{n+1}$.
\end{lemma}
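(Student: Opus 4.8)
The plan is to prove the lemma by induction on $n$, where each step unfolds the sum $G_n\plusc G_1$ one level using Definition~\ref{def:sum} and then simplifies the resulting options up to equivalence. The base case $n=0$ is immediate: since $G_1=\g{a,b|a}$ is composite, $G_0\plusc G_1=a\plusc\g{a,b|a}=\g{a\plusc a,\,a\plusc b|a\plusc a}=\g{a,b|a}=G_1=G_{0+1}$, using the tabulated atom identities $a\plusc a=a$ and $a\plusc b=b$.

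Before the inductive step I would record an auxiliary fact, proved by its own induction on $m$: for every $m\geq 0$, $G_m\plusc a=G_m$ and $G_m\plusc b\eq b$. The case $m=0$ is just $a\plusc a=a$ and $a\plusc b=b$. For $m\geq 1$, unfolding $G_m=\g{a,b|G_{m-1}}$ against the atoms gives $G_m\plusc a=\g{a\plusc a,\,b\plusc a|G_{m-1}\plusc a}=\g{a,b|G_{m-1}\plusc a}=\g{a,b|G_{m-1}}=G_m$ by the inner induction hypothesis, and similarly $G_m\plusc b=\g{b|G_{m-1}\plusc b}\eq\g{b|b}\eq b$; here I use that equivalence is a congruence on game forms, so an option may be replaced by an equivalent option (a standard fact from \cite{S2022-hex-cgt}), together with the elementary identity $\g{b|b}\eq b$.

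Now for the inductive step ($n\geq 1$): both $G_n$ and $G_1$ are composite, so by Definition~\ref{def:sum} the game $G_n\plusc G_1$ has left options $a\plusc G_1,\ b\plusc G_1,\ G_n\plusc a,\ G_n\plusc b$ and right options $G_{n-1}\plusc G_1,\ G_n\plusc a$. Substituting $a\plusc G_1=\g{a,b|a}=G_1$, $b\plusc G_1=\g{b|b}\eq b$, $G_n\plusc a=G_n$, $G_n\plusc b\eq b$ (the auxiliary fact), and $G_{n-1}\plusc G_1\eq G_n$ (the induction hypothesis), congruence collapses this to $G_n\plusc G_1\eq\g{G_1,\,b,\,G_n|G_n}$. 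It then remains to show $\g{G_1,\,b,\,G_n|G_n}\eq G_{n+1}=\g{a,b|G_n}$. Using that the sequence $(G_m)$ is strictly increasing and consists of passable games (\cite[Prop.~10.2]{S2022-hex-cgt}), I have $G_1\leq G_n$, $a=G_0\leq G_n$, $G_{n-1}\leq G_n$, and $G_n\tri G_n$. Since $G_1\leq G_n$, the left option $G_1$ is dominated by the left option $G_n$ and may be dropped; since $a\leq G_n$, the dominated option $a$ may be added; this reduces the goal to $\g{a,b,G_n|G_n}\eq\g{a,b|G_n}$. The inequality $\g{a,b|G_n}\leq\g{a,b,G_n|G_n}$ is automatic (adding a left option cannot hurt Black), and for the converse $\g{a,b,G_n|G_n}\leq\g{a,b|G_n}$ it is enough, by the definition of the order relations, to check $G_n\tri\g{a,b|G_n}$, the remaining conditions being immediate. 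This holds because $G_n$ has the right option $G_{n-1}$ with $G_{n-1}\leq\g{a,b|G_n}$: the latter because the left options of $G_{n-1}$ (namely $a,b$, or $G_{n-1}=a$ itself when $n=1$) are each $\tri\g{a,b|G_n}$, and $G_{n-1}\tri G_n$ follows from $G_{n-1}\leq G_n$ and $G_n\tri G_n$ by transitivity. Equivalently, one may phrase this last reduction as: the left option $G_n$ of $\g{a,b,G_n|G_n}$ is reversible through $G_{n-1}$, so it may be replaced by the left options of $G_{n-1}$, which are already present; either way the game is $G_{n+1}$.

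I expect this last reduction — replacing the structural left option $G_n$ by the atomic options $a$ and $b$ — to be the main obstacle, since it is the one place where the internal form of $G_n$ and the increasing property of the superswitches are genuinely used. A secondary nuisance is the degenerate case $n=1$: there $G_{n-1}=G_0=a$ is atomic, so "the left options of $G_{n-1}$" is empty (harmless, since the surviving left set $\{a,b\}$ is still non-empty), and $G_1$ and $G_n$ coincide as left options, so the "drop $G_1$" step is vacuous; the same chain of domination and reversibility still yields $G_2=\g{a,b|G_1}$, one just has to phrase it carefully.
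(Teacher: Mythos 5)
Your proof is correct and follows essentially the same route as the paper's: induction on $n$, unfolding the sum via Definition~\ref{def:sum}, absorption identities for the atoms, removal of the dominated left option, and the final equivalence $\g{G_n,b|G_n}\eq\g{a,b|G_n}$ obtained from $a\leq G_n$ on one side and passability plus monotonicity of the superswitch sequence on the other. The only differences are cosmetic: you prove the absorption facts just for the $G_m$ rather than for all games over $\s{a,b}$, and you reach $G_n\tri G_{n+1}$ via the right option $G_{n-1}$ instead of directly from $G_n\tri G_n$ and $G_n\leq G_{n+1}$, which the paper does in one line.
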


\begin{proof}
  First note that for all games $G$ involving only the atoms $a$ and
  $b$, we have $a\plusc G\,\eq\,G\,\eq\, G\plusc a$ and $b\plusc
  G\,\eq\, b\,\eq\, G\plusc b$. This is easily shown by induction on
  $G$.

  We now prove the lemma by induction. The base case holds because
  $G_0\plusc G_1 = a\plusc G_1 \eq G_1$. Now consider some
  $n>0$. Recall that, by definition, we have $G_n =
  \g{a,b|G_{n-1}}$. Also, by the induction hypothesis, we have
  $G_{n-1}\plusc G_1\eq G_n$. Then
  \[
  \begin{array}{rcl}
    G_n \plusc G_1
    &=& \g{G_n^L\plusc G_1, G_n\plusc G_1^L|G_n^R\plusc G_1, G_n\plusc G_1^R}\\
    &\eq& \g{a\plusc G_1, b\plusc G_1, G_n\plusc a, G_n\plusc b|G_{n-1}\plusc G_1, G_n\plusc a}\\
    &\eq& \g{G_1,b,G_n,b|G_n, G_n}.
  \end{array}
  \]
  From {\cite[Prop.~10.2]{S2022-hex-cgt}}, we know that $G_1\leq G_n$, and
  therefore the left option $G_1$ is dominated; therefore, the game
  $G_n\plusc G_1$ simplifies to $\g{G_n,b|G_n}$. From $a\leq G_n$, we
  get $\g{a,b|G_n}\leq\g{G_n,b|G_n}$. On the other hand, from $G_n\leq
  G_{n+1}$, since the games are passable, we get $G_n\tri
  G_{n+1}=\g{a,b|G_n}$, which implies $\g{G_n,b|G_n}\leq \g{a,b|G_n}$.
  It follows that $G_n\plusc G_1\eq\g{G_n,b|G_n}\eq\g{a,b|G_n}\eq
  G_{n+1}$, as claimed.
\end{proof}

\subsection{Superswitches are Hex realizable}
\label{ssec:superswitch-realizable}

\begin{lemma}\label{lem:g1-realizable}
  The following 3-terminal position has value $G_1=\g{a,b|a}$.
  \[
  \begin{hexboard}[scale=0.65]
    \rotation{-30}
    \hex(0,2)\white(0,2)
    \hex(0,3)\white(0,3)
    \hex(0,4)\black(0,4)\sflabel{3}
    \hex(0,5)\white(0,5)
    \hex(1,1)\white(1,1)
    \hex(1,2)\black(1,2)
    \hex(1,3)
    \hex(1,4)\black(1,4)
    \hex(1,5)\white(1,5)
    \hex(2,0)\white(2,0)
    \hex(2,1)\black(2,1)
    \hex(2,2)
    \hex(2,3)
    \hex(2,4)\white(2,4)
    \hex(2,5)\white(2,5)
    \hex(3,0)\white(3,0)
    \hex(3,1)\black(3,1)
    \hex(3,2)\white(3,2)
    \hex(3,3)
    \hex(3,4)
    \hex(3,5)\white(3,5)
    \hex(4,0)\white(4,0)
    \hex(4,1)\black(4,1)
    \hex(4,2)
    \hex(4,3)
    \hex(4,4)\black(4,4)
    \hex(4,5)\white(4,5)
    \hex(5,0)\white(5,0)
    \hex(5,1)\black(5,1)
    \hex(5,2)\black(5,2)
    \hex(5,3)\white(5,3)
    \hex(5,4)\black(5,4)
    \hex(5,5)\white(5,5)
    \hex(6,0)\white(6,0)
    \hex(6,1)
    \hex(6,2)
    \hex(6,3)\black(6,3)
    \hex(6,4)\black(6,4)
    \hex(6,5)\white(6,5)
    \hex(7,0)\white(7,0)
    \hex(7,1)\black(7,1)
    \hex(7,2)
    \hex(7,3)\white(7,3)
    \hex(7,4)\black(7,4)
    \hex(7,5)\white(7,5)
    \hex(8,0)\white(8,0)
    \hex(8,1)
    \hex(8,2)
    \hex(8,3)\white(8,3)
    \hex(8,4)\black(8,4)
    \hex(8,5)\white(8,5)
    \hex(9,0)\white(9,0)
    \hex(9,1)\white(9,1)
    \hex(9,2)\black(9,2)
    \hex(9,3)\white(9,3)
    \hex(9,4)\black(9,4)\sflabel{2}
    \hex(9,5)\white(9,5)
    \hex(10,0)\white(10,0)
    \hex(10,1)\black(10,1)
    \hex(11,0)\black(11,0)
    \foreach\i in {3,...,11} {\hex(\i,-1)\black(\i,-1)}
    \hex(12,-1)\black(12,-1)\sflabel{1}
  \end{hexboard}
  \]
\end{lemma}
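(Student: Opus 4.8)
The plan is to show that the displayed position, which I will call $P$, has value $G_1=\g{a,b|a}$ by proving the two inequalities $G_1\leq P$ and $P\leq G_1$ directly from the mutual recursion defining $\leq$ and $\tri$, using the fact that Hex positions are monotone. The first step is a reduction: although $P$ has twelve empty cells, almost all of them lie in standard Hex connection templates, and I would begin by recording two structural facts. First, there is a robust ``default'' connection linking terminals $2$ and $3$, built from solid black stones together with bridge-type slack, so that $P$ already has value $\geq a$, and moreover any single White intrusion into this connection can be answered by a Black reply that restores a $2$--$3$ connection without disturbing anything else. Second, there is a distinguished ``switch'' cell in the left-hand empty cluster (among $(1,3),(2,2),(2,3),(3,3),(3,4),(4,2),(4,3)$) such that a Black stone there completes a template connecting terminals $1$ and $3$ while simultaneously and permanently severing the $2$--$3$ link, whereas a White stone there forecloses the $1$--$3$ possibility altogether while leaving the default connection, and hence the eventual outcome $a$, intact. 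Verifying these two facts is the only genuinely Hex-specific reasoning, and I expect it to be the main obstacle; the delicate point is that after Black plays the switch the break between terminal $2$ and terminals $\{1,3\}$ is irreparable, so that Black can secure $a$ or $b$ but never all three terminals.

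Granting the reduction, the combinatorial-game part is short. For $G_1\leq P$ one checks $a\tri P$ and $b\tri P$ (the two left options of $G_1$) and $G_1\tri P^R$ for every White first move $P^R$. The relation $a\tri P$ holds because the default connection gives $a\leq P$, hence $a\leq P\leq P^L$ for any Black move $P^L$; the relation $b\tri P$ holds because Black's switch move yields a position whose outcome is forced to $b$; and $G_1\tri P^R$ holds because either White's move leaves the default connection certifying $a\leq P^R$, or White has intruded into that connection, in which case Black's restoring reply produces a position that still has an $a$-connection and an untouched switch, hence value $\geq G_1$. For $P\leq G_1$ one checks $P\tri a$ (the unique right option of $G_1$) and $P^L\tri G_1$ for every Black first move $P^L$. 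The relation $P\tri a$ holds because White's stone on the switch cell leaves a position in which White can keep terminal $1$ disconnected, so its only reachable outcomes are $a$ and $\bot$ and its value is $\leq a$. Finally, for each Black move $P^L$: if $P^L$ is the switch move, then $P^L$ has value $b$, a left option of $G_1$, so $P^L\tri G_1$; and if $P^L$ is any other move, which on its own cannot connect terminal $1$, then White's reply on the switch cell gives a position of value $\leq a\leq G_1$, so again $P^L\tri G_1$. Assembling these observations gives $G_1\eq P$.

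Two points need a little care. The ``restoring reply'' clause in the proof of $G_1\leq P$ is invoked on positions strictly shallower than $P$, so the argument is really a simultaneous induction on the depth of the positions involved; since there are only finitely many possible template intrusions, it terminates. And since all of the case analysis is finite and local, the lemma is equally amenable to verification by a Hex solver of the kind used in the rest of the paper; indeed the database of Section~\ref{sec:database} is precisely a systematic tabulation of such verified 3-terminal values.
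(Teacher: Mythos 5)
Your combinatorial-game skeleton (deriving $G_1\leq P$ and $P\leq G_1$ from two structural facts about the position, using monotonicity and the recursion for $\leq$ and $\tri$) is fine in outline, but the two structural facts are exactly where the entire content of the lemma lives, and as you state them they do not survive inspection of the actual position. The ``switch'' cannot sit in the left-hand cluster $(1,3),(2,2),(2,3),(3,3),(3,4),(4,2),(4,3)$. In this position the group containing terminal $1$ (the long bottom row together with $(11,0)$, $(10,1)$, $(9,2)$) has a \emph{unique} empty neighbour, namely $(8,2)$ in the right-hand cluster. Consequently no single White move in the left cluster forecloses a $1$--$3$ connection, and no Black move in the left cluster can have value $\geq b$, because White simply answers at $(8,2)$ and terminal $1$ is dead; since $b\tri P$ requires an actual left option $P^L$ with $b\leq P^L$, the only candidate switch move is $(8,2)$ itself. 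Relatedly, the phrase ``a Black stone there \ldots permanently severs the $2$--$3$ link'' is not meaningful: an added black stone never disconnects black groups. What you would actually have to prove is a global strategy statement: after Black commits at $(8,2)$, White (moving second within the region) can forever exclude terminal $2$ from Black's network while Black holds $1$--$3$, so that the resulting value is exactly $b$ and not something larger such as a position from which $\top$ is still reachable. That claim, the robustness of the $2$--$3$ connection against arbitrary intrusions, and the key step of your induction (that every intrusion-and-restore exchange yields a position still $\geq G_1$, which is \emph{not} an instance of the lemma and needs an explicitly formulated invariant) are all left unproved; you yourself flag them as ``the main obstacle,'' which means the proposal is a plan rather than a proof.

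For comparison, the paper does not attempt a structural argument at all: its proof is a direct computation, converting the $12$-cell position to its game form and reducing to canonical form (the position was found via the database of Section~\ref{sec:database}). Your closing remark that the lemma ``is equally amenable to verification by a Hex solver'' is essentially the paper's proof, but in your write-up it is demoted to an aside while the load-bearing claims remain unestablished and, in the case of the switch cell's location, incorrect. To salvage a hand proof you would need to relocate the switch analysis to $(8,2)$, formulate the invariant maintained by Black's restoring replies (terminal $1$'s liberty at $(8,2)$ intact plus a second-player $2$--$3$ connection plus the latent $1$--$3$ resources through $(8,1)$, $(7,1)$, $(6,1)$/$(6,2)$ and the left cluster), and verify the finitely many White intrusions against it --- at which point you are doing by hand the same finite case analysis the paper delegates to the computer.
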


\begin{proof}
  This can be checked by direct computation, by converting the
  position to a game form and then computing its canonical form. We
  will say more about how we actually found this position in
  Section~\ref{sec:database}.
\end{proof}

\begin{corollary}
  Each value in the infinite sequence $G_0,G_1,\ldots$ is realizable
  as a 3-terminal Hex position.  
\end{corollary}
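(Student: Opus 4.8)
The plan is to combine the three facts already established: Lemma~\ref{lem:g1-realizable} (the displayed position has value $G_1$), Lemma~\ref{lem:G-n+1} ($G_n \plusc G_1 \eq G_{n+1}$), and the fact that the concatenation operation $\plusc$ on 3-terminal positions, being a specific way of gluing the boundaries of two Hex regions, is realizable on the Hex board — that is, if a 3-terminal position $P$ realizes a value $G$ and a 3-terminal position $Q$ realizes a value $H$, then the concatenated position $P \plusc Q$ is again an honest Hex position, and it realizes $G \plusc H$. (This last point is exactly the sense in which $\plusc$ was defined schematically in equation~\eqref{eqn:plusc}: one physically connects terminal $1$ of $P$ to terminal $1$ of $Q$ and terminal $2$ of $P$ to terminal $3$ of $Q$ by laying down appropriate black stones, and fills the gaps with white stones; the compatibility of the game-form sum with the board-level gluing is an instance of Definition~\ref{def:sum} applied to the monotone function $f$ governing concatenation of atomic outcomes, i.e. the table $a\plusc a = a$, $a\plusc b = b$, etc.)

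The argument is then a straightforward induction on $n$. For the base case, $G_0 = a$ is realizable: any 3-terminal Hex position in which White has already connected terminals $2$ and $3$ to each other — equivalently, in which Black has lost the $a$-connection but the region is otherwise settled with outcome $a$ — has value $[a]$; concretely one can take a fully-filled region witnessing outcome $a$, such as in Figure~\ref{fig:3-terminal-outcomes}(a). (Alternatively, use $G_1$ from Lemma~\ref{lem:g1-realizable} as the base case and start the induction at $n=1$.) For the inductive step, suppose a 3-terminal Hex position $P_n$ realizes $G_n$. Let $P_1$ be the position from Lemma~\ref{lem:g1-realizable}, which realizes $G_1$. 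Then $P_n \plusc P_1$ is a 3-terminal Hex position, and by the realizability of $\plusc$ it has value $G_n \plusc G_1$, which by Lemma~\ref{lem:G-n+1} is $\eq G_{n+1}$. Hence $G_{n+1}$ is realizable, completing the induction.

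The only real content beyond the two cited lemmas is the observation that $\plusc$ respects Hex-realizability and computes the sum of values — and this is essentially immediate from the definitions, since $\plusc$ is literally a board-gluing operation and the induced operation on game forms is the sum $\plusf$ of Definition~\ref{def:sum} for the appropriate $f$; the recursion in that definition mirrors the fact that a move by either player inside $P \plusc Q$ is a move inside $P$ or inside $Q$. So I expect the main obstacle to be purely expository: stating cleanly that concatenation of 3-terminal Hex positions is well-defined at the board level (the two regions can always be embedded side by side on a large enough board with the prescribed boundary connections, since terminals $1$ of both pieces can be routed to the same black edge and terminals $2$ of $P$, $3$ of $Q$ can be joined internally) and that it implements $\plusf$ on values — after which the corollary is a one-line induction. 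If one prefers to avoid even this, one can alternatively give explicit Hex positions for each $G_n$ by iterating the construction of Lemma~\ref{lem:g1-realizable} $n$ times, but the inductive argument via $\plusc$ is cleaner.
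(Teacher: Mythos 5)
Your proposal is correct and follows essentially the same route as the paper: realize $G_1$ via Lemma~\ref{lem:g1-realizable}, observe that concatenation of Hex-realizable positions is again Hex-realizable (being a literal board-gluing as in \eqref{eqn:plusc}), and apply Lemma~\ref{lem:G-n+1} repeatedly, which the paper phrases as $G_n \eq G_1\plusc\cdots\plusc G_1$ rather than as an explicit induction. (One small slip in your base-case discussion: outcome $a$ means \emph{Black} connects terminals $2$ and $3$, not White, but the concrete filled position from Figure~\ref{fig:3-terminal-outcomes}(a) that you cite is the right witness, so the argument stands.)
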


\begin{proof}
  $G_0=a$ is realizable since it is atomic, and $G_1$ is realizable by
  Lemma~\ref{lem:g1-realizable}. It is also clear that if $G$ and $H$ are
  Hex realizable, then so is $G\plusc H$, because all we have to do is
  combine $G$ and $H$ in the way shown in diagram {\eqref{eqn:plusc}}.
  By repeated applications of Lemma~\ref{lem:G-n+1}, we have $G_n \eq
  G_1\plusc G_1\plusc\ldots\plusc G_1$ when $n\geq 1$. Therefore $G_n$
  is Hex realizable for all $n$. For example, the following is a Hex
  realization of $G_3\eq G_1\plusc G_1\plusc G_1$. Notice that it is
  just 3 copies of $G_1$ concatenated together.
  \[
  \begin{hexboard}[scale=0.65]
    \rotation{-30}
    \hex(0,2)\white(0,2)
    \hex(0,3)\white(0,3)
    \hex(0,4)\black(0,4)\sflabel{3}
    \hex(0,5)\white(0,5)
    \hex(1,1)\white(1,1)
    \hex(1,2)\black(1,2)
    \hex(1,3)
    \hex(1,4)\black(1,4)
    \hex(1,5)\white(1,5)
    \hex(2,0)\white(2,0)
    \hex(2,1)\black(2,1)
    \hex(2,2)
    \hex(2,3)
    \hex(2,4)\white(2,4)
    \hex(2,5)\white(2,5)
    \hex(3,0)\white(3,0)
    \hex(3,1)\black(3,1)
    \hex(3,2)\white(3,2)
    \hex(3,3)
    \hex(3,4)
    \hex(3,5)\white(3,5)
    \hex(4,0)\white(4,0)
    \hex(4,1)\black(4,1)
    \hex(4,2)
    \hex(4,3)
    \hex(4,4)\black(4,4)
    \hex(4,5)\white(4,5)
    \hex(5,0)\white(5,0)
    \hex(5,1)\black(5,1)
    \hex(5,2)\black(5,2)
    \hex(5,3)\white(5,3)
    \hex(5,4)\black(5,4)
    \hex(5,5)\white(5,5)
    \hex(6,0)\white(6,0)
    \hex(6,1)
    \hex(6,2)
    \hex(6,3)\black(6,3)
    \hex(6,4)\black(6,4)
    \hex(6,5)\white(6,5)
    \hex(7,0)\white(7,0)
    \hex(7,1)\black(7,1)
    \hex(7,2)
    \hex(7,3)\white(7,3)
    \hex(7,4)\black(7,4)
    \hex(7,5)\white(7,5)
    \hex(8,0)\white(8,0)
    \hex(8,1)
    \hex(8,2)
    \hex(8,3)\white(8,3)
    \hex(8,4)\black(8,4)
    \hex(8,5)\white(8,5)
    \hex(9,0)\white(9,0)
    \hex(9,1)\white(9,1)
    \hex(9,2)\black(9,2)
    \hex(9,3)\white(9,3)
    \hex(9,4)\black(9,4)
    \hex(9,5)\white(9,5)
    \hex(10,0)\white(10,0)
    \hex(10,1)\black(10,1)
    \hex(11,0)\black(11,0)

    \hex(10,2)\white(10,2)
    \hex(10,3)\white(10,3)
    \hex(10,4)\black(10,4)
    \hex(10,5)\white(10,5)
    \hex(11,1)\white(11,1)
    \hex(11,2)\black(11,2)
    \hex(11,3)
    \hex(11,4)\black(11,4)
    \hex(11,5)\white(11,5)
    \hex(12,0)\white(12,0)
    \hex(12,1)\black(12,1)
    \hex(12,2)
    \hex(12,3)
    \hex(12,4)\white(12,4)
    \hex(12,5)\white(12,5)
    \hex(13,0)\white(13,0)
    \hex(13,1)\black(13,1)
    \hex(13,2)\white(13,2)
    \hex(13,3)
    \hex(13,4)
    \hex(13,5)\white(13,5)
    \hex(14,0)\white(14,0)
    \hex(14,1)\black(14,1)
    \hex(14,2)
    \hex(14,3)
    \hex(14,4)\black(14,4)
    \hex(14,5)\white(14,5)
    \hex(15,0)\white(15,0)
    \hex(15,1)\black(15,1)
    \hex(15,2)\black(15,2)
    \hex(15,3)\white(15,3)
    \hex(15,4)\black(15,4)
    \hex(15,5)\white(15,5)
    \hex(16,0)\white(16,0)
    \hex(16,1)
    \hex(16,2)
    \hex(16,3)\black(16,3)
    \hex(16,4)\black(16,4)
    \hex(16,5)\white(16,5)
    \hex(17,0)\white(17,0)
    \hex(17,1)\black(17,1)
    \hex(17,2)
    \hex(17,3)\white(17,3)
    \hex(17,4)\black(17,4)
    \hex(17,5)\white(17,5)
    \hex(18,0)\white(18,0)
    \hex(18,1)
    \hex(18,2)
    \hex(18,3)\white(18,3)
    \hex(18,4)\black(18,4)
    \hex(18,5)\white(18,5)
    \hex(19,0)\white(19,0)
    \hex(19,1)\white(19,1)
    \hex(19,2)\black(19,2)
    \hex(19,3)\white(19,3)
    \hex(19,4)\black(19,4)
    \hex(19,5)\white(19,5)
    \hex(20,0)\white(20,0)
    \hex(20,1)\black(20,1)
    \hex(21,0)\black(21,0)

    \hex(20,2)\white(20,2)
    \hex(20,3)\white(20,3)
    \hex(20,4)\black(20,4)
    \hex(20,5)\white(20,5)
    \hex(21,1)\white(21,1)
    \hex(21,2)\black(21,2)
    \hex(21,3)
    \hex(21,4)\black(21,4)
    \hex(21,5)\white(21,5)
    \hex(22,0)\white(22,0)
    \hex(22,1)\black(22,1)
    \hex(22,2)
    \hex(22,3)
    \hex(22,4)\white(22,4)
    \hex(22,5)\white(22,5)
    \hex(23,0)\white(23,0)
    \hex(23,1)\black(23,1)
    \hex(23,2)\white(23,2)
    \hex(23,3)
    \hex(23,4)
    \hex(23,5)\white(23,5)
    \hex(24,0)\white(24,0)
    \hex(24,1)\black(24,1)
    \hex(24,2)
    \hex(24,3)
    \hex(24,4)\black(24,4)
    \hex(24,5)\white(24,5)
    \hex(25,0)\white(25,0)
    \hex(25,1)\black(25,1)
    \hex(25,2)\black(25,2)
    \hex(25,3)\white(25,3)
    \hex(25,4)\black(25,4)
    \hex(25,5)\white(25,5)
    \hex(26,0)\white(26,0)
    \hex(26,1)
    \hex(26,2)
    \hex(26,3)\black(26,3)
    \hex(26,4)\black(26,4)
    \hex(26,5)\white(26,5)
    \hex(27,0)\white(27,0)
    \hex(27,1)\black(27,1)
    \hex(27,2)
    \hex(27,3)\white(27,3)
    \hex(27,4)\black(27,4)
    \hex(27,5)\white(27,5)
    \hex(28,0)\white(28,0)
    \hex(28,1)
    \hex(28,2)
    \hex(28,3)\white(28,3)
    \hex(28,4)\black(28,4)
    \hex(28,5)\white(28,5)
    \hex(29,0)\white(29,0)
    \hex(29,1)\white(29,1)
    \hex(29,2)\black(29,2)
    \hex(29,3)\white(29,3)
    \hex(29,4)\black(29,4)\sflabel{2}
    \hex(29,5)\white(29,5)
    \hex(30,0)\white(30,0)
    \hex(30,1)\black(30,1)
    \hex(31,0)\black(31,0)

    \foreach\i in {3,...,31} {\hex(\i,-1)\black(\i,-1)}
    \hex(32,-1)\black(32,-1)\sflabel{1}

    \draw[thick,decoration={brace,mirror},decorate] \coord(-0.5,6) -- node[below=1ex]{$G_1$} \coord(8.5,6);
    \draw[thick,decoration={brace,mirror},decorate] \coord(9.5,6) -- node[below=1ex]{$G_1$} \coord(18.5,6);
    \draw[thick,decoration={brace,mirror},decorate] \coord(19.5,6) -- node[below=1ex]{$G_1$} \coord(28.5,6);
  \end{hexboard}
  \vspace{-5ex}
  \]
\end{proof}

\subsection{Superswitches are Hex distinguishable}
\label{ssec:distinguish}

Now that we have found Hex realizations of superswitches as 3-terminal
positions, the question remains whether they are distinct as Hex
positions. Although it was shown in {\cite[Prop.~10.2]{S2022-hex-cgt}} that
the superswitches $G_0,G_1,\ldots$ are distinct \emph{as abstract game
values}, it does not a priori follow that they are distinct in Hex,
because to establish the latter, we need to find a
\emph{Hex realizable} context that distinguishes any pair of them.
Fortunately, the superswitches themselves can be repurposed to provide
such contexts.

First, let us consider the \emph{dual superswitches}, defined by
$G\opp_0=a$ and $G\opp_{n+1}=\g{G\opp_n|a,b}$.  They work exactly like
the superswitches, except that the roles of left and right are
reversed. In other words, it is now the right player who gets $n$
chances to switch the outcome from $a$ to $b$. We can obtain Hex
realizations of the dual superswitches by taking the Hex realizations
of the corresponding superswitches, exchanging the roles of Black and
White, and renumbering the terminals. For good measure, we also flip
the position upside down. For example, the following is a realization
of the dual superswitch $G\opp_1$.
\[
\begin{hexboard}[scale=0.65,yscale=-1]
  \rotation{-30}
  \hex(0,2)\black(0,2)
  \hex(0,3)\black(0,3)
  \hex(0,4)\white(0,4)
  \hex(0,5)\black(0,5)\sflabel{1}
  \hex(1,1)\black(1,1)
  \hex(1,2)\white(1,2)
  \hex(1,3)
  \hex(1,4)\white(1,4)
  \hex(1,5)\black(1,5)
  \hex(2,0)\black(2,0)\sflabel{2}
  \hex(2,1)\white(2,1)
  \hex(2,2)
  \hex(2,3)
  \hex(2,4)\black(2,4)
  \hex(2,5)\black(2,5)
  \hex(3,0)\black(3,0)
  \hex(3,1)\white(3,1)
  \hex(3,2)\black(3,2)
  \hex(3,3)
  \hex(3,4)
  \hex(3,5)\black(3,5)
  \hex(4,0)\black(4,0)
  \hex(4,1)\white(4,1)
  \hex(4,2)
  \hex(4,3)
  \hex(4,4)\white(4,4)
  \hex(4,5)\black(4,5)
  \hex(5,0)\black(5,0)
  \hex(5,1)\white(5,1)
  \hex(5,2)\white(5,2)
  \hex(5,3)\black(5,3)
  \hex(5,4)\white(5,4)
  \hex(5,5)\black(5,5)
  \hex(6,0)\black(6,0)
  \hex(6,1)
  \hex(6,2)
  \hex(6,3)\white(6,3)
  \hex(6,4)\white(6,4)
  \hex(6,5)\black(6,5)
  \hex(7,0)\black(7,0)
  \hex(7,1)\white(7,1)
  \hex(7,2)
  \hex(7,3)\black(7,3)
  \hex(7,4)\white(7,4)
  \hex(7,5)\black(7,5)
  \hex(8,0)\black(8,0)
  \hex(8,1)
  \hex(8,2)
  \hex(8,3)\black(8,3)
  \hex(8,4)\white(8,4)
  \hex(8,5)\black(8,5)
  \hex(9,0)\black(9,0)
  \hex(9,1)\black(9,1)
  \hex(9,2)\white(9,2)
  \hex(9,3)\black(9,3)\sflabel{3}
  \hex(9,4)\white(9,4)
  \hex(9,5)\black(9,5)
  \hex(10,0)\black(10,0)
  \hex(10,1)\white(10,1)
  \hex(11,0)\white(11,0)
  \foreach\i in {3,...,11} {\hex(\i,-1)\white(\i,-1)}
  \hex(12,-1)\white(12,-1)
\end{hexboard}
\]
As usual, the other dual superswitches $G\opp_n$ are obtained by
concatenation.

Next, we consider the following operation: If $G,H$ are 3-terminal
positions, their \emph{juxtaposition}, written $G\plusj H$, is the
2-terminal position shown schematically in the following diagram:
\begin{equation}\label{eqn:plusj}
  G\plusj H\quad=\quad
  \begin{tikzpicture}[scale=0.75,baseline={(a.south)}]
    \draw[fill=black!10] (0,0) rectangle node{$G$} (3,2);
    \draw[fill=black!10] (4,0) rectangle node{$H$} (7,2);
    \draw[terminal] (3,0.5) node[left]{$2$} -- (4,0.5) node[right]{$2$};
    \draw[terminal] (3,1.5) node[left]{$1$} -- (4,1.5) node[right]{$1$};
    \draw[terminal] (-1,1) -- (0,1) node[right]{$3$};
    \draw[terminal] (7,1) node[left]{$3$} -- (8,1);
    \node (a) at (0,1) {};
  \end{tikzpicture}
\end{equation}
Since terminal 1 of $G$ is connected to terminal 1 of $H$ and terminal
2 of $G$ is connected to terminal 2 of $H$, it is immediately obvious
that the atoms $a$ and $b$ satisfy the following identities:
\[
a\plusj a = \top, \quad
a\plusj b = \bot, \quad
b\plusj a = \bot, \quad
b\plusj b = \top.
\]
The following proposition shows what happens when we juxtapose a
superswitch with a dual superswitch. Recall that $\Star=\g{\top|\bot}$
is the value of a game that is a first-player win.
\begin{proposition}\label{prop:juxta}
  We have
  \[
  G_n \plusj G\opp_m ~\eq\,
  \begin{mychoices}
    \top & \mbox{if $n>m-1$}, \\
    \Star & \mbox{if $n=m-1$}, \\
    \bot & \mbox{if $n<m-1$}. \\
  \end{mychoices}
  \]
\end{proposition}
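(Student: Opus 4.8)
The plan is a two-stage induction resting entirely on Definition~\ref{def:sum}, the atomic identities $a\plusj a=b\plusj b=\top$ and $a\plusj b=b\plusj a=\bot$ recorded above, and the standard facts (see \cite{S2022-hex-cgt}) that equivalent game forms may be substituted for one another, whether as options or as summands, and that dominated options may be deleted from a game form. Since $n$ and $m$ are integers, the claimed trichotomy is simply ``$n\geq m$'' / ``$n=m-1$'' / ``$n\leq m-2$''.

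First I would compute the four mixed sums $a\plusj G\opp_m$, $b\plusj G\opp_m$, $G_n\plusj a$, and $G_n\plusj b$, each by a straightforward induction (on $m$, resp.\ $n$). For example, since $G\opp_m=\g{G\opp_{m-1}|a,b}$ is composite and $a$ is atomic, Definition~\ref{def:sum} gives $a\plusj G\opp_m=\g{a\plusj G\opp_{m-1}|a\plusj a,a\plusj b}=\g{a\plusj G\opp_{m-1}|\top,\bot}\eq\g{a\plusj G\opp_{m-1}|\bot}$ for $m\geq1$; combined with $a\plusj G\opp_0=\top$ this yields $a\plusj G\opp_0=\top$, $a\plusj G\opp_1\eq\Star$, and $a\plusj G\opp_m\eq\bot$ for all $m\geq2$. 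The analogous computations give $b\plusj G\opp_m\eq\bot$ for all $m$ and, dually, $G_n\plusj a\eq\top$ for all $n$, while $G_n\plusj b$ equals $\bot$, $\Star$, $\top$ for $n=0$, $n=1$, $n\geq2$ respectively. Each of these reductions uses only the elementary equivalences $\g{\top|\top}\eq\top$, $\g{\bot|\bot}\eq\bot$, $\g{\top|\Star}\eq\top$, and $\g{\Star|\bot}\eq\bot$, all of which are immediate from the definitions of $\leq$ and $\tri$.

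Second, write $S_{n,m}$ for the value of $G_n\plusj G\opp_m$ and induct on $n+m$. The cases $n=0$ and $m=0$ are exactly the mixed sums above: $S_{0,m}$ is $\top,\Star,\bot,\bot,\dots$ for $m=0,1,2,3,\dots$ and $S_{n,0}=\top$ for all $n$, both of which match the claimed formula. For $n,m\geq1$, expanding both factors with Definition~\ref{def:sum} and substituting the known values of the options gives
\[
S_{n,m}\;\eq\;\g{a\plusj G\opp_m,\ b\plusj G\opp_m,\ S_{n,m-1}\ |\ S_{n-1,m},\ G_n\plusj a,\ G_n\plusj b}.
\]
The left option $b\plusj G\opp_m\eq\bot$ and the right option $G_n\plusj a\eq\top$ are dominated and drop out; by the first step and the induction hypothesis (applied to $(n,m-1)$ and $(n-1,m)$), every remaining option lies in the three-element chain $\bot<\Star<\top$. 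A case analysis according to whether $m=1$ or $m\geq2$, whether $n=1$ or $n\geq2$, and which branch of the trichotomy $(n,m)$ falls into identifies each surviving option; further dominations then collapse the left and right option sets to singletons, so that $S_{n,m}$ simplifies to one of $\g{\top|\top}$, $\g{\top|\Star}$, $\g{\top|\bot}$, $\g{\Star|\bot}$, $\g{\bot|\bot}$, that is, to $\top$, $\top$, $\Star$, $\bot$, $\bot$ — in exact agreement with the statement.

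The only genuine work is the bookkeeping in this last step: checking that in each of the handful of cases the dominations within $\{\bot,\Star,\top\}$ are valid (they are, since $\leq$ restricts to the chain $\bot<\Star<\top$ there), and that the induction hypothesis is invoked only on pairs of strictly smaller sum. I would lay this step out as a small table indexed by ($n$ vs.\ $1$, $m$ vs.\ $1$, $n$ vs.\ $m$); there is nothing conceptually subtle beyond it.
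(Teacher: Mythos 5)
Your proposal is correct and follows essentially the same route as the paper's proof: compute the mixed sums $a\plusj G\opp_m$, $b\plusj G\opp_m$, $G_n\plusj a$, $G_n\plusj b$, then induct on $n+m$, expand $G_n\plusj G\opp_m$ via Definition~\ref{def:sum}, and delete dominated options. The only cosmetic differences are that the paper obtains $G_n\plusj a\eq\top$ from monotonicity ($G_0\leq G_n$) rather than a separate induction, and packages your final case table uniformly via the ``sign games'' $S_k$ and the identity $S_k\eq\g{S_{k+1}|S_{k-1}}$, so the surviving options are always $S_{n-m+2}$ and $S_{n-m}$.
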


Before we prove the theorem, we give an intuitive explanation: the
left player owns the left switch and the right player owns the right
switch. Neither player wants to commit to a setting for their switch,
or else the other player will set their own switch accordingly and win
the game. Therefore, Left plays in the right switch as long as
possible and Right plays in the left switch as long as
possible. Whoever first finishes this ``race to the bottom'' wins,
except that the left player has a small advantage since the
``default'' setting for both switches is $a$, which gives a left
player win.

\begin{proof}
  To ease the notation in this proof, we define the \emph{sign games}
  by
  \[
  S_n =
  \begin{mychoices}
    \top & \mbox{if $n>0$,} \\
    \Star & \mbox{if $n=0$,} \\
    \bot & \mbox{if $n<0$.}
  \end{mychoices}
  \]
  The claim of the proposition can then be stated as $G_n\plusj
  G\opp_m\eq S_{n-m+1}$. We note that for all integers $n$, we have
  $S_n \eq \g{S_{n+1}|S_{n-1}}$.
  
  We start by computing $G_n\plusj a$ and $G_n\plusj b$. Recall that
  the sequence $G_0,G_1,\ldots$ is increasing. We have $G_0\plusj a =
  a\plusj a = \top$, and since $G_0\leq G_n$ for all $n$, it follows
  that $G_n\plusj a \eq \top$ as well. We have $G_0\plusj b = a\plusj
  b = \bot$, $G_1\plusj b = \g{a,b|a}\plusj b = \g{\bot,\top|\bot} \eq
  \Star$, and $G_2\plusj b = \g{a,b|G_1}\plusj b\eq\g{\bot,\top|\Star}
  \eq \top$. Therefore $G_n\plusj b \eq \top$ holds for all $n\geq 2$.
  In other words, $G_n\plusj b\eq S_{n-1}$.  By a dual computation, we
  find that $a\plusj G\opp_m\eq S_{1-m}$ and $b\plusj G\opp_m \eq
  \bot$ for all $m\geq 0$.

  We prove the proposition by induction on $n+m$. The base case for
  $m=0$ holds because we just calculated that $G_n\plusj G\opp_0 =
  G_n\plusj a \eq \top = S_{n+1}$ for all $n\geq 0$; similarly, the
  base case for $n=0$ holds because $G_0\plusj G\opp_m = a\plusj
  G\opp_m\eq S_{1-m}$ for all $m\geq 0$. Now consider the case
  $n,m>0$. We have:
  \[
  \begin{array}{rcl}
    G_n \plusj G\opp_m
    &=& \g{G_n^L\plusj G\opp_m, G_n\plusj {G\opp_m}^L
      |    G_n^R\plusj G\opp_m, G_n\plusj {G\opp_m}^R}\\
    &=& \g{a\plusj G\opp_m, b\plusj G\opp_m, G_n\plusj G\opp_{m-1}
      |    G_{n-1}\plusj G\opp_m, G_n\plusj a, G_n\plusj b}\\
    &\eq& \g{S_{1-m}, \bot, S_{n-m+2}
      |    S_{n-m}, \top, S_{n-1}}.
  \end{array}
  \]
  In the last step, we have used the above calculations as well as the
  induction hypothesis. Since $1-m<n-m+2$, we have $S_{1-m}\leq
  S_{n-m+2}$, and therefore $S_{n-m+2}$ dominates the other left
  options; similarly, $S_{n-m}$ dominates all other right
  options. Therefore $G_n \eq \g{S_{n-m+2}|S_{n-m}} \eq S_{n-m+1}$ as
  claimed.
\end{proof}

\begin{corollary}
  There are infinitely many Hex-distinguishable 3-terminal positions.
\end{corollary}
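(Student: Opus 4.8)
The plan is to read the corollary off Proposition~\ref{prop:juxta}, using the Hex realizations of the superswitches and dual superswitches constructed above as the distinguishing contexts. First I would record two bookkeeping facts about juxtaposition. If $G$ and $H$ are Hex-realizable $3$-terminal positions, then $G\plusj H$ is a Hex-realizable $2$-terminal position: one simply glues the two boards along terminals $1$ and $2$ as in diagram~\eqref{eqn:plusj}, exactly as we glued along $\plusc$ in the realization of $G_n$. And a $2$-terminal region becomes a full Hex board once its two terminals are extended to the two black board edges; under this completion, the value of the region (a game over the two-element poset $\s{\bot,\top}$, where $\top$ means ``Black's edges are connected'') is precisely the value of the resulting whole-board game. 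Hence, for any fixed $m$, the operation ``juxtapose with the chosen Hex realization of $G\opp_m$, then extend to the black board edges'' is a genuine Hex context with a $3$-terminal hole.

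Next I would fix distinct indices $n<n'$ and feed $G_n$ and $G_{n'}$ into the context built from $G\opp_{n+1}$. By Proposition~\ref{prop:juxta}, filling the hole with $G_n$ gives the whole-board value $G_n\plusj G\opp_{n+1}\eq\Star$ (this is the case $n=(n+1)-1$), a first-player win, while filling it with $G_{n'}$ gives $G_{n'}\plusj G\opp_{n+1}\eq\top$ (the case $n'>(n+1)-1$), an outright Black win. Since $\Star\neqq\top$, these two whole-board positions are inequivalent --- concretely, with White to move the first is a White win but the second remains a Black win --- so this single Hex context distinguishes $G_n$ from $G_{n'}$. As $n$ and $n'$ ranged over arbitrary distinct natural numbers and each $G_n$ is Hex realizable, the positions $G_0,G_1,G_2,\ldots$ form an infinite family of pairwise Hex-distinguishable $3$-terminal positions, which is the claim.

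Nearly all of the work is already done in Proposition~\ref{prop:juxta}; the only point that needs care is the plumbing of the first paragraph, so that what is established is distinguishability \emph{as Hex positions} rather than merely as abstract game values. Equivalently, one can argue by contraposition: were $G_n$ and $G_{n'}$ to agree in every Hex context, they would in particular agree after juxtaposition with $G\opp_{n+1}$ and completion to a full board, forcing $\Star\eq\top$, which is absurd.
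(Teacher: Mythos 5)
Your proposal is correct and follows essentially the paper's own route: realize the superswitches and dual superswitches in Hex, juxtapose, and invoke Proposition~\ref{prop:juxta} to get distinct whole-board values, so a single context of the form $(-)\plusj G\opp_k$ separates $G_n$ from $G_{n'}$. The only difference is your choice of index ($G\opp_{n+1}$, giving $\Star$ versus $\top$ with White to move), which is a valid, and if anything cleaner, instantiation of the same argument.
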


\begin{proof}
  Consider Hex realizations of the superswitches $G_n$ and $G_m$,
  where $n<m$. To show that they are Hex-distinct, it suffices to find
  a Hex context in which $G_n$ is a first player win for Black and
  $G_m$ is a first player loss for Black. By
  Proposition~\ref{prop:juxta}, $(-)\plusj G\opp_{n-1}$ is such a
  context.
\end{proof}

For example, the following shows the 2-terminal position $G_2\plusj
G\opp_1$, which has value $\Star$ by Proposition~\ref{prop:juxta}.

\[
\begin{hexboard}[scale=0.65]
  \rotation{-30}
  \hex(0,2)\white(0,2)
  \hex(0,3)\white(0,3)
  \hex(0,4)\black(0,4)
  \hex(0,5)\white(0,5)
  \hex(1,1)\white(1,1)
  \hex(1,2)\black(1,2)
  \hex(1,3)
  \hex(1,4)\black(1,4)
  \hex(1,5)\white(1,5)
  \hex(2,0)\white(2,0)
  \hex(2,1)\black(2,1)
  \hex(2,2)
  \hex(2,3)
  \hex(2,4)\white(2,4)
  \hex(2,5)\white(2,5)
  \hex(3,0)\white(3,0)
  \hex(3,1)\black(3,1)
  \hex(3,2)\white(3,2)
  \hex(3,3)
  \hex(3,4)
  \hex(3,5)\white(3,5)
  \hex(4,0)\white(4,0)
  \hex(4,1)\black(4,1)
  \hex(4,2)
  \hex(4,3)
  \hex(4,4)\black(4,4)
  \hex(4,5)\white(4,5)
  \hex(5,0)\white(5,0)
  \hex(5,1)\black(5,1)
  \hex(5,2)\black(5,2)
  \hex(5,3)\white(5,3)
  \hex(5,4)\black(5,4)
  \hex(5,5)\white(5,5)
  \hex(6,0)\white(6,0)
  \hex(6,1)
  \hex(6,2)
  \hex(6,3)\black(6,3)
  \hex(6,4)\black(6,4)
  \hex(6,5)\white(6,5)
  \hex(7,0)\white(7,0)
  \hex(7,1)\black(7,1)
  \hex(7,2)
  \hex(7,3)\white(7,3)
  \hex(7,4)\black(7,4)
  \hex(7,5)\white(7,5)
  \hex(8,0)\white(8,0)
  \hex(8,1)
  \hex(8,2)
  \hex(8,3)\white(8,3)
  \hex(8,4)\black(8,4)
  \hex(8,5)\white(8,5)
  \hex(9,0)\white(9,0)
  \hex(9,1)\white(9,1)
  \hex(9,2)\black(9,2)
  \hex(9,3)\white(9,3)
  \hex(9,4)\black(9,4)
  \hex(9,5)\white(9,5)
  \hex(10,0)\white(10,0)
  \hex(10,1)\black(10,1)
  \hex(11,0)\black(11,0)
  \hex(10,2)\white(10,2)
  \hex(10,3)\white(10,3)
  \hex(10,4)\black(10,4)
  \hex(10,5)\white(10,5)
  \hex(11,1)\white(11,1)
  \hex(11,2)\black(11,2)
  \hex(11,3)
  \hex(11,4)\black(11,4)
  \hex(11,5)\white(11,5)
  \hex(12,0)\white(12,0)
  \hex(12,1)\black(12,1)
  \hex(12,2)
  \hex(12,3)
  \hex(12,4)\white(12,4)
  \hex(12,5)\white(12,5)
  \hex(13,0)\white(13,0)
  \hex(13,1)\black(13,1)
  \hex(13,2)\white(13,2)
  \hex(13,3)
  \hex(13,4)
  \hex(13,5)\white(13,5)
  \hex(14,0)\white(14,0)
  \hex(14,1)\black(14,1)
  \hex(14,2)
  \hex(14,3)
  \hex(14,4)\black(14,4)
  \hex(14,5)\white(14,5)
  \hex(15,0)\white(15,0)
  \hex(15,1)\black(15,1)
  \hex(15,2)\black(15,2)
  \hex(15,3)\white(15,3)
  \hex(15,4)\black(15,4)
  \hex(15,5)\white(15,5)
  \hex(16,0)\white(16,0)
  \hex(16,1)
  \hex(16,2)
  \hex(16,3)\black(16,3)
  \hex(16,4)\black(16,4)
  \hex(16,5)\white(16,5)
  \hex(17,0)\white(17,0)
  \hex(17,1)\black(17,1)
  \hex(17,2)
  \hex(17,3)\white(17,3)
  \hex(17,4)\black(17,4)
  \hex(17,5)\white(17,5)
  \hex(18,0)\white(18,0)
  \hex(18,1)
  \hex(18,2)
  \hex(18,3)\white(18,3)
  \hex(18,4)\black(18,4)
  \hex(18,5)\white(18,5)
  \hex(19,0)\white(19,0)
  \hex(19,1)\white(19,1)
  \hex(19,2)\black(19,2)
  \hex(19,3)\white(19,3)
  \hex(19,4)\black(19,4)
  \hex(19,5)\white(19,5)
  \hex(20,0)\white(20,0)
  \hex(20,1)\black(20,1)
  \hex(21,0)\black(21,0)
  \hex(22,0)\white(22,0)
  \hex(21,1)\white(21,1)
  \hex(20,2)\white(20,2)
  \hex(20,3)\white(20,3)
  \hex(20,4)\black(20,4)
  \hex(21,2)\black(21,2)
  \hex(21,3)\black(21,3)
  \hex(21,4)\black(21,4)
  \hex(22,1)\black(22,1)
  \hex(22,2)\white(22,2)
  \hex(22,3)\white(22,3)
  \hex(22,4)\black(22,4)
  \hex(23,0)\white(23,0)
  \hex(23,1)
  \hex(23,2)
  \hex(23,3)\white(23,3)
  \hex(23,4)\black(23,4)
  \hex(24,-1)\black(24,-1)
  \hex(24,0)\white(24,0)
  \hex(24,1)
  \hex(24,2)\black(24,2)
  \hex(24,3)\white(24,3)
  \hex(24,4)\black(24,4)
  \hex(25,-1)\black(25,-1)
  \hex(25,0)\black(25,0)
  \hex(25,1)
  \hex(25,2)
  \hex(25,3)\white(25,3)
  \hex(25,4)\black(25,4)
  \hex(26,-1)\black(26,-1)
  \hex(26,0)
  \hex(26,1)
  \hex(26,2)\white(26,2)
  \hex(26,3)
  \hex(26,4)\black(26,4)
  \hex(27,-1)\black(27,-1)
  \hex(27,0)\white(27,0)
  \hex(27,1)\black(27,1)
  \hex(27,2)
  \hex(27,3)\white(27,3)
  \hex(27,4)\black(27,4)
  \hex(28,-1)\black(28,-1)
  \hex(28,0)\white(28,0)
  \hex(28,1)\white(28,1)
  \hex(28,2)
  \hex(28,3)
  \hex(28,4)\black(28,4)
  \hex(29,-1)\black(29,-1)
  \hex(29,0)\white(29,0)
  \hex(29,1)\black(29,1)
  \hex(29,2)
  \hex(29,3)\black(29,3)
  \hex(29,4)\black(29,4)
  \hex(30,-1)\black(30,-1)
  \hex(30,0)\white(30,0)
  \hex(30,1)\black(30,1)
  \hex(30,2)\white(30,2)
  \hex(30,3)\white(30,3)
  \hex(30,4)\white(30,4)
  \hex(31,-1)\black(31,-1)
  \hex(31,0)\white(31,0)
  \hex(31,1)\black(31,1)
  \hex(32,-1)\black(32,-1)
  \hex(32,0)\white(32,0)
  %
  \foreach\i in {4,...,34} {\hex(\i,-2)\white(\i,-2)}
  \foreach\i in {4,...,23} {\hex(\i,-1)\black(\i,-1)}
  \foreach\i in {20,...,30} {\hex(\i,5)\white(\i,5)}
  \hex(3,-1)\white(3,-1)
  \hex(33,-1)\white(33,-1)
  \draw[thick,decoration={brace,mirror},decorate] \coord(-0.5,6) -- node[below=1ex]{$G_1$} \coord(8.5,6);
  \draw[thick,decoration={brace,mirror},decorate] \coord(9.5,6) -- node[below=1ex]{$G_1$} \coord(18.5,6);
  \draw[thick,decoration={brace,mirror},decorate] \coord(20.5,6) -- node[below=1ex]{$G\opp_1$} \coord(29.5,6);
\end{hexboard}
\]

\subsection{Cofinality}

The increasing sequence of superswitches $G_0,G_1,\ldots$ has another
useful property: it is cofinal among the set of all finite passable
games $H$ over $\s{\bot,a,b,\top}$ in which Left cannot achieve the
outcome $\top$.

\begin{proposition}\label{prop-superswitch-cofinal}
  Let $H$ be a finite passable game over $\s{\bot,a,b,\top}$. If
  $\top\ntri H$, then there exists some $n$ such that $H\leq G_n$.
  If $\top\nleq H$, then there exists some $n$ such that $H\tri G_n$.
\end{proposition}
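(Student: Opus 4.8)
The plan is to prove both statements simultaneously by induction on the depth of $H$ (valid since $H$ is short, so it has finite depth and each option set is finite). The two statements are entangled and must be proved together: verifying $H\leq G_n$ forces one to verify $H\tri G_{n-1}$, and verifying $H\tri G_n$ proceeds by producing a right option $H^R$ with $H^R\leq G_m$ for a suitable $m$. Before starting, I would record one preliminary fact that uses passability: if $H$ is passable and $\top\ntri H$, then $\top\nleq H$. Indeed, if $\top\leq H$, then, since $H\tri H$ by passability, the transitivity rule ``$G\leq H$ and $H\tri K$ imply $G\tri K$'' gives $\top\tri H$, a contradiction. I would also unfold the definitions for composite $H$: $\top\leq H$ means $\top\tri H^R$ for all $H^R$, and $\top\tri H$ means $\top\leq H^L$ for some $H^L$. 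Thus for composite $H$, the hypothesis $\top\ntri H$ says exactly that $\top\nleq H^L$ for every left option, while $\top\nleq H$ says exactly that $\top\ntri H^R$ for some right option.

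For the base cases, recall $H$ is over $\s{\bot,a,b,\top}$, so an atomic $H$ is one of $[\bot],[a],[b],[\top]$, and each of the hypotheses $\top\ntri H$ and $\top\nleq H$ rules out $[\top]$. A direct check gives $[\bot]\leq G_0$, $[a]=G_0\leq G_2$, and $[b]\leq G_2$ (for the last: $[b]\tri G_1$ via the left option $b$ of $G_1$, and since $G_1$ is the unique right option of $G_2$ this yields $[b]\leq G_2$). Dually, $[\bot],[a],[b]$ each satisfy $(-)\tri G_1$ via an appropriate left option of $G_1$. So in the atomic case the first statement holds with $n=2$ and the second with $n=1$.

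For the inductive step with $H$ composite, I would first prove the $\tri$-statement: from $\top\nleq H$ choose a right option $H^R$ with $\top\ntri H^R$, apply the induction hypothesis (first statement) to get $H^R\leq G_n$ for some $n$, and conclude $H\tri G_n$ via the clause ``there exists $G^R$ with $G^R\leq H$''. Then I would prove the $\leq$-statement. The hypothesis $\top\ntri H$ gives $\top\nleq H^L$ for every left option $H^L$, so by the induction hypothesis (second statement, applied to the strictly shorter $H^L$) there is $n_L$ with $H^L\tri G_{n_L}$; and since $H$ is passable we have $\top\nleq H$, so by the $\tri$-statement just established there is $m$ with $H\tri G_m$. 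Now pick $n$ larger than $m$ and than every $n_L$ (possible since $H$ is short, so there are finitely many left options). Using that $G_0\leq G_1\leq\cdots$ is increasing together with the transitivity rule ``$G\tri H$ and $H\leq K$ imply $G\tri K$'', I get $H^L\tri G_{n_L}\leq G_n$ for every $H^L$, and $H\tri G_m\leq G_{n-1}$; since $G_{n-1}$ is the unique right option of $G_n$ (as $n\geq 1$), these are exactly the two conditions defining $H\leq G_n$.

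The main obstacle I anticipate is the asymmetry in how the hypotheses propagate: $\top\ntri H$ does \emph{not} pass down to $\top\ntri H^L$ on left options, only to the weaker $\top\nleq H^L$. This is precisely why the proposition must be stated and proved as a pair --- the $\tri$-version is the one that survives restriction to left options --- and it is also where passability is indispensable, since passability is exactly what lets one re-upgrade $\top\ntri H$ to $\top\nleq H$ and thereby re-enter the $\tri$-statement. Everything else is routine bookkeeping with the transitivity lemmas and the monotonicity of the superswitch sequence from \cite[Prop.~10.2]{S2022-hex-cgt}.
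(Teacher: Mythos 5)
Your induction has the same skeleton as the paper's proof, and most steps are fine, but there is one genuine gap, and it sits exactly where the paper invokes machinery you never use. Your claimed unfolding ``for composite $H$, $\top\nleq H$ says exactly that $\top\ntri H^R$ for some right option'' is not what the definition gives. By definition, $\top\leq H$ requires \emph{two} things: $\top\tri H^{(R)}$ for all right options \emph{and} $\top\tri H$ (this second conjunct comes from the clause ``for all $G^{(L)}$, $G^{(L)}\tri H$'' with $G=[\top]$ atomic, so that $G^{(L)}=[\top]$). Hence $\top\nleq H$ only tells you that $\top\ntri H$ \emph{or} some $H^R$ satisfies $\top\ntri H^R$; it does not hand you a right option to feed into the induction hypothesis. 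The two conditions really are independent for general games: $H=\g{\bot|\top}$ is composite, its unique right option satisfies $\top\tri\top$, yet $\top\nleq H$ because its only left option is $\bot$. (That game is not passable, so it does not contradict the proposition, but it shows the ``unfolding'' is false as a definitional matter and that extracting the bad right option genuinely needs the passability or monotonicity hypothesis.) Since your proof of the $\leq$-statement at a given depth relies on the $\tri$-statement at the same depth, this gap undermines both halves of your induction.

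The statement you need --- if $H$ is composite and $\top\nleq H$, then some $H^R$ satisfies $\top\ntri H^R$ --- is true for the games at hand, but it is exactly the contrapositive of Lemma~\ref{lem:topleqG}, and that lemma is proved using \emph{monotonicity}: pick any right option, use $H^R\leq H$ to upgrade $\top\tri H^R$ to $\top\tri H$. This is why the paper first invokes Theorem~\ref{thm:fundamental} to assume without loss of generality that $H$ is monotone (all hypotheses and conclusions are invariant under $\eq$ by the transitivity rules), and only then runs the induction. You work directly with passable games and never make this reduction; a passable-game analogue of Lemma~\ref{lem:topleqG} can be proved, but it needs a nontrivial descent argument using $H\tri H$ repeatedly, not a one-line unfolding. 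The cleanest repair is to prepend the paper's reduction to monotone $H$ and cite Lemma~\ref{lem:topleqG}; with that inserted, the rest of your argument (base cases, proving $\tri$ before $\leq$ at each depth, choosing $n$ large and combining monotonicity of the sequence $(G_n)$ with the transitivity rules) goes through and is essentially the paper's proof, with your ``large enough $n$'' bookkeeping in place of the explicit bound $d\geq\depth(H)+2$.
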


\begin{proof}
  By the fundamental theorem (Theorem~\ref{thm:fundamental}), we can
  assume without loss of generality that $H$ is monotone. We claim
  that the following hold for all monotone games:
  \begin{enumerate}\alphalabels
  \item If $\top\nleq H$ and $d\geq\depth(H)+2$, then $H\tri G_d$.
  \item If $\top\ntri H$ and $d\geq\depth(H)+2$, then $H\leq G_d$.
  \end{enumerate}
  We prove these claims by induction on $H$. If $H$ is atomic, then
  from either one of the assumptions $\top\nleq H$ or $\top\ntri H$,
  we know that $H\in\s{\bot,a,b}$. Then $H\leq G_2$ holds by immediate
  calculation, and therefore also $H\tri G_d$ and $H\leq G_d$ for any
  $d\geq 2$.  Now suppose that $H$ is composite and let
  $d\geq\depth(H)+2$. To prove (a), assume $\top\nleq H$. By the
  contrapositive of Lemma~\ref{lem:topleqG}, there is some $H^R$
  satisfying $\top\ntri H^R$. By the induction hypothesis (b),
  $H^R\leq G_d$, which implies $H\tri G_d$. To prove (b), assume
  $\top\ntri H$.  To prove $H\leq G_d$, first consider any left option
  $H^L$. We must show $H^L\tri G_d$. This follows from the induction
  hypothesis (a) because $\top\nleq H^L$. Next, consider the unique
  right option $G_{d-1}$ of $G_d$. We must show $H\tri G_{d-1}$. Let
  $H^R$ be any right option of $H$. Since $H^R$ is monotone, we have
  $H^R\leq H$; it follows that $\top\ntri H^R$. Also, $H^R$ has
  smaller depth than $H$, so by the induction hypothesis (b), we have
  $H^R\leq G_{d-1}$. This implies $H\tri G_{d-1}$ as claimed.
\end{proof}

\subsection{Simpleswitches}
\label{ssec:simpleswitch}

We will see an application of cofinality in
Section~\ref{ssec:connects-both}, but first we note that superswitches
are relatively inefficient: each additional level of the superswitch
(i.e., passing from $G_n$ to $G_{n+1}$) requires a Hex position with
12 additional empty cells. It turns out that there is a more efficient
cofinal sequence that requires only 6 empty cells per level. We call
these the \emph{simpleswitches}. They are defined by:
\[
\begin{array}{lll}
  H_0 &=& a, \\
  H_1 &=& \g{a,\g{\top|b}|a}, \\
  H_{n+1} &=& \g{a,b|H_n}\quad\mbox{for all $n\geq 1$.}
\end{array}
\]
Except for the variation in $H_1$, the simpleswitches are very similar
to the superswitches. It is easy to see that $G_n\leq H_n\leq G_{n+1}$
for all $n\geq 0$, and therefore the simpleswitches form a strictly
increasing sequence with the same cofinality as the superswitches.

By a proof very similar to that of Lemma~\ref{lem:G-n+1}, we find that
we have $H_n \plusc H_1 \eq H_{n+1}$ for all $n$. Also, we can verify
by direct evaluation that the following is a 3-terminal realization of
$H_1$:
\[
\begin{hexboard}[scale=0.65]
  \rotation{-30}
  \hex(0,4)\black(0,4)\sflabel{3}
  \hex(0,5)\black(0,5)
  \hex(0,6)\white(0,6)
  \hex(1,3)\white(1,3)
  \hex(1,4)
  \hex(1,5)\black(1,5)
  \hex(1,6)\white(1,6)
  \hex(2,2)\white(2,2)
  \hex(2,3)\black(2,3)
  \hex(2,4)
  \hex(2,5)
  \hex(2,6)\white(2,6)
  \hex(3,1)\white(3,1)
  \hex(3,2)
  \hex(3,3)
  \hex(3,4)\black(3,4)\sflabel{2}
  \hex(3,5)\black(3,5)
  \hex(3,6)\white(3,6)
  \hex(4,0)\black(4,0)
  \hex(4,1)\white(4,1)
  \hex(4,2)
  \hex(4,3)\white(4,3)
  \hex(5,0)\black(5,0)
  \hex(5,1)\black(5,1)
  \hex(5,2)\white(5,2)
  \hex(6,0)\black(6,0)
  \hex(6,1)\white(6,1)
  \hex(7,0)\black(7,0)\sflabel{1}
\end{hexboard}
\]
Thus, for example, we get the following realization for the
simpleswitch $H_5 = H_1\plusc H_1\plusc H_1\plusc H_1\plusc H_1$:
\[
\begin{hexboard}[scale=0.65]
  \rotation{-30}
  \hex(0,4)\black(0,4)\sflabel{3}
  \hex(0,5)\black(0,5)
  \hex(0,6)\white(0,6)
  \hex(1,3)\white(1,3)
  \hex(1,4)
  \hex(1,5)\black(1,5)
  \hex(1,6)\white(1,6)
  \hex(2,2)\white(2,2)
  \hex(2,3)\black(2,3)
  \hex(2,4)
  \hex(2,5)
  \hex(2,6)\white(2,6)
  \hex(3,1)\white(3,1)
  \hex(3,2)
  \hex(3,3)
  \hex(3,4)\black(3,4)
  \hex(3,5)\black(3,5)
  \hex(3,6)\white(3,6)
  \hex(4,0)\black(4,0)
  \hex(4,1)\white(4,1)
  \hex(4,2)
  \hex(4,3)\white(4,3)
  \hex(5,0)\black(5,0)
  \hex(5,1)\black(5,1)
  \hex(5,2)\white(5,2)
  \hex(6,0)\black(6,0)
  \hex(6,1)\white(6,1)
  \hex(7,0)\black(7,0)
  \hex(4,4)
  \hex(4,5)\black(4,5)
  \hex(4,6)\white(4,6)
  \hex(5,3)\black(5,3)
  \hex(5,4)
  \hex(5,5)
  \hex(5,6)\white(5,6)
  \hex(6,2)
  \hex(6,3)
  \hex(6,4)\black(6,4)
  \hex(6,5)\black(6,5)
  \hex(6,6)\white(6,6)
  \hex(7,1)\white(7,1)
  \hex(7,2)
  \hex(7,3)\white(7,3)
  \hex(8,0)\black(8,0)
  \hex(8,1)\black(8,1)
  \hex(8,2)\white(8,2)
  \hex(9,0)\black(9,0)
  \hex(9,1)\white(9,1)
  \hex(10,0)\black(10,0)
  \hex(7,4)
  \hex(7,5)\black(7,5)
  \hex(7,6)\white(7,6)
  \hex(8,3)\black(8,3)
  \hex(8,4)
  \hex(8,5)
  \hex(8,6)\white(8,6)
  \hex(9,2)
  \hex(9,3)
  \hex(9,4)\black(9,4)
  \hex(9,5)\black(9,5)
  \hex(9,6)\white(9,6)
  \hex(10,1)\white(10,1)
  \hex(10,2)
  \hex(10,3)\white(10,3)
  \hex(11,0)\black(11,0)
  \hex(11,1)\black(11,1)
  \hex(11,2)\white(11,2)
  \hex(12,0)\black(12,0)
  \hex(12,1)\white(12,1)
  \hex(13,0)\black(13,0)
  \hex(10,4)
  \hex(10,5)\black(10,5)
  \hex(10,6)\white(10,6)
  \hex(11,3)\black(11,3)
  \hex(11,4)
  \hex(11,5)
  \hex(11,6)\white(11,6)
  \hex(12,2)
  \hex(12,3)
  \hex(12,4)\black(12,4)
  \hex(12,5)\black(12,5)
  \hex(12,6)\white(12,6)
  \hex(13,1)\white(13,1)
  \hex(13,2)
  \hex(13,3)\white(13,3)
  \hex(14,0)\black(14,0)
  \hex(14,1)\black(14,1)
  \hex(14,2)\white(14,2)
  \hex(15,0)\black(15,0)
  \hex(15,1)\white(15,1)
  \hex(16,0)\black(16,0)
  \hex(13,4)
  \hex(13,5)\black(13,5)
  \hex(13,6)\white(13,6)
  \hex(14,3)\black(14,3)
  \hex(14,4)
  \hex(14,5)
  \hex(14,6)\white(14,6)
  \hex(15,2)
  \hex(15,3)
  \hex(15,4)\black(15,4)\sflabel{2}
  \hex(15,5)\black(15,5)
  \hex(15,6)\white(15,6)
  \hex(16,1)\white(16,1)
  \hex(16,2)
  \hex(16,3)\white(16,3)
  \hex(17,0)\black(17,0)
  \hex(17,1)\black(17,1)
  \hex(17,2)\white(17,2)
  \hex(18,0)\black(18,0)
  \hex(18,1)\white(18,1)
  \hex(19,0)\black(19,0)\sflabel{1}
  \draw[thick,decoration={brace,mirror},decorate] \coord(-0.4,7) -- node[below=1ex]{$H_1$} \coord(2.4,7);
  \draw[thick,decoration={brace,mirror},decorate] \coord(2.6,7) -- node[below=1ex]{$H_1$} \coord(5.4,7);
  \draw[thick,decoration={brace,mirror},decorate] \coord(5.6,7) -- node[below=1ex]{$H_1$} \coord(8.4,7);
  \draw[thick,decoration={brace,mirror},decorate] \coord(8.6,7) -- node[below=1ex]{$H_1$} \coord(11.4,7);
  \draw[thick,decoration={brace,mirror},decorate] \coord(11.6,7) -- node[below=1ex]{$H_1$} \coord(14.4,7);
\end{hexboard}
\]
It is evident that the Hex realizations of simpleswitches are much
more compact than those we gave of superswitches.

\subsection{Application: Verifying connects-both templates}
\label{ssec:connects-both}

In Hex, an \emph{edge template} is a region that includes a
distinguished black stone and a black edge, such that the following
two properties hold:
\begin{itemize}
\item \emph{Validity:} Black can guarantee to connect the given
  stone to the edge within the region, even with White moving first.
\item \emph{Minimality:} Removing any empty cell or any black stone
  from the region makes the template invalid.
\end{itemize}
The following is an example of an edge template:
\[
\begin{hexboard}[scale=0.8]
  \template(7,4)
  \foreach \i in {5,...,6} {\hex(\i,1)}
  \foreach \i in {3,...,7} {\hex(\i,2)}
  \foreach \i in {2,...,7} {\hex(\i,3)}
  \foreach \i in {1,...,7} {\hex(\i,4)}
  \black(5,1)\sflabel{\bfseries{A}}
\end{hexboard}
\]
Here, the distinguished stone is marked ``A'', and it also
happens to be the only stone in the template. For more information
about edge templates, and a proof of the validity of the above
template, see {\cite{Seymour}}.

Large templates are difficult to verify by hand; for validity, one
must consider every possible strategy by White, and minimality is even
more difficult to verify manually. Fortunately, edge templates can
easily be verified using Hex solver software such as Mohex
{\cite{Mohex}}. Given a board position and a player to move, exactly
one player must have a winning strategy, and the Hex solver computes
whether that player is Black or White. For example, to verify the
validity of the above template, it is sufficient to run a Hex solver
on the following position, with White to move:
\[
\begin{hexboard}[scale=0.8]
  \rotation{-30}
  \board(9,5)
  \black(6,1)
  \black(6,2)
  \foreach\i in {1,...,5} {\white(\i,1)}
  \foreach\i in {1,...,5} {\white(\i,2)}
  \foreach\i in {1,...,3} {\white(\i,3)}
  \foreach\i in {1,...,2} {\white(\i,4)}
  \foreach\i in {1,...,1} {\white(\i,5)}
  \foreach\i in {7,...,9} {\white(\i,1)}
  \foreach\i in {8,...,9} {\white(\i,2)}
  \foreach\i in {9,...,9} {\white(\i,3)}
  \foreach\i in {9,...,9} {\white(\i,4)}
  \foreach\i in {9,...,9} {\white(\i,5)}
\end{hexboard}
\]
If the winner is Black, the template is valid. Moreover, minimality is
easily checked by placing a white stone on one of the empty cells and
re-running the solver. The winner should be White. If this can be
repeated for every empty cell of the template, the template is
minimal. (If the template contains additional black stones, one should
also try removing these stones one at a time and check that the
template is no longer valid).

Some edge templates have additional properties. For example, the
following template has the property that Black can guarantee to
connect \emph{both} stones $A$ and $B$ to the edge, with White going
first. In other words, Black does not have to choose which of $A$ and
$B$ to connect, but can always connect both of them. We call such a
template a \emph{connects-both template}.
\[
\begin{hexboard}[scale=0.8]
  \template(6,4)
  \foreach \i in {4,...,6} {\hex(\i,1)}
  \foreach \i in {3,...,6} {\hex(\i,2)}
  \foreach \i in {2,...,6} {\hex(\i,3)}
  \foreach \i in {1,...,6} {\hex(\i,4)}
  \black(4,1)\label{{\sf \bfseries{A}}}
  \black(6,1)\label{{\sf \bfseries{B}}}
\end{hexboard}
\]
A priori, it is not clear how we can use a Hex solver to verify a
connects-both template. A naive first attempt might be to connect both
$A$ and $B$ to the opposite board edge, but in that case, Black would win
if Black could connect $A$ \emph{or} $B$ to the edge, rather than $A$
\emph{and} $B$. What we need is to put the would-be template into a
context such that Black wins if and only if Black connects both
stones. Unfortunately, as we will show in
Section~\ref{ssec:no-single-context}, there exists no single Hex context
with this property. However, as we will now show, there is a
\emph{sequence} of contexts that will do the job in the limit.

We say that a 3-terminal position has the \emph{connects-both}
property if Black can guarantee to connect all three terminals to each
other, with White to move first. In other words, this is the case if
and only if the position has value $\top$.

\begin{theorem}\label{thm:connects-both}
  A 3-terminal position $G$ has the connects-both property if and only
  if $G\plusj H\opp_n = \top$ for all dual simpleswitches $H\opp_n$. 
\end{theorem}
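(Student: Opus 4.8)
The plan is to treat the two implications separately. As a preliminary I would work out the function $f$ that governs the sum $\plusj$ on \emph{all} pairs of atoms, not merely on $a$ and $b$. Since $G\plusj H$ glues terminal $1$ of $G$ to terminal $1$ of $H$ and terminal $2$ of $G$ to terminal $2$ of $H$, and Black wins the resulting $2$-terminal position precisely when the two external terminals (terminal $3$ of $G$ and terminal $3$ of $H$) become connected, one reads off at once that $f(\bot,y)=f(c,y)=\bot$ for every $y$, that $f(\top,y)=\top$ for $y\in\s{a,b,\top}$, and (as already observed in the text) $f(a,a)=f(b,b)=\top$, $f(a,b)=f(b,a)=\bot$. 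In particular $f$ is monotone, so $\plusj$ is defined and respects $\eq$ in each argument. The forward implication is then easy: if $G\eq\top$ then $G\plusj H\opp_n\eq[\top]\plusj H\opp_n$, and a one-line induction on game forms shows $[\top]\plusj H\eq\top$ whenever every atom of $H$ lies in $\s{a,b,\top}$ --- for $H=[y]$ this is $[\top]\plusj[y]=[f(\top,y)]=[\top]$, and for composite $H$ every option of $[\top]\plusj H=\g{[\top]\plusj H^L|[\top]\plusj H^R}$ is $\eq\top$ by induction, so $[\top]\plusj H\eq\g{\top|\top}\eq\top$. Since every atom of $H\opp_n$ is one of $a$, $b$, $\top$, this gives $G\plusj H\opp_n\eq\top$ for all $n$.

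For the converse I would argue by contraposition, via an induction on game forms patterned on the proof of Proposition~\ref{prop-superswitch-cofinal}. Suppose $G\not\eq\top$; since $G\leq\top$ holds for every game over $P_3$ ($\top$ being the top outcome), we get $\top\nleq G$, and recall that $G$, being a Hex position, is monotone. I would prove, for monotone $G$, the twin claims: (a) if $\top\ntri G$ and $N\geq\depth(G)+3$, then $\top\ntri G\plusj H\opp_N$; and (b) if $\top\nleq G$ and $N\geq\depth(G)+2$, then $\top\nleq G\plusj H\opp_N$. These are proved by simultaneous induction with induction parameter the pair $(N,\depth(G))$ ordered lexicographically ($N$ first). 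Granting them, applying (b) to $G$ with $N=\depth(G)+2$ gives $\top\nleq G\plusj H\opp_N$, hence $G\plusj H\opp_N\not\eq\top$, which is exactly the contrapositive of the remaining implication.

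The inductive steps use repeatedly that for a composite game $Y$ one has $\top\leq Y$ iff ($\top\tri Y$ and $\top\tri Y^R$ for every right option $Y^R$), and $\top\tri Y$ iff some left option $Y^L$ satisfies $\top\leq Y^L$. For (b): $G$ is composite and monotone with $\top\nleq G$, so the contrapositive of Lemma~\ref{lem:topleqG} supplies a right option $G^R$ with $\top\ntri G^R$; since $\depth(G^R)<\depth(G)$, the inductive hypothesis (a) gives $\top\ntri G^R\plusj H\opp_N$, and because $G^R\plusj H\opp_N$ is a right option of the composite game $G\plusj H\opp_N$, we conclude $\top\nleq G\plusj H\opp_N$. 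For (a): $\top\ntri G$ with $G$ composite forces $\top\nleq G^L$ for every left option $G^L$; the left options of $G\plusj H\opp_N$ are the games $G^L\plusj H\opp_N$ together with the single game $G\plusj H\opp_{N-1}$ (using $H\opp_N=\g{H\opp_{N-1}|a,b}$, valid since $N\geq 3$), and the inductive hypothesis (b) --- applied to each $G^L$, of smaller depth, and to $G$ itself at the strictly smaller parameter $N-1$, using $\top\nleq G$ --- shows no left option of $G\plusj H\opp_N$ is $\geq\top$, so $\top\ntri G\plusj H\opp_N$.

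The crux is the base case $G=[x]$ with $x\neq\top$, which carries all the quantitative content. For $x\in\s{\bot,c}$ we have $[x]\plusj H\opp_N\eq\bot$ outright, since $f(x,-)\equiv\bot$; for $x=b$ the relabelling $f(b,-)$ sends $a\mapsto\bot$ and $b,\top\mapsto\top$, and one checks the resulting game collapses to $\bot$. The genuinely delicate case is $x=a$: here $[a]\plusj H\opp_N$ is $H\opp_N$ with its atoms relabelled by $f(a,-)$ (which sends $a,\top\mapsto\top$ and $b\mapsto\bot$); this is \emph{not} equivalent to $\bot$, but a short secondary induction on $N$ using $H\opp_N=\g{H\opp_{N-1}|a,b}$ shows that for every $N\geq 2$ both $\top\ntri[a]\plusj H\opp_N$ and $\top\nleq[a]\plusj H\opp_N$ hold (informally, $[a]\plusj H\opp_N$ is a small value whose right-option spine terminates at $\bot$, admitting no second-player connection to $\top$). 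Two bookkeeping points deserve care: first, the thresholds $\depth(G)+3$ and $\depth(G)+2$ must be chosen so that both the depth-decreasing and the $N$-decreasing recursive calls stay within range and the base case is reached only at $N\geq 2$; second, the simpleswitches are only passable, not monotone, so Lemma~\ref{lem:topleqG} is invoked solely for $G$ (which is monotone) and never for the sum $G\plusj H\opp_N$ --- this is legitimate because the properties $\top\nleq Y$ and $\top\ntri Y$ of a composite game $Y$ were used above only through their characterizations in terms of the options of $Y$. (Alternatively, after passing to the outcome poset $\s{\bot,a,b,\top}$ by collapsing $c$ to $\bot$, one could invoke the cofinality of Proposition~\ref{prop-superswitch-cofinal} to replace $G$ by a superswitch $G_m$ with $G\tri G_m$ and then reduce to computations in the style of Proposition~\ref{prop:juxta}; this variant requires essentially the same amount of work.)
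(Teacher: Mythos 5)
Your forward direction contains a concrete error. Dualizing exchanges $\top$ and $\bot$ (swapping Black and White turns ``all black terminals connected'' into ``no black terminals connected''), so the dual simpleswitch is $H\opp_1=\g{a|a,\g{b|\bot}}$ and its atoms lie in $\s{a,b,\bot}$, not in $\s{a,b,\top}$ as you claim. Consequently your induction invariant ``every option of $[\top]\plusj H$ is $\eq\top$'' fails: the right option $[\top]\plusj\g{b|\bot}=\g{\top|\bot}\eq\Star$ is not $\eq\top$. The conclusion $\top\plusj H\opp_n\eq\top$ is nevertheless true (and the paper dispatches it as ``an easy induction''), and the repair is small: show $\top\leq\top\plusj H\opp_n$ directly from the definition of $\leq$, checking that every right option of the sum ($\top\plusj a\eq\top$, $\top\plusj b\eq\top$, and for $n=1$ also $\top\plusj\g{b|\bot}\eq\Star$) satisfies $\top\tri{}$ it --- for $\Star$ because Left can answer to $\top$ --- while the left option $\top\plusj H\opp_{n-1}$ is handled by induction. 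So the statement you need is ``$\top\tri{}$ every right option,'' not ``every option is $\eq\top$.''

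Your converse direction --- the substantive half --- is correct and takes a genuinely different route from the paper. The paper replaces each atom $c$ of $G$ by $\bot$ (legitimate since $c\plusj x=\bot\plusj x$), invokes the cofinality of the superswitches (Proposition~\ref{prop-superswitch-cofinal}) to get $G'\tri G_n$, and then concludes via the juxtaposition calculus of Proposition~\ref{prop:juxta} together with the comparison $H\opp_{n+2}\leq G\opp_{n+2}$ between dual simpleswitches and dual superswitches. You instead run the cofinality-style induction directly on the sum: the twin claims (a) and (b), proved by lexicographic induction on $(N,\depth(G))$, with the contrapositive of Lemma~\ref{lem:topleqG} supplying a right option $G^R$ with $\top\ntri G^R$, and explicit atomic base cases in which $c$ is absorbed for free because $f(c,-)\equiv\bot$ and only the atom $a$ requires a genuine computation. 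I checked the bookkeeping: the thresholds $\depth(G)+3$ and $\depth(G)+2$ keep all three recursive calls ((b) at the same $N$ and smaller depth, (b) at $N-1$ and the same depth, (a) at the same $N$ and smaller depth) in range and strictly decreasing in lexicographic rank, and your restriction of Lemma~\ref{lem:topleqG} to $G$ itself (monotone, being a Hex position) rather than to the sum is exactly the right care to take, since the switches are only passable. What each approach buys: the paper's argument is shorter given Propositions~\ref{prop:juxta} and~\ref{prop-superswitch-cofinal} and isolates cofinality as the conceptual reason; yours is self-contained, avoids the $c\mapsto\bot$ substitution and the simpleswitch/superswitch comparison, and yields an explicit bound $n=\depth(G)+2$ --- essentially the cofinality induction redone in situ. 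Your closing parenthetical, incidentally, sketches essentially the paper's actual proof.
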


\begin{proof}
  The left-to-right implication is easy, because an easy induction
  shows that $\top\plusj H\opp_n \eq \top$ holds for all $n$.

  For the right-to-left direction, assume $G\plusj H\opp_n = \top$ for
  all dual simpleswitches $H\opp_n$. Assume, for the sake of obtaining a
  contradiction, that $G\neqq \top$. Then $G<\top$. Note that because
  $G$ is a Hex position, $G$ is necessarily finite and monotone. Let
  $G'$ be the game obtained from $G$ by replacing each occurrence of
  the atom $c$ by $\bot$. Since the function mapping $c$ to $\bot$ is
  monotone, $G'$ is a monotone game over $\s{\bot,a,b,\top}$. We also
  have $G'\leq G$, thus $G'<\top$.  By
  Proposition~\ref{prop-superswitch-cofinal}, there is some
  superswitch $G_n$ such that $G'\tri G_n$. It follows that $G'\plusj
  H\opp_{n+2} \leq G'\plusj G\opp_{n+2} \tri G_n\plusj G\opp_{n+2} \eq
  \bot$, where the last equivalence holds by
  Proposition~\ref{prop:juxta}.  On the other hand, juxtaposition
  satisfies $c\plusj x = \bot\plusj x$ for every atom $x$. Therefore,
  since $G'$ only differs from $G$ by replacing some atoms $c$ by
  $\bot$, we have $G'\plusj H\opp_{n+2} = G\plusj H\opp_{n+2}$.
  Putting this together, we have $G\plusj H\opp_{n+2}\tri\bot$,
  contradicting the assumption that $G\plusj H\opp_{n+2} = \top$.  
\end{proof}

\begin{figure}
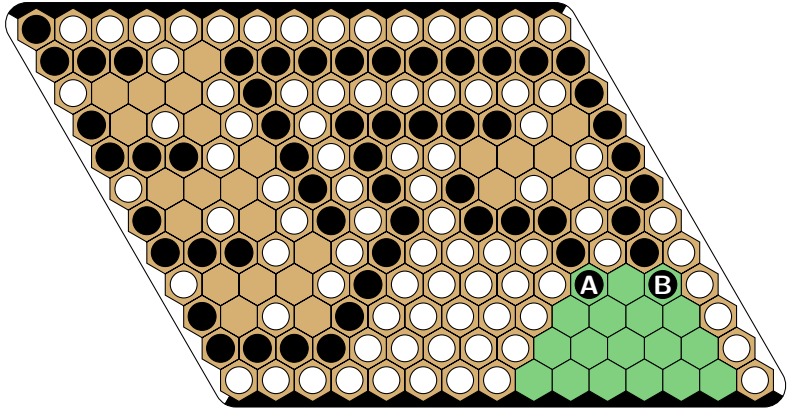

  \[
\begin{hexboard}[baseline={($(current bounding box.center)-(0,1ex)$)},scale=0.7]
  \rotation{-30}
  \board(15,12)
\black(1,1)\white(2,1)\white(3,1)\white(4,1)\white(5,1)\white(6,1)\white(7,1)\white(8,1)\white(9,1)\white(10,1)\white(11,1)\white(12,1)\white(13,1)\white(14,1)\white(15,1)
\black(1,2)\black(2,2)\black(3,2)\white(4,2)\nofil(5,2)\black(6,2)\black(7,2)\black(8,2)\black(9,2)\black(10,2)\black(11,2)\black(12,2)\black(13,2)\black(14,2)\black(15,2)
\white(1,3)\nofil(2,3)\nofil(3,3)\nofil(4,3)\white(5,3)\black(6,3)\white(7,3)\white(8,3)\white(9,3)\white(10,3)\white(11,3)\white(12,3)\white(13,3)\white(14,3)\black(15,3)
\black(1,4)\nofil(2,4)\white(3,4)\nofil(4,4)\white(5,4)\black(6,4)\white(7,4)\black(8,4)\black(9,4)\black(10,4)\black(11,4)\black(12,4)\white(13,4)\nofil(14,4)\black(15,4)
\black(1,5)\black(2,5)\black(3,5)\white(4,5)\nofil(5,5)\black(6,5)\white(7,5)\black(8,5)\white(9,5)\white(10,5)\nofil(11,5)\nofil(12,5)\nofil(13,5)\white(14,5)\black(15,5)
\white(1,6)\nofil(2,6)\nofil(3,6)\nofil(4,6)\white(5,6)\black(6,6)\white(7,6)\black(8,6)\white(9,6)\black(10,6)\nofil(11,6)\white(12,6)\nofil(13,6)\white(14,6)\black(15,6)
\black(1,7)\nofil(2,7)\white(3,7)\nofil(4,7)\white(5,7)\black(6,7)\white(7,7)\black(8,7)\white(9,7)\black(10,7)\black(11,7)\black(12,7)\white(13,7)\black(14,7)\white(15,7)
\black(1,8)\black(2,8)\black(3,8)\white(4,8)\nofil(5,8)\white(6,8)\black(7,8)\white(8,8)\white(9,8)\white(10,8)\white(11,8)\black(12,8)\white(13,8)\black(14,8)\white(15,8)
\white(1,9)\nofil(2,9)\nofil(3,9)\nofil(4,9)\white(5,9)\black(6,9)\white(7,9)\white(8,9)\white(9,9)\white(10,9)\white(11,9)\black(12,9)\sflabel{\bfseries{A}}\carry(13,9)\black(14,9)\sflabel{\bfseries{B}}\white(15,9)
\black(1,10)\nofil(2,10)\white(3,10)\nofil(4,10)\black(5,10)\white(6,10)\white(7,10)\white(8,10)\white(9,10)\white(10,10)\carry(11,10)\carry(12,10)\carry(13,10)\carry(14,10)\white(15,10)
\black(1,11)\black(2,11)\black(3,11)\black(4,11)\white(5,11)\white(6,11)\white(7,11)\white(8,11)\white(9,11)\carry(10,11)\carry(11,11)\carry(12,11)\carry(13,11)\carry(14,11)\white(15,11)
\white(1,12)\white(2,12)\white(3,12)\white(4,12)\white(5,12)\white(6,12)\white(7,12)\white(8,12)\carry(9,12)\carry(10,12)\carry(11,12)\carry(12,12)\carry(13,12)\carry(14,12)\white(15,12)
\carry(12,9)
\carry(14,9)
\end{hexboard}
  \]
  \caption{Verifying a connects-both template}
  \label{fig:connects-both}
\end{figure}

Theorem~\ref{thm:connects-both} provides a method for verifying
connects-both templates using a Hex solver: we merely need to put the
template in the context of a sufficiently high tower of dual
simpleswitches. An example of this is shown in
Figure~\ref{fig:connects-both}. Here, the template itself appears in
the lower right corner, and the rest of the board is taken up by a
dual simpleswitch of depth 4. If the position is a second-player win
for Black for dual simpleswitches of \emph{all} depths, then the
template is valid.

In practice, of course, one cannot check this for infinitely many dual
simpleswitches. Theoretically, it is sufficient to limit the depth of
the dual simpleswitch to slightly more than the depth of the candidate
template, i.e., the number of its empty cells. But in most cases, this
depth is too large to be feasibly checkable. However, with some manual
work, it is possible to verify a connects-both template using only a
single dual simpleswitch of some fixed small depth. If the solver
determines that White can win going first, the template is not
valid. Otherwise, for every possible white intrusion, the solver will
suggest a potential black winning response. If the response is such
that it connects terminals $A$ and $B$ to each other, then the depth
of the simpleswitch no longer matters; otherwise, one must recursively
consider all white follow-up moves. In this way, the validity of most
templates can be verified in a relatively small number of steps.

For the proof of minimality, one can as usual consider every possible
way of placing a single white stone in the template or removing a
black stone, and uses the solver to show that the resulting region is
a first-player win for White. As long as this procedure succeeds, the
depth of the used simpleswitches does not matter, because by
Theorem~\ref{thm:connects-both}, a dual simpleswitch of any one
depth is sufficient to invalidate a template.

\subsection{Tripleswitches}\label{ssec:tripleswitches}

In Proposition~\ref{prop-superswitch-cofinal}, we showed that the
sequence of superswitches is cofinal for the class of games over
$\s{\bot,a,b,\top}$ in which Left cannot achieve the outcome $\top$.
There is an analogous cofinal sequence of games over
$\s{\bot,a,b,c,\top}$. Define the following games:
\[
\begin{array}{lll}
  T_0 &=& \bot, \\
  T_{n+1} &=& \g{a,b,c|T_n}\quad\mbox{for all $n\geq 0$.}
\end{array}
\]
For example, $T_3 = \g{a,b,c|\g{a,b,c|\g{a,b,c|\bot}}}$.
We call these games \emph{ideal tripleswitches}. It is easy to see
that they form an increasing sequence of passable games. The following
proposition establishes the cofinality of the sequence:

\begin{proposition}\label{prop:ideal-tripleswitch-cofinal}
  Let $H$ be a finite passable game over $\s{\bot,a,b,c,\top}$. If
  $\top\ntri H$, then there exists some $n$ such that $H\leq T_n$.
  If $\top\nleq H$, then there exists some $n$ such that $H\tri T_n$.
\end{proposition}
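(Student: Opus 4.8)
The plan is to follow, essentially verbatim, the structure of the proof of Proposition~\ref{prop-superswitch-cofinal}, replacing the superswitches $G_n$ throughout by the ideal tripleswitches $T_n$. Since the outcome poset $\s{\bot,a,b,c,\top}$ still has a top and a bottom element, the fundamental theorem (Theorem~\ref{thm:fundamental}) lets us assume without loss of generality that $H$ is monotone: replacing $H$ by an equivalent monotone game affects neither the hypotheses $\top\ntri H$, $\top\nleq H$ nor the conclusions $H\leq T_n$, $H\tri T_n$, all of which are invariant under $\eq$ by the transitivity properties of $\leq$ and $\tri$.

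Next I would prove, by induction on $H$, that for every monotone game $H$ over $\s{\bot,a,b,c,\top}$: \emph{(a)} if $\top\nleq H$ and $d\geq\depth(H)+2$ then $H\tri T_d$; and \emph{(b)} if $\top\ntri H$ and $d\geq\depth(H)+2$ then $H\leq T_d$. For the base case, $H$ atomic, either hypothesis forces $H\in\s{\bot,a,b,c}$, and a direct calculation gives $H\leq T_2$ (each of $a$, $b$, $c$ is a left option of $T_1$, and $\bot\leq a$), hence $H\tri T_d$ and $H\leq T_d$ for all $d\geq 2$. For the inductive step with $H$ composite: to prove (a), assume $\top\nleq H$; the contrapositive of Lemma~\ref{lem:topleqG} yields a right option $H^R$ with $\top\ntri H^R$, and since $\depth(H^R)<\depth(H)$, induction hypothesis (b) gives $H^R\leq T_d$, whence $H\tri T_d$. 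To prove (b), assume $\top\ntri H$; then every left option satisfies $\top\nleq H^L$ (otherwise $\top\tri H$), so induction hypothesis (a) gives $H^L\tri T_d$; and for the unique right option $T_{d-1}$ of $T_d$, any right option $H^R$ of $H$ satisfies $H^R\leq H$ by monotonicity, hence $\top\ntri H^R$, and since $\depth(H^R)<\depth(H)$, induction hypothesis (b) gives $H^R\leq T_{d-1}$ and therefore $H\tri T_{d-1}$; together these show $H\leq T_d$. The proposition then follows by taking $n=\depth(H)+2$ in (a) or (b) as appropriate.

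I do not expect a substantive obstacle here: the argument is structurally identical to that of Proposition~\ref{prop-superswitch-cofinal}, and the only changes are cosmetic — the poset carries an extra incomparable atom $c$, and the base of the recursion is $T_0=\bot$ rather than $G_0=a$. The one point worth double-checking is the base case: since $T_1=\g{a,b,c|\bot}$ and, for instance, $a\ntri\bot$, we have $a\nleq T_1$, so the bound $d\geq 2$ genuinely cannot be lowered; but $a\leq T_2$ holds because $a$ is a left option of $T_1$, so the uniform bound $\depth(H)+2$ used for the superswitches remains correct.
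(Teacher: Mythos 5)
Your proof is correct and follows essentially the same route as the paper, which simply notes that the argument of Proposition~\ref{prop-superswitch-cofinal} carries over verbatim with one more atom; your verification of the base case (that $a,b,c\leq T_2$ but $a\nleq T_1$, so the bound $\depth(H)+2$ is still the right one) is a sound check of the only detail that changes.
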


\begin{proof}
  The proof is exactly the same as that of
  Proposition~\ref{prop-superswitch-cofinal}, with one more atom.
\end{proof}

With the exception of $T_0$, $T_1$, and $T_2$, we do not know whether
the ideal tripleswitches are realizable as 3-terminal Hex
positions. However, there is a sequence of 3-terminal Hex positions
that has the same cofinality as the ideal tripleswitches. We call
these the \emph{Hex tripleswitches}.

\begin{definition}\label{def:pinwheel}
  If $G_1$, $G_2$, and $G_3$ are 3-terminal positions, their
  \emph{pinwheel composition}, which we write as $\pinwheel(G_1, G_2,
  G_3)$, is the 3-terminal position shown schematically in the
  following diagram:
  \begin{equation}\label{eqn:pinwheel}
    \pinwheel(G_1, G_2, G_3)\quad=\quad
    \begin{tikzpicture}[scale=0.65,baseline={(center.south)}]
      \path (270:2.25cm) node (a) {};
      \path (150:2.25cm) node (b) {};
      \path (30:2.25cm) node (c) {};
      \path (270:4cm) node (aa) {};
      \path (150:4cm) node (bb) {};
      \path (30:4cm) node (cc) {};
      \path (270:4.2cm) node (aaa) {};
      \path (150:4.2cm) node (bbb) {};
      \path (30:4.2cm) node (ccc) {};
      \node (center) at (0,0) {};
      \draw[terminal] (a) -- (b);
      \draw[terminal] (b) -- (c);
      \draw[terminal] (c) -- (a);
      \draw[terminal] (a) -- (aa) (aaa) node{$1$};
      \draw[terminal] (b) -- (bb) (bbb) node{$2$};
      \draw[terminal] (c) -- (cc) (ccc) node{$3$};
      \draw[fill=black!10,rotate around={0:(a)}]
      (a) +(-1.5,-1) rectangle node{$G_1$} +(1.5,1)
      (a) +(0,-0.7) node {$3$}
      (a) +(-0.5,0.7) node {$1$}
      (a) +(0.5,0.7) node {$2$}
      ;
      \draw[fill=black!10,rotate around={-120:(b)}]
      (b) +(-1.5,-1) rectangle node{$G_2$} +(1.5,1)
      (b) +(0,-0.7) node {$3$}
      (b) +(-0.5,0.7) node {$1$}
      (b) +(0.5,0.7) node {$2$}
      ;
      \draw[fill=black!10,rotate around={120:(c)}]
      (c) +(-1.5,-1) rectangle node{$G_3$} +(1.5,1)
      (c) +(0,-0.7) node {$3$}
      (c) +(-0.5,0.7) node {$1$}
      (c) +(0.5,0.7) node {$2$}
      ;
    \end{tikzpicture}
  \end{equation}
\end{definition}

A number of identities for atomic pinwheels are illustrated in
Figure~\ref{fig:pinwheel-atoms}.

\begin{figure}
  \[
  \begin{tikzpicture}[scale=0.4]
    \path (270:2.25cm) node (a) {};
    \path (150:2.25cm) node (b) {};
    \path (30:2.25cm) node (c) {};
    \path (270:4cm) node (aa) {};
    \path (150:4cm) node (bb) {};
    \path (30:4cm) node (cc) {};
    \path (270:4.2cm) node (aaa) {};
    \path (150:4.2cm) node (bbb) {};
    \path (30:4.2cm) node (ccc) {};
    \draw[terminal] (a) -- (b);
    \draw[terminal] (b) -- (c);
    \draw[terminal] (c) -- (a);
    \draw[terminal] (a) -- (aa) (aaa) node{$1$};
    \draw[terminal] (b) -- (bb) (bbb) node{$2$};
    \draw[terminal] (c) -- (cc) (ccc) node{$3$};
    \draw[fill=black!10,rotate around={0:(a)}]
    (a) +(-1.5,-1) rectangle +(1.5,1)
    (a) +(0.7,0) node {$a$}
    ;
    \draw[fill=black!10,rotate around={-120:(b)}]
    (b) +(-1.5,-1) rectangle +(1.5,1)
    (b) +(0.7,0) node {$a$}
    ;
    \draw[fill=black!10,rotate around={120:(c)}]
    (c) +(-1.5,-1) rectangle +(1.5,1)
    (c) +(0.7,0) node {$a$}
    ;
    \draw[terminal] (aa) -- (a.center) -- ($(a)!0.5!(c)$);
    \draw[terminal] (bb) -- (b.center) -- ($(b)!0.5!(a)$);
    \draw[terminal] (cc) -- (c.center) -- ($(c)!0.5!(b)$);
    \path (0,-5.5) node{$\pinwheel(a,a,a) = \bot$};
  \end{tikzpicture}
  \qquad
  \begin{tikzpicture}[scale=0.4]
    \path (270:2.25cm) node (a) {};
    \path (150:2.25cm) node (b) {};
    \path (30:2.25cm) node (c) {};
    \path (270:4cm) node (aa) {};
    \path (150:4cm) node (bb) {};
    \path (30:4cm) node (cc) {};
    \path (270:4.2cm) node (aaa) {};
    \path (150:4.2cm) node (bbb) {};
    \path (30:4.2cm) node (ccc) {};
    \draw[terminal] (a) -- (b);
    \draw[terminal] (b) -- (c);
    \draw[terminal] (c) -- (a);
    \draw[terminal] (a) -- (aa) (aaa) node{$1$};
    \draw[terminal] (b) -- (bb) (bbb) node{$2$};
    \draw[terminal] (c) -- (cc) (ccc) node{$3$};
    \draw[fill=black!10,rotate around={0:(a)}]
    (a) +(-1.5,-1) rectangle +(1.5,1)
    (a) +(0.7,0) node {$a$}
    ;
    \draw[fill=black!10,rotate around={-120:(b)}]
    (b) +(-1.5,-1) rectangle +(1.5,1)
    (b) +(0.7,0) node {$a$}
    ;
    \draw[fill=black!10,rotate around={120:(c)}]
    (c) +(-1.5,-1) rectangle +(1.5,1)
    (c) +(-0.7,0) node {$b$}
    ;
    \draw[terminal] (aa) -- (a.center) -- ($(a)!0.5!(c)$);
    \draw[terminal] (bb) -- (b.center) -- ($(b)!0.5!(a)$);
    \draw[terminal] (cc) -- (c.center) -- ($(c)!0.5!(a)$);
    \path (0,-5.5) node{$\pinwheel(a,a,b) = b$};
  \end{tikzpicture}
  \qquad
  \begin{tikzpicture}[scale=0.4]
    \path (270:2.25cm) node (a) {};
    \path (150:2.25cm) node (b) {};
    \path (30:2.25cm) node (c) {};
    \path (270:4cm) node (aa) {};
    \path (150:4cm) node (bb) {};
    \path (30:4cm) node (cc) {};
    \path (270:4.2cm) node (aaa) {};
    \path (150:4.2cm) node (bbb) {};
    \path (30:4.2cm) node (ccc) {};
    \draw[terminal] (a) -- (b);
    \draw[terminal] (b) -- (c);
    \draw[terminal] (c) -- (a);
    \draw[terminal] (a) -- (aa) (aaa) node{$1$};
    \draw[terminal] (b) -- (bb) (bbb) node{$2$};
    \draw[terminal] (c) -- (cc) (ccc) node{$3$};
    \draw[fill=black!10,rotate around={0:(a)}]
    (a) +(-1.5,-1) rectangle +(1.5,1)
    (a) +(0.7,0) node {$a$}
    ;
    \draw[fill=black!10,rotate around={-120:(b)}]
    (b) +(-1.5,-1) rectangle +(1.5,1)
    (b) +(-0.7,0) node {$b$}
    ;
    \draw[fill=black!10,rotate around={120:(c)}]
    (c) +(-1.5,-1) rectangle +(1.5,1)
    (c) +(0.7,0) node {$a$}
    ;
    \draw[terminal] (aa) -- (a.center) -- ($(a)!0.5!(c)$);
    \draw[terminal] (bb) -- (b.center) -- ($(b)!0.5!(c)$);
    \draw[terminal] (cc) -- (c.center) -- ($(c)!0.5!(b)$);
    \path (0,-5.5) node{$\pinwheel(a,b,a) = a$};
  \end{tikzpicture}
  \qquad
  \begin{tikzpicture}[scale=0.4]
    \path (270:2.25cm) node (a) {};
    \path (150:2.25cm) node (b) {};
    \path (30:2.25cm) node (c) {};
    \path (270:4cm) node (aa) {};
    \path (150:4cm) node (bb) {};
    \path (30:4cm) node (cc) {};
    \path (270:4.2cm) node (aaa) {};
    \path (150:4.2cm) node (bbb) {};
    \path (30:4.2cm) node (ccc) {};
    \draw[terminal] (a) -- (b);
    \draw[terminal] (b) -- (c);
    \draw[terminal] (c) -- (a);
    \draw[terminal] (a) -- (aa) (aaa) node{$1$};
    \draw[terminal] (b) -- (bb) (bbb) node{$2$};
    \draw[terminal] (c) -- (cc) (ccc) node{$3$};
    \draw[fill=black!10,rotate around={0:(a)}]
    (a) +(-1.5,-1) rectangle +(1.5,1)
    (a) +(0.7,0) node {$a$}
    ;
    \draw[fill=black!10,rotate around={-120:(b)}]
    (b) +(-1.5,-1) rectangle +(1.5,1)
    (b) +(-0.7,0) node {$b$}
    ;
    \draw[fill=black!10,rotate around={120:(c)}]
    (c) +(-1.5,-1) rectangle +(1.5,1)
    (c) +(-0.7,0) node {$b$}
    ;
    \draw[terminal] (aa) -- (a.center) -- ($(a)!0.5!(c)$);
    \draw[terminal] (bb) -- (b.center) -- ($(b)!0.5!(c)$);
    \draw[terminal] (cc) -- (c.center) -- ($(c)!0.5!(a)$);
    \path (0,-5.5) node{$\pinwheel(a,b,b) = b$};
  \end{tikzpicture}
  \]
  \[
  \begin{tikzpicture}[scale=0.4]
    \path (270:2.25cm) node (a) {};
    \path (150:2.25cm) node (b) {};
    \path (30:2.25cm) node (c) {};
    \path (270:4cm) node (aa) {};
    \path (150:4cm) node (bb) {};
    \path (30:4cm) node (cc) {};
    \path (270:4.2cm) node (aaa) {};
    \path (150:4.2cm) node (bbb) {};
    \path (30:4.2cm) node (ccc) {};
    \draw[terminal] (a) -- (b);
    \draw[terminal] (b) -- (c);
    \draw[terminal] (c) -- (a);
    \draw[terminal] (a) -- (aa) (aaa) node{$1$};
    \draw[terminal] (b) -- (bb) (bbb) node{$2$};
    \draw[terminal] (c) -- (cc) (ccc) node{$3$};
    \draw[fill=black!10,rotate around={0:(a)}]
    (a) +(-1.5,-1) rectangle +(1.5,1)
    (a) +(-0.7,0) node {$b$}
    ;
    \draw[fill=black!10,rotate around={-120:(b)}]
    (b) +(-1.5,-1) rectangle +(1.5,1)
    (b) +(0.7,0) node {$a$}
    ;
    \draw[fill=black!10,rotate around={120:(c)}]
    (c) +(-1.5,-1) rectangle +(1.5,1)
    (c) +(0.7,0) node {$a$}
    ;
    \draw[terminal] (aa) -- (a.center) -- ($(a)!0.5!(b)$);
    \draw[terminal] (bb) -- (b.center) -- ($(b)!0.5!(a)$);
    \draw[terminal] (cc) -- (c.center) -- ($(c)!0.5!(b)$);
    \path (0,-5.5) node{$\pinwheel(b,a,a) = c$};
  \end{tikzpicture}
  \qquad
  \begin{tikzpicture}[scale=0.4]
    \path (270:2.25cm) node (a) {};
    \path (150:2.25cm) node (b) {};
    \path (30:2.25cm) node (c) {};
    \path (270:4cm) node (aa) {};
    \path (150:4cm) node (bb) {};
    \path (30:4cm) node (cc) {};
    \path (270:4.2cm) node (aaa) {};
    \path (150:4.2cm) node (bbb) {};
    \path (30:4.2cm) node (ccc) {};
    \draw[terminal] (a) -- (b);
    \draw[terminal] (b) -- (c);
    \draw[terminal] (c) -- (a);
    \draw[terminal] (a) -- (aa) (aaa) node{$1$};
    \draw[terminal] (b) -- (bb) (bbb) node{$2$};
    \draw[terminal] (c) -- (cc) (ccc) node{$3$};
    \draw[fill=black!10,rotate around={0:(a)}]
    (a) +(-1.5,-1) rectangle +(1.5,1)
    (a) +(-0.7,0) node {$b$}
    ;
    \draw[fill=black!10,rotate around={-120:(b)}]
    (b) +(-1.5,-1) rectangle +(1.5,1)
    (b) +(0.7,0) node {$a$}
    ;
    \draw[fill=black!10,rotate around={120:(c)}]
    (c) +(-1.5,-1) rectangle +(1.5,1)
    (c) +(-0.7,0) node {$b$}
    ;
    \draw[terminal] (aa) -- (a.center) -- ($(a)!0.5!(b)$);
    \draw[terminal] (bb) -- (b.center) -- ($(b)!0.5!(a)$);
    \draw[terminal] (cc) -- (c.center) -- ($(c)!0.5!(a)$);
    \path (0,-5.5) node{$\pinwheel(b,a,b) = c$};
  \end{tikzpicture}
  \qquad
  \begin{tikzpicture}[scale=0.4]
    \path (270:2.25cm) node (a) {};
    \path (150:2.25cm) node (b) {};
    \path (30:2.25cm) node (c) {};
    \path (270:4cm) node (aa) {};
    \path (150:4cm) node (bb) {};
    \path (30:4cm) node (cc) {};
    \path (270:4.2cm) node (aaa) {};
    \path (150:4.2cm) node (bbb) {};
    \path (30:4.2cm) node (ccc) {};
    \draw[terminal] (a) -- (b);
    \draw[terminal] (b) -- (c);
    \draw[terminal] (c) -- (a);
    \draw[terminal] (a) -- (aa) (aaa) node{$1$};
    \draw[terminal] (b) -- (bb) (bbb) node{$2$};
    \draw[terminal] (c) -- (cc) (ccc) node{$3$};
    \draw[fill=black!10,rotate around={0:(a)}]
    (a) +(-1.5,-1) rectangle +(1.5,1)
    (a) +(-0.7,0) node {$b$}
    ;
    \draw[fill=black!10,rotate around={-120:(b)}]
    (b) +(-1.5,-1) rectangle +(1.5,1)
    (b) +(-0.7,0) node {$b$}
    ;
    \draw[fill=black!10,rotate around={120:(c)}]
    (c) +(-1.5,-1) rectangle +(1.5,1)
    (c) +(0.7,0) node {$a$}
    ;
    \draw[terminal] (aa) -- (a.center) -- ($(a)!0.5!(b)$);
    \draw[terminal] (bb) -- (b.center) -- ($(b)!0.5!(c)$);
    \draw[terminal] (cc) -- (c.center) -- ($(c)!0.5!(b)$);
    \path (0,-5.5) node{$\pinwheel(b,b,a) = a$};
  \end{tikzpicture}
  \qquad
  \begin{tikzpicture}[scale=0.4]
    \path (270:2.25cm) node (a) {};
    \path (150:2.25cm) node (b) {};
    \path (30:2.25cm) node (c) {};
    \path (270:4cm) node (aa) {};
    \path (150:4cm) node (bb) {};
    \path (30:4cm) node (cc) {};
    \path (270:4.2cm) node (aaa) {};
    \path (150:4.2cm) node (bbb) {};
    \path (30:4.2cm) node (ccc) {};
    \draw[terminal] (a) -- (b);
    \draw[terminal] (b) -- (c);
    \draw[terminal] (c) -- (a);
    \draw[terminal] (a) -- (aa) (aaa) node{$1$};
    \draw[terminal] (b) -- (bb) (bbb) node{$2$};
    \draw[terminal] (c) -- (cc) (ccc) node{$3$};
    \draw[fill=black!10,rotate around={0:(a)}]
    (a) +(-1.5,-1) rectangle +(1.5,1)
    (a) +(-0.7,0) node {$b$}
    ;
    \draw[fill=black!10,rotate around={-120:(b)}]
    (b) +(-1.5,-1) rectangle +(1.5,1)
    (b) +(-0.7,0) node {$b$}
    ;
    \draw[fill=black!10,rotate around={120:(c)}]
    (c) +(-1.5,-1) rectangle +(1.5,1)
    (c) +(-0.7,0) node {$b$}
    ;
    \draw[terminal] (aa) -- (a.center) -- ($(a)!0.5!(b)$);
    \draw[terminal] (bb) -- (b.center) -- ($(b)!0.5!(c)$);
    \draw[terminal] (cc) -- (c.center) -- ($(c)!0.5!(a)$);
    \path (0,-5.5) node{$\pinwheel(b,b,b) = \bot$};
  \end{tikzpicture}
  \]
  \caption{Some identities for pinwheels of atomic positions}
  \label{fig:pinwheel-atoms}
\end{figure}
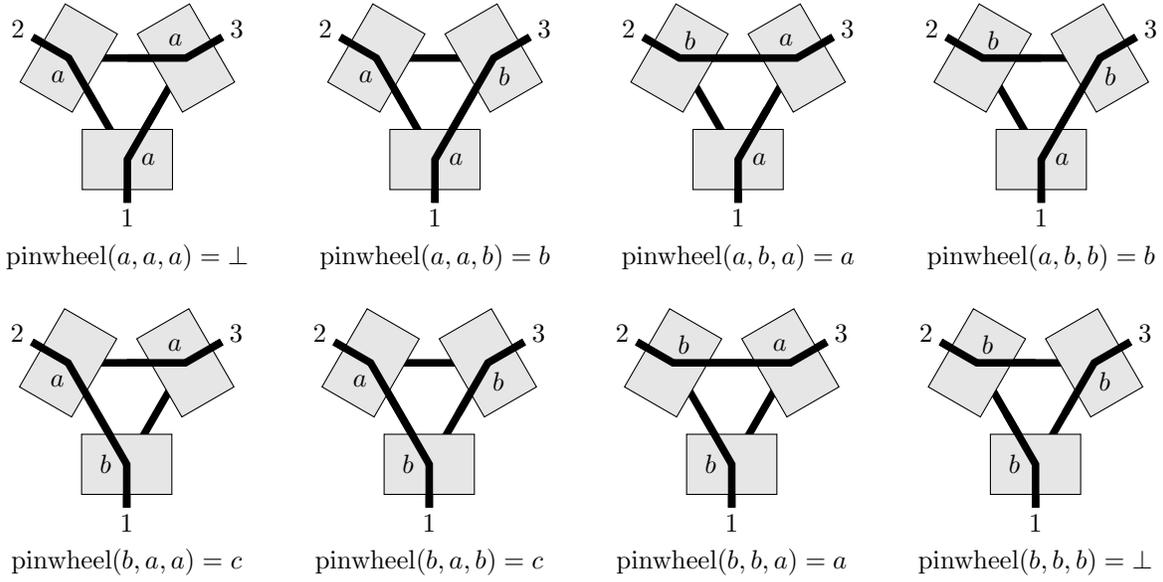

We define the Hex position $T'_n =
\pinwheel(G_{n+2},G_{n+2},G_{n+2})$, where $G_{n+2}$ is a Hex
realization of the $(n+2)$nd superswitch. Note that, since the only atoms
occurring in the canonical value of $G_{n+2}$ are $a$ and $b$, by the
identities in Figure~\ref{fig:pinwheel-atoms}, the only atoms that can
possibly occur in the canonical value of $T'_n$ are
$\s{\bot,a,b,c}$. Therefore, outcome $\top$ is not achievable, and in
particular, $\top\ntri T'_n$. It follows by
Proposition~\ref{prop:ideal-tripleswitch-cofinal} that $T'_n\leq T_m$
for some $m$. Conversely, we claim that $T_n \leq T'_n$, and
therefore the sequence $(T'_n)_{n\in\N}$ has the same cofinality as
the sequence $(T_n)_{n\in\N}$.

\begin{lemma}\label{lem:hex-tripleswitch}
  For all $n$, $T_n \leq T'_n$.
\end{lemma}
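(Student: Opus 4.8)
The plan is to prove, by induction on $n$, the slightly stronger statement: for all $n\ge 0$ and all $j,k,l\ge n+2$, one has $T_n\le\pinwheel(G_j,G_k,G_l)$; the lemma is the special case $j=k=l=n+2$. Two standard facts will be used throughout. First, $G_0\le G_1\le\cdots$ is increasing \cite[Prop.~10.2]{S2022-hex-cgt} and $G_0=a$, so $a\le G_m$ for every $m$; also $\bot$ lies below every game over $P_3$, being the least atom. Second, $\pinwheel(G_1,G_2,G_3)$ is a ternary instance of the disjunctive sum of Definition~\ref{def:sum}, whose combining function on atoms takes the values recorded in Figure~\ref{fig:pinwheel-atoms}; hence $\pinwheel([x],[y],[z])=[\pinwheel(x,y,z)]$ on atoms, and (by the same simultaneous induction with $\tri$ that establishes monotonicity of binary sums in \cite{S2022-hex-cgt}) $X\le X'$, $Y\le Y'$, $Z\le Z'$ imply $\pinwheel(X,Y,Z)\le\pinwheel(X',Y',Z')$.

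The base case $n=0$ is immediate since $T_0=\bot$ lies below every game. For the inductive step I would fix $n\ge 1$ and $j,k,l\ge n+2$ (so $G_j,G_k,G_l$ are all composite) and assume the statement for $n-1$, then verify $T_n\le\pinwheel(G_j,G_k,G_l)$ straight from the definition of $\le$. The left options of $T_n=\g{a,b,c|T_{n-1}}$ are the atoms $a$, $b$, $c$, and for each I would exhibit a dominating left option of $\pinwheel(G_j,G_k,G_l)$: the games $\pinwheel(b,G_k,G_l)$, $\pinwheel(G_j,b,G_l)$, $\pinwheel(G_j,G_k,b)$ arise from Left switching one copy to $b$, and using $a\le G_m$, monotonicity, and the atomic identities $\pinwheel(b,a,a)=c$, $\pinwheel(a,b,a)=a$, $\pinwheel(a,a,b)=b$ one gets $c\le\pinwheel(b,G_k,G_l)$, $a\le\pinwheel(G_j,b,G_l)$, $b\le\pinwheel(G_j,G_k,b)$; hence $a$, $b$, $c$ are each $\tri\,\pinwheel(G_j,G_k,G_l)$. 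For the right options: the right options of $\pinwheel(G_j,G_k,G_l)$ are $\pinwheel(G_{j-1},G_k,G_l)$, $\pinwheel(G_j,G_{k-1},G_l)$, $\pinwheel(G_j,G_k,G_{l-1})$, and in each of these all three indices are at least $(n-1)+2$, so the induction hypothesis gives $T_{n-1}\le$ each of them; since $T_{n-1}$ is a right option of $T_n$ this yields $T_n\tri$ each of them. Both clauses of the definition of $\le$ then hold.

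The conceptual content is small and the delicate points are organizational. The first is getting the slot-to-outcome dictionary of the pinwheel exactly right — which of the three ``switch a copy to $b$'' moves realizes $a$, which realizes $b$, which realizes $c$ — but this is precisely what Figure~\ref{fig:pinwheel-atoms} supplies. The second is the index bookkeeping: the threshold $n+2$ is chosen so that after a right move drops one index to $n+1$, all three indices still meet the bound $(n-1)+2$ needed to invoke the induction hypothesis at level $n-1$. The only external ingredient I expect to quote is the monotonicity of disjunctive sums (and their behaviour on atoms) from \cite{S2022-hex-cgt}, applied here to a ternary combining function; if a ternary version is not stated there explicitly, the binary argument carries over verbatim. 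That monotonicity step is the one place where I would be careful to make sure the cited machinery is invoked correctly.
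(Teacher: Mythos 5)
Your proof is correct and follows essentially the same route as the paper's: induction on $n$, using the atomic pinwheel identities and the monotonicity of the pinwheel composition to show that Left's options $a,b,c$ of $T_n$ are handled, and the superswitch right-option structure plus the induction hypothesis to handle Right's options. The only difference is cosmetic: you strengthen the induction statement to asymmetric index triples $j,k,l\geq n+2$, whereas the paper keeps the symmetric statement and instead observes $T'_{n-1}\leq H$ for each right option $H$ by monotonicity, chaining this with $T_{n-1}\leq T'_{n-1}$ from the induction hypothesis.
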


\begin{proof}
  Informally, the proof can be summarized by saying that even after
  Right gets $n$ free moves in $T'_n$, Left can still set all 3 of
  the pinwheel's superswitches to atomic values of Left's choice,
  thereby achieving any desired outcome $a$, $b$, or $c$.

  More formally, we first note that $a,b\leq \g{a,b|\g{a,b|a}} = G_2$,
  and therefore $\pinwheel(x,y,z)\leq \pinwheel(G_2,G_2,G_2) = T'_2$
  for all $x,y,z\in\s{a,b}$. By the identities in
  Figure~\ref{fig:pinwheel-atoms}, it follows that $a,b,c\leq T'_0$,
  and therefore also $a,b,c\tri T'_0$.

  We prove the lemma by induction. The base case is clear since
  $T_0=\bot$. For the induction step, consider $n>0$. To show $T_n
  \leq T'_n$, first consider any left option $x$ of $T_n$. Then
  $x\in\s{a,b,c}$, and therefore $x\tri T'_0\leq T'_n$ as
  desired. Next, consider any right option $H$ of $T'_n =
  \pinwheel(G_{n+2},G_{n+2},G_{n+2})$. Using the canonical form of
  $G_{n+2}$, there are exactly three such right options, namely
  $\pinwheel(G_{n+1},G_{n+2},G_{n+2})$,
  $\pinwheel(G_{n+2},G_{n+1},G_{n+2})$, and
  $\pinwheel(G_{n+2},G_{n+2},G_{n+1})$. In all three cases, it is the
  case that\linebreak $\pinwheel(G_{n+1},G_{n+1},G_{n+1}) \leq H$, or
  in other words, $T'_{n-1} \leq H$. By the induction hypothesis, we know
  $T_{n-1} \leq T'_{n-1}$, and since $T_{n-1}$ is a right option of
  $T_n$, it follows that $T_n\tri H$ as desired.
\end{proof}

\subsection{Application: No single context verifies connects-both templates}
\label{ssec:no-single-context}

We can use the Hex tripleswitches to prove something that was claimed
in Section~\ref{ssec:connects-both}, namely, that there is no single
Hex context for a 3-terminal position such that Black wins if and only
if the position has the connects-both property. Here, by a ``Hex
context for a 3-terminal position'', we mean any position $C$ (atomic
or not) on a Hex board with a ``hole'' into which a 3-terminal
position $G$ fits, as suggested by the following illustration:
\[
\begin{hexboard}[scale=0.35]
  \rotation{-30}
  \begin{pgfonlayer}{edges}
    \fill[black!10]
    \coord(-10,-7) -- \coord(-10,9) -- \coord(8,9) -- \coord(8,-7) -- cycle
    (300:2) -- (240:2) -- (180:2) -- (120:2) -- (60:2) -- (0:2) -- cycle;
  \end{pgfonlayer}
  \edge[\nw](-10,-7)(8,-7)
  \edge[\ne](8,-7)(8,9)
  \edge[\se](8,9)(-10,9)
  \edge[\sw](-10,9)(-10,-7)
  \fill[black] (0:2) -- (0:2.5) -- (60:2.5) -- (60:2) -- cycle;
  \fill[black] (120:2) -- (120:2.5) -- (180:2.5) -- (180:2) -- cycle;
  \fill[black] (240:2) -- (240:2.5) -- (300:2.5) -- (300:2) -- cycle;
  \fill[white] (60:2) -- (60:2.5) -- (120:2.5) -- (120:2) -- cycle;
  \fill[white] (180:2) -- (180:2.5) -- (240:2.5) -- (240:2) -- cycle;
  \fill[white] (300:2) -- (300:2.5) -- (360:2.5) -- (360:2) -- cycle;
  \draw (0:2) -- (60:2) -- (120:2) -- (180:2) -- (240:2) -- (300:2) -- cycle;
  \draw (0:2.5) -- (60:2.5) -- (120:2.5) -- (180:2.5) -- (240:2.5) -- (300:2.5) -- cycle;
  \path (150:3) node {$1$};
  \path (30:3) node {$2$};
  \path (270:3) node {$3$};
  \path (0,0) node {$G$};
  \path (180:5) node {$C$};
\end{hexboard}
\]
If $C$ is such a context and $G$ is a 3-terminal position, we write
$G\plusctx C$ for the operation of plugging $G$ into the hole in the
context.

\begin{proposition}
  There is no Hex context $C$ with the property that for all
  3-terminal Hex positions $G$, we have $G\plusctx C \eq\top$ if and
  only if $G\eq\top$.
\end{proposition}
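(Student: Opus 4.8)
The plan is to argue by contradiction, using the Hex tripleswitches of Section~\ref{ssec:tripleswitches} as witnesses. Suppose such a context $C$ exists; in particular, applying the hypothesis with $G=\top$, we get $\top\plusctx C\eq\top$. The key structural observation is that plugging a 3-terminal position into the hole of $C$ is an instance of the disjunctive sum of Definition~\ref{def:sum}: there is a fixed finite game form $D$ for the region outside the hole --- with the two Black board edges (identified), the two White board edges (identified), and the six boundary stones of the hole as its terminals --- together with its outcome poset $Q$ and a monotone payoff $f\colon P_3\times Q\to\s{\bot,\top}$, such that $G\plusctx C\eq G\plusf D$ for every 3-terminal position $G$, where $f(h,q)=\top$ exactly when Black's two board edges end up connected given that the hole realizes connectivity $h$ among its Black terminals and $D$ ends in state $q$. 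Consequently $G\mapsto G\plusctx C$ is monotone for $\leq$ and respects $\eq$, and $\top\plusf D\eq\top$. Here is where planarity enters: for each $q$, the set $\{h\in P_3\mid f(h,q)=\top\}$ is an up-set of $P_3$ which, if it contains $\top$, also contains at least one of $a,b,c$. Indeed, a simple Black path joining Black's two board edges meets the three-terminal hole only in sub-paths running between two of its three Black terminals, and with only three terminals such a path uses at most one of the pairs $\s{1,2},\s{1,3},\s{2,3}$; so whenever the full tripod $h=\top$ makes $f$ fire, some single pair $h\in\s{a,b,c}$ already does.

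The heart of the proof is then the claim that there is an $m$ with $T_m\plusf D\eq\top$, where $T_m$ is the $m$th ideal tripleswitch. Granting this, we are done: by Lemma~\ref{lem:hex-tripleswitch} we have $T_m\leq T'_m$, so $\top\eq T_m\plusf D\leq T'_m\plusf D\leq\top$ by monotonicity, whence $T'_m\plusctx C\eq T'_m\plusf D\eq\top$. But, as noted just before Lemma~\ref{lem:hex-tripleswitch}, the only atoms occurring in the canonical form of $T'_m$ are $\bot,a,b,c$, so that canonical form is not $[\top]$ and hence $T'_m\neqq\top$. Since $T'_m$ is a 3-terminal Hex position with $T'_m\plusctx C\eq\top$ but $T'_m\neqq\top$, this contradicts the assumed property of $C$.

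To prove the claim I would proceed just as in the proof of Proposition~\ref{prop:ideal-tripleswitch-cofinal}, by induction on the depth of $D$, showing that if $d\geq\depth(D)+2$ then $\top\leq T_d\plusf D$ (so the claim holds with $m=\depth(D)+2$). Unfolding $\top\leq\top\plusf D$ through the definitions of $\leq$ and of the sum gives, when $D$ is composite, a left option $D^L_\ast$ with $\top\plusf D^L_\ast\eq\top$, together with the fact that every right option $D^R$ again ``leads one step closer to $\top$''; when a relevant sub-game is atomic, the planarity observation above supplies a left option of the corresponding $T_d\plusf(-)$ of value $[\top]$, namely $h_0\plusf(-)$ for a suitable $h_0\in\s{a,b,c}$. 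Feeding these facts, together with the recursive shape $T_d=\g{a,b,c|T_{d-1}}$, into the induction hypothesis --- applied at sub-games of $D$ of smaller depth, for both $T_d$ and $T_{d-1}$ --- one checks that $T_d\plusf D$ has a left option equivalent to $\top$ and that all of its right options, namely $T_{d-1}\plusf D$ and the various $T_d\plusf D^R$, satisfy $\tri\top$; this is exactly $\top\leq T_d\plusf D$.

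The step I expect to be the main obstacle is this last induction --- in particular, keeping the bookkeeping of the ``cushion'' $d$ coherent as the recursion descends into the options of $D$ while at certain steps only lowering the tripleswitch index from $d$ to $d-1$, so as to confirm that a cushion exceeding $\depth(D)$ genuinely suffices. This is the formal content of the informal statement that Left ``can always set the pinwheel in time''. Everything else --- the monotonicity of disjunctive sums, the identification of $G\plusctx C$ with a sum $G\plusf D$, and the passage from $T_m$ to the Hex-realizable $T'_m$ via Lemma~\ref{lem:hex-tripleswitch} --- is routine.
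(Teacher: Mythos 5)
Your route is, at bottom, the paper's argument recast in the language of sums and the order relations: both proofs feed the assumed context a Hex tripleswitch whose index exceeds the number of empty cells of $C$ by two, both use Lemma~\ref{lem:hex-tripleswitch} to pass from the ideal tripleswitch $T_m$ to its Hex realization $T'_m$, and both turn on the same observation about exhausted contexts (if $\top$ wins in an atomic context, then so does one of $a,b,c$). Where the paper argues with an explicit strategy --- Black answers context moves by the second-player winning strategy for $\top\plusctx C$ and answers tripleswitch moves by arbitrary context moves, invoking monotonicity --- you instead prove the abstract inequality $\top\leq T_d\plusf D$ for $d\geq\depth(D)+2$ by induction, in the style of Proposition~\ref{prop:ideal-tripleswitch-cofinal}. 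The induction you defer does go through with exactly your cushion: unfolding $\top\leq\top\plusf D$ yields, when $D$ is composite, a left option $D^L_*$ with $\top\leq\top\plusf D^L_*$ and, for every $D^R$, the relation $\top\tri\top\plusf D^R$; the unique right option that lowers the switch index, $T_{d-1}\plusf D$, is met by the left option $T_{d-1}\plusf D^L_*$, so each drop of the index is matched by a drop of $\depth$, and in the atomic base case the atom property of $f$ supplies the winning left option $x\plusf D$ with $x\in\s{a,b,c}$. So your packaging costs some bookkeeping that the paper's strategy argument avoids, but it is sound.

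The one incorrect step is your justification of the ``planarity observation''. It is not true that a simple black path joining Black's board edges uses at most one of the pairs $\s{1,2},\s{1,3},\s{2,3}$ inside the hole: a simple path may leave the hole through one stone of a terminal and re-enter later through another stone of the same or a different terminal, thereby using two pairs, and then no single pair reproduces that particular path. The conclusion you need is nevertheless true and is exactly the paper's first observation; the correct argument uses only the two outside segments of the path, from one black board edge to its first contact with a terminal, say $i$, and from its last contact with a terminal, say $j$, to the other black board edge. These segments lie entirely in the completed context, so either $i=j$ and even $\bot$ in the hole wins, or else the single pair $\s{i,j}$, i.e.\ one of $a,b,c$, already wins. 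With that repair, and with the deferred induction written out, your proof is complete.
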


\begin{proof}
  We first observe that if $C$ is an \emph{atomic} context (i.e., if
  $C$ has no empty cells) and if $\top\plusctx C$ is winning for Black,
  then one is of $a\plusctx C$, $b\plusctx C$, or $c\plusctx C$ is
  already winning. To see this, consider any Black winning path in
  $\top\plusctx C$. If the path lies completely outside the region $G$,
  the claim is trivial and $\bot\plusctx C$ is already winning in this
  case. Otherwise, since the path must both enter and exit the region
  $G$, one of $G$'s terminals must be connected to the top board edge
  and one of $G$'s terminals must be connected to the bottom board
  edge. It follows that one of $a\plusctx C$, $b\plusctx C$, or
  $c\plusctx C$ is winning.

  Now suppose, for the purpose of obtaining a contradiction, that
  there is a Hex context $C$ with the claimed property. By assumption,
  Black has a second-player winning strategy in $\top\plusctx C$. Let
  $d$ be the depth of $C$, i.e., the number of empty cells in it. Let
  $T'_{d+2}$ be the Hex tripleswitch defined in
  Section~\ref{ssec:tripleswitches}. Recall that $T'_{d+2}<\top$. We claim
  that Black has a second-player winning strategy in $T'_{d+2}\plusctx C$,
  contradicting our assumption about $C$.

  By Lemma~\ref{lem:hex-tripleswitch}, we have $T_{d+2}\leq T'_{d+2}$,
  and therefore it is sufficient to show that Black has a
  second-player winning strategy in $T_{d+2}\plusctx C$. Black's
  strategy is as follows. If White plays in $C$, Black responds in $C$
  by following the second-player winning strategy for $\top\plusctx
  C$. If White plays in $T_{d+2}$, Black ignores White's move and
  makes an arbitrary move in $C$. Due to monotonicity, this move can
  only help Black in $C$, i.e., Black continues to have a
  second-player winning strategy in $\top\plusctx C$. In this way,
  after at most $d$ moves by each player, the context $C$ reaches an
  atomic position, say $C_0$. Meanwhile, Black has not yet made any
  moves in the $T_d$ component of the game, and White has made at most
  $d$ moves there, so that component's value is $T_k$ for some $k\geq
  2$.  In summary, at the end of this phase, the game's value is
  $T_k\plusctx C_0$.
  
  By assumption, $\top\plusctx C_0$ is a win for Black. By our first
  observation above, there is an atom $x\in\s{a,b,c}$ such that
  $x\plusctx C_0$ is a win for Black, i.e., $x\plusctx C_0\eq\top$.
  On the other hand, by properties of tripleswitches, we have $x \leq
  T_2\leq T_k$, and therefore $T_k\plusctx C_0\eq\top$. It follows
  that Black has a winning strategy from the current position, which
  finishes the proof.
\end{proof}

\section{A database of Hex realizable 3-terminal values}
\label{sec:database}

In Section~\ref{ssec:superswitch-realizable}, we used the fact that
the abstract game value $\g{a,b|a}$ is realizable as a specific
3-terminal Hex position. Similarly, in
Section~\ref{ssec:simpleswitch}, we used a specific Hex realization of
the value $\g{a,\g{\top|b}|a}$. Both of these Hex positions are
non-obvious, and the reader may wonder how we found them. More
generally: given a passable game value $G$ over the 3-terminal
outcome poset, what is a practical method for finding a Hex
realization of $G$?

While we do not currently know how to solve this problem in general,
we have created a tool that has proven useful in practice. Namely, we
created a database of more than a million Hex realizable 3-terminal
values.  In Section~\ref{ssec:database-create}, we will briefly
explain how the database was created, and in
Section~\ref{ssec:database-use}, we will illustrate how to use it.

\subsection{How the database was created}
\label{ssec:database-create}

When trying to create a list of Hex realizable values, the first idea
is to enumerate many small 3-terminal Hex positions and record their
values.  However, this idea does not work well. The problem is that
even if one restricts attention to positions that are small enough to
be efficiently evaluated (in our case, positions with up to
approximately 15 empty cells), the number of such positions is
astronomical, and most of them yield values that are either repetitive
or much too complicated to be of interest.

Instead, we started from a small initial set of realizable 3-terminal
values, and then repeatedly closed the set under symmetries, duals,
and the pinwheel construction of Definition~\ref{def:pinwheel}. After
each iteration, we reduced all values to canonical form, weeded out
duplicates, and removed values that were unreasonably complicated (we
arbitrarily chose to eliminate values whose canonical forms contained
more than 50 atoms). We do not claim the method to be exhaustive (it
is not clear whether all realizable 3-terminal values can be obtained
by repeated applications of pinwheel composition). Nevertheless, and
perhaps surprisingly, the method generated a very large number of
interesting Hex-realizable values with a reasonable computational
effort.

\subsection{How to use the database}
\label{ssec:database-use}

\begin{figure}
  \input{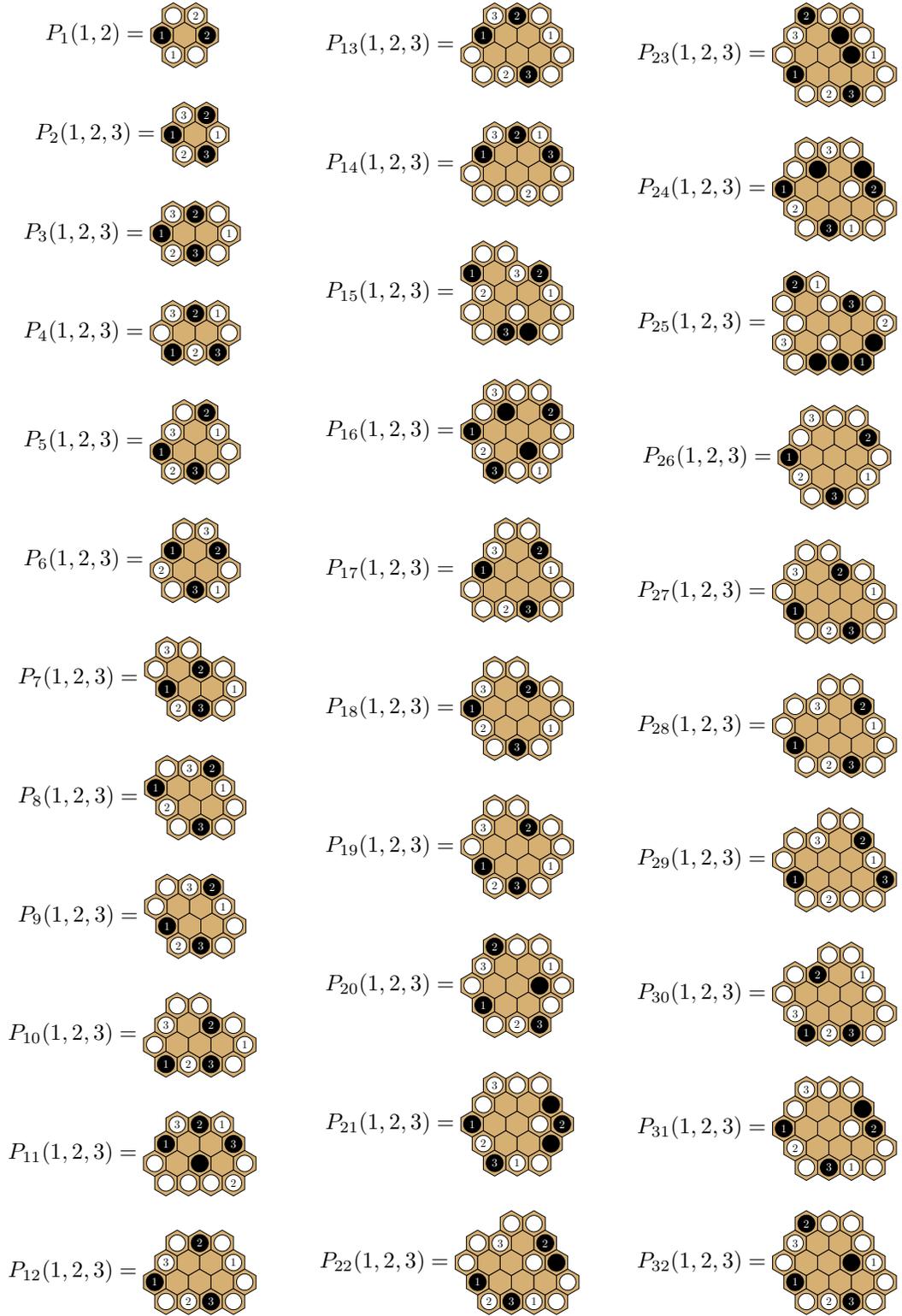}
  \caption{Some primary 3-terminal positions}
  \label{fig:primaries}
\end{figure}

The database is available from {\cite{database}}. It consists of two
files in a format that is readable by both humans and machines. The
file {\sf primaries.txt} contains a list of 122 so-called
\emph{primary positions}. The first 32 of these are shown in
Figure~\ref{fig:primaries}. We write $P_n$ for the $n$th primary
position, and $P\opp_n$ for its dual, which is obtained by switching
the roles of black and white. To facilitate taking duals, we have
labelled both the black terminals and the white terminals of the
primary positions. For example:
\[
P_5(1,2,3) =
\begin{hexboard}[baseline={($(current bounding box.center)-(0,1ex)$)},scale=0.7]
  \rotation{-30}
  \hex(0,2)\black(0,2)\sflabel{1}
  \hex(0,3)\white(0,3)
  \hex(1,1)\white(1,1)
  \hex(1,2)
  \hex(1,3)\black(1,3)\sflabel{3}
  \hex(2,0)\white(2,0)
  \hex(2,1)
  \hex(2,2)
  \hex(2,3)\white(2,3)
  \hex(3,0)\black(3,0)\sflabel{2}
  \hex(3,1)\white(3,1)
  \hex(3,2)\white(3,2)
\end{hexboard}
\qquad
P\opp_5(1,2,3) =
\begin{hexboard}[baseline={($(current bounding box.center)-(0,1ex)$)},scale=0.7]
  \rotation{-30}
  \hex(0,2)\white(0,2)
  \hex(0,3)\black(0,3)\sflabel{2}
  \hex(1,1)\black(1,1)\sflabel{3}
  \hex(1,2)
  \hex(1,3)\white(1,3)
  \hex(2,0)\black(2,0)
  \hex(2,1)
  \hex(2,2)
  \hex(2,3)\black(2,3)
  \hex(3,0)\white(3,0)
  \hex(3,1)\black(3,1)\sflabel{1}
  \hex(3,2)\black(3,2)
\end{hexboard}
\]
We write $P_n(x,y,z)$ for the analogous position whose terminals are
numbered $x,y,z$ instead of $1,2,3$. For example:
\[
P_5(x,y,z) =
\begin{hexboard}[baseline={($(current bounding box.center)-(0,1ex)$)},scale=0.7]
  \rotation{-30}
  \hex(0,2)\black(0,2)\label{$x$}
  \hex(0,3)\white(0,3)
  \hex(1,1)\white(1,1)
  \hex(1,2)
  \hex(1,3)\black(1,3)\label{$z$}
  \hex(2,0)\white(2,0)
  \hex(2,1)
  \hex(2,2)
  \hex(2,3)\white(2,3)
  \hex(3,0)\black(3,0)\label{$y$}
  \hex(3,1)\white(3,1)
  \hex(3,2)\white(3,2)
\end{hexboard}
\qquad
P\opp_5(x,y,z) =
\begin{hexboard}[baseline={($(current bounding box.center)-(0,1ex)$)},scale=0.7]
  \rotation{-30}
  \hex(0,2)\white(0,2)
  \hex(0,3)\black(0,3)\label{$y$}
  \hex(1,1)\black(1,1)\label{$z$}
  \hex(1,2)
  \hex(1,3)\white(1,3)
  \hex(2,0)\black(2,0)
  \hex(2,1)
  \hex(2,2)
  \hex(2,3)\black(2,3)
  \hex(3,0)\white(3,0)
  \hex(3,1)\black(3,1)\label{$x$}
  \hex(3,2)\black(3,2)
\end{hexboard}
\]
We also use the superscript $\times$ as a \emph{chirality} annotation,
indicating that the position should be mirrored, so that its terminals
are arranged counterclockwise instead of clockwise. Finally, the
notation $\Eq(x,y)$ denotes two terminals that are connected to each
other, and similarly $\Eq(x,y,z)$.  Thus:
\[
P_5(x,y,z)\xx =
\begin{hexboard}[baseline={($(current bounding box.center)-(0,1ex)$)},scale=0.7,xscale=-1]
  \rotation{-30}
  \hex(0,2)\black(0,2)\label{$x$}
  \hex(0,3)\white(0,3)
  \hex(1,1)\white(1,1)
  \hex(1,2)
  \hex(1,3)\black(1,3)\label{$z$}
  \hex(2,0)\white(2,0)
  \hex(2,1)
  \hex(2,2)
  \hex(2,3)\white(2,3)
  \hex(3,0)\black(3,0)\label{$y$}
  \hex(3,1)\white(3,1)
  \hex(3,2)\white(3,2)
\end{hexboard}
\qquad
\Eq(x,y) = 
\begin{hexboard}[baseline={($(current bounding box.center)-(0,0.55ex)$)},scale=0.8]
  \rotation{-30}
  \draw[terminal] \coord(0,0) -- \coord(-1,0) node[left] {$x$};
  \draw[terminal] \coord(0,0) -- \coord(1,0) node[right] {$y$};
\end{hexboard}
\qquad
\Eq(x,y,z) = 
\begin{hexboard}[baseline={($(current bounding box.center)-(0,0.55ex)$)},scale=0.8]
  \rotation{-30}
  \draw[terminal] \coord(0,0) -- \coord(0,-1) node[left] {$x$};
  \draw[terminal] \coord(0,0) -- \coord(1,0) node[right] {$y$};
  \draw[terminal] \coord(0,0) -- \coord(-1,1) node[left] {$z$};
\end{hexboard}
\]
The file {\sf realizable-3terminal.txt} describes one Hex position per
line, in the format shown Figure~\ref{fig:realizable}. Each line
contains three pieces of information about a position: its number of
empty cells, its value in canonical form, and a description of the
position itself. The latter expresses each position as a composition
of primary positions.
\begin{figure}
  \[
  \begin{array}{@{}lll}
    0 & \quotemath{\bot} & G(1,2,3) = \Empty \\
    0 & \quotemath{\top} & G(1,2,3) = \Eq(1,2,3) \\
    0 & \quotemath{a} & G(1,2,3) = \Eq(2,3) \\
    0 & \quotemath{b} & G(1,2,3) = \Eq(1,3) \\
    0 & \quotemath{c} & G(1,2,3) = \Eq(1,2) \\
    1 & \quotemath{\g{a|\bot}} & G(1,2,3) = P_{1}(2,3) \\
    1 & \quotemath{\g{\top|a}} & G(1,2,3) = \Eq(2,3), P_{1}(1,2) \\
    2 & \quotemath{\g{\top|\g{a|\bot}}} & G(1,2,3) = P_{3}(1,2,3) \\
    2 & \quotemath{\g{\g{\top|b}|\bot}} & G(1,2,3) = P_{3}\opp(2,3,1) \\
    3 & \quotemath{\g{a,\g{\top|b}|\bot}} & G(1,2,3) = P_{5}(1,2,3) \\
    3 & \quotemath{\g{a,\g{\top|c}|\bot}} & G(1,2,3) = P_{5}(1,3,2)\xx \\
    5 & \quotemath{\g{b,\g{\top|a,\g{c|\bot}}|\bot}} & G(1,2,3) = P_{3}\opp(1,4,3), P_{5}(2,1,4)\xx \\
    6 & \quotemath{\g{a,\g{\top|b}|a}} & G(1,2,3) = P_{25}\opp(2,1,3)\xx \\
    12&  \quotemath{\g{a,b|a}} & G(1,2,3) = P_{1}(3,4), P_{3}\opp(4,5,3), P_{3}(2,4,6), P_{5}(2,1,6)\xx, P_{9}(2,5,4) \\
    14& \quotemath{\g{a,b|\g{a,b|\g{a|\bot}}}} & G(1,2,3) = P_{5}\opp(4,5,1), P_{15}\opp(4,6,2)\xx, P_{25}\opp(6,5,3)\xx \\
  \end{array}
  \]

  \caption{Excerpt from the 3-terminal database}
  \label{fig:realizable}
\end{figure}

For example, consider the second-to-last line of
Figure~\ref{fig:realizable}. It states that the game value $\g{a,b|a}$
is realized by the position
\[
G(1,2,3) =
P_{1}(3,4), P_{3}\opp(4,5,3), P_{3}(2,4,6), P_{5}(2,1,6)\xx, P_{9}(2,5,4).
\]
To decipher this notation, we first draw the five primary positions
indicated on the right-hand side, dualizing and mirroring them as
indicated by the ``op'' and ``$\times$'' annotations:
\[
P_{1}(3,4) = 
\begin{hexboard}[baseline={($(current bounding box.center)-(0,1ex)$)},scale=0.65]
  \rotation{-30}
  \hex(0,1)\black(0,1)\sflabel{3}
  \hex(0,2)\white(0,2)
  \hex(1,0)\white(1,0)
  \hex(1,1)
  \hex(1,2)\white(1,2)
  \hex(2,0)\white(2,0)
  \hex(2,1)\black(2,1)\sflabel{4}
\end{hexboard}
\qquad
P_{3}(2,4,6) = 
\begin{hexboard}[baseline={($(current bounding box.center)-(0,1ex)$)},scale=0.65]
  \rotation{-30}
  \hex(0,1)\black(0,1)\sflabel{2}
  \hex(0,2)\white(0,2)
  \hex(1,0)\white(1,0)
  \hex(1,1)
  \hex(1,2)\black(1,2)\sflabel{6}
  \hex(2,0)\black(2,0)\sflabel{4}
  \hex(2,1)
  \hex(2,2)\white(2,2)
  \hex(3,0)\white(3,0)
  \hex(3,1)\white(3,1)
\end{hexboard}
\qquad
P\opp_{3}(4,5,3) = 
\begin{hexboard}[baseline={($(current bounding box.center)-(0,1ex)$)},scale=0.65]
  \rotation{-30}
  \hex(0,1)\white(0,1)
  \hex(0,2)\black(0,2)\sflabel{5}
  \hex(1,0)\black(1,0)\sflabel{3}
  \hex(1,1)
  \hex(1,2)\white(1,2)
  \hex(2,0)\white(2,0)
  \hex(2,1)
  \hex(2,2)\black(2,2)
  \hex(3,0)\black(3,0)
  \hex(3,1)\black(3,1)\sflabel{4}
\end{hexboard}
\]
\[
P_{5}(2,1,6)\xx = 
\begin{hexboard}[baseline={($(current bounding box.center)-(0,1ex)$)},xscale=-1,scale=0.65]
  \rotation{-30}
  \hex(0,2)\black(0,2)\sflabel{2}
  \hex(0,3)\white(0,3)
  \hex(1,1)\white(1,1)
  \hex(1,2)
  \hex(1,3)\black(1,3)\sflabel{6}
  \hex(2,0)\white(2,0)
  \hex(2,1)
  \hex(2,2)
  \hex(2,3)\white(2,3)
  \hex(3,0)\black(3,0)\sflabel{1}
  \hex(3,1)\white(3,1)
  \hex(3,2)\white(3,2)
\end{hexboard}
\qquad
P_{9}(2,5,4) = 
\begin{hexboard}[baseline={($(current bounding box.center)-(0,1ex)$)},scale=0.65]
  \rotation{-30}
  \hex(0,1)\white(0,1)
  \hex(0,2)\black(0,2)\sflabel{2}
  \hex(0,3)\white(0,3)
  \hex(1,0)\white(1,0)
  \hex(1,1)
  \hex(1,2)
  \hex(1,3)\black(1,3)\sflabel{4}
  \hex(2,0)\white(2,0)
  \hex(2,1)
  \hex(2,2)
  \hex(2,3)\white(2,3)
  \hex(3,0)\black(3,0)\sflabel{5}
  \hex(3,1)\white(3,1)
  \hex(3,2)\white(3,2)
\end{hexboard}
\]
Next, we connect like-numbered terminals and use $1$, $2$, and $3$ as
the exterior terminals of the position $G(1,2,3)$, like this:
\[
\scalebox{1}{$
  \begin{hexboard}[baseline={($(current bounding box.center)-(0,1ex)$)},scale=-0.65]
    \begin{scope}[shift={(1.5cm,0.35cm)},xscale=-1]
      \rotation{-120}
      \hex(0,2)\black(0,2)\sflabel{2}
      \hex(0,3)\white(0,3)
      \hex(1,1)\white(1,1)
      \hex(1,2)
      \hex(1,3)\black(1,3)\sflabel{6}
      \hex(2,0)\white(2,0)
      \hex(2,1)
      \hex(2,2)
      \hex(2,3)\white(2,3)
      \hex(3,0)\black(3,0)\sflabel{1}
      \hex(3,1)\white(3,1)
      \hex(3,2)\white(3,2)
      \node[circle, minimum size=0.25cm, inner sep=0] (p5x-2) at \coord(0,2) {};
      \node[circle, minimum size=0.25cm, inner sep=0] (p5x-6) at \coord(1,3) {};
      \node[circle, minimum size=0.25cm, inner sep=0] (p5x-1) at \coord(3,0) {};
    \end{scope}
    \begin{scope}[shift={(5cm,0cm)}]
      \rotation{-120}
      \hex(0,1)\black(0,1)\sflabel{2}
      \hex(0,2)\white(0,2)
      \hex(1,0)\white(1,0)
      \hex(1,1)
      \hex(1,2)\black(1,2)\sflabel{6}
      \hex(2,0)\black(2,0)\sflabel{4}
      \hex(2,1)
      \hex(2,2)\white(2,2)
      \hex(3,0)\white(3,0)
      \hex(3,1)\white(3,1)
      \node[circle, minimum size=0.25cm, inner sep=0] (p3-2) at \coord(0,1) {};
      \node[circle, minimum size=0.25cm, inner sep=0] (p3-6) at \coord(1,2) {};
      \node[circle, minimum size=0.25cm, inner sep=0] (p3-4) at \coord(2,0) {};
    \end{scope}
    \begin{scope}[shift={(7.5cm,2.5cm)}]
      \rotation{-60}
      \hex(0,1)\white(0,1)
      \hex(0,2)\black(0,2)\sflabel{2}
      \hex(0,3)\white(0,3)
      \hex(1,0)\white(1,0)
      \hex(1,1)
      \hex(1,2)
      \hex(1,3)\black(1,3)\sflabel{4}
      \hex(2,0)\white(2,0)
      \hex(2,1)
      \hex(2,2)
      \hex(2,3)\white(2,3)
      \hex(3,0)\black(3,0)\sflabel{5}
      \hex(3,1)\white(3,1)
      \hex(3,2)\white(3,2)
      \node[circle, minimum size=0.25cm, inner sep=0] (p9-2) at \coord(0,2) {};
      \node[circle, minimum size=0.25cm, inner sep=0] (p9-4) at \coord(1,3) {};
      \node[circle, minimum size=0.25cm, inner sep=0] (p9-5) at \coord(3,0) {};
    \end{scope}
    \begin{scope}[shift={(11cm,2.3cm)}]
      \rotation{-120}
      \hex(0,1)\white(0,1)
      \hex(0,2)\black(0,2)\sflabel{5}
      \hex(1,0)\black(1,0)\sflabel{3}
      \hex(1,1)
      \hex(1,2)\white(1,2)
      \hex(2,0)\white(2,0)
      \hex(2,1)
      \hex(2,2)\black(2,2)
      \hex(3,0)\black(3,0)
      \hex(3,1)\black(3,1)\sflabel{4}
      \node[circle, minimum size=0.25cm, inner sep=0] (p3op-5) at \coord(0,2) {};
      \node[circle, minimum size=0.25cm, inner sep=0] (p3op-3) at \coord(1,0) {};
      \node[circle, minimum size=0.25cm, inner sep=0] (p3op-4) at \coord(3,1) {};
    \end{scope}
    \begin{scope}[shift={(13.5cm,0cm)}]
      \rotation{195}
      \hex(0,1)\black(0,1)\sflabel{3}
      \hex(0,2)\white(0,2)
      \hex(1,0)\white(1,0)
      \hex(1,1)
      \hex(1,2)\white(1,2)
      \hex(2,0)\white(2,0)
      \hex(2,1)\black(2,1)\sflabel{4}
      \node[circle, minimum size=0.25cm, inner sep=0] (p1-3) at \coord(0,1) {};
      \node[circle, minimum size=0.25cm, inner sep=0] (p1-4) at \coord(2,1) {};
    \end{scope}
    \node (t1) at (-1.5cm,-0.8cm) {$1$};
    \node (t2) at (-1.5cm,2.2cm) {$2$};
    \node (t3) at (16.5cm,2.4cm) {$3$};
    \coordinate (l2) at (0cm,2.0cm);
    \coordinate (l3) at (0,2.3cm);
    \coordinate (l4) at (0cm,0cm);
    \pgfsetcornersarced{\pgfpoint{0.5cm}{0.5cm}}
    \draw [terminal] (p3op-3) -- (l3 -| p1-3) -- (t3);
    \draw [terminal] (l3 -| p1-3) -- (p1-3);
    \draw [terminal] (p3-4) -- (l4 -| p9-4) -- (l4 -| p3op-4) -- (p1-4);
    \draw [terminal] (l4 -| p9-4) -- (p9-4);
    \draw [terminal] (l4 -| p3op-4) -- (p3op-4);
    \draw [terminal] (p9-5) -- (p3op-5);
    \draw [terminal] (t2) -- (l2 -| p5x-2) -- (l2 -| p3-2) -- (p9-2);
    \draw [terminal] (l2 -| p5x-2) -- (p5x-2);
    \draw [terminal] (l2 -| p3-2) -- (p3-2);
    \draw [terminal] (t1) -- (p5x-1);
    \draw [terminal] (p5x-6) to[out=5, in=175] (p3-6);
  \end{hexboard}
  $}
\]
Finally, if desired, we can manually try to fit the position into a
small amount of space. The following is a compact representation of
the value $\g{a,b|a}$. We have numbered the internal terminals to make
it easier to compare the position with the one above; note that
terminal 5 is not a stone, but a direct connection between adjacent
cells.  This realization of the superswitch is in fact the one that we
first saw in Section~\ref{ssec:superswitch-realizable}.
\[
\begin{hexboard}[scale=0.65]
  \rotation{-30}
  \foreach\i in {4,...,12} {\hex(\i,-1)\black(\i,-1)}
  \hex(13,-1)\black(13,-1)\sflabel{1}
  \hex(1,2)\white(1,2)
  \hex(1,3)\white(1,3)
  \hex(1,4)\black(1,4)\sflabel{3}
  \hex(1,5)\white(1,5)
  \hex(2,1)\white(2,1)
  \hex(2,2)\black(2,2)
  \hex(2,3)
  \hex(2,4)\black(2,4)
  \hex(2,5)\white(2,5)
  \hex(3,0)\white(3,0)
  \hex(3,1)\black(3,1)\sflabel{4}
  \hex(3,2)
  \hex(3,3)
  \hex(3,4)\white(3,4)
  \hex(3,5)\white(3,5)
  \hex(4,0)\white(4,0)
  \hex(4,1)\black(4,1)
  \hex(4,2)\white(4,2)
  \node[fill=black, inner sep=0mm, minimum height=3.5mm] at \coord(3.5,3) {{\footnotesize\color{white}\sf 5}};
  \hex(4,3)
  \hex(4,4)
  \hex(4,5)\white(4,5)
  \hex(5,0)\white(5,0)
  \hex(5,1)\black(5,1)
  \hex(5,2)
  \hex(5,3)
  \hex(5,4)\black(5,4)\sflabel{2}
  \hex(5,5)\white(5,5)
  \hex(6,0)\white(6,0)
  \hex(6,1)\black(6,1)
  \hex(6,2)\black(6,2)\sflabel{4}
  \hex(6,3)\white(6,3)
  \hex(6,4)\black(6,4)
  \hex(6,5)\white(6,5)
  \hex(7,0)\white(7,0)
  \hex(7,1)
  \hex(7,2)
  \hex(7,3)\black(7,3)\sflabel{2}
  \hex(7,4)\black(7,4)
  \hex(7,5)\white(7,5)
  \hex(8,0)\white(8,0)
  \hex(8,1)\black(8,1)\sflabel{6}
  \hex(8,2)
  \hex(8,3)\white(8,3)
  \hex(8,4)\black(8,4)
  \hex(8,5)\white(8,5)
  \hex(9,0)\white(9,0)
  \hex(9,1)
  \hex(9,2)
  \hex(9,3)\white(9,3)
  \hex(9,4)\black(9,4)
  \hex(9,5)\white(9,5)
  \hex(10,0)\white(10,0)
  \hex(10,1)\white(10,1)
  \hex(10,2)\black(10,2)\sflabel{1}
  \hex(10,3)\white(10,3)
  \hex(10,4)\black(10,4)\sflabel{2}
  \hex(10,5)\white(10,5)
  \hex(11,0)\white(11,0)
  \hex(11,1)\black(11,1)
  \hex(12,0)\black(12,0)
\end{hexboard}
\]
As another example, consider the realization of $\g{a|\g{\top|b}|a}$.
Figure~\ref{fig:realizable} states that this value is realized by the
Hex position $G(1,2,3)=P\opp_{25}(2,1,3)\xx$. Looking up the primary
position $P_{25}$ in Figure~\ref{fig:primaries} and applying the
indicated transformations, we obtain the realization
\[
G(1,2,3) = P\opp_{25}(2,1,3)\xx = ~~
\begin{hexboard}[baseline={($(current bounding box.center)-(0,1ex)$)},scale=0.65,xscale=-1]
  \rotation{90}
  \hex(0,3)\black(0,3)\sflabel{3}
  \hex(0,4)\black(0,4)
  \hex(1,1)\black(1,1)
  \hex(1,2)\black(1,2)
  \hex(1,3)
  \hex(1,4)\white(1,4)
  \hex(2,0)\white(2,0)\
  \hex(2,1)
  \hex(2,2)
  \hex(2,3)\black(2,3)
  \hex(2,4)\white(2,4)
  \hex(3,0)\black(3,0)\sflabel{2}
  \hex(3,1)\black(3,1)
  \hex(3,2)
  \hex(3,3)
  \hex(3,4)\white(3,4)
  \hex(4,1)\white(4,1)
  \hex(4,2)
  \hex(4,3)\white(4,3)
  \hex(5,1)\black(5,1)
  \hex(5,2)\black(5,2)\sflabel{1}
\end{hexboard}
\]
Modulo some padding, this is exactly the simpleswitch of
Section~\ref{ssec:simpleswitch}.

\begin{remark}
  Because of Theorem~\ref{thm:hex-universal}, we know that every Hex
  position can be decomposed into the single-cell primaries $P_1$ and
  $P_2$. Therefore, the remaining 120 primary positions are not
  strictly speaking necessary. However, using these additional
  primaries greatly decreases the size of the database, while also
  making it more useable.
\end{remark}

\begin{remark}\label{rem:corners-and-forks}
  From Section~\ref{ssec:corners-and-forks}, we know that the outcome
  posets for some kinds of 3-terminal regions, such as corners and
  forks, are proper quotients of the outcome poset of a generic
  3-terminal region. This means that distinct 3-terminal positions may
  become equivalent when regarded as corners or forks. We can
  therefore take our database of Hex-realizable 3-terminal values and
  extract smaller databases of Hex-realizable corners and forks. The
  1382388 distinct Hex-realizable 3-terminal values in our database
  reduce to 369291 Hex-realizable corners and 40310 Hex-realizable
  forks.
\end{remark}

\section{Application: Verifying pivoting templates}
\label{sec:pivoting}

In addition to edge templates, which guarantee that one or more stones
can be connected to the relevant edge, one can also consider regions
with other related properties. An example of this is a \emph{pivoting
template}. Informally, a pivoting template is a region that includes a
distinguished black stone $A$ and an empty cell $B$, such that Black
can continuously threaten to connect $A$ to the edge until the point
where Black either connects $A$ to the edge or occupies $B$ and
connects $B$ to the edge. Moreover, to be considered a template, the
region should also be minimal with this property.

The following is an example of a pivoting template:
\begin{equation}\label{eqn:pivoting-example}
  \begin{hexboard}[baseline={($(current bounding box.center)-(0,1ex)$)},scale=0.8]
    \rotation{-30}
    \template(6,4)
    \foreach \i in {4,6} {\hex(\i,1)}
    \foreach \i in {3,...,6} {\hex(\i,2)}
    \foreach \i in {2,...,6} {\hex(\i,3)}
    \foreach \i in {1,...,6} {\hex(\i,4)}
    \black(4,1)\sflabel{A}
    \cell(6,1)\sflabel{B}
  \end{hexboard}
\end{equation}
Depending on White's moves, Black may play as follows:
\[
\begin{hexboard}[scale=0.8]
  \rotation{-30}
  \template(6,4)
  \foreach \i in {4,6} {\hex(\i,1)}
  \foreach \i in {3,...,6} {\hex(\i,2)}
  \foreach \i in {2,...,6} {\hex(\i,3)}
  \foreach \i in {1,...,6} {\hex(\i,4)}
  \black(4,1)\sflabel{A}
  \cell(6,1)
  \white(4,2)\sflabel{1}
  \black(3,2)\sflabel{2}
  \white(2,4)\sflabel{3}
  \black(3,3)\sflabel{4}
  \white(3,4)\sflabel{5}
  \black(5,3)\sflabel{6}
  \white(4,3)\sflabel{7}
  \black(6,1)\sflabel{8}
\end{hexboard}
\quad
\begin{hexboard}[scale=0.8]
  \rotation{-30}
  \template(6,4)
  \foreach \i in {4,6} {\hex(\i,1)}
  \foreach \i in {3,...,6} {\hex(\i,2)}
  \foreach \i in {2,...,6} {\hex(\i,3)}
  \foreach \i in {1,...,6} {\hex(\i,4)}
  \black(4,1)\sflabel{A}
  \cell(6,1)
  \white(3,3)\sflabel{1}
  \black(4,3)\sflabel{2}
  \white(4,2)\sflabel{3}
  \black(6,1)\sflabel{4}
\end{hexboard}
\quad
\begin{hexboard}[scale=0.8]
  \rotation{-30}
  \template(6,4)
  \foreach \i in {4,6} {\hex(\i,1)}
  \foreach \i in {3,...,6} {\hex(\i,2)}
  \foreach \i in {2,...,6} {\hex(\i,3)}
  \foreach \i in {1,...,6} {\hex(\i,4)}
  \black(4,1)\sflabel{A}
  \cell(6,1)
  \white(2,4)\sflabel{1}
  \black(3,3)\sflabel{2}
  \white(4,2)\sflabel{3}
  \black(3,2)\sflabel{4}
  \white(3,4)\sflabel{5}
  \black(5,3)\sflabel{6}
  \white(4,3)\sflabel{7}
  \black(6,1)\sflabel{8}
\end{hexboard}
\]
In all cases, Black is threatening to connect $A$ to the edge (meaning
that if White passes, Black could connect $A$ to the edge in a single
move), until Black occupies $B$.

\subsection{Pivoting forks}\label{ssec:pivoting-forks}

We can use combinatorial game theory to formalize the concept of
pivoting. Consider the outcome poset $P_2 = \s{\bot,a,b,\top}$, where
$a$ and $b$ are incomparable. Slightly generalizing the terminology
from Section~\ref{ssec:corners-and-forks}, we will use the term
\emph{fork} for any game over this outcome poset (even for regions
that are not connected to white board edges). Informally, we want to
say that a fork is \emph{pivoting} if Left can continuously threaten
$a$ until the point where either $a$ or $b$ is achieved. To make this
more formal, we need to interpret the phrases ``achieve'',
``threaten'', and ``continuously until'' in combinatorial game theory
terms. We say that a game $G$ \emph{achieves} $b$ if $b\leq G$, and
that $G$ \emph{threatens} $a$ if $a\tri G$. The former means that Left
can guarantee outcome at least $b$, going second. The latter means
that Left can guarantee outcome at least $a$, going first. By
``continuously threaten until'', we mean that $G$ threatens $a$, and
for every right move there is a left response that reestablishes the
threat, until $b$ is achieved. The following recursive definition
makes this precise. Recall from Section~\ref{sec:background-cgt} that
$G^{R(L)}$ means a left option of $G^R$ when $G^R$ is composite, or
$G^R$ itself when $G^R$ is atomic.

\begin{definition}[Pivoting fork]\label{def:pivoting-formal}
  A fork $G$ is called \emph{pivoting} if
  \begin{itemize}
  \item $b\leq G$, or
  \item $a\tri G$ and for all $G^R$, there exists some $G^{R(L)}$ that
    is pivoting.
  \end{itemize}
\end{definition}

\begin{lemma}\label{lem:pivoting-monotone}
  If $G$ is pivoting and $G\leq H$, then $H$ is also pivoting.
\end{lemma}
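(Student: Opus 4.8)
The plan is to prove Lemma~\ref{lem:pivoting-monotone} by induction, following the recursive structure of Definition~\ref{def:pivoting-formal}. The natural quantity to induct on is the depth of $G$, or equivalently we argue by structural induction on $G$ (since the options of $G$ have smaller depth). Given $G$ pivoting and $G\leq H$, we split into the two cases from the definition of ``$G$ is pivoting''.

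First, suppose $b\leq G$. Then since $G\leq H$, transitivity of $\leq$ gives $b\leq H$, so $H$ satisfies the first clause and is pivoting. This case is immediate and requires no induction. The second case is where $a\tri G$ and for all $G^R$ there exists some $G^{R(L)}$ that is pivoting. Here I want to show $H$ is pivoting via the second clause. The first part, $a\tri H$, follows from $a\tri G$ and $G\leq H$ by the transitivity property ``if $G\tri H$ and $H\leq K$ then $G\tri K$'' stated in Section~\ref{sec:background-cgt} (applied with the roles: $a\tri G$ and $G\leq H$ yields $a\tri H$). The remaining obligation is: for every right option $H^R$ of $H$, there exists some $H^{R(L)}$ that is pivoting.

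The main obstacle is precisely this last step — matching up right options of $H$ with right options of $G$. This is the standard ``$H\leq G$-style'' move-matching, but here we have $G\leq H$, so we need to unfold what $G\leq H$ tells us about $H$'s right options. By the definition of $\leq$, from $G\leq H$ we get that for all $H^{(R)}$, $G\tri H^{(R)}$. Fix a right option $H^R$ of $H$; then $G\tri H^R$. By the definition of $\tri$, at least one of the following holds: (i) there exists $G^R$ with $G^R\leq H^R$, or (ii) there exists $(H^R)^L$ with $G\leq (H^R)^L$, or (iii) both $G$ and $H^R$ are atomic with the atoms related — but since $G$ is composite here (it has a right option, being in the second case of a composite-looking situation; more carefully, if $G$ were atomic the second clause of pivoting would be vacuous in its ``for all $G^R$'' part but then $a\tri G$ forces $a\leq G$... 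I should handle the atomic subcase of $G$ separately, noting that if $G$ is atomic and pivoting then either $b\leq G$ handled above, or $a\tri G$ with $G$ atomic, which by the definition of $\tri$ on atoms means $a\leq G$, hence $b$ or... actually $a\tri G$ atomic gives $a\le G$; and separately I then need $H$ pivoting, using $a\leq G\leq H$ so $a\tri H$, and then for right options of $H$ I still must produce pivoting $H^{R(L)}$ — so even the atomic-$G$ case reduces to the same move-matching, and I'll just fold it in). In case (i), $G^R\leq H^R$: by the inductive hypothesis applied to the pivoting option $G^{R(L)}$ of $G^R$ — wait, more precisely, the hypothesis gives a pivoting $G^{R(L)}$; I then need to relate it to a left option of $H^R$. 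Since $G^R\leq H^R$, for all $(G^R)^{(L)}$ we have $(G^R)^{(L)}\tri H^R$... hmm, this gives $\tri$ not $\leq$, so I cannot directly invoke the induction hypothesis on $G^{R(L)}\leq(H^R)^L$.

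Let me reconsider: the cleaner route is likely a simultaneous induction proving two statements, one for $\leq$ and one for $\tri$, mirroring how $\leq$ and $\tri$ are defined by mutual recursion and how pivoting interacts with both. Specifically I would prove together: (a) if $G$ is pivoting and $G\leq H$ then $H$ is pivoting; and (b) if $G$ is pivoting and $G\tri H$ then $H$ is pivoting (or some ``threat-transfers'' variant). Then in case (i) above, from $G^R\leq H^R$ and the pivoting option $G^{R(L)}$ of $G^R$: actually a pivoting \emph{option} $G^{R(L)}$ satisfies $G^{R(L)}\tri G^R$? Not in general. I think the right auxiliary claim is about followers; alternatively, unfold $G^R\leq H^R$ directly to conclude $H^R$ is pivoting by noting $H^R$ inherits: there is a pivoting $G^{R(L)}$, and $G^{R(L)}$ is a left option of $G^R$, and $G^R\leq H^R$ means every left option of $G^R$ satisfies $G^{R(L)}\tri H^R$; then I want a left option of $H^R$ dominating it, which is exactly statement (b) of the mutual induction: $G^{R(L)}$ pivoting and $G^{R(L)}\tri H^R$ implies... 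I need ``pivoting transfers along $\tri$ to produce a pivoting left option of the target,'' i.e. a statement like: if $K$ is pivoting and $K\tri H$ then either $b\le H$ or there is a pivoting $H^{(L)}$. I will formulate and prove these two mutually recursive claims by induction on the combined depth (or on $K$ in the $\tri$ claim and $G$ in the $\leq$ claim). Cases (ii) and (iii) of $\tri$ are then handled directly: in case (ii), $G\leq(H^R)^L$ and $G$ pivoting give, by the $\leq$-claim (induction hypothesis, smaller on the left/structurally $G$ unchanged but $(H^R)^L$ — need a well-founded measure; use the sum of depths of the two games, which strictly decreases), that $(H^R)^L$ is pivoting, and $(H^R)^L = (H^R)^{(L)}$ is a left option of $H^R$, giving the desired $H^{R(L)}$. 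I expect the bookkeeping of the mutual recursion and choosing the right well-founded measure (sum of depths of the pair) to be the only real work; each individual case is a short application of the transitivity lemmas and the definitions, so I will not belabor the routine verifications.
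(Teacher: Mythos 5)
Your final plan is correct and is essentially the paper's own proof: the paper likewise introduces the auxiliary notion of having a pivoting left option (it calls a game \emph{pre-pivoting} if some $G^{(L)}$ is pivoting) and proves the lemma by a simultaneous induction on a $\leq$-statement and a $\tri$-statement (plus a third statement, ``pre-pivoting transfers along $\leq$,'' whose content your argument handles inline in case (i)), declaring the resulting case analysis routine. Your $\tri$-claim with the extra disjunct $b\leq H$ is just a cosmetic variant of the paper's pre-pivoting conclusion (that disjunct yields a pivoting left option of a composite $H$ anyway), so the two arguments coincide in substance, including the use of a well-founded measure on the pair of games.
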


\begin{proof}
  For the proof, it is convenient to say that $G$ is
  \emph{pre-pivoting} if there exists $G^{(L)}$ such that $G^{(L)}$ is
  pivoting. We then prove the following three statements by
  simultaneous induction: (a) $G$ pivoting and $G\leq H$ implies $H$
  pivoting; (b) $G$ pivoting and $G\tri H$ implies $H$ pre-pivoting;
  (c) $G$ pre-pivoting and $G\leq H$ implies $H$ pre-pivoting. With
  this setup, the rest of the proof is then routine.
\end{proof}

The following proposition characterizes pivoting forks in two
different ways. Recall that $G\plusj H$ denotes the juxtaposition
operation, which was defined in Section~\ref{ssec:distinguish} for
3-terminal positions, but also makes sense for forks. It has the
following addition table:
\[
\begin{array}{c|cccc}
  \plusj & \bot & a & b & \top \\\hline
  \bot & \bot & \bot & \bot & \bot \\
  a & \bot & \top & \bot & \top \\
  b & \bot & \bot & \top & \top \\
  \top & \bot & \top & \top & \top \\
\end{array}
\]

\begin{proposition}\label{prop:pivoting}
  For a passable fork $G$, the following are equivalent:
  \begin{enumerate}\alphalabels
  \item $G$ is pivoting;
  \item $\top \leq G \plusj \g{a,b|\g{a,b|\g{a|\bot}}}$;
  \item $b\leq G$ or $\g{a|a,\g{b|\bot}}\leq G$.
  \end{enumerate}
\end{proposition}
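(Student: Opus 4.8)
The three conditions will be shown equivalent by a cycle $(a)\Rightarrow(c)\Rightarrow(b)\Rightarrow(a)$, or possibly by proving $(a)\Leftrightarrow(c)$ and $(a)\Leftrightarrow(b)$ separately; I will sketch the former arrangement. The key objects are the two small forks $F = \g{a,b|\g{a,b|\g{a|\bot}}}$ appearing in (b) and $K = \g{a|a,\g{b|\bot}}$ appearing in (c). Note that $F$ is essentially the superswitch-like game $\g{a,b|\g{a,b|\g{a|\bot}}}$, and $K$ is the canonical form of the ``minimal pivoting fork'': the left player threatens $a$ and if the right player plays to $a,\g{b|\bot}$, the left player can still reestablish things. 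I would first record a few easy facts: $K$ is passable and pivoting (direct check against Definition~\ref{def:pivoting-formal}: $a\tri K$ since $K$ has left option $a$; the unique right option $a,\g{b|\bot}$... wait, that is a set of two right options? No — $K=\g{a \mid a, \g{b|\bot}}$ has left option $a$ and right options $a$ and $\g{b|\bot}$; for the right option $a$ we need a left option of it that is pivoting, but $a$ is atomic and $b\not\leq a$, so actually pivoting must come through $b\leq G$... let me reconsider). I would double-check the exact canonical form and the parsing of (c) before committing; the intended reading is likely $\g{a \mid a, \g{b\mid\bot}}$ with the comma separating two right options, and one verifies pivoting by the first clause failing and the second clause holding via $G^{R(L)}$ analysis. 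This bookkeeping is the kind of thing to get exactly right but not belabor.

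\textbf{The implication $(a)\Rightarrow(c)$.} Assuming $G$ is pivoting, I want to produce either $b\leq G$ or $K\leq G$. By the fundamental theorem (Theorem~\ref{thm:fundamental}) and Lemma~\ref{lem:pivoting-monotone}, I may assume $G$ is monotone (replacing $G$ by its canonical form preserves both pivoting and the truth of (c)). If $b\leq G$ we are done, so assume $b\not\leq G$. Then the pivoting condition gives $a\tri G$ and, for every $G^R$, some pivoting $G^{R(L)}$. To show $K=\g{a\mid a,\g{b\mid\bot}}\leq G$ I unfold the definition of $\leq$: I need $a\tri G$ (have it), $a\tri G$ again for the right option $a$ of... no: $G'\leq G$ requires all left options of $G'$ to be $\tri G$ and $G'$ to be $\tri$ every right option of $G$. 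So I need $a\tri G$ (the left option of $K$) — yes — and $K\tri G^R$ for every $G^R$. For the latter: since $G$ is pivoting and $b\not\leq G$, every $G^R$ has a pivoting $G^{R(L)}$; pivoting games satisfy $b\leq$ (themselves) or threaten $a$, and in either case one shows $\g{b\mid\bot}\leq G^{R(L)}$ or $a\leq G^{R(L)}$ by a sub-induction — which gives $K\tri G^R$ via the ``there exists $H^L$ with $G\leq H^L$'' clause of the definition of $\tri$. This induction — tracking the $\g{b\mid\bot}$ vs. $a$ dichotomy down the pivoting recursion — is where the real content lives.

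\textbf{The implications $(c)\Rightarrow(b)\Rightarrow(a)$.} For $(c)\Rightarrow(b)$: if $b\leq G$ then using the juxtaposition table (and monotonicity of $\plusj$, Definition~\ref{def:sum}) one checks $\top\leq b\plusj F$ directly by computing $b\plusj F$ — since $b\plusj b=\top$ and $F$ has left option $b$, this is a short calculation in the style of the proof of Proposition~\ref{prop:juxta}. If instead $K\leq G$, then $K\plusj F\leq G\plusj F$ by monotonicity, so it suffices to show $\top\leq K\plusj F$, again a finite evaluation. For $(b)\Rightarrow(a)$: assuming $\top\leq G\plusj F$, I show $G$ is pivoting by induction on $G$, using the recursive structure of $F=\g{a,b\mid F'}$ where $F'=\g{a,b\mid\g{a\mid\bot}}$; expanding $\top\leq G\plusj F$ via the definition of $\leq$ and $\tri$ forces either $b\leq G$ (from the right option $b$ of $F$ juxtaposing with $G$ to force a connection) or that the threat/response structure of Definition~\ref{def:pivoting-formal} holds, by peeling one level of $F$ at a time. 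The main obstacle I anticipate is $(a)\Rightarrow(c)$: getting the monotone-reduction and the nested induction on the pivoting recursion to mesh cleanly, and in particular verifying that the canonical-form massaging (which is needed so that ``right options'' behave) does not disturb the pivoting property — this is exactly what Lemma~\ref{lem:pivoting-monotone} is for, so I expect it to go through, but it is the delicate step. The $\plusj$-table computations for (b) and (c) are routine.
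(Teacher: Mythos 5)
Your plan runs the cycle $(a)\Rightarrow(c)\Rightarrow(b)\Rightarrow(a)$, which is genuinely different from the paper: the paper proves $(a)\Rightarrow(b)$ by an induction that plays out Left's pivoting strategy inside $G\plusj C$, does the heavy unfolding work in $(b)\Rightarrow(c)$ (after using the fundamental theorem to assume $G$ monotone), and gets $(c)\Rightarrow(a)$ for free from Lemma~\ref{lem:pivoting-monotone} because $b$ and $\g{a|a,\g{b|\bot}}$ are themselves pivoting. Your $(c)\Rightarrow(b)$ leg (monotonicity of $\plusj$ in one argument plus the two finite evaluations $\top\leq b\plusj C$ and $\top\leq \g{a|a,\g{b|\bot}}\plusj C$) is fine and arguably lighter than the corresponding work in the paper. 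However, the two legs where you locate ``the real content'' are not carried out, and as sketched they contain concrete errors, so this is a gap rather than merely a different but complete route.

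First, you never resolve your own hesitation about whether $K=\g{a|a,\g{b|\bot}}$ is pivoting. It is: for the atomic right option $a$, the witness $K^{R(L)}$ is $a$ itself, and $a$ is pivoting because the quantification over right options in Definition~\ref{def:pivoting-formal} is vacuous for atomic games; your suggestion that an atomic game can only be pivoting via $b\leq G$ is false, and leaving this unsettled undermines the base cases of both inductions. Second, the central induction in $(a)\Rightarrow(c)$ does not close as you describe it. The inductive dichotomy should be statement (c) itself applied to the pivoting left option: $b\leq G^{RL}$ or $K\leq G^{RL}$. In the branch $K\leq G^{RL}$ you may indeed conclude $K\tri G^R$ by the ``there exists $H^L$ with $K\leq H^L$'' clause; but in the other branch that clause is unavailable, and your proposed intermediate ``$\g{b|\bot}\leq G^{R(L)}$'' does not feed into it, because $K\nleq\g{b|\bot}$ (already $a\tri\g{b|\bot}$ fails). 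The correct step is to use the other clause of $\tri$: from $b\leq G^{RL}$ one gets $b\tri G^R$, and since $\bot\leq H$ holds for every game $H$, also $\g{b|\bot}\tri G^{R(R)}$ for all $G^{R(R)}$; hence $\g{b|\bot}\leq G^R$, and $K\tri G^R$ follows via the right option $\g{b|\bot}$ of $K$. Third, $(b)\Rightarrow(a)$ is only gestured at. Peeling one level of $C$ gives, for each $G^R$, a winning Left reply in $G^R\plusj C$, but when that reply lands in the $C$-component (on $a$ or $b$, giving $a\leq G^R$ or $b\leq G^R$) you still owe a pivoting $G^{R(L)}$; this requires the separate observations that $b\leq H$ immediately yields a pivoting $H^{(L)}$, and that $a\leq H$ forces (by a further induction) either $H$ atomic and pivoting or a left option $H^L$ with $a\leq H^L$ that is pivoting. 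With these repairs your reversed cycle does go through, but as written the proposal is incomplete at exactly the steps that carry the proof.
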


\begin{proof}
  $(a)\imp (b)$: Assume $G$ is pivoting. Let $C =
  \g{a,b|\g{a,b|\g{a|\bot}}}$. We must show that Left has a second
  player winning strategy in $G \plusj C$.  We show this by induction
  on $G$. If $b\leq G$, the claim holds because we can verify by
  direct calculation that $\top\leq b\plusj C$. Otherwise, suppose
  Right moves to $G^R\plusj C$. By the pivoting assumption, Left has a
  response $G^{R(L)}$ (possibly passing) that is again pivoting. Then
  Left wins the game by the induction hypothesis. Now suppose Right
  moves to $G\plusj C^R = G\plusj \g{a,b|\g{a|\bot}}$.  By the
  pivoting assumption, either $b\leq G$, in which case Left wins by
  moving to $G\plusj b$, or $a\tri G$, in which case $a\plusj C^R \tri
  G\plusj C^R$. Since $a\plusj C^R = a \plusj \g{a,b|\g{a|\bot}}$ is
  easily seen to be a second-player win for Left, it follows that
  $G\plusj C^R$ is a first-player win.

  $(b)\imp (c)$: Let $C=\g{a,b|\g{a,b|\g{a|\bot}}}$, and assume $\top
  \leq G \plusj C$.  By the fundamental theorem of monotone games, we
  can assume without loss of generality that $G$ is monotone.

  We proceed by induction on $G$. We first claim that $a\tri G$ or
  $b\leq G$. Let $C^R = \g{a,b|\g{a|\bot}}$. By definition of $\leq$,
  we have $\top\tri G\plusj C^R$. By definition of $\tri$, there is
  some $(G\plusj C^R)^L$ with $\top\leq (G\plusj C^R)^L$. By
  definition of the sum, there are two cases: $\top\leq G^L \plusj
  C^R$ or $\top\leq G\plusj C^{RL}$.  In the first case, we have
  $\top\tri G^L\plusj C^{RR}$, i.e., $\top\tri G^L \plusj \g{a|\bot}$,
  i.e., Left has a winning move in $G^L\plusj \g{a|\bot}$. This move
  must be to $a$ in the right component, since any other move is
  losing due to Right's response of $\bot$. Therefore $\top\leq
  G^L\plusj a$. This implies $a\leq G^L$, hence $a\tri G$. In the
  second case, we have $\top\leq G\plusj C^{RL}$. Since $C^{RL}$ is
  either $a$ or $b$, it follows that $\top\leq G\plusj a$ or $\top\leq
  G\plusj b$. In the first case, $a\leq G$, which implies $a\tri
  G$. In the second case, $b\leq G$. In all cases, we are done proving
  the first claim.

  To finish proving $(c)$, we must prove $b\leq G$ or
  $\g{a|a,\g{b|\bot}}\leq G$. In case $b\leq G$, we are done, so
  assume $b\nleq G$. We must show $\g{a|a,\g{b|\bot}}\leq G$.  By
  definition of $\leq$, we must show two things.
  \begin{itemize}
  \item We must show that $a\tri G$, but this holds by the first claim
    (and our assumption that $b\nleq G$).
  \item We must also show that every $G^{(R)}$ satisfies
    $\g{a|a,\g{b|\bot}}\tri G^{(R)}$. So consider some $G^{(R)}$.  From
    assumption $(b)$, we get $\top\tri G^{(R)}\plusj C$. By definition of
    $\tri$, there exists some $(G^{(R)}\plusj C)^L$ such that $\top\leq
    (G^{(R)}\plusj C)^L$. By definition of the sum, $(G^{(R)}\plusj C)^L$ is
    either $G^{(R)}\plusj C^L$ or $G^{{(R)}L}\plusj C$.
    \begin{itemize}
    \item Case 1: $\top\leq G^{(R)}\plusj C^L$. Note that $C^L$ is
      either $a$ or $b$, and therefore either $\top\leq
      G^{(R)}\plusj a$ or $\top\leq G^{(R)}\plusj b$. This implies
      $a\leq G^{(R)}$ or $b\leq G^{(R)}$.
      \begin{itemize}
      \item Case 1.1: $a\leq G^{(R)}$, therefore
        $\g{a|a,\g{b|\bot}}\tri G^{(R)}$ directly from the definition
        of $\tri$.
      \item Case 1.2: $b\leq G^{(R)}$. But we had assumed that $G$ is
        monotone, therefore $b\leq G^{(R)}\leq G$, contradicting our
        assumption that $b\nleq G$.
      \end{itemize}
    \item Case 2: $\top\leq G^{{(R)}L}\plusj C$. Then by the
      induction hypothesis, we have $b\leq G^{{(R)}L}$ or
      $\g{a|a,\g{b|\bot}}\leq G^{{(R)}L}$.
      \begin{itemize}
      \item Case 2.1: $b\leq G^{{(R)}L}$. This implies $b\tri G^{(R)}$,
        hence $\g{b|\bot}\leq G^{(R)}$, hence
        $\g{a|a,\g{b|\bot}}\tri G^{(R)}$ as claimed.
      \item Case 2.2: $\g{a|a,\g{b|\bot}}\leq G^{{(R)}L}$. This
        directly implies $\g{a|a,\g{b|\bot}}\tri G^{{(R)}}$ by the
        definition of $\tri$.
      \end{itemize}
    \end{itemize}
  \end{itemize}

  $(c)\imp (a)$: Clearly both $b$ and $\g{a|a,\g{b|\bot}}$ are
  pivoting, and therefore, by Lemma~\ref{lem:pivoting-monotone}, so is
  any $G$ that is at least as good as one of those two.
\end{proof}

\subsection{Verifying pivoting templates}\label{ssec:verifying-pivoting}

Proposition~\ref{prop:pivoting} suggests a method for verifying the
validity of pivoting templates. To prove that some Hex position $G$
has the pivoting property, by Proposition~\ref{prop:pivoting}(b), all
we need to do is juxtapose it with a Hex position of value $C =
\g{a,b|\g{a,b|\g{a|\bot}}}$ and use a Hex solver to check whether
Black has a second-player winning strategy in $G\plusj C$. But can we
find a Hex fork with value $C$? Here, the database of
Section~\ref{sec:database} is helpful. As shown in
Figure~\ref{fig:realizable}, the value $C$ has a Hex realization with
14 empty cells as $G(1,2,3) = P_{5}\opp(4,5,1), P_{15}\opp(4,6,2)\xx,
P_{25}\opp(6,5,3)\xx$.  Following the procedure of
Section~\ref{ssec:database-use}, we find that this Hex realization is
\[
\begin{hexboard}[baseline={($(current bounding box.center)-(0,1ex)$)},scale=0.6,rotate=90]
  \begin{scope}[shift={(0cm,0cm)}]
    \rotation{60}
    \hex(0,2)\white(0,2)
    \hex(0,3)\black(0,3)\sflabel{5}
    \hex(1,1)\black(1,1)
    \hex(1,2)
    \hex(1,3)\white(1,3)
    \hex(2,0)\black(2,0)\sflabel{1}
    \hex(2,1)
    \hex(2,2)
    \hex(2,3)\black(2,3)
    \hex(3,0)\white(3,0)
    \hex(3,1)\black(3,1)
    \hex(3,2)\black(3,2)\sflabel{4}
    \node[circle, minimum size=0.25cm, inner sep=0] (p5-1) at \coord(2,0) {};
    \node[circle, minimum size=0.25cm, inner sep=0] (p5-4) at \coord(3,2) {};
    \node[circle, minimum size=0.25cm, inner sep=0] (p5-5) at \coord(0,3) {};
  \end{scope}
  \begin{scope}[shift={(1.82cm,6cm)},xscale=-1]
    \rotation{-60}
    \hex(0,3)\black(0,3)
    \hex(0,4)\black(0,4)\sflabel{6}
    \hex(1,1)\white(1,1)
    \hex(1,2)\black(1,2)
    \hex(1,3)
    \hex(1,4)\white(1,4)
    \hex(2,0)\black(2,0)
    \hex(2,1)
    \hex(2,2)
    \hex(2,3)\black(2,3)
    \hex(2,4)\white(2,4)
    \hex(3,0)\black(3,0)\sflabel{2}
    \hex(3,1)\black(3,1)
    \hex(3,2)
    \hex(3,3)
    \hex(3,4)\black(3,4)\sflabel{4}
    \hex(4,1)\white(4,1)
    \hex(4,2)\black(4,2)
    \hex(4,3)\black(4,3)
    \node[circle, minimum size=0.25cm, inner sep=0] (p15-2) at \coord(3,0) {};
    \node[circle, minimum size=0.25cm, inner sep=0] (p15-4) at \coord(3,4) {};
    \node[circle, minimum size=0.25cm, inner sep=0] (p15-6) at \coord(0,4) {};
    \node[circle, minimum size=0.25cm, inner sep=0] (p15-l) at \coord(2,0) {};
  \end{scope}
  \begin{scope}[shift={(5cm,3cm)},xscale=-1]
    \rotation{-60}
    \hex(0,3)\black(0,3)\sflabel{3}
    \hex(0,4)\black(0,4)
    \hex(1,1)\black(1,1)
    \hex(1,2)\black(1,2)
    \hex(1,3)
    \hex(1,4)\white(1,4)
    \hex(2,0)\white(2,0)
    \hex(2,1)
    \hex(2,2)
    \hex(2,3)\black(2,3)
    \hex(2,4)\white(2,4)
    \hex(3,0)\black(3,0)\sflabel{6}
    \hex(3,1)\black(3,1)
    \hex(3,2)
    \hex(3,3)
    \hex(3,4)\white(3,4)
    \hex(4,1)\white(4,1)
    \hex(4,2)
    \hex(4,3)\white(4,3)
    \hex(5,1)\black(5,1)
    \hex(5,2)\black(5,2)\sflabel{5}
    \node[circle, minimum size=0.25cm, inner sep=0] (p25-3) at \coord(0,3) {};
    \node[circle, minimum size=0.25cm, inner sep=0] (p25-5) at \coord(5,2) {};
    \node[circle, minimum size=0.25cm, inner sep=0] (p25-6) at \coord(3,0) {};
  \end{scope}
  \node at (0.5cm,-1cm -| p5-4) [right] {$P_{5}\opp(4,5,1)$.};
  \node at (1cm,7cm -| p15-l) [left] {$P_{15}\opp(4,6,2)\xx$};
  \node at (4cm,-0.4cm -| p25-6) [right] {$P_{25}\opp(6,5,3)\xx$};
  \node (t1) at (-2cm,0cm |- p5-1) {$1$};
  \node (t2) at (-2cm,0cm |- p15-2) {$2$};
  \node (t3) at (6.8cm,0cm |- p25-3) {$3$};
  \pgfsetcornersarced{\pgfpoint{0.5cm}{0.5cm}}
  \draw [terminal] (t1) -- (p5-1);
  \draw [terminal] (t2) -- (p15-2);
  \draw [terminal] (t3) -- (p25-3);
  \draw [terminal] (p5-5) -- (p25-5);
  \draw [terminal] (p5-4) -- (p15-4);
  \draw [terminal] (p15-6) -- (p25-6);
\end{hexboard}
\]
The following is a more compact representation of this
position. Note that according to our convention of terminal numbering,
outcome $a$ means that terminals 3 and 2 are connected, and outcome
$b$ means that terminals 3 and 1 are connected.
\begin{equation}\label{eqn:co-pivot}
  \begin{hexboard}[baseline={($(current bounding box.center)-(0,1ex)$)},scale=0.6]
    \rotation{-30}
    \foreach\i in {8,...,10} {\hex(\i,0)}
    \foreach\i in {7,...,10} {\hex(\i,1)}
    \foreach\i in {6,...,10} {\hex(\i,2)}
    \foreach\i in {5,...,10} {\hex(\i,3)}
    \foreach\i in {4,...,10} {\hex(\i,4)}
    \foreach\i in {3,...,10} {\hex(\i,5)}
    \foreach\i in {3,...,9} {\hex(\i,6)}
    \foreach\i in {2,...,9} {\hex(\i,7)}
    \foreach\i in {2,...,8} {\hex(\i,8)}
    \white(8,0)
    \black(9,0)\sflabel{3}
    \white(10,0)
    \white(7,1)
    \black(8,1)
    \white(10,1)
    \white(6,2)
    \black(9,2)
    \white(10,2)
    \white(5,3)
    \black(6,3)
    \black(7,3)
    \white(10,3)
    \white(4,4)
    \black(5,4)
    \white(7,4)
    \black(9,4)
    \white(10,4)
    \white(3,5)
    \black(6,5)
    \white(7,5)
    \white(8,5)
    \black(9,5)
    \white(10,5)
    \white(3,6)
    \black(4,6)
    \white(9,6)
    \white(2,7)
    \black(3,7)
    \white(4,7)
    \black(5,7)
    \black(6,7)
    \black(8,7)
    \white(9,7)
    \white(2,8)
    \black(3,8)\sflabel{2}
    \white(4,8)
    \white(5,8)
    \white(6,8)
    \black(7,8)\sflabel{1}
    \white(8,8)
  \end{hexboard}
\end{equation}
To check the validity of a pivoting template, it then suffices to
juxtapose the template with the position {\eqref{eqn:co-pivot}} and
let a Hex solver check that Black has a second-player win. For
example, Figure~\ref{fig:pivot-validity} shows a position that can be
used to verify the validity of the pivoting template
{\eqref{eqn:pivoting-example}}.

As usual, minimality can be checked by placing a white stone on one of
the empty cells and re-running the solver, much like we did for other
types of templates.

\begin{figure}
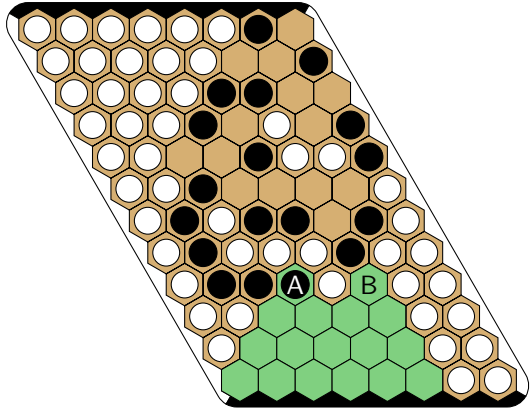

  \[
  \begin{hexboard}[scale=0.7]
    \rotation{-30}
    \board(8,12)
    \white(1,1)
    \white(2,1)
    \white(3,1)
    \white(4,1)
    \white(5,1)
    \white(6,1)
    \black(7,1)
    \white(1,2)
    \white(2,2)
    \white(3,2)
    \white(4,2)
    \white(5,2)
    \black(8,2)
    \white(1,3)
    \white(2,3)
    \white(3,3)
    \white(4,3)
    \black(5,3)
    \black(6,3)
    \white(1,4)
    \white(2,4)
    \white(3,4)
    \black(4,4)
    \white(6,4)
    \black(8,4)
    \white(1,5)
    \white(2,5)
    \black(5,5)
    \white(6,5)
    \white(7,5)
    \black(8,5)
    \white(1,6)
    \white(2,6)
    \black(3,6)
    \white(8,6)
    \white(1,7)
    \black(2,7)
    \white(3,7)
    \black(4,7)
    \black(5,7)
    \black(7,7)
    \white(8,7)
    \white(1,8)
    \black(2,8)
    \white(3,8)
    \white(4,8)
    \white(5,8)
    \black(6,8)
    \white(7,8)
    \white(8,8)
    \white(1,9)
    \black(2,9)
    \black(3,9)
    \black(4,9)\sflabel{A}
    \white(5,9)
    \cell(6,9)\sflabel{B}
    \white(7,9)
    \white(8,9)
    \white(1,10)
    \white(2,10)
    \white(7,10)
    \white(8,10)
    \white(1,11)
    \white(7,11)
    \white(8,11)
    \white(7,12)
    \white(8,12)
    \carry(4,9)
    \carry(6,9)
    \carry(3,10)\carry(4,10)\carry(5,10)\carry(6,10)
    \carry(2,11)\carry(3,11)\carry(4,11)\carry(5,11)\carry(6,11)
    \carry(1,12)\carry(2,12)\carry(3,12)\carry(4,12)\carry(5,12)\carry(6,12)
  \end{hexboard}
  \]  
  \caption{Verifying a pivoting template}
  \label{fig:pivot-validity}
\end{figure}

\subsection{Sente pivoting forks}

\emph{Sente} is a Japanese Go term that roughly means ``keeping the
initiative''. Its opposite is \emph{gote}, which means ``losing the
initiative''. Concretely, suppose there is a goal that a player wants
to achieve. To achieve the goal in sente means to achieve it in such a
way that it's the player's turn immediately afterwards. If instead it
is the opponent's turn after the player achieves the goal, then the
player has achieved it in gote.

Our definition of a pivoting fork in Section~\ref{ssec:pivoting-forks}
is such that when Left achieves the outcome $b$, Left achieves it in
gote. We can also define a stronger version of a pivoting fork where
Left achieves $b$ in sente. The following definition is analogous to
Definition~\ref{def:pivoting-formal}, except that we have moved the
condition $b\leq G$ to when it is Left's turn.

\begin{definition}[Sente pivoting fork]
  A fork $G$ is called \emph{sente pivoting} if
  \begin{itemize}
  \item $a\tri G$ and for all $G^R$, either $b\leq G$ or there exists
    some $G^{R(L)}$ that is sente pivoting.
  \end{itemize}
\end{definition}

The following characterization of sente pivoting forks is analogous to
Proposition~\ref{prop:pivoting}.

\begin{proposition}\label{prop:sente-pivoting}
  For a passable fork $G$, the following are equivalent:
  \begin{enumerate}\alphalabels
  \item $G$ is sente pivoting;
  \item $\top \leq G\plusj \g{a,b|a}$;
  \item $\g{a|a,b}\leq G$.
  \end{enumerate}
\end{proposition}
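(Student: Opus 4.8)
The plan is to mirror the proof of Proposition~\ref{prop:pivoting} as closely as possible, since the sente variant differs only by moving the ``escape'' condition $b \leq G$ from Right's turn to Left's turn; this suggests that the context $\g{a,b|a}$ should play the role that $\g{a,b|\g{a,b|\g{a|\bot}}}$ played before, and that the explicit winning fork $\g{a|a,b\}}$ replaces $\g{a|a,\g{b|\bot}}$. I would prove the three implications $(a)\imp(b)\imp(c)\imp(a)$ in a cycle, exactly as in Proposition~\ref{prop:pivoting}.

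For $(a)\imp(b)$: assume $G$ is sente pivoting and set $C = \g{a,b|a}$. I would show by induction on $G$ that Left has a second-player winning strategy in $G \plusj C$. If Right moves to $G^R \plusj C$, then by the sente pivoting definition either $b \leq G$ (in which case, since $C$ is composite, Left can move to $b\plusj C$ or rather respond appropriately; actually I should think of it as: Left moves in the $G$-component if possible—note $b\leq G$ gives $b \tri G^R$ via monotonicity-type reasoning, so Left has a response reestablishing a pivoting position) or there is $G^{R(L)}$ that is again sente pivoting, and the induction hypothesis applies. If Right moves to $G \plusj C^R = G \plusj a$, then since $a \tri G$, we get $a \plusj a = \top \leq$… wait, more carefully: $a\tri G$ means Left moving first in $G$ can reach (at least) $a$; combined with the right component already being $a$, Left wins. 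The base-case computations to verify are $\top \leq b \plusj C$ and that $a \plusj a = \top$ (this is immediate from the addition table for $\plusj$ on forks), and that $a \plusj C^R$ is a second-player Left win, which is the same style of direct evaluation as in the earlier proof.

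For $(b)\imp(c)$: by the fundamental theorem of monotone games (Theorem~\ref{thm:fundamental}), assume $G$ is monotone. I would first extract from $\top \leq G \plusj \g{a,b|a}$ the fact that $a \tri G$, by unwinding the definitions of $\leq$ and $\tri$ and the sum, exactly as the ``first claim'' in the proof of Proposition~\ref{prop:pivoting}(b)$\imp$(c): the unique right option of $C=\g{a,b|a}$ is $a$, so $\top \tri G \plusj a$, and the resulting left option must be $\top \leq G^L \plusj a$ (forcing $a \leq G^L$, hence $a \tri G$) or $\top \leq G \plusj a'$ for $a'$ a left option $a$ or $b$ of $C$, giving $a\leq G$ or $b\leq G$. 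Then, to show $\g{a|a,b\}} \leq G$, I verify the two clauses of the definition of $\leq$: the left clause is $a \tri G$, already established; the right clause requires $\g{a|a,b\}} \tri G^{(R)}$ for every $G^{(R)}$, which I get by applying $(b)$ to $G^{(R)}$ (using $\top \tri G^{(R)} \plusj C$) and case-splitting on whether the witnessing left option lies in the $G$-component or the $C$-component, invoking the induction hypothesis in the former case and monotonicity of $G$ plus the computation of $C^L \in \{a,b\}$ in the latter. This is the most intricate step and the main obstacle: getting the case analysis in $(b)\imp(c)$ to close cleanly, in particular making sure that the ``$b\leq G^{(R)}$'' subcase is handled—here, unlike in Proposition~\ref{prop:pivoting}, $b\leq G^{(R)}$ does \emph{not} lead to a contradiction (we do not assume $b\nleq G$), but instead directly gives $b \leq G^{(R)}$, hence $\g{a|a,b\}} \tri G^{(R)}$ since $b$ is a right option of $\g{a|a,b\}}$. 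Finally, for $(c)\imp(a)$: it is routine that $\g{a|a,b\}}$ is itself sente pivoting (its unique left option is $a$, so $a \tri \g{a|a,b\}}$; and for its unique right option $\g{a,b|\bot}$—wait, the right options are $a$ and $b$—$b \leq \g{a|a,b\}}$? No: need $b\leq G$ or a sente-pivoting $G^{R(L)}$; for $G^R = b$ we have $G^{R(L)} = b$ atomic, which we'd need sente pivoting, which it's not directly—so I'd instead check $b\leq \g{a|a,b\}}$, which holds since going second Left can answer to $b$). So $\g{a|a,b\}}$ is sente pivoting, and then Lemma~\ref{lem:pivoting-monotone}'s analogue—which I would either cite or reprove in one line for the sente notion, since its proof (the (a)/(b)/(c) simultaneous induction on pre-pivoting) goes through verbatim—upgrades this to any $G$ with $\g{a|a,b\}} \leq G$.
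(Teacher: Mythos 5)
Your overall route --- cycling $(a)\imp(b)\imp(c)\imp(a)$ in parallel with Proposition~\ref{prop:pivoting}, with $\g{a,b|a}$ as the testing context, $\g{a|a,b}$ as the canonical sente pivoting fork, and a sente analogue of Lemma~\ref{lem:pivoting-monotone} for the last step --- is exactly what the paper intends (its own proof is omitted as ``similar''), and your $(b)\imp(c)$ is essentially sound, including the correct observation that the subcase $b\leq G^{(R)}$ now closes directly via the right option $b$ of $\g{a|a,b}$ instead of by contradiction. The genuine gap is in your treatment of the ``$b$-achievement'' clause of the sente pivoting definition, in both $(a)\imp(b)$ and $(c)\imp(a)$. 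In $(a)\imp(b)$, after Right moves to $G^R\plusj C$ and the escape clause is invoked, you have Left answer in the $G$-component using $b\tri G^R$ (obtained from $b\leq G$), claiming this ``reestablishes a pivoting position''. It does not: after such a reply the $G$-component may be worth exactly $b$ with no surviving threat of $a$, and Right then sets the context to $a$, giving $b\plusj a=\bot$. The winning reply is in the \emph{context}, moving $C$ to $b$; but that wins only if $b\leq G^R$, i.e.\ only if the clause is evaluated at the position reached after Right's move (which is what ``moved to when it is Left's turn'' must mean, and is forced if the proposition is to hold). On the literal reading ``$b\leq G$'' the implication $(a)\imp(b)$ is simply false: $G=\g{a,b|\g{b|\bot}}$ is a passable fork with $a\tri G$ and $b\leq G$, hence ``sente pivoting'' on that reading, yet $\top\nleq G\plusj\g{a,b|a}$ (after Right moves to $\g{b|\bot}\plusj\g{a,b|a}$, none of Left's replies $b\plusj\g{a,b|a}$, $\g{b|\bot}\plusj a$, $\g{b|\bot}\plusj b$ is $\geq\top$), and likewise $\g{a|a,b}\nleq G$. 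So the step cannot be bridged as you set it up; you must use $b\leq G^R$ and the context move.

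The same issue appears in your $(c)\imp(a)$: to show $\g{a|a,b}$ is sente pivoting you assert $b\leq\g{a|a,b}$, which is false --- the only Left option of $\g{a|a,b}$ is $a$, so $b\ntri\g{a|a,b}$ and Right moving first can force outcome $a$. With the clause evaluated at $G^R$, the right option $b$ is handled by the trivial $b\leq b$ and the right option $a$ by the sente pivoting atomic response $a$, exactly as you began to argue before patching with the false inequality. Once these two points are repaired (and the spurious case in your extraction of $a\tri G$ in $(b)\imp(c)$ is dropped: since $C^R=a$ is atomic, the witnessing left option of $G\plusj a$ can only be of the form $G^L\plusj a$), the rest of your outline, including the one-line sente analogue of Lemma~\ref{lem:pivoting-monotone}, goes through as in Proposition~\ref{prop:pivoting}.
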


\begin{proof}
  The proof follows along similar lines as that of
  Proposition~\ref{prop:pivoting}, and we omit the details.
\end{proof}

An example of a sente pivoting template is shown in
Figure~\ref{fig:pivoting-gote-sente}(b). It is obtained from the
(gote) pivoting template in Figure~\ref{fig:pivoting-gote-sente}(a) by
extending the template's carrier with an additional cell between $A$
and $B$. The idea is that even after Black occupies $B$, Black still
threatens to connect to $A$, forcing White to spend one more move
defending against that threat. So when Black finally stops threatening
$A$, it is Black's turn.

\begin{figure}
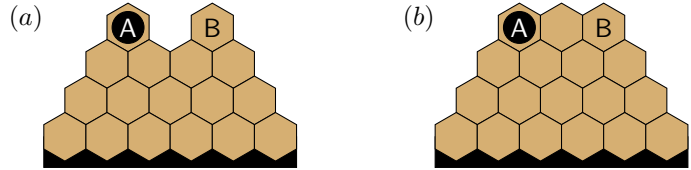

  \[
  (a)
  \begin{hexboard}[baseline={($(current bounding box.north)-(0,2ex)$)},scale=0.8]
    \rotation{-30}
    \template(6,4)
    \foreach \i in {4,6} {\hex(\i,1)}
    \foreach \i in {3,...,6} {\hex(\i,2)}
    \foreach \i in {2,...,6} {\hex(\i,3)}
    \foreach \i in {1,...,6} {\hex(\i,4)}
    \black(4,1)\sflabel{A}
    \cell(6,1)\sflabel{B}
  \end{hexboard}
  \qquad\qquad
  (b)
  \begin{hexboard}[baseline={($(current bounding box.north)-(0,2ex)$)},scale=0.8]
    \rotation{-30}
    \template(6,4)
    \foreach \i in {4,5,6} {\hex(\i,1)}
    \foreach \i in {3,...,6} {\hex(\i,2)}
    \foreach \i in {2,...,6} {\hex(\i,3)}
    \foreach \i in {1,...,6} {\hex(\i,4)}
    \black(4,1)\sflabel{A}
    \cell(6,1)\sflabel{B}
  \end{hexboard}
  \]
  \caption{A gote pivoting template and a sente pivoting template}
  \label{fig:pivoting-gote-sente}
\end{figure}

The following proposition shows that this works in general, i.e.,
extending any (gote) pivoting template with a semi-connection from $A$
to $B$ results in a sente pivoting template.

\begin{proposition}\label{prop:gote-pivoting-sente}
  As before, let $P_2 = \s{\bot,a,b,\top}$ be the outcome poset for a
  fork. Also consider the 2-element poset $\Bool=\s{\top,\bot}$ and
  any addition operation $+:P_2\times\Bool\to P_2$ satisfying
  $b+\top\geq a$ and $x+\bot\geq x$ for all $x\in P_2$. If $G$ is a
  pivoting fork, then $G+\g{\top|\bot}$ is a sente pivoting fork.
\end{proposition}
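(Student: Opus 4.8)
The plan is to reduce everything to the algebraic characterizations already proved, rather than arguing about threats directly. By Proposition~\ref{prop:pivoting}, since $G$ is a passable pivoting fork, we have $\top \leq G \plusj \g{a,b|\g{a,b|\g{a|\bot}}}$; equivalently (the ``$(a)\Leftrightarrow(c)$'' part) either $b\leq G$ or $\g{a|a,\g{b|\bot}}\leq G$. On the target side, by Proposition~\ref{prop:sente-pivoting} it suffices to show $\g{a|a,b}\leq G + \g{\top|\bot}$, where $+$ is the given addition $P_2\times\Bool\to P_2$. So the whole proposition becomes: show $\g{a|a,b}\leq G+\g{\top|\bot}$ from the hypothesis on $G$ together with the inequalities $b+\top\geq a$ and $x+\bot\geq x$.

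First I would handle the easy case $b\leq G$. Here $G+\g{\top|\bot}$ has a left option $G+\top$ (by the definition of the sum, Definition~\ref{def:sum}), and $b\leq G$ together with monotonicity of $+$ and $b+\top\geq a$ gives $a\leq b+\top\leq G+\top$; the other left option is $G^L+\g{\top|\bot}$, and for the right side $(G+\g{\top|\bot})^R$ is either $G^R+\g{\top|\bot}$ or $G+\bot$. Using $G+\bot\geq G\geq b$ (from $x+\bot\geq x$ plus the case hypothesis) one checks $\g{a|a,b}\tri G+\bot$, and the remaining verifications are routine by the definition of $\leq$ and $\tri$. The interesting case is $\g{a|a,\g{b|\bot}}\leq G$. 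I would argue directly that $\g{a|a,b}\leq G+\g{\top|\bot}$ by unfolding the definition of $\leq$: I need (i) $a\tri G+\g{\top|\bot}$, and (ii) for every right option of $G+\g{\top|\bot}$, that $\g{a|a,b}\tri$ it. For (i), since $\g{a|a,\g{b|\bot}}\leq G$ gives $a\leq G$ (a left option dominated, or rather: $a\tri G$ follows since $a$ is the left option of $\g{a|a,\g{b|\bot}}$), and then $a\leq G\leq G+\bot$ where $G+\bot$ is obtainable as a left move in $G+\g{\top|\bot}$, so $a\tri G+\g{\top|\bot}$. For (ii), a right option is either $G+\bot$ (where $a\leq G\leq G+\bot$, so $\g{a|a,b}\tri$ it via its left option $a$), or $G^R+\g{\top|\bot}$: here I use that $\g{a|a,\g{b|\bot}}\leq G$ forces, for each $G^R$, that $\g{a|a,\g{b|\bot}}\tri G^R$, meaning $a\leq G^R$ or $\g{b|\bot}\leq G^R$. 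In the first subcase $a\leq G^R\leq G^R+\bot$ (a left move inside $G^R+\g{\top|\bot}$). In the second subcase $\g{b|\bot}\leq G^R$, so $b\leq G^R+\top$ (a left move inside $G^R+\g{\top|\bot}$), and thus $\g{a|a,b}\tri G^R+\g{\top|\bot}$ because its left option $\g{a|a,b}^L=a$... wait, more carefully: $\g{a|a,b}\tri H$ holds if there is $H^L$ with $\g{a|a,b}\leq H^L$, or $\g{a|a,b}^R\leq H$ for some right option; since $b\leq (G^R+\g{\top|\bot})^L = G^R+\top$, and $b$ is a right option of $\g{a|a,b}$, we get $\g{a|a,b}\tri G^R+\g{\top|\bot}$.

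The main obstacle I expect is bookkeeping: keeping straight which of the two characterizations (the ``$\plusj$'' form (b) versus the explicit canonical-form bound (c)) of Propositions~\ref{prop:pivoting} and~\ref{prop:sente-pivoting} makes each step cleanest, and correctly enumerating the left and right options of $G+\g{\top|\bot}$ via Definition~\ref{def:sum} (which mixes options of both summands). There is also a subtlety that $G+\g{\top|\bot}$ need not be monotone even though $G$ is, so I should not silently invoke monotonicity of the sum; instead, all the ``$a\leq G\leq G+\bot$'' type steps should cite only monotonicity of the operation $+:P_2\times\Bool\to P_2$ and the hypothesis $x+\bot\geq x$, plus $b+\top\geq a$. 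Once the option structure is laid out, each of the finitely many cases is an immediate application of the definitions of $\leq$ and $\tri$, so I would present the case split and then say the remaining checks are routine.
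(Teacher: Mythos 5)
Your opening reduction is the same as the paper's: by Proposition~\ref{prop:pivoting}(c) either $b\leq G$ or $\g{a|a,\g{b|\bot}}\leq G$, and by Proposition~\ref{prop:sente-pivoting}(c) it suffices to prove $\g{a|a,b}\leq G+\g{\top|\bot}$. But the way you then discharge this inequality has genuine errors. By Definition~\ref{def:sum}, the \emph{left} options of $G+\g{\top|\bot}$ are $G^L+\g{\top|\bot}$ and $G+\top$, while $G+\bot$ (and likewise $G^R+\bot$ inside $G^R+\g{\top|\bot}$) is a \emph{right} option; so $G+\bot$ cannot serve as the ``left move'' you use to witness $a\tri G+\g{\top|\bot}$, nor can $G^R+\bot$ in your subcase $a\leq G^R$. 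Moreover, $\g{a|a,\g{b|\bot}}\leq G$ yields only $a\tri G$ (the left-option clause of $\leq$), not $a\leq G$: taking $G=\g{a|a,\g{b|\bot}}$ itself, one has $a\nleq G$ because $a\ntri\g{b|\bot}$, so your step ``$a\leq G\leq G+\bot$'' is false in general, and with it your treatment of the right option $G+\bot$. There are also misapplications of $\tri$: to conclude $\g{a|a,b}\tri H$ you need either a right option of $\g{a|a,b}$ that is $\leq H$ (the whole sum), or a left option $H^L$ with $\g{a|a,b}\leq H^L$; showing ``$b\leq H^L$'' for a right option $b$ of $\g{a|a,b}$, as in your second subcase, establishes neither, and the claim $b\leq G^R+\top$ from $\g{b|\bot}\leq G^R$ is itself unjustified (it can fail, e.g.\ when $\bot+\top=\bot$).

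The structural point is that a direct unfolding against a \emph{general} $G$ cannot be closed by ``routine'' checks: the right options $G^R+\g{\top|\bot}$ and $G+\bot$ involve arbitrary subgames of $G$ about which you know only the recursive clauses packed into $\g{a|a,\g{b|\bot}}\leq G$, so you would need an induction on $G$, which you never set up --- and you explicitly deny yourself the tool that makes this unnecessary. The paper's proof uses monotonicity of the game-level sum in its first argument (a consequence of the atom-level map being monotone; not to be confused with $G+\g{\top|\bot}$ being a monotone \emph{game}), together with transitivity, to reduce everything to two finite verifications: $\g{a|a,b}\leq b+\g{\top|\bot}$ and $\g{a|a,b}\leq\g{a|a,\g{b|\bot}}+\g{\top|\bot}$, each checked directly from the definitions and the hypotheses $b+\top\geq a$, $x+\bot\geq x$. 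Note that even your easy case $b\leq G$ quietly uses the same lemma, since $b+\top\leq G+\top$ is game-level monotonicity of the sum, not merely monotonicity of $+$ on atoms. Either restore that lemma (and then your proof collapses into the paper's), or replace the case analysis by an honest induction on $G$; as written, the ``interesting case'' is not proved.
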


\begin{proof}
  By Proposition~\ref{prop:pivoting}, we know that $b\leq G$ or
  $\g{a|a,\g{b|\bot}}\leq G$. By
  Proposition~\ref{prop:sente-pivoting}, we must show that
  $\g{a|a,b}\leq G+\g{\top|\bot}$. Therefore, it suffices to show that
  $\g{a|a,b}\leq b+\g{\top|\bot}$ and $\g{a|a,b}\leq
  \g{a|a,\g{b|\bot}}+\g{\top|\bot}$. Both are easy to show from the
  definition of $\leq$ and the assumptions about $+$.
\end{proof}

\subsection{Application: A new handicap strategy for \texorpdfstring{$11\times 11$}{11 × 11} Hex}

In {\cite{Henderson-Hayward-2015}}, Henderson and Hayward described an
explicit winning strategy for Black in $11\times 11$ Hex if Black is
allowed to start by playing two stones. (More generally, they
described a winning strategy on $n\times n$ Hex, provided that Black
is allowed to start with $k$ stones where $6k-1\geq
n$). Figure~\ref{fig:handicap-strategies}(a) shows Henderson and
Hayward's winning opening.

\begin{figure}
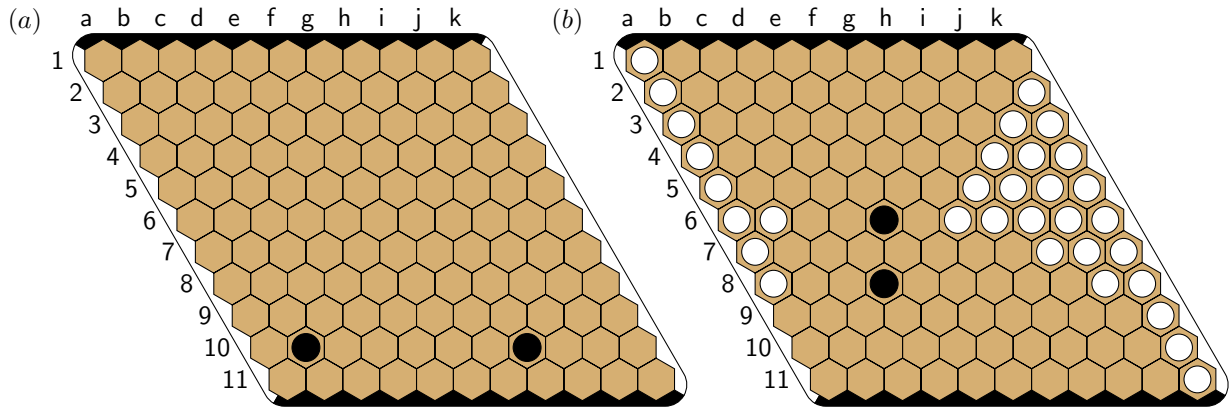

  \[
  (a)
  \begin{hexboard}[baseline={($(current bounding box.north)-(0,2.5ex)$)},scale=0.7]
    \rotation{-30}
    \board(11,11)
    \black(2,10)
    \black(8,10)
    \cell(1.2,-0.3)\label*{\sf a}
    \cell(2.2,-0.3)\label*{\sf b}
    \cell(3.2,-0.3)\label*{\sf c}
    \cell(4.2,-0.3)\label*{\sf d}
    \cell(5.2,-0.3)\label*{\sf e}
    \cell(6.2,-0.3)\label*{\sf f}
    \cell(7.2,-0.3)\label*{\sf g}
    \cell(8.2,-0.3)\label*{\sf h}
    \cell(9.2,-0.3)\label*{\sf i}
    \cell(10.2,-0.3)\label*{\sf j}
    \cell(11.2,-0.3)\label*{\sf k}
    \cell(0.2,1)\leftlabel{\sf 1}
    \cell(0.2,2)\leftlabel{\sf 2}
    \cell(0.2,3)\leftlabel{\sf 3}
    \cell(0.2,4)\leftlabel{\sf 4}
    \cell(0.2,5)\leftlabel{\sf 5}
    \cell(0.2,6)\leftlabel{\sf 6}
    \cell(0.2,7)\leftlabel{\sf 7}
    \cell(0.2,8)\leftlabel{\sf 8}
    \cell(0.2,9)\leftlabel{\sf 9}
    \cell(0.2,10)\leftlabel{\sf 10}
    \cell(0.2,11)\leftlabel{\sf 11}
  \end{hexboard}
  \hspace{-12ex}
  (b)
  \begin{hexboard}[baseline={($(current bounding box.north)-(0,2.5ex)$)},scale=0.7] 
    \rotation{-30}
    \board(11,11)
    \white(1,1)
    \white(1,2)
    \white(11,2)
    \white(1,3)
    \white(10,3)
    \white(11,3)
    \white(1,4)
    \white(9,4)
    \white(10,4)
    \white(11,4)
    \white(1,5)
    \white(8,5)
    \white(9,5)
    \white(10,5)
    \white(11,5)
    \white(1,6)
    \white(2,6)
    \black(5,6)
    \white(7,6)
    \white(8,6)
    \white(9,6)
    \white(10,6)
    \white(11,6)
    \white(1,7)
    \white(9,7)
    \white(10,7)
    \white(11,7)
    \white(1,8)
    \black(4,8)
    \white(10,8)
    \white(11,8)
    \white(11,9)
    \white(11,10)
    \white(11,11)
    \cell(1.2,-0.3)\label*{\sf a}
    \cell(2.2,-0.3)\label*{\sf b}
    \cell(3.2,-0.3)\label*{\sf c}
    \cell(4.2,-0.3)\label*{\sf d}
    \cell(5.2,-0.3)\label*{\sf e}
    \cell(6.2,-0.3)\label*{\sf f}
    \cell(7.2,-0.3)\label*{\sf g}
    \cell(8.2,-0.3)\label*{\sf h}
    \cell(9.2,-0.3)\label*{\sf i}
    \cell(10.2,-0.3)\label*{\sf j}
    \cell(11.2,-0.3)\label*{\sf k}
    \cell(0.2,1)\leftlabel{\sf 1}
    \cell(0.2,2)\leftlabel{\sf 2}
    \cell(0.2,3)\leftlabel{\sf 3}
    \cell(0.2,4)\leftlabel{\sf 4}
    \cell(0.2,5)\leftlabel{\sf 5}
    \cell(0.2,6)\leftlabel{\sf 6}
    \cell(0.2,7)\leftlabel{\sf 7}
    \cell(0.2,8)\leftlabel{\sf 8}
    \cell(0.2,9)\leftlabel{\sf 9}
    \cell(0.2,10)\leftlabel{\sf 10}
    \cell(0.2,11)\leftlabel{\sf 11}
  \end{hexboard}
  \]
  \caption{Two handicap winning positions for Black. (a) From
    Henderson and Hayward {\cite{Henderson-Hayward-2015}}. (b) Using a
    pivoting template.}
  \label{fig:handicap-strategies}
\end{figure}

Interestingly, our characterization of pivoting templates allows us to
give another such handicap strategy for $11\times 11$. Specifically,
we claim that the two black stones shown in
Figure~\ref{fig:handicap-strategies}(b) are a winning opening for
Black. The white stones are of course not required in an actual game,
but we have included them in the figure to show the area that Black needs
to carry out the win (or more precisely, the area that we need to
carry out our \emph{proof} of Black's win).

\begin{proposition}
  The position shown in Figure~\ref{fig:handicap-strategies}(b) is
  winning for Black, with White to move.
\end{proposition}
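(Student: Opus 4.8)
The plan is to decompose the board in Figure~\ref{fig:handicap-strategies}(b) into a handful of well-understood Hex regions, translate the win condition into a statement about forks and juxtapositions, and then invoke the characterization of pivoting forks from Proposition~\ref{prop:pivoting} together with a Hex solver. Concretely, I would first identify that the two black handicap stones at \move{e6} and \move{d8} partition the relevant part of the board (the uncolored cells of the figure) into two pieces: a region $G$ adjacent to one black edge that contains the stone \move{e6} and the ``pivot'' cell near \move{d8}, and a second region $H$ adjacent to the opposite black edge containing the stone \move{d8}. The claim will be that $G$ is a pivoting fork and $H$ realizes the co-pivot value $C=\g{a,b|\g{a,b|\g{a|\bot}}}$ (or its sente analogue $\g{a,b|a}$ via Proposition~\ref{prop:sente-pivoting}), so that gluing them matches the juxtaposition $G\plusj C$, whose value is $\top$ exactly when $G$ is pivoting.

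The key steps, in order, are: (1) carefully delineate the two subregions and their terminals, checking that the white stones shown in the figure really do seal off the region so that Black's only route from one edge to the other passes through the shared internal terminals — this is the ``outcome poset'' bookkeeping of Section~\ref{ssec:regions}; (2) verify, using a Hex solver as in Section~\ref{ssec:verifying-pivoting}, that the region $G$ (the one containing \move{e6} and the pivot) is a pivoting fork, i.e.\ that Black has a second-player win in $G\plusj C$ where $C$ is realized by the 14-cell position~\eqref{eqn:co-pivot}; (3) verify, again with the solver, that the complementary region $H$ has value $C$ (equivalently that $H$ is dual to a pivoting fork, or directly that $H\eq C$ by looking it up / computing its canonical form); (4) combine (2) and (3) via Proposition~\ref{prop:pivoting}(a)$\Leftrightarrow$(b): since $\top\leq G\plusj C$ and $H\eq C$, the full position $G\plusj H$ has value $\top$, which by the definition of $\leq$ on forks (and monotonicity of the disjunctive sum) means Black wins going second on the whole board. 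I would also remark that in fact one should use the sente version — the figure's region likely realizes the sente pivoting property of Proposition~\ref{prop:sente-pivoting}, which is what makes the $11\times 11$ bound tight — and adjust the context $C$ accordingly to $\g{a,b|a}$.

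The main obstacle I expect is step (1): getting the region decomposition exactly right so that the abstract game-theoretic gluing faithfully models the Hex board. One has to be careful that the three internal terminals of each piece correspond to genuine connection requirements, that the white walls in the figure genuinely force Black's winning paths through those terminals and nowhere else, and that the orientation and numbering of terminals is consistent between $G$ and $H$ so that the gluing is the juxtaposition $\plusj$ (with its addition table from Section~\ref{ssec:pivoting-forks}) rather than some other sum. Once the decomposition is pinned down, steps (2) and (3) are finite solver computations, and step (4) is a direct appeal to Proposition~\ref{prop:pivoting} (or Proposition~\ref{prop:sente-pivoting}) plus the fact that a fork has value $\top$ iff Black wins going second. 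A secondary, milder obstacle is confirming that the white stones drawn are sufficient — i.e.\ that we are not implicitly using board cells outside the indicated area — but this too is checkable by running the solver on exactly the depicted position with White to move and confirming the winner is Black.
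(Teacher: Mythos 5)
Your overall shape (divide the board into a pivoting region around \move{e6} and a complementary region around \move{d8}, then combine via the fork machinery and solver checks) matches the paper's strategy, but step (3) of your plan contains a genuine gap. You require the complementary region $H$ to have value \emph{exactly} the testing context ($H\eq C$ with $C=\g{a,b|\g{a,b|\g{a|\bot}}}$, or $\g{a,b|a}$ in the sente variant), so that $G\plusj H\eq G\plusj C$. There is no reason for this equality to hold: region 2 is a large region with many empty cells whose fork value is some complicated game, not the co-pivot value, and computing its canonical form is not feasible anyway (the paper introduces the divide-and-conquer precisely because the position is too big for direct solving). The paper never computes or asserts the value of region 2; it only needs one-sided information, combined by monotonicity of the juxtaposition.

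Concretely, the paper's argument is asymmetric: (i) region 1 \emph{minus} the extra cell marked $*$ is verified to be a (gote) pivoting template by a solver check against the 14-cell realization of $C$ (your step (2), which is fine); (ii) Proposition~\ref{prop:gote-pivoting-sente} then upgrades region 1 \emph{with} the cell $*$ to a sente pivoting fork, so by Proposition~\ref{prop:sente-pivoting} its value is at least $\g{a|a,b}$ --- this use of the extra semi-connection and the gote-to-sente proposition is missing from your proposal, and it is what makes the bound usable; (iii) region 2 is juxtaposed with a Hex realization of \emph{exactly} $\g{a|a,b}$ (the dual superswitch of Section~\ref{ssec:distinguish}), and Mohex verifies that this juxtaposition is a second-player win for Black. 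Since region 1 is at least as good for Black as $\g{a|a,b}$, monotonicity of the sum then gives that region 2 juxtaposed with region 1 is also a Black second-player win. So the fix to your plan is to replace ``verify $H\eq C$'' by ``verify $\top\leq H\plusj D$ for a concrete Hex realization $D$ of $\g{a|a,b}$,'' and to add the $*$-cell/sente step that supplies the matching lower bound $\g{a|a,b}\leq$ (region 1).
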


\begin{proof}
  In principle, we could show that this position is winning for Black
  by directly inputting it into a Hex solver. However, the region is
  too large to be efficiently solvable. Instead, we prove it using a
  divide-and-conquer method.  We divide the board into two regions as
  shown in Figure~\ref{fig:handicap-proof}. We also insert some thin
  white lines, and claim that Black can win even without the winning
  path crossing those lines.

  First, we claim that region 1, minus the cell marked ``$*$'', is a
  pivoting template. As explained in
  Section~\ref{ssec:verifying-pivoting}, this can be checked by
  juxtaposing it with an appropriate context, as in
  Figure~\ref{fig:conquer}(a). The position in
  Figure~\ref{fig:conquer}(a) is simple enough to be solved by Mohex,
  and is a second-player win for Black as claimed.

  Second, we claim that region 1, including the cell marked ``$*$'',
  is a sente pivoting template. This follows by
  Proposition~\ref{prop:gote-pivoting-sente}. Therefore, by
  Proposition~\ref{prop:sente-pivoting}, the value of region 1
  (including the cell marked ``$*$'') is at least $\g{a|a,b}$.

  Third, we claim that region 2, when juxtaposed with a fork of value
  $\g{a|a,b}$, is a second-player win for Black. Since the dual
  superswitch of Section~\ref{ssec:distinguish} is known to have
  exactly value $\g{a|a,b}$, the claim can be checked by juxtaposing
  region 2 with such a dual superswitch, as in
  Figure~\ref{fig:conquer}(b). This position is simple enough to be
  solved by Mohex, and is a second-player win for Black as claimed.

  Putting all claims together, since region 2 is winning in the
  context of $\g{a|a,b}$, and region 1 is at least as good for Black
  as $\g{a|a,b}$, region 2 is also winning in the context of region 1,
  as claimed.
\end{proof}

The idea of using divide-and-conquer methods for finding winning
strategies in Hex is not new. Yang et al.~\cite{YLP-decomp} decomposed
the board into smaller regions to identify winning moves on boards up
to size $9\times 9$. However, they only considered decompositions into
simple (2-terminal) templates. By contrast, our regions have much more
intricate combinatorial properties.

\subsection{Corollary: A winning strategy for 1-move handicap}\label{ssec:handicap-strategy}

A system for measuring the strength of handicap in Hex was proposed by
one of the authors, and has found some acceptance in the Hex community
{\cite{Demer-handicap}}. The idea is to measure handicap as a nominal
``number of extra moves'' given to Black at the start of the
game. Through selective use of the swap rule, this ``number of moves''
can be adjusted in increments of 0.5. Specifically, when playing Hex
with the swap rule, the game is approximately fair (we ignore the
second player's theoretical advantage because it is very small in
practice). This is considered a handicap of 0 moves. On the other
hand, when playing without the swap rule, the difference between going
first and going second is exactly one extra move at the start of the
game. So compared to a theoretically fair game, it makes sense to say
that Black, who goes first, has an advantage worth 0.5 moves when
playing without swap. An advantage of exactly 1 move can be achieved
by playing with swap, but giving Black one extra move at the earliest
opportunity (i.e., right after White decides to swap or not). This
means: Black plays, White swaps, and Black makes two consecutive
moves, or: Black plays, White declines to swap, and Black makes an
additional move. A handicap of 1.5 moves can be achieved by playing
without swap and letting Black start with two moves, and so on.

In this terminology, both Henderson and Hayward's strategy and our
pivoting strategy (Figure~\ref{fig:handicap-strategies}(a) and (b))
are for 1.5-move handicap: the swap rule is not used and Black gets to
start with one extra move.

Interestingly, the two strategies can be combined to yield a
guaranteed winning strategy for Black with 1-move handicap. The strategy is as
follows: Black opens at \move{h10}. If White doesn't swap, Black plays
a free move at \move{b10} and then follows the Henderson-Hayward
strategy (see Figure~\ref{fig:handicap-strategies}(a)). If White does
swap, the black stone at \move{h10} effectively becomes a white stone
at \move{j8}, which is outside of the carrier of our pivoting
strategy. Black plays two stones at \move{e6} and \move{d8} and then
follows the pivoting strategy (see
Figure~\ref{fig:handicap-strategies}(b)).

\begin{figure}
  \[
  \begin{hexboard}[baseline={($(current bounding box.north)-(0,2.5ex)$)},scale=0.7] 
    \rotation{-30}
    \board(11,11)
    \white(1,1)
    \white(1,2)
    \white(11,2)
    \white(1,3)
    \white(10,3)
    \white(11,3)
    \white(1,4)
    \white(9,4)
    \white(10,4)
    \white(11,4)
    \white(1,5)
    \white(8,5)
    \white(9,5)
    \white(10,5)
    \white(11,5)
    \white(1,6)
    \white(2,6)
    \white(7,6)
    \white(8,6)
    \white(9,6)
    \white(10,6)
    \white(11,6)
    \white(1,7)
    \white(9,7)
    \white(10,7)
    \white(11,7)
    \white(1,8)
    \white(10,8)
    \white(11,8)
    \white(11,9)
    \white(11,10)
    \white(11,11)
    \cell(3,6)\sflabel{B}
    \cell(4,6)\label{$*$}
    \black(5,6)\sflabel{A}
    \black(4,8)
    \foreach\i in {2,...,11} {\shaded{FF8080}(\i,1)}
    \foreach\i in {2,...,10} {\shaded{FF8080}(\i,2)}
    \foreach\i in {2,...,9} {\shaded{FF8080}(\i,3)}
    \foreach\i in {2,...,8} {\shaded{FF8080}(\i,4)}
    \foreach\i in {2,...,7} {\shaded{FF8080}(\i,5)}
    \foreach\i in {3,...,6} {\shaded{FF8080}(\i,6)}
    \foreach\i in {2,...,8} {\shaded{80D080}(\i,7)}
    \foreach\i in {2,...,9} {\shaded{80D080}(\i,8)}
    \foreach\i in {1,...,10} {\shaded{80D080}(\i,9)}
    \foreach\i in {1,...,10} {\shaded{80D080}(\i,10)}
    \foreach\i in {1,...,10} {\shaded{80D080}(\i,11)}
    \begin{scope}
      \clip \coord(3,7)--\coord(4,5)--\coord(5,5)--\coord(4,7)--\coord(5,7)--\coord(6,5)--\coord(7,5)--\coord(6,7)--cycle;
      \draw[white, line width=0.8mm] \coord(3.3,6.3)--\coord(3.666,6.666)--\coord(4.4,6.3);
      \draw[white, line width=0.8mm] \coord(5.3,6.3)--\coord(5.666,6.666)--\coord(6.4,6.3);
    \end{scope}
    \node at \coord(5.333,3.333) {\sf\color{white}{\Huge\textbf{1}}};
    \node at \coord(6,9) {\sf\color{white}{\Huge\textbf{2}}};
  \end{hexboard}
  \]
  \caption{Divide\ldots}
  \label{fig:handicap-proof}
\end{figure}

\begin{figure}
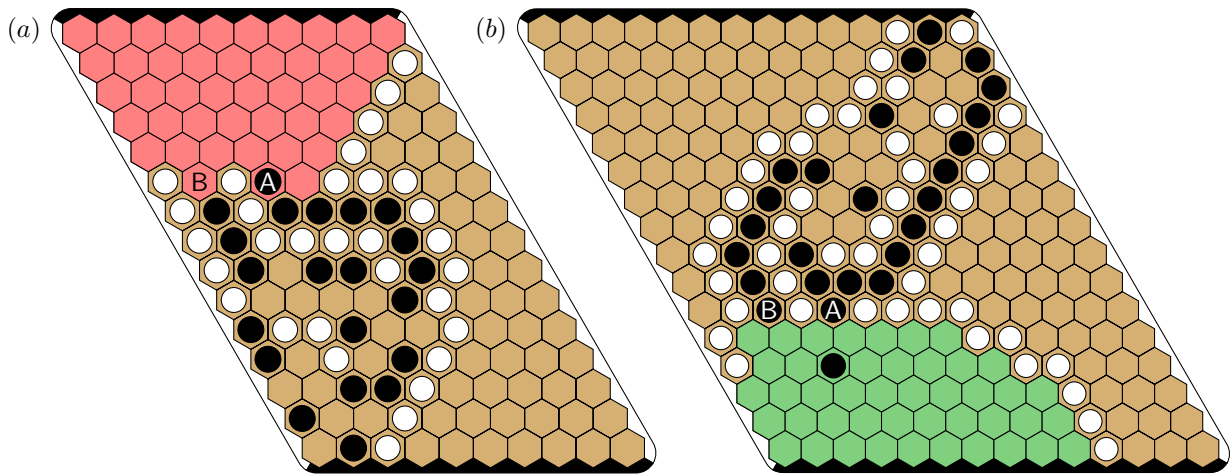

  \[
  (a)~
  \begin{hexboard}[scale=0.65,rotate=180,baseline={($(current bounding box.north)-(0,2.5ex)$)}]
    \rotation{-30}
    \board(10,15)
    \foreach\i in {6,7,9} {\shaded{FF8080}(\i,10)}
    \foreach\i in {5,...,10} {\shaded{FF8080}(\i,11)}
    \foreach\i in {4,...,10} {\shaded{FF8080}(\i,12)}
    \foreach\i in {3,...,10} {\shaded{FF8080}(\i,13)}
    \foreach\i in {2,...,10} {\shaded{FF8080}(\i,14)}
    \foreach\i in {1,...,10} {\shaded{FF8080}(\i,15)}
    \white(8,1)
    \black(9,1)
    \white(7,2)
    \black(10,2)
    \white(6,3)
    \black(7,3)
    \black(8,3)
    \white(5,4)
    \black(6,4)
    \white(8,4)
    \black(10,4)
    \white(4,5)
    \black(7,5)
    \white(8,5)
    \white(9,5)
    \black(10,5)
    \white(4,6)
    \black(5,6)
    \white(10,6)
    \white(3,7)
    \black(4,7)
    \white(5,7)
    \black(6,7)
    \black(7,7)
    \black(9,7)
    \white(10,7)
    \white(3,8)
    \black(4,8)
    \white(5,8)
    \white(6,8)
    \white(7,8)
    \white(8,8)
    \black(9,8)
    \white(10,8)
    \white(3,9)
    \black(4,9)
    \black(5,9)
    \black(6,9)
    \black(7,9)
    \white(8,9)
    \black(9,9)
    \white(10,9)
    \white(3,10)
    \white(4,10)
    \white(5,10)
    \black(7,10)\sflabel{A}
    \white(8,10)
    \cell(9,10)\sflabel{B}
    \white(10,10)
    \white(4,11)
    \white(3,12)
    \white(2,13)
    \white(1,14)
  \end{hexboard}
  \hspace{-16ex}
  (b)~
  \begin{hexboard}[scale=0.61,baseline={($(current bounding box.north)-(0,2.5ex)$)}]
    \rotation{-30}
    \board(14,16)
    \foreach\i in {2,...,8} {\shaded{80D080}(\i,12)}
    \foreach\i in {2,...,9} {\shaded{80D080}(\i,13)}
    \foreach\i in {1,...,10} {\shaded{80D080}(\i,14)}
    \foreach\i in {1,...,10} {\shaded{80D080}(\i,15)}
    \foreach\i in {1,...,10} {\shaded{80D080}(\i,16)}
    \white(12,1)
    \black(13,1)
    \white(14,1)
    \white(11,2)
    \black(12,2)
    \black(14,2)
    \white(10,3)
    \white(11,3)
    \black(14,3)
    \white(8,4)
    \white(9,4)
    \black(10,4)
    \white(12,4)
    \black(13,4)
    \white(14,4)
    \white(6,5)
    \white(7,5)
    \white(10,5)
    \black(12,5)
    \white(13,5)
    \white(5,6)
    \black(6,6)
    \black(7,6)
    \white(10,6)
    \black(11,6)
    \white(12,6)
    \white(4,7)
    \black(5,7)
    \white(6,7)
    \black(8,7)
    \white(9,7)
    \black(10,7)
    \white(11,7)
    \white(3,8)
    \black(4,8)
    \white(5,8)
    \white(8,8)
    \black(9,8)
    \white(10,8)
    \white(2,9)
    \black(3,9)
    \white(4,9)
    \black(5,9)
    \white(6,9)
    \white(7,9)
    \black(8,9)
    \white(9,9)
    \white(2,10)
    \black(3,10)
    \white(4,10)
    \black(5,10)
    \black(6,10)
    \black(7,10)
    \white(8,10)
    \white(2,11)
    \white(4,11)
    \white(6,11)
    \white(7,11)
    \white(8,11)
    \white(9,11)
    \white(1,12)
    \white(9,12)
    \white(10,12)
    \white(1,13)
    \black(4,13)
    \white(10,13)
    \white(11,13)
    \white(11,14)
    \white(11,15)
    \white(11,16)
    \black(3,11)\sflabel{B}
    \black(5,11)\sflabel{A}
  \end{hexboard}
  \]
  \caption{\ldots and conquer. Both positions are second-player wins for
    Black and can be efficiently solved by Mohex.}
  \label{fig:conquer}
\end{figure}

\section{Application: Non-inferiority of probes in Hex templates}
\label{sec:witnesses}

\subsection{Inferiority of probes}

Consider the following edge template, which is called the
\emph{ziggurat} or the \emph{4-3-2 template} {\cite{Seymour,Henderson-Hayward}}:
\begin{equation}\label{eqn:ziggurat}
\begin{hexboard}[baseline={($(current bounding box.center)-(0,1ex)$)},scale=0.8]
  \template(4,3)
  \foreach \i in {3,...,4} {\hex(\i,1)}
  \foreach \i in {2,...,4} {\hex(\i,2)}
  \foreach \i in {1,...,4} {\hex(\i,3)}
  \black(3,1)
  \cell(4,1)\sflabel{1}
  \cell(2,2)\sflabel{2}
  \cell(3,2)\sflabel{3}
  \cell(4,2)\sflabel{4}
  \cell(1,3)\sflabel{5}
  \cell(2,3)\sflabel{6}
  \cell(3,3)\sflabel{7}
  \cell(4,3)\sflabel{8}
\end{hexboard}
\end{equation}
It is easy to verify that Black can indeed connect the stone to the
edge. One way for Black to accomplish this is to play the pairing
strategy $\s{\s{1,3},\s{2,4},\s{5,6},\s{7,8}}$: if White plays in any
numbered cell in the template, Black plays in the other numbered cell
of the same pair.

Nevertheless, the fact that Black can defend the connection does not
mean that it is never useful for White to play in the template. By
judiciously playing in the template, White can force Black to respond
in a way that gives White an advantage. For example, consider the
following situation, with White to move. It is easy to see that
White's move at $4$ is winning, and every other move is losing:
\begin{equation}\label{eqn:witness-ziggurat-4-ex}
\begin{hexboard}[baseline={($(current bounding box.center)-(0,1ex)$)},scale=0.8]
  \rotation{-30}
  \board(5,4)
  \white(1,1)
  \white(2,1)
  \white(3,1)
  \white(4,1)
  \white(1,2)
  \white(2,2)
  \white(1,3)
  \black(5,1)
  \black(3,2)
  \black(5,2)
  \white(5,3)
  \white(5,4)
  \cell(4,2)\sflabel{1}
  \cell(2,3)\sflabel{2}
  \cell(3,3)\sflabel{3}
  \cell(4,3)\sflabel{4}
  \cell(1,4)\sflabel{5}
  \cell(2,4)\sflabel{6}
  \cell(3,4)\sflabel{7}
  \cell(4,4)\sflabel{8}
\end{hexboard}
\end{equation}
A white move in Black's template is called an \emph{intrusion} or a
\emph{probe} {\cite{Seymour,Henderson-Hayward}}. A probe $x$ is
\emph{inferior} if it can never be the unique winning move. More
precisely, for every way of embedding the template in a Hex board, if
playing at $x$ is a winning first move for White, then there also
exists some other winning first move (inside or outside the
template). Conversely, a probe is \emph{non-inferior} if there exists
some board position where $x$ is the unique winning move. Such a board
position is called a \emph{witness} of the non-inferiority of the
probe. Thus, {\eqref{eqn:witness-ziggurat-4-ex}} is a witness of the
non-inferiority of probe~$4$ in the ziggurat.

Henderson and Hayward conjectured that in the ziggurat, probes $3$,
$5$, $6$, $7$ and $8$ are inferior
{\cite[Conj.~1]{Henderson-Hayward}}. Here, we show that the conjecture
is false: in fact, we prove that none of the ziggurat's probes are
inferior. (Impatient readers can skip directly to
Figure~\ref{fig:witnesses-ziggurat} to see the witnesses).

Proving the non-inferiority of some probe in a template, or more
generally, of some move in a Hex region, is not an easy task, because
it requires finding a witnessing context in which that move, and only
that move, is winning. It is not usually possible to find such
witnesses by trial and error or by a brute force search, because the
witnesses can be exceedingly rare and subtle. Instead, we need a
finely-tuned tool. It turns out that combinatorial game theory, along
with our database of Hex-realizable 3-terminal values from
Section~\ref{sec:database}, is the right tool for the job.

\subsection{Abstract probes}\label{ssec:abstract-probes}

Our method for constructing witnesses of non-inferiority is best
illustrated in an example. Our running example will be probe~$7$ in
the ziggurat. We introduce the required machinery in this section, and
then give a step-by-step demonstration of the method in
Section~\ref{ssec:step-by-step}.

First, some terminology. Consider the ziggurat in
{\eqref{eqn:ziggurat}}. Let $A$ be the outcome poset for the ziggurat,
as defined in Section~\ref{sec:background-cgt}. Note that the ziggurat
has a complicated boundary that is not an $n$-terminal region, and so
its outcome poset is likely to be complicated. To carry out the
computations below, we must be able to do computations in this outcome
poset, but fortunately, we do not need to explicitly understand what
its elements are.

Let $G$ be the game form of the ziggurat {\eqref{eqn:ziggurat}} over
the poset $A$. We have
\begin{equation}\label{eqn:ziggurat-value}
  G \eq \g{G^L_1,\ldots,G^L_8|G^R_1,\ldots,G^R_8},
\end{equation}
where $G^L_i$ is the value of the position obtained by placing a black
stone on cell $i$ of the ziggurat, and $G^R_i$ is the value of the
analogous position with a white stone on cell $i$.  Since we are
interested in situations where $G^R_7$ is the only winning move for
White, we can ask what happens if we modify $G$ so that White cannot
play $7$ as the first move. Let $H$ be the game form that is just like
$G$, except that the right option $G^R_7$ has been removed:
\begin{equation}\label{eqn:ziggurat-probe}
  H = \g{G^L_1,\ldots,G^L_8|G^R_1,\ldots,G^R_6,G^R_8}.
\end{equation}
(Note that we only remove the white move at $7$ from $G$ itself, not
from any proper followers of $G$; in other words, White will be
allowed to play in cell $7$ as long as it is not the first move in the
region). We call the pair $(G,H)$ an \emph{abstract probe}. More
generally:

\begin{definition}[Abstract probe]
  An \emph{abstract probe} over an outcome poset $A$ is a pair $(G,H)$
  of games over $A$ such that $G\leq H$. We say that the abstract
  probe is \emph{viable} if $G<H$.
\end{definition}

When an abstract probe is viable, it means that there
\emph{potentially} exists some context in which White is winning in
$G$ and losing in $H$. In particular, in the intended situation where
$H$ is obtained from $G$ by removing a single white option, it means
that there potentially exists a context where the removed option was
the only winning move for White. On the other hand, when an abstract
probe is not viable, there is no such context.

Our general strategy is to start with a viable abstract probe, and
then gradually refine and extend the context while keeping the probe
viable at every step. To carry out this refinement, we must be able to
add a context to a probe. We do this in a componentwise way, by
defining $(G,H)\plusf X = (G\plusf X, H\plusf X)$.

Our refinement process ends when we reach a 2-terminal position, i.e.,
a probe $(G,H)\plusf X$ over the boolean poset $\Bool$. At this point,
the following lemma will give us a witness of non-inferiority.

\begin{lemma}\label{lem:viable-follower}
  Let $(G,H)$ be a probe over a poset $A$, such that $G$ and $H$ have
  the same left options, and such that $G$ has exactly one right
  option $K$ that is not a right option of $H$. Let $X$ be a game over
  a poset $B$, and let $f:A\times B\to \Bool$ be a monotone
  function. If the probe $(G,H)\plusf X$ is viable, then there exists
  some follower $Y$ of $X$ such that $K\plusf Y$ is the unique
  winning move for White in $G\plusf Y$.
\end{lemma}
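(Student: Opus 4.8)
The plan is to prove Lemma~\ref{lem:viable-follower} by induction on the structure of the game $X$, extracting at each stage a "move" that White must make in the $X$-component, and recursing on the resulting follower. The base case is when $X$ is atomic, say $X=[b]$. Then $G\plusf X = \g{G^L_i\plusf[b]\mid G^R_j\plusf[b], K\plusf[b]}$ and $H\plusf X$ is the same without the option $K\plusf[b]$, where I am writing the right options of $G$ that $H$ shares as $G^R_j$ and the exceptional one as $K$. Viability means $G\plusf[b] < H\plusf[b]$, i.e.\ $H\plusf[b]\nleq G\plusf[b]$; by the definition of $\leq$ and the fact that $G,H$ have the same left options and the same right options apart from $K\plusf[b]$, the only way $H\plusf[b]\leq G\plusf[b]$ can fail is that $K\plusf[b]\ntri H\plusf[b]$ while $K\plusf[b]\tri G\plusf[b]$ via the right option $K\plusf[b]$ of $G$ (which collapses to $K\plusf[b]\leq G\plusf[b]$, automatic). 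Unwinding this over $\Bool$: since $G\plusf[b]$ is a first-player win for White (White moves to $K\plusf[b]$) but $H\plusf[b]$ is not (White has no winning first move), and since $K$ is the unique right option of $G$ missing from $H$, the move $K\plusf[b]$ is the unique winning move for White in $G\plusf[b]$. So I take $Y=X=[b]$.

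For the induction step, suppose $X$ is composite. Consider the probe $(G\plusf X, H\plusf X)$, which is viable by hypothesis. Again $H\plusf X\nleq G\plusf X$. Here the subtlety is that both $G$ and $H$ now acquire extra left and right options coming from the options of $X$: $G\plusf X$ has left options $G^L_i\plusf X$ and $G\plusf X^L$, and right options $G^R_j\plusf X$, $K\plusf X$, and $G\plusf X^R$, while $H\plusf X$ has the same except that $K\plusf X$ is replaced by nothing (the $G\plusf X^R$ and $H\plusf X^R$ are the same since $G$, $H$ have the same right options besides $K$, wait — they differ, so $H\plusf X$ has $G^R_j\plusf X$ and $H\plusf X^R$; I need to be a little careful that $X^R$ ranges the same way in both). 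Tracking through the definition of $\leq$ and the monotonicity of $f$ (which gives $G\plusf X^R\leq G\plusf X$ etc.), I expect that the failure of $H\plusf X\leq G\plusf X$ must be witnessed either by $K\plusf X\ntri H\plusf X$ — in which case I want to move into the $X$-component — or by the corresponding failure for some $X^R$. The clean way to organize this is to prove, simultaneously by induction on $X$, the slightly strengthened statement: whenever $(G,H)$ is a probe of the stated shape and $(G,H)\plusf X$ is viable, then $K\plusf X$ is a winning move for White in $G\plusf X$, no other right option of $G\plusf X$ that is also a right option of $H\plusf X$ is winning, but possibly some $G\plusf X^R$ is; and then from any such winning $G\plusf X^R$ one re-runs the argument on the probe $(G,H)$ with context $X^R$ (which is viable because if $G\plusf X^R$ is a White win then... ), eventually reaching an atomic $X$ where no further $X^R$-moves are available and $K\plusf Y$ becomes the genuinely unique winning move.

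The main obstacle, and the place I expect to spend real effort, is the bookkeeping in this induction step: controlling exactly which right options of $G\plusf X$ and $H\plusf X$ are winning, and arguing that when $G\plusf X$ has a winning right move of the form $G\plusf X^R$, the probe $(G,H)\plusf X^R$ is again viable — this is where I need $G\leq H$ together with monotonicity of $f$ to conclude $G\plusf X^R\leq H\plusf X^R$ automatically, and then that $H\plusf X^R$ being a White win would propagate up to contradict viability of $(G,H)\plusf X$. I would formalize "winning move for White" purely in terms of $\tri$ and $\leq$ over $\Bool$ (a move $Z$ for White in a position $W$ of value over $\Bool$ is winning iff $Z\eq\bot$, i.e.\ $Z$ is a White win, equivalently $W\tri W$ fails through that option appropriately), so that everything reduces to manipulations with the two order relations and the transitivity properties quoted from \cite{S2022-hex-cgt}, plus Lemma~\ref{lem:topleqG}. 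Once the viability-preservation claim and the uniqueness claim are in hand, the descent terminates because $X$ strictly decreases in the induction, and the atomic base case delivers the desired follower $Y$ with $K\plusf Y$ the unique White-winning move in $G\plusf Y$.
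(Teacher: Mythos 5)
Your overall plan --- induction on $X$, with the atomic base case taking $Y=X$ and the inductive step descending into the context --- is the same as the paper's, and your base case is essentially correct (modulo writing $K\plusf[b]\ntri H\plusf[b]$ for what should be $H\plusf[b]\ntri K\plusf[b]$). The gap is in the inductive step. The ``strengthened statement'' you propose to carry through the induction is false: viability of $(G,H)\plusf X$ does not imply that $K\plusf X$ is a winning move for White in $G\plusf X$, nor that no shared right option is winning. The failure of $H\plusf X\leq G\plusf X$ has three possible witnesses, not two: $H\plusf X\ntri K\plusf X$, or $H\plusf X\ntri G\plusf X^R$ for some $X^R$, or $H\plusf X^L\ntri G\plusf X$ for some $X^L$. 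In the third case the discrepancy is invisible to White moving first --- for instance one can have $G\plusf X\eq\bot$ while $H\plusf X\eq\Star$, so that every White move in $G\plusf X$ ``wins'' (killing both of your clauses at this level) and the only asymmetry is that Black has a winning first move in $H\plusf X$, which must be of the form $H\plusf X^L$, because the left options coming from the $G$/$H$ component are literally the same positions in both games. Your argument never considers this situation, and your descent only ever passes to right options $X^R$ (``eventually reaching an atomic $X$ where no further $X^R$-moves are available''), so it has nowhere to go here.

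The paper's proof avoids any such invariant by arguing directly on outcomes over $\Bool$, in three cases. (1) White has a winning first move in $G\plusf X$, none in $H\plusf X$, and no winning move lies in the $X$-component: then every winning move is some $G^R\plusf X$ with $G^R$ not a right option of $H$, hence equals $K\plusf X$, which is therefore the unique winning move; take $Y=X$. (2) Some winning move is of the form $G\plusf X^R$: then $H\plusf X^R$ is not winning, so $G\plusf X^R<H\plusf X^R$ and the induction hypothesis applies to $X^R$ --- this is exactly the viability-preservation step you left unfinished (``which is viable because \ldots''), and it needs the case hypothesis that White has no winning move in $H\plusf X$, which is why the outcome-case organization matters. (3) White wins $G\plusf X$ as second player but not $H\plusf X$: then Black's winning move in $H\plusf X$ must be some $H\plusf X^L$, the corresponding $G\plusf X^L$ is not winning for Black, so $G\plusf X^L<H\plusf X^L$ and one recurses into the \emph{left} option $X^L$. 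To repair your proof you need to drop the strengthened invariant and add this third, left-option case; with it, the bookkeeping you were worried about reduces to the single observation that any option in which the move is made in the $G$/$H$ component is the identical position in $G\plusf X$ and $H\plusf X$.
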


\begin{proof}
  By induction on $X$. Since $G\plusf X<H\plusf X$ and both games are
  over $\Bool$, we know that either as first player or as second
  player, White has a winning strategy in $G\plusf X$ but not in
  $H\plusf X$. Case 1: White has a first-player winning strategy in
  $G\plusf X$ but not in $H\plusf X$, and none of White's winning
  moves in $G\plusf X$ are in $X$. In this case, White must have a
  winning move of the form $G^R\plusf X$. Since White does not have a
  winning move in $H\plusf X$, $G^R$ cannot be a right option of $H$,
  and therefore, $G^R$ must be $K$, proving the claim of the lemma.
  Case 2: White has a first-player winning strategy in $G\plusf X$ but
  not in $H\plusf X$, and at least one of White's winning moves in
  $G\plusf X$ is of the form $G\plusf X^R$. By assumption, $H\plusf
  X^R$ is not winning, so $G\plusf X^R<H\plusf X^R$, and the claim
  follows by the induction hypothesis. Case 3: White has a
  second-player winning strategy in $G\plusf X$ but not in $H\plusf
  X$. Then Left has some winning move in $H\plusf X$.  Since $G$ and
  $H$ have the same left options and Left has no winning move in
  $G\plusf X$, Left's winning move in $H\plusf X$ must be some
  $H\plusf X^L$, and the corresponding $G\plusf X^L$ is not winning
  for Left. Then $G\plusf X^L<H\plusf X^L$, and the claim
  follows by the induction hypothesis.
\end{proof}

\subsection{Step-by-step computation of witnesses}
\label{ssec:step-by-step}

Let us now apply this method to probe~$7$ in the ziggurat. Note that
each of the below steps is explorative; if any step fails, one must
redo the previous step.

\paragraph{Step 1.}

Let $P=(G,H)$ be the abstract probe defined by
{\eqref{eqn:ziggurat-value}} and {\eqref{eqn:ziggurat-probe}} above,
i.e., the one corresponding to probe~$7$ in the ziggurat. Since the
ziggurat only has $8$ empty cells, the values $G$ and $H$ can be
computed reasonably efficiently by traversing the game tree. The first
thing we must check is that the probe $P$ is viable, i.e., that
$G<H$. This can be verified by a computation.

\paragraph{Step 2.}

Next, we try to convert $P$ into an $n$-terminal probe by
surrounding it with black and white stones. There are potentially 
many ways to do so. By trial and error, we find that the following
boundary keeps the probe viable.
\[
\begin{hexboard}[baseline={($(current bounding box.center)-(0,1ex)$)},scale=0.8]
  \template(6,4)
  \foreach \i in {4,...,6} {\hex(\i,1)}
  \foreach \i in {3,...,6} {\hex(\i,2)}
  \foreach \i in {2,...,6} {\hex(\i,3)}
  \foreach \i in {1,...,6} {\hex(\i,4)}
  \black(4,2)
  \cell(5,2)\sflabel{1}
  \cell(3,3)\sflabel{2}
  \cell(4,3)\sflabel{3}
  \cell(5,3)\sflabel{4}
  \cell(2,4)\sflabel{5}
  \cell(3,4)\sflabel{6}
  \cell(4,4)\sflabel{7}
  \cell(5,4)\sflabel{8}
  \white(1,4)
  \black(2,3)
  \white(3,2)
  \black(4,1)
  \white(5,1)
  \black(6,1)
  \white(6,2)
  \black(6,3)
  \white(6,4)
\end{hexboard}
\]
Let $P'$ be the value of this new probe over the 5-terminal outcome
poset. We can schematically embed the probe $P'$ inside a Hex board as
follows:
\[
\begin{tikzpicture}[xscale=0.5, yscale=0.5, xslant=-0.3]
  \pgfdeclarelayer{black}
  \pgfdeclarelayer{white}
  \pgfsetlayers{black,white,main}
  \def\r{0.2}

  \fill[black!10]
  (0,7) -- (13,7) -- (13,0) -- (0,0) -- cycle;

  \draw[fill=black,line join=round]
  (-\r,-\r) -- (\r,\r) -- (13-\r,\r) -- (13+\r,-\r) -- cycle
  (-\r,7+\r) -- (\r,7-\r) -- (13-\r,7-\r) -- (13+\r,7+\r) -- cycle;
  \draw[fill=white,line join=round]
  (-\r,-\r) -- (\r,\r) -- (\r,7-\r) -- (-\r,7+\r) -- cycle
  (13-\r,\r) -- (13+\r,-\r) -- (13+\r,7+\r) -- (13-\r,7-\r) -- cycle;

  \draw[fill=white]
  (3-\r,\r) -- (3-\r,1) -- (3+\r,1) -- (3+\r,\r) -- cycle
  (3-\r,2) -- (3-\r,3+\r) -- (4,3+\r) -- (4,3-\r) -- (3+\r,3-\r) -- (3+\r,2) -- cycle
  (5,3+\r) -- (7,3+\r) -- (7,3-\r) -- (5,3-\r) -- cycle
  (10+\r,2) -- (10+\r,3+\r) -- (9,3+\r) -- (9,3-\r) -- (10-\r,3-\r) -- (10-\r,2) -- cycle
  (10+\r,\r) -- (10+\r,1) -- (10-\r,1) -- (10-\r,\r) -- cycle
  ;
  \draw[fill=black]
  (3-\r,1) -- (3+\r,1) -- (3+\r,2) -- (3-\r,2) -- cycle
  (4,3+\r) -- (4,3-\r) -- (5,3-\r) -- (5,3+\r) -- cycle
  (7,3+\r) -- (7,3-\r) -- (9,3-\r) -- (9,3+\r) -- cycle
  (10-\r,1) -- (10+\r,1) -- (10+\r,2) -- (10-\r,2) -- cycle
  ;
  \node at (6.5,1.5) {$P'$};
  \node at (6.5,5) {Rest of board};
\end{tikzpicture}
\]
Our goal is to find a Hex-realizable value for ``Rest of board'' that
keeps the probe viable.

\paragraph{Step 3.}

We divide the rest of the board into simpler regions. By trial and
error, we find that a good first step is to slice off two corners
$A$ and $B$, as in the following diagram:
\[
\begin{tikzpicture}[xscale=0.5, yscale=0.5, xslant=-0.3]
  \pgfdeclarelayer{black}
  \pgfdeclarelayer{white}
  \pgfsetlayers{black,white,main}
  \def\r{0.2}

  \fill[black!10]
  (0,7) -- (13,7) -- (13,0) -- (0,0) -- cycle;

  \draw[fill=black,line join=round]
  (-\r,-\r) -- (\r,\r) -- (13-\r,\r) -- (13+\r,-\r) -- cycle
  (-\r,7+\r) -- (\r,7-\r) -- (13-\r,7-\r) -- (13+\r,7+\r) -- cycle;
  \draw[fill=white,line join=round]
  (-\r,-\r) -- (\r,\r) -- (\r,3-\r) -- (1,3-\r) -- (1,3+\r) -- (\r,3+\r) -- (\r,7-\r) -- (-\r,7+\r) -- cycle
  (13-\r,\r) -- (13+\r,-\r) -- (13+\r,7+\r) -- (13-\r,7-\r) -- (13-\r,3+\r) -- (12,3+\r) -- (12,3-\r) -- (13-\r,3-\r) -- cycle;

  \draw[fill=white]
  (3-\r,\r) -- (3-\r,1) -- (3+\r,1) -- (3+\r,\r) -- cycle
  (3-\r,2) -- (3-\r,3-\r) -- (2,3-\r) -- (2,3+\r) -- (4,3+\r) -- (4,3-\r) -- (3+\r,3-\r) -- (3+\r,2) -- cycle
  (5,3+\r) -- (7,3+\r) -- (7,3-\r) -- (5,3-\r) -- cycle
  (10+\r,2) -- (10+\r,3-\r) -- (11,3-\r) -- (11,3+\r) -- (9,3+\r) -- (9,3-\r) -- (10-\r,3-\r) -- (10-\r,2) -- cycle
  (10+\r,\r) -- (10+\r,1) -- (10-\r,1) -- (10-\r,\r) -- cycle
  ;
  \draw[fill=black]
  (3-\r,1) -- (3+\r,1) -- (3+\r,2) -- (3-\r,2) -- cycle
  (4,3+\r) -- (4,3-\r) -- (5,3-\r) -- (5,3+\r) -- cycle
  (7,3+\r) -- (7,3-\r) -- (9,3-\r) -- (9,3+\r) -- cycle
  (10-\r,1) -- (10+\r,1) -- (10+\r,2) -- (10-\r,2) -- cycle
  ;
  \node at (6.5,1.5) {$P'$};

  \draw[fill=black]
  (1,3-\r) -- (2,3-\r) -- (2,3+\r) -- (1,3+\r) -- cycle
  (11,3-\r) -- (12,3-\r) -- (12,3+\r) -- (11,3+\r) -- cycle
  ;
  \node at (11.5,1.5) {$A$};
  \node at (1.5,1.5) {$B$};
  \node at (6.5,5) {Rest of board};
\end{tikzpicture}
\]
Using the database from Remark~\ref{rem:corners-and-forks}, we make a
list of Hex-realizable corner values $A$ such that the probe $P'+A$
remains viable. Anecdotally, we find that the probe is kept viable by
about 1.5 percent of the possible values of $A$. Similarly, we make a
list of Hex-realizable corners $B$ such that the probe $P'+B$ remains
viable. Here, we find that about 64 percent of the candidate values of
$B$ work. Next, we combine the two separate lists of viable values for
$A$ and $B$ into a single list of pairs $(A,B)$ such that the probe
$P'+A+B$ is viable. The vast majority of such pairs $(A,B)$ work.

\paragraph{Step 4.}

We divide the rest of the board into smaller regions again. Consider
the regions $C$ and $D$ shown in the following diagram:
\[
\begin{tikzpicture}[xscale=0.5, yscale=0.5, xslant=-0.3]
  \pgfdeclarelayer{black}
  \pgfdeclarelayer{white}
  \pgfsetlayers{black,white,main}
  \def\r{0.2}

  \fill[black!10]
  (0,7) -- (13,7) -- (13,0) -- (0,0) -- cycle;

  \draw[fill=black,line join=round]
  (-\r,-\r) -- (\r,\r) -- (13-\r,\r) -- (13+\r,-\r) -- cycle
  (-\r,7+\r) -- (\r,7-\r) -- (13-\r,7-\r) -- (13+\r,7+\r) -- cycle;
  \draw[fill=white,line join=round]
  (-\r,-\r) -- (\r,\r) -- (\r,3-\r) -- (1,3-\r) -- (1,3+\r) -- (\r,3+\r) -- (\r,7-\r) -- (-\r,7+\r) -- cycle
  (13-\r,\r) -- (13+\r,-\r) -- (13+\r,7+\r) -- (13-\r,7-\r) -- (13-\r,3+\r) -- (12,3+\r) -- (12,3-\r) -- (13-\r,3-\r) -- cycle;

  \draw[fill=white]
  (3-\r,\r) -- (3-\r,1) -- (3+\r,1) -- (3+\r,\r) -- cycle
  (3-\r,2) -- (3-\r,3-\r) -- (2,3-\r) -- (2,3+\r) -- (4,3+\r) -- (4,3-\r) -- (3+\r,3-\r) -- (3+\r,2) -- cycle
  (5,3+\r) -- (7,3+\r) -- (7,3-\r) -- (5,3-\r) -- cycle
  (10+\r,2) -- (10+\r,3-\r) -- (11,3-\r) -- (11,3+\r) -- (9,3+\r) -- (9,3-\r) -- (10-\r,3-\r) -- (10-\r,2) -- cycle
  (10+\r,\r) -- (10+\r,1) -- (10-\r,1) -- (10-\r,\r) -- cycle
  ;
  \draw[fill=black]
  (3-\r,1) -- (3+\r,1) -- (3+\r,2) -- (3-\r,2) -- cycle
  (4,3+\r) -- (4.5-\r,3+\r) -- (4.5-\r,5) -- (4.5+\r,5) -- (4.5+\r,3+\r) -- (5,3+\r) -- (5,3-\r) -- (4,3-\r) -- cycle
  (7,3+\r) -- (8-\r,3+\r) -- (8-\r,5) -- (8+\r,5) -- (8+\r,3+\r) -- (9,3+\r) -- (9,3-\r) -- (7,3-\r) -- cycle
  (10-\r,1) -- (10+\r,1) -- (10+\r,2) -- (10-\r,2) -- cycle
  ;
  \node at (6.5,1.5) {$P'$};

  \draw[fill=black]
  (1,3-\r) -- (2,3-\r) -- (2,3+\r) -- (1,3+\r) -- cycle
  (11,3-\r) -- (12,3-\r) -- (12,3+\r) -- (11,3+\r) -- cycle
  ;
  \node at (11.5,1.5) {$A$};
  \node at (1.5,1.5) {$B$};

  \draw[fill=white]
  (4.5-\r,5) -- (4.5-\r,7-\r) -- (4.5+\r,7-\r) -- (4.5+\r,5) -- cycle
  (8-\r,5) -- (8-\r,7-\r) -- (8+\r,7-\r) -- (8+\r,5) -- cycle
  ;
  \node at (10.5,5) {$C$};
  \node at (2.25,5) {$D$};
\end{tikzpicture}
\]
Similarly to what we did in the previous step, we make a list of pairs
$(A,C)$ keeping the probe $P'+A+C$ viable (about 24 percent of the
pairs work). We also make a list of pairs $(B,D)$ keeping the probe
$P'+B+D$ viable (only about 0.8 of such pairs work). Next, we look for
triples $(A,B,D)$ keeping the probe $P'+A+B+D$ viable.

Here, we hit a snag. We do not find any viable triples $(A,B,D)$. 

\paragraph{Step 5.}

Since we were not able to complete the previous step, we try a
different subdivision of the board. Using more trial and error, we
arrive at the following subdivision:
\[
\begin{tikzpicture}[xscale=0.5, yscale=0.5, xslant=-0.3]
  \pgfdeclarelayer{black}
  \pgfdeclarelayer{white}
  \pgfsetlayers{black,white,main}
  \def\r{0.2}

  \fill[black!10]
  (0,7) -- (13,7) -- (13,0) -- (0,0) -- cycle;

  \draw[fill=black,line join=round]
  (-\r,-\r) -- (\r,\r) -- (13-\r,\r) -- (13+\r,-\r) -- cycle
  (-\r,7+\r) -- (\r,7-\r) -- (13-\r,7-\r) -- (13+\r,7+\r) -- cycle;
  \draw[fill=white,line join=round]
  (-\r,-\r) -- (\r,\r) -- (\r,3-\r) -- (1,3-\r) -- (1,3+\r) -- (\r,3+\r) -- (\r,5-\r) -- (2,5-\r) -- (2,5+\r) -- (\r,5+\r) -- (\r,7-\r) -- (-\r,7+\r) -- cycle
  (13-\r,\r) -- (13+\r,-\r) -- (13+\r,7+\r) -- (13-\r,7-\r) -- (13-\r,3+\r) -- (12,3+\r) -- (12,3-\r) -- (13-\r,3-\r) -- cycle;

  \draw[fill=white]
  (3-\r,\r) -- (3-\r,1) -- (3+\r,1) -- (3+\r,\r) -- cycle
  (3-\r,2) -- (3-\r,3-\r) -- (2,3-\r) -- (2,3+\r) -- (4,3+\r) -- (4,3-\r) -- (3+\r,3-\r) -- (3+\r,2) -- cycle
  (5,3+\r) -- (6-\r,3+\r) -- (6-\r,5-\r) -- (4,5-\r) -- (4,5+\r) -- (6+\r,5+\r) -- (6+\r,3+\r) -- (7,3+\r) -- (7,3-\r) -- (5,3-\r) -- cycle
  (10+\r,2) -- (10+\r,3-\r) -- (11,3-\r) -- (11,3+\r) -- (9,3+\r) -- (9,3-\r) -- (10-\r,3-\r) -- (10-\r,2) -- cycle
  (10+\r,\r) -- (10+\r,1) -- (10-\r,1) -- (10-\r,\r) -- cycle
  ;
  \draw[fill=black]
  (3-\r,1) -- (3+\r,1) -- (3+\r,2) -- (3-\r,2) -- cycle
  (4,3+\r) -- (4,3-\r) -- (5,3-\r) -- (5,3+\r) -- cycle
  (7,3+\r) -- (8-\r,3+\r) -- (8-\r,5) -- (8+\r,5) -- (8+\r,3+\r) -- (9,3+\r) -- (9,3-\r) -- (7,3-\r) -- cycle
  (10-\r,1) -- (10+\r,1) -- (10+\r,2) -- (10-\r,2) -- cycle
  ;
  \node at (6.5,1.5) {$P'$};

  \draw[fill=black]
  (1,3-\r) -- (2,3-\r) -- (2,3+\r) -- (1,3+\r) -- cycle
  (11,3-\r) -- (12,3-\r) -- (12,3+\r) -- (11,3+\r) -- cycle
  ;
  \node at (11.5,1.5) {$A$};
  \node at (1.5,1.5) {$B$};

  \draw[fill=white]
  (8-\r,5) -- (8-\r,7-\r) -- (8+\r,7-\r) -- (8+\r,5) -- cycle
  ;
  \node at (10.5,5) {$C$};

  \draw[fill=black]
  (2,5-\r) -- (2,5+\r) -- (4,5+\r) -- (4,5-\r) -- cycle;
  \node at (4,6) {$E$};
  \node at (3,4) {$F$};
\end{tikzpicture}
\]
Note that regions $A$, $B$, $C$, and $E$ are corners, but $F$ is a
general 3-terminal position. Also note that region $C$ is the same as
in step~4, so we can re-use the list of viable pairs $(C,A)$ we have
already computed.  Continuing as above, we eventually find a tuple
$(A,B,C,E,F)$ of Hex-realizable values for these regions, such that
the probe $P'+A+B+C+E+F$ is viable. The simplest such tuple we found,
as measured by the total number of empty cells in its Hex realization,
is
\[
\begin{array}{ccl}
  A &=& \g{a,c|\g{a|\bot}},\\
  B &=& \g{a,c|\g{a|\bot}},\\
  C &=& \g{\g{\top|a},\g{\top|c,\g{b|\bot}}|\bot},\\
  E &=& \g{\g{\top|b}|\bot},\\
  F &=& \g{\g{\top|b}|b,\g{c|\bot}}.
\end{array}
\]

\paragraph{Step 6.}

The trial-and-error phase of the method is now completed. What remains
to be done is to assemble the witness position.  Looking up the Hex
realizations of $A$, $B$, $C$, $E$, and $F$ in the database of
Section~\ref{sec:database}, and connecting the appropriate terminals,
we obtain the following position:
\begin{equation}\label{eqn:witness-ziggurat-7}
\begin{hexboard}[baseline={($(current bounding box.center)-(0,1ex)$)},scale=0.8]
  \rotation{-30}
  \board(11,9)
  \white(1,1)\white(2,1)\white(3,1)\white(4,1)\white(5,1)\white(6,1)\white(7,1)\white(8,1)\white(9,1)\black(10,1)\black(11,1)
  \white(1,2)\white(2,2)\black(3,2)\black(4,2)\white(5,2)\black(6,2)\nofil(7,2)\black(8,2)\nofil(9,2)\white(10,2)\black(11,2)
  \white(1,3)\nofil(2,3)\nofil(3,3)\nofil(4,3)\black(5,3)\nofil(6,3)\nofil(7,3)\white(8,3)\nofil(9,3)\white(10,3)\nofil(11,3)
  \black(1,4)\white(2,4)\black(3,4)\nofil(4,4)\white(5,4)\white(6,4)\nofil(7,4)\white(8,4)\black(9,4)\nofil(10,4)\nofil(11,4)
  \black(1,5)\white(2,5)\nofil(3,5)\black(4,5)\white(5,5)\black(6,5)\white(7,5)\black(8,5)\white(9,5)\nofil(10,5)\black(11,5)
  \black(1,6)\white(2,6)\black(3,6)\white(4,6)\black(5,6)\white(6,6)\black(7,6)\white(8,6)\white(9,6)\black(10,6)\white(11,6)
  \black(1,7)\white(2,7)\black(3,7)\white(4,7)\black(5,7)\carry(6,7)\white(7,7)\black(8,7)\nofil(9,7)\nofil(10,7)\black(11,7)
  \black(1,8)\white(2,8)\black(3,8)\carry(4,8)\carry(5,8)\carry(6,8)\black(7,8)\nofil(8,8)\black(9,8)\nofil(10,8)\black(11,8)
  \black(1,9)\white(2,9)\carry(3,9)\carry(4,9)\carry(5,9)\carry(6,9)\white(7,9)\white(8,9)\white(9,9)\nofil(10,9)\white(11,9)
  \carry(5,7)
  \cell(6,7)\sflabel{1}
  \cell(4,8)\sflabel{2}
  \cell(5,8)\sflabel{3}
  \cell(6,8)\sflabel{4}
  \cell(3,9)\sflabel{5}
  \cell(4,9)\sflabel{6}
  \cell(5,9)\sflabel{7}
  \cell(6,9)\sflabel{8}
  \cell(10,7) \sflabel{$A$}
  \shaded{FF8080}(8,8)
  \shaded{FF8080}(9,7)
  \shaded{FF8080}(10,7)
  \shaded{FF8080}(10,8)
  \shaded{FF8080}(10,9)
  \cell(4,3) \sflabel{$B$}
  \shaded{FF8080}(2,3)
  \shaded{FF8080}(3,3)
  \shaded{FF8080}(4,3)
  \shaded{FF8080}(4,4)
  \shaded{FF8080}(3,5)
  \cell(10,4) \sflabel{$C$}
  \shaded{FF8080}(10,4)
  \shaded{FF8080}(10,5)
  \shaded{FF8080}(11,3)
  \shaded{FF8080}(11,4)
  \cell(9,2) \sflabel{$E$}
  \shaded{FF8080}(9,2)
  \shaded{FF8080}(9,3)
  \cell(7,3) \sflabel{$F$}
  \shaded{FF8080}(6,3)
  \shaded{FF8080}(7,2)
  \shaded{FF8080}(7,3)
  \shaded{FF8080}(7,4)
\end{hexboard}
\end{equation}
Here, we have labelled the regions $A$--$F$ to make them more easily
recognizable. By construction, the probe $P'+A+B+C+E+F$ is viable;
therefore, Lemma~\ref{lem:viable-follower} guarantees that some
follower of $A+B+C+E+F$ is the desired witness of non-inferiority, in
which probe~7 is the unique winning move for White. In practice, since
we typically enumerate the possible contexts in order of increasing
depth, it is almost always the case that $A+B+C+E+F$ itself is the
witness.

But can one trust this to work? The required computations took several
days, and we could have easily made a mistake somewhere. Luckily, one
does not have to take our word for it; it is easy to check the final
answer using a Hex solver such as Mohex. Indeed, Mohex easily confirms
that probe~7 is the only winning move in
{\eqref{eqn:witness-ziggurat-7}}.

\paragraph{Discussion.}

The method described in this section requires both heavy computation
and significant human guidance. Computations are used for such tasks
as searching large databases of Hex-realizable values, computing
outcomes and sums, and determining the viability of probes. Human
judgement is needed for tasks such as proposing subdivisions of the
board, deciding in which order to fill them, when to backtrack, etc.
On the whole, this is a labor intensive process. It may be possible to
better automate this process in the future, but we have not attempted
to do so.

We also note that the witness position
{\eqref{eqn:witness-ziggurat-7}} is very complex (certainly much more
complex than any position that is likely to arise naturally during
game play). While one can computationally verify that probe 7 is the
only winning move for White, we do not have a human-level explanation
for why this is so. Indeed, finding the winning move in this position
is a difficult puzzle that would likely stump most Hex masters.

\subsection{Results}

\subsubsection{Probes in some common Hex templates}

We investigated the inferiority or non-inferiority of probes into many
common Hex templates. Figure~\ref{fig:named-templates} shows a number
of named templates, and we have numbered the probes in each
template.
\begin{figure}
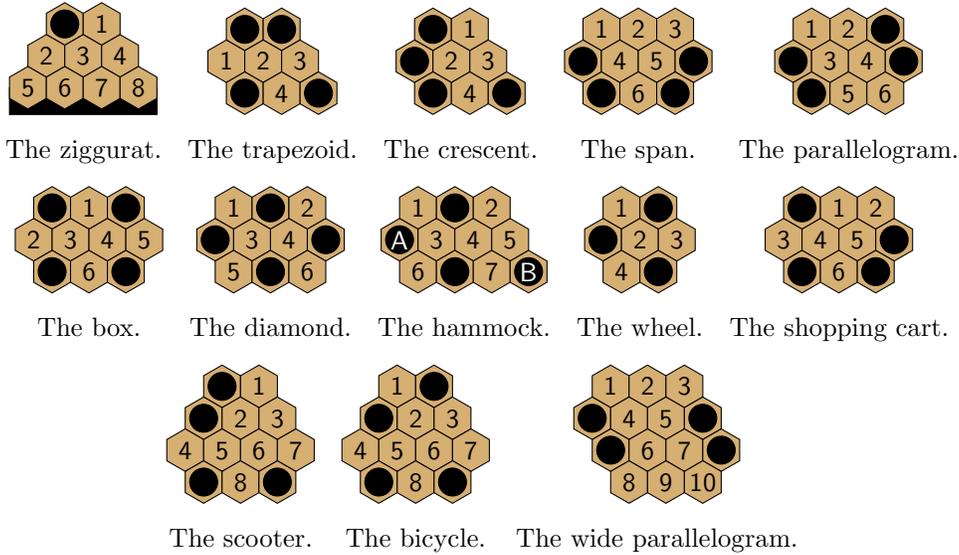

  \def\scale{0.7}
  \setlength{\increment}{0.175cm}
  \newcommand{\offset}{\hspace{2em}}
  \[
  \begin{array}[b]{c}
    \begin{hexboard}[baseline={($(current bounding box.south)-(0,1ex)$)},scale=\scale]
      \template(4,3)
      \foreach \i in {3,...,4} {\hex(\i,1)}
      \foreach \i in {2,...,4} {\hex(\i,2)}
      \foreach \i in {1,...,4} {\hex(\i,3)}
      \black(3,1)
      \cell(4,1)\sflabel{1}
      \cell(2,2)\sflabel{2}
      \cell(3,2)\sflabel{3}
      \cell(4,2)\sflabel{4}
      \cell(1,3)\sflabel{5}
      \cell(2,3)\sflabel{6}
      \cell(3,3)\sflabel{7}
      \cell(4,3)\sflabel{8}
    \end{hexboard}
    \\
    \mbox{The ziggurat.}
  \end{array}
  \begin{array}[b]{c}
    \begin{hexboard}[baseline={($(current bounding box.south)-(0,1ex)$)},scale=\scale]
      \rotation{-30}
      \foreach\i in {2,...,3} {\hex(\i,1)}
      \foreach\i in {1,...,3} {\hex(\i,2)}
      \foreach\i in {1,...,3} {\hex(\i,3)}
      \black(2,1)
      \black(3,1)
      \black(1,3)
      \black(3,3)
      \cell(1,2)\sflabel{1}
      \cell(2,2)\sflabel{2}
      \cell(3,2)\sflabel{3}
      \cell(2,3)\sflabel{4}
    \end{hexboard}
    \\
    \mbox{The trapezoid.}
  \end{array}
  \begin{array}[b]{c}
    \begin{hexboard}[baseline={($(current bounding box.south)-(0,1ex)$)},scale=\scale]
      \rotation{-30}
      \foreach\i in {2,...,3} {\hex(\i,1)}
      \foreach\i in {1,...,3} {\hex(\i,2)}
      \foreach\i in {1,...,3} {\hex(\i,3)}
      \black(2,1)
      \black(1,2)
      \black(1,3)
      \black(3,3)
      \cell(3,1)\sflabel{1}
      \cell(2,2)\sflabel{2}
      \cell(3,2)\sflabel{3}
      \cell(2,3)\sflabel{4}
    \end{hexboard}
    \\
    \mbox{The crescent.}
  \end{array}
  \begin{array}[b]{c}
    \begin{hexboard}[baseline={($(current bounding box.south)-(0,1ex)$)},scale=\scale]
      \rotation{-30}
      \foreach\i in {2,...,4} {\hex(\i,1)}
      \foreach\i in {1,...,4} {\hex(\i,2)}
      \foreach\i in {1,...,3} {\hex(\i,3)}
      \black(1,2)
      \black(1,3)
      \black(3,3)
      \black(4,2)
      \cell(2,1)\sflabel{1}
      \cell(3,1)\sflabel{2}
      \cell(4,1)\sflabel{3}
      \cell(2,2)\sflabel{4}
      \cell(3,2)\sflabel{5}
      \cell(2,3)\sflabel{6}
    \end{hexboard}
    \\
    \mbox{The span.}
  \end{array}
  \begin{array}[b]{c}
    \begin{hexboard}[baseline={($(current bounding box.south)-(0,1ex)$)},scale=\scale]
      \rotation{-30}
      \foreach\i in {2,...,4} {\hex(\i,1)}
      \foreach\i in {1,...,4} {\hex(\i,2)}
      \foreach\i in {1,...,3} {\hex(\i,3)}
      \black(1,2)
      \black(1,3)
      \black(4,1)
      \black(4,2)
      \cell(2,1)\sflabel{1}
      \cell(3,1)\sflabel{2}
      \cell(2,2)\sflabel{3}
      \cell(3,2)\sflabel{4}
      \cell(2,3)\sflabel{5}
      \cell(3,3)\sflabel{6}
    \end{hexboard}
    \\
    \mbox{The parallelogram.}
  \end{array}
  \]
  \[
  \begin{array}[b]{c}
    \begin{hexboard}[baseline={($(current bounding box.south)-(0,1ex)$)},scale=\scale]
      \rotation{-30}
      \foreach\i in {2,...,4} {\hex(\i,1)}
      \foreach\i in {1,...,4} {\hex(\i,2)}
      \foreach\i in {1,...,3} {\hex(\i,3)}
      \black(2,1)
      \black(1,3)
      \black(4,1)
      \black(3,3)
      \cell(3,1)\sflabel{1}
      \cell(1,2)\sflabel{2}
      \cell(2,2)\sflabel{3}
      \cell(3,2)\sflabel{4}
      \cell(4,2)\sflabel{5}
      \cell(2,3)\sflabel{6}
    \end{hexboard}
    \\
    \mbox{The box.}
  \end{array}
  \begin{array}[b]{c}
    \begin{hexboard}[baseline={($(current bounding box.south)-(0,1ex)$)},scale=\scale]
      \rotation{-30}
      \foreach\i in {2,...,4} {\hex(\i,1)}
      \foreach\i in {1,...,4} {\hex(\i,2)}
      \foreach\i in {1,...,3} {\hex(\i,3)}
      \black(1,2)
      \black(3,1)
      \black(2,3)
      \black(4,2)
      \cell(2,1)\sflabel{1}
      \cell(4,1)\sflabel{2}
      \cell(2,2)\sflabel{3}
      \cell(3,2)\sflabel{4}
      \cell(1,3)\sflabel{5}
      \cell(3,3)\sflabel{6}
    \end{hexboard}
    \\
    \mbox{The diamond.}
  \end{array}
  \begin{array}[b]{c}
    \begin{hexboard}[baseline={($(current bounding box.south)-(0,1ex)$)},scale=\scale]
      \rotation{-30}
      \foreach\i in {2,...,4} {\hex(\i,1)}
      \foreach\i in {1,...,4} {\hex(\i,2)}
      \foreach\i in {1,...,4} {\hex(\i,3)}
      \black(1,2)\sflabel{A}
      \black(2,3)
      \black(3,1)
      \black(4,3)\sflabel{B}
      \cell(2,1)\sflabel{1}
      \cell(4,1)\sflabel{2}
      \cell(2,2)\sflabel{3}
      \cell(3,2)\sflabel{4}
      \cell(4,2)\sflabel{5}
      \cell(1,3)\sflabel{6}
      \cell(3,3)\sflabel{7}
    \end{hexboard}
    \\
    \mbox{The hammock.}
  \end{array}
  \begin{array}[b]{c}
    \begin{hexboard}[baseline={($(current bounding box.south)-(0,1ex)$)},scale=\scale]
      \rotation{-30}
      \foreach\i in {2,...,3} {\hex(\i,1)}
      \foreach\i in {1,...,3} {\hex(\i,2)}
      \foreach\i in {1,...,2} {\hex(\i,3)}
      \black(1,2)
      \black(3,1)
      \black(2,3)
      \cell(2,1)\sflabel{1}
      \cell(2,2)\sflabel{2}
      \cell(3,2)\sflabel{3}
      \cell(1,3)\sflabel{4}
    \end{hexboard}
    \\
    \mbox{The wheel.}
  \end{array}
  \begin{array}[b]{c}
    \begin{hexboard}[baseline={($(current bounding box.south)-(0,1ex)$)},scale=\scale]
      \rotation{-30}
      \foreach\i in {2,...,4} {\hex(\i,1)}
      \foreach\i in {1,...,4} {\hex(\i,2)}
      \foreach\i in {1,...,3} {\hex(\i,3)}
      \black(2,1)
      \black(1,3)
      \black(4,2)
      \black(3,3)
      \cell(3,1)\sflabel{1}
      \cell(4,1)\sflabel{2}
      \cell(1,2)\sflabel{3}
      \cell(2,2)\sflabel{4}
      \cell(3,2)\sflabel{5}
      \cell(2,3)\sflabel{6}
    \end{hexboard}
    \\
    \mbox{The shopping cart.}
  \end{array}
  \]
  \[
  \begin{array}[b]{c}
    \begin{hexboard}[baseline={($(current bounding box.south)-(0,1ex)$)},scale=\scale]
      \rotation{-30}
      \foreach\i in {3,...,4} {\hex(\i,1)}
      \foreach\i in {2,...,4} {\hex(\i,2)}
      \foreach\i in {1,...,4} {\hex(\i,3)}
      \foreach\i in {1,...,3} {\hex(\i,4)}
      \black(3,1)
      \black(2,2)
      \black(1,4)
      \black(3,4)
      \cell(4,1)\sflabel{1}
      \cell(3,2)\sflabel{2}
      \cell(4,2)\sflabel{3}
      \cell(1,3)\sflabel{4}
      \cell(2,3)\sflabel{5}
      \cell(3,3)\sflabel{6}
      \cell(4,3)\sflabel{7}
      \cell(2,4)\sflabel{8}
    \end{hexboard}
    \\
    \mbox{The scooter.}
  \end{array}
  \begin{array}[b]{c}
    \begin{hexboard}[baseline={($(current bounding box.south)-(0,1ex)$)},scale=\scale]
      \rotation{-30}
      \foreach\i in {3,...,4} {\hex(\i,1)}
      \foreach\i in {2,...,4} {\hex(\i,2)}
      \foreach\i in {1,...,4} {\hex(\i,3)}
      \foreach\i in {1,...,3} {\hex(\i,4)}
      \black(4,1)
      \black(2,2)
      \black(1,4)
      \black(3,4)
      \cell(3,1)\sflabel{1}
      \cell(3,2)\sflabel{2}
      \cell(4,2)\sflabel{3}
      \cell(1,3)\sflabel{4}
      \cell(2,3)\sflabel{5}
      \cell(3,3)\sflabel{6}
      \cell(4,3)\sflabel{7}
      \cell(2,4)\sflabel{8}
    \end{hexboard}
    \\
    \mbox{The bicycle.}
  \end{array}
  \begin{array}[b]{c}
    \begin{hexboard}[baseline={($(current bounding box.south)-(0,1ex)$)},scale=\scale]
      \rotation{-30}
      \foreach\i in {2,...,4} {\hex(\i,1)}
      \foreach\i in {1,...,4} {\hex(\i,2)}
      \foreach\i in {1,...,4} {\hex(\i,3)}
      \foreach\i in {1,...,3} {\hex(\i,4)}
      \black(1,2)
      \black(1,3)
      \black(4,2)
      \black(4,3)
      \cell(2,1)\sflabel{1}
      \cell(3,1)\sflabel{2}
      \cell(4,1)\sflabel{3}
      \cell(2,2)\sflabel{4}
      \cell(3,2)\sflabel{5}
      \cell(2,3)\sflabel{6}
      \cell(3,3)\sflabel{7}
      \cell(1,4)\sflabel{8}
      \cell(2,4)\sflabel{9}
      \cell(3,4)\sflabel{10}
    \end{hexboard}
    \\
    \mbox{The wide parallelogram.}
  \end{array}
  \]
  \caption{Some Hex templates. The ziggurat is an edge template, and
    the others are interior templates. In the hammock, Black is
    guaranteed to connect the stones labelled $A$ and $B$ to each
    other. In the remaining interior templates, Black can connect all
    of the black stones.}
  \label{fig:named-templates}
\end{figure}

To prove that a probe is non-inferior, it suffices to provide a
witness position as discussed above. For proving that a probe $x$ is
inferior, there are several methods. The most common such methods, and
the only ones that we need here, are to show that $x$ is
\emph{dominated} by some other move in the template, or to show that
$x$ is \emph{strongly reversible}. Here, a white move $x$ is strongly
reversible if there exists a black response $y$ so that if White has a
winning move in the resulting position, then White already had a
winning move other than $x$ before $x$ and $y$ were played. For a more
detailed discussion of inferiority proofs and additional methods and
examples, see Henderson and Hayward
{\cite{Henderson-Hayward,star-decomposition}}.

We found that in the templates of Figure~\ref{fig:named-templates},
the following probes are inferior:
\begin{itemize}
\item In the trapezoid, probe $2$ is inferior. It is
  strongly reversed by $1$.
\item In the crescent, probes $2$ and $4$
  are inferior. Both are strongly reversed by $3$.
  Probe $2$ is also strongly reversed by $1$.
\item In the span, probes $4$ and $5$ are
  inferior. They are dominated by $2$, using
  star-decomposition domination {\cite{star-decomposition}}.
\item In the parallelogram, probes $3$ and
  $4$ are inferior. They are strongly reversed by
  $5$ and $6$, respectively.
\end{itemize}

All other probes in all templates of Figure~\ref{fig:named-templates}
are non-inferior, with witnesses shown in
Figures~\ref{fig:witnesses-part1} and {\ref{fig:witnesses-part2}}. In
each witness position, the cell marked with a white dot is the unique
winning move for White. In choosing these witness positions, we
generally tried to minimize the number of empty cells (rather than
minimizing the number of stones), because the number of empty cells is
a more meaningful measure of a position's complexity.

\begin{figure}
  \def\scale{0.5}
  \setlength{\increment}{0.175cm}
  \newcommand{\offset}{\hspace{1.7em}}
  Ziggurat:
  \[
  \begin{hexboard}[baseline={($(current bounding box.south)-(0,1ex)$)},scale=\scale]
    \rotation{-30}
    \board(6,4)
    \white(1,1)\white(2,1)\white(3,1)\white(4,1)\white(5,1)\black(6,1)
    \white(1,2)\black(2,2)\black(3,2)\black(4,2)\carry(5,2)\white(6,2)
    \black(1,3)\white(2,3)\carry(3,3)\carry(4,3)\carry(5,3)\white(6,3)
    \black(1,4)\carry(2,4)\carry(3,4)\carry(4,4)\carry(5,4)\white(6,4)
    \carry(4,2)
    \cell(5,2)\winmove
  \end{hexboard}
  \offset\hspace{-4\increment}
  \begin{hexboard}[baseline={($(current bounding box.south)-(0,1ex)$)},scale=\scale]
    \rotation{-30}
    \board(4,4)
    \white(1,1)\black(2,1)\nofil(3,1)\white(4,1)
    \black(1,2)\white(2,2)\black(3,2)\carry(4,2)
    \nofil(1,3)\carry(2,3)\carry(3,3)\carry(4,3)
    \carry(1,4)\carry(2,4)\carry(3,4)\carry(4,4)
    \carry(3,2)
    \cell(2,3)\winmove
  \end{hexboard}
  \offset\hspace{-5\increment}
  \begin{hexboard}[baseline={($(current bounding box.south)-(0,1ex)$)},scale=\scale]
    \rotation{-30}
    \board(7,5)
    \white(1,1)\white(2,1)\white(3,1)\white(4,1)\white(5,1)\white(6,1)\black(7,1)
    \white(1,2)\white(2,2)\white(3,2)\white(4,2)\nofil(5,2)\nofil(6,2)\black(7,2)
    \white(1,3)\white(2,3)\white(3,3)\black(4,3)\carry(5,3)\white(6,3)\black(7,3)
    \white(1,4)\black(2,4)\carry(3,4)\carry(4,4)\carry(5,4)\nofil(6,4)\white(7,4)
    \black(1,5)\carry(2,5)\carry(3,5)\carry(4,5)\carry(5,5)\white(6,5)\white(7,5)
    \carry(4,3)
    \cell(4,4)\winmove
  \end{hexboard}
  \offset\hspace{-4\increment}
  \begin{hexboard}[baseline={($(current bounding box.south)-(0,1ex)$)},scale=\scale]
    \rotation{-30}
    \board(5,4)
    \white(1,1)
    \white(2,1)
    \white(3,1)
    \white(4,1)
    \white(1,2)
    \white(2,2)
    \white(1,3)
    \black(5,1)
    \black(3,2)
    \carry(3,2)
    \black(5,2)
    \white(5,3)
    \white(5,4)
    \carry(4,2)
    \carry(2,3)
    \carry(3,3)
    \carry(4,3)\winmove
    \carry(1,4)
    \carry(2,4)
    \carry(3,4)
    \carry(4,4)
  \end{hexboard}
  \offset\hspace{-7\increment}
  \begin{hexboard}[baseline={($(current bounding box.south)-(0,1ex)$)},scale=\scale]
    \rotation{-30}
    \board(7,7)
    \white(1,1)\white(2,1)\white(3,1)\white(4,1)\black(5,1)\white(6,1)\nofil(7,1)
    \white(1,2)\white(2,2)\nofil(3,2)\nofil(4,2)\nofil(5,2)\nofil(6,2)\nofil(7,2)
    \white(1,3)\black(2,3)\nofil(3,3)\white(4,3)\nofil(5,3)\nofil(6,3)\white(7,3)
    \nofil(1,4)\nofil(2,4)\black(3,4)\black(4,4)\white(5,4)\black(6,4)\white(7,4)
    \black(1,5)\white(2,5)\black(3,5)\carry(4,5)\white(5,5)\black(6,5)\white(7,5)
    \black(1,6)\carry(2,6)\carry(3,6)\carry(4,6)\black(5,6)\white(6,6)\white(7,6)
    \carry(1,7)\carry(2,7)\carry(3,7)\carry(4,7)\white(5,7)\white(6,7)\white(7,7)
    \carry(3,5)
    \cell(1,7)\winmove
  \end{hexboard}
  \]
  \[
  \begin{hexboard}[baseline={($(current bounding box.south)-(0,1ex)$)},scale=\scale]
    \rotation{-30}
    \board(6,5)
    \white(1,1)\white(2,1)\white(3,1)\white(4,1)\black(5,1)\white(6,1)
    \white(1,2)\white(2,2)\white(3,2)\nofil(4,2)\nofil(5,2)\black(6,2)
    \white(1,3)\white(2,3)\black(3,3)\carry(4,3)\white(5,3)\black(6,3)
    \white(1,4)\carry(2,4)\carry(3,4)\carry(4,4)\nofil(5,4)\white(6,4)
    \carry(1,5)\carry(2,5)\carry(3,5)\carry(4,5)\black(5,5)\white(6,5)
    \carry(3,3)
    \cell(2,5)\winmove
  \end{hexboard}
  \offset\hspace{-9\increment}
  \begin{hexboard}[baseline={($(current bounding box.south)-(0,1ex)$)},scale=\scale]
    \rotation{-30}
    \board(11,9)
    \white(1,1)\white(2,1)\white(3,1)\white(4,1)\white(5,1)\white(6,1)\white(7,1)\white(8,1)\white(9,1)\black(10,1)\black(11,1)
    \white(1,2)\white(2,2)\black(3,2)\black(4,2)\white(5,2)\black(6,2)\nofil(7,2)\black(8,2)\nofil(9,2)\white(10,2)\black(11,2)
    \white(1,3)\nofil(2,3)\nofil(3,3)\nofil(4,3)\black(5,3)\nofil(6,3)\nofil(7,3)\white(8,3)\nofil(9,3)\white(10,3)\nofil(11,3)
    \black(1,4)\white(2,4)\black(3,4)\nofil(4,4)\white(5,4)\white(6,4)\nofil(7,4)\white(8,4)\black(9,4)\nofil(10,4)\nofil(11,4)
    \black(1,5)\white(2,5)\nofil(3,5)\black(4,5)\white(5,5)\black(6,5)\white(7,5)\black(8,5)\white(9,5)\nofil(10,5)\black(11,5)
    \black(1,6)\white(2,6)\black(3,6)\white(4,6)\black(5,6)\white(6,6)\black(7,6)\white(8,6)\white(9,6)\black(10,6)\white(11,6)
    \black(1,7)\white(2,7)\black(3,7)\white(4,7)\black(5,7)\carry(6,7)\white(7,7)\black(8,7)\nofil(9,7)\nofil(10,7)\black(11,7)
    \black(1,8)\white(2,8)\black(3,8)\carry(4,8)\carry(5,8)\carry(6,8)\black(7,8)\nofil(8,8)\black(9,8)\nofil(10,8)\black(11,8)
    \black(1,9)\white(2,9)\carry(3,9)\carry(4,9)\carry(5,9)\carry(6,9)\white(7,9)\white(8,9)\white(9,9)\nofil(10,9)\white(11,9)
    \carry(5,7)
    \cell(5,9)\winmove
  \end{hexboard}
  \offset\hspace{-7\increment}
  \begin{hexboard}[baseline={($(current bounding box.south)-(0,1ex)$)},scale=\scale]
    \rotation{-30}
    \board(9,7)
    \white(1,1)\white(2,1)\white(3,1)\white(4,1)\white(5,1)\white(6,1)\black(7,1)\white(8,1)\black(9,1)
    \white(1,2)\white(2,2)\black(3,2)\black(4,2)\black(5,2)\nofil(6,2)\white(7,2)\nofil(8,2)\nofil(9,2)
    \white(1,3)\nofil(2,3)\white(3,3)\nofil(4,3)\white(5,3)\nofil(6,3)\nofil(7,3)\nofil(8,3)\black(9,3)
    \white(1,4)\nofil(2,4)\nofil(3,4)\white(4,4)\white(5,4)\black(6,4)\nofil(7,4)\black(8,4)\white(9,4)
    \nofil(1,5)\white(2,5)\black(3,5)\black(4,5)\carry(5,5)\white(6,5)\black(7,5)\white(8,5)\white(9,5)
    \black(1,6)\white(2,6)\carry(3,6)\carry(4,6)\carry(5,6)\black(6,6)\white(7,6)\white(8,6)\white(9,6)
    \black(1,7)\carry(2,7)\carry(3,7)\carry(4,7)\carry(5,7)\white(6,7)\white(7,7)\white(8,7)\white(9,7)
    \carry(4,5)
    \cell(5,7)\winmove
  \end{hexboard}
  \]
  \begin{minipage}[t]{0.6\textwidth}
  Trapezoid:
  \[
  \begin{hexboard}[baseline={($(current bounding box.south)-(0,1ex)$)},scale=\scale]
    \rotation{-30}
    \board(4,4)
    \black(1,1)\black(2,1)\white(3,1)\white(4,1)
    \black(1,2)\white(2,2)\black(3,2)\black(4,2)
    \black(1,3)\carry(2,3)\carry(3,3)\carry(4,3)
    \white(1,4)\black(2,4)\carry(3,4)\black(4,4)
    \carry(3,2)
    \carry(4,2)
    \carry(2,4)
    \carry(4,4)
    \cell(2,3)\winmove
  \end{hexboard}
  \offset\hspace{-6\increment}
  \begin{hexboard}[baseline={($(current bounding box.south)-(0,1ex)$)},scale=\scale]
    \rotation{-30}
    \board(6,5)
    \white(1,1)\white(2,1)\black(3,1)\white(4,1)\black(5,1)\nofil(6,1)
    \white(1,2)\black(2,2)\nofil(3,2)\white(4,2)\carry(5,2)\black(6,2)
    \black(1,3)\white(2,3)\nofil(3,3)\black(4,3)\carry(5,3)\black(6,3)
    \nofil(1,4)\nofil(2,4)\white(3,4)\carry(4,4)\carry(5,4)\white(6,4)
    \white(1,5)\nofil(2,5)\black(3,5)\black(4,5)\white(5,5)\white(6,5)
    \carry(6,2)
    \carry(6,3)
    \carry(4,3)
    \carry(4,5)
    \cell(5,4)\winmove
  \end{hexboard}
  \offset\hspace{-6\increment}
  \begin{hexboard}[baseline={($(current bounding box.south)-(0,1ex)$)},scale=\scale]
    \rotation{-30}
    \board(6,5)
    \white(1,1)\black(2,1)\white(3,1)\white(4,1)\white(5,1)\black(6,1)
    \nofil(1,2)\carry(2,2)\black(3,2)\white(4,2)\nofil(5,2)\black(6,2)
    \black(1,3)\carry(2,3)\black(3,3)\nofil(4,3)\white(5,3)\black(6,3)
    \carry(1,4)\carry(2,4)\white(3,4)\nofil(4,4)\nofil(5,4)\white(6,4)
    \black(1,5)\white(2,5)\nofil(3,5)\white(4,5)\white(5,5)\white(6,5)
    \carry(3,2)
    \carry(1,3)
    \carry(3,3)
    \carry(1,5)
    \cell(1,4)\winmove
  \end{hexboard}
  \]
  \end{minipage}
  \begin{minipage}[t]{0.35\textwidth}
  Crescent:
  \[
  \begin{hexboard}[baseline={($(current bounding box.south)-(0,1ex)$)},scale=\scale]
    \rotation{-30}
    \board(4,4)
    \white(1,1)\white(2,1)\white(3,1)\black(4,1)
    \white(1,2)\black(2,2)\carry(3,2)\white(4,2)
    \black(1,3)\carry(2,3)\carry(3,3)\white(4,3)
    \black(1,4)\carry(2,4)\black(3,4)\white(4,4)
    \carry(2,2)
    \carry(1,3)
    \carry(1,4)
    \carry(3,4)
    \cell(3,2)\winmove
  \end{hexboard}
  \offset\hspace{-5\increment}
  \begin{hexboard}[baseline={($(current bounding box.south)-(0,1ex)$)},scale=\scale]
    \rotation{-30}
    \board(4,5)
    \white(1,1)\white(2,1)\white(3,1)\black(4,1)
    \white(1,2)\black(2,2)\carry(3,2)\black(4,2)
    \black(1,3)\carry(2,3)\carry(3,3)\white(4,3)
    \black(1,4)\carry(2,4)\black(3,4)\white(4,4)
    \white(1,5)\black(2,5)\white(3,5)\white(4,5)
    \carry(2,2)
    \carry(1,3)
    \carry(1,4)
    \carry(3,4)
    \cell(3,3)\winmove
  \end{hexboard}
  \]
  \end{minipage}

  \begin{minipage}[t]{0.6\textwidth}
  Span:
  \[
  \begin{hexboard}[baseline={($(current bounding box.south)-(0,1ex)$)},scale=\scale]
    \rotation{-30}
    \board(4,4)
    \white(1,1)\black(2,1)\white(3,1)\white(4,1)
    \white(1,2)\carry(2,2)\carry(3,2)\carry(4,2)
    \black(1,3)\carry(2,3)\carry(3,3)\black(4,3)
    \black(1,4)\carry(2,4)\black(3,4)\white(4,4)
    \carry(1,3)
    \carry(1,4)
    \carry(3,4)
    \carry(4,3)
    \cell(2,2)\winmove
  \end{hexboard}
  \offset\hspace{-5\increment}
  \begin{hexboard}[baseline={($(current bounding box.south)-(0,1ex)$)},scale=\scale]
    \rotation{-30}
    \board(4,5)
    \white(1,1)\white(2,1)\white(3,1)\black(4,1)
    \white(1,2)\carry(2,2)\carry(3,2)\carry(4,2)
    \black(1,3)\carry(2,3)\carry(3,3)\black(4,3)
    \black(1,4)\carry(2,4)\black(3,4)\white(4,4)
    \black(1,5)\white(2,5)\white(3,5)\white(4,5)
    \carry(1,3)
    \carry(1,4)
    \carry(3,4)
    \carry(4,3)
    \cell(3,2)\winmove
  \end{hexboard}
  \offset\hspace{-7\increment}
  \begin{hexboard}[baseline={($(current bounding box.south)-(0,1ex)$)},scale=\scale]
    \rotation{-30}
    \board(6,7)
    \white(1,1)\white(2,1)\black(3,1)\white(4,1)\white(5,1)\black(6,1)
    \white(1,2)\black(2,2)\white(3,2)\black(4,2)\carry(5,2)\white(6,2)
    \white(1,3)\nofil(2,3)\black(3,3)\carry(4,3)\carry(5,3)\white(6,3)
    \black(1,4)\white(2,4)\carry(3,4)\carry(4,4)\carry(5,4)\black(6,4)
    \black(1,5)\white(2,5)\black(3,5)\black(4,5)\white(5,5)\black(6,5)
    \black(1,6)\nofil(2,6)\white(3,6)\white(4,6)\white(5,6)\black(6,6)
    \white(1,7)\black(2,7)\white(3,7)\white(4,7)\white(5,7)\black(6,7)
    \carry(4,2)
    \carry(3,3)
    \carry(3,5)
    \carry(4,5)
    \cell(3,4)\winmove
  \end{hexboard}
  \]
  \end{minipage}
  \begin{minipage}[t]{0.35\textwidth}
  Parallelogram:
  \[
  \begin{hexboard}[baseline={($(current bounding box.south)-(0,1ex)$)},scale=\scale]
    \rotation{-30}
    \board(4,4)
    \white(1,1)\black(2,1)\white(3,1)\white(4,1)
    \white(1,2)\carry(2,2)\carry(3,2)\black(4,2)
    \black(1,3)\carry(2,3)\carry(3,3)\black(4,3)
    \black(1,4)\carry(2,4)\carry(3,4)\white(4,4)
    \carry(1,3)
    \carry(1,4)
    \carry(4,2)
    \carry(4,3)
    \cell(2,2)\winmove
  \end{hexboard}
  \offset\hspace{-5\increment}
  \begin{hexboard}[baseline={($(current bounding box.south)-(0,1ex)$)},scale=\scale]
    \rotation{-30}
    \board(4,5)
    \black(1,1)\black(2,1)\black(3,1)\white(4,1)
    \white(1,2)\carry(2,2)\carry(3,2)\black(4,2)
    \black(1,3)\carry(2,3)\carry(3,3)\black(4,3)
    \black(1,4)\carry(2,4)\carry(3,4)\black(4,4)
    \white(1,5)\white(2,5)\white(3,5)\black(4,5)
    \carry(1,3)
    \carry(1,4)
    \carry(4,2)
    \carry(4,3)
    \cell(3,2)\winmove
  \end{hexboard}
  \]
  \end{minipage}

  \begin{minipage}[t]{0.6\textwidth}
  Box:
  \[
  \begin{hexboard}[baseline={($(current bounding box.south)-(0,1ex)$)},scale=\scale]
    \rotation{-30}
    \board(5,7)
    \white(1,1)\white(2,1)\black(3,1)\black(4,1)\white(5,1)
    \white(1,2)\black(2,2)\white(3,2)\nofil(4,2)\black(5,2)
    \white(1,3)\carry(2,3)\black(3,3)\white(4,3)\black(5,3)
    \black(1,4)\carry(2,4)\carry(3,4)\white(4,4)\black(5,4)
    \carry(1,5)\carry(2,5)\black(3,5)\nofil(4,5)\white(5,5)
    \black(1,6)\carry(2,6)\white(3,6)\black(4,6)\white(5,6)
    \white(1,7)\black(2,7)\black(3,7)\white(4,7)\white(5,7)
    \carry(3,3)
    \carry(1,4)
    \carry(3,5)
    \carry(1,6)
    \cell(3,4)\winmove
  \end{hexboard}
  \offset\hspace{-4\increment}
  \begin{hexboard}[baseline={($(current bounding box.south)-(0,1ex)$)},scale=\scale]
    \rotation{-30}
    \board(5,4)
    \black(1,1)\white(2,1)\white(3,1)\white(4,1)\white(5,1)
    \black(1,2)\white(2,2)\black(3,2)\carry(4,2)\black(5,2)
    \black(1,3)\carry(2,3)\carry(3,3)\carry(4,3)\carry(5,3)
    \white(1,4)\black(2,4)\carry(3,4)\black(4,4)\white(5,4)
    \carry(2,4)
    \carry(3,2)
    \carry(4,4)
    \carry(5,2)
    \cell(2,3)\winmove
  \end{hexboard}
  \offset\hspace{-5\increment}
  \begin{hexboard}[baseline={($(current bounding box.south)-(0,1ex)$)},scale=\scale]
    \rotation{-30}
    \board(6,5)
    \white(1,1)\carry(2,1)\black(3,1)\white(4,1)\nofil(5,1)\white(6,1)
    \black(1,2)\carry(2,2)\carry(3,2)\white(4,2)\nofil(5,2)\nofil(6,2)
    \carry(1,3)\carry(2,3)\black(3,3)\nofil(4,3)\white(5,3)\black(6,3)
    \black(1,4)\carry(2,4)\white(3,4)\nofil(4,4)\black(5,4)\white(6,4)
    \nofil(1,5)\black(2,5)\white(3,5)\black(4,5)\white(5,5)\white(6,5)
    \carry(3,1)
    \carry(1,2)
    \carry(3,3)
    \carry(1,4)
    \cell(2,2)\winmove
  \end{hexboard}
  \]
  \end{minipage}
  \begin{minipage}[t]{0.35\textwidth}
  Diamond (and wheel):
  \[
  \begin{hexboard}[baseline={($(current bounding box.south)-(0,1ex)$)},scale=\scale]
    \rotation{-30}
    \board(4,4)
    \white(1,1)\black(2,1)\white(3,1)\white(4,1)
    \white(1,2)\carry(2,2)\black(3,2)\carry(4,2)
    \black(1,3)\carry(2,3)\carry(3,3)\black(4,3)
    \carry(1,4)\black(2,4)\carry(3,4)\black(4,4)
    \carry(1,3)
    \carry(3,2)
    \carry(2,4)
    \carry(4,3)
    \cell(2,2)\winmove
  \end{hexboard}
  \offset\hspace{-8\increment}
  \begin{hexboard}[baseline={($(current bounding box.south)-(0,1ex)$)},scale=\scale]
    \rotation{-30}
    \board(9,8)
    \white(1,1)\white(2,1)\white(3,1)\white(4,1)\white(5,1)\white(6,1)\black(7,1)\carry(8,1)\white(9,1)
    \white(1,2)\white(2,2)\white(3,2)\white(4,2)\black(5,2)\carry(6,2)\carry(7,2)\black(8,2)\white(9,2)
    \white(1,3)\white(2,3)\black(3,3)\nofil(4,3)\white(5,3)\black(6,3)\carry(7,3)\carry(8,3)\black(9,3)
    \white(1,4)\black(2,4)\white(3,4)\nofil(4,4)\white(5,4)\carry(6,4)\black(7,4)\nofil(8,4)\black(9,4)
    \white(1,5)\nofil(2,5)\nofil(3,5)\black(4,5)\black(5,5)\white(6,5)\nofil(7,5)\black(8,5)\white(9,5)
    \black(1,6)\white(2,6)\nofil(3,6)\white(4,6)\white(5,6)\black(6,6)\white(7,6)\nofil(8,6)\white(9,6)
    \black(1,7)\black(2,7)\black(3,7)\black(4,7)\black(5,7)\nofil(6,7)\black(7,7)\white(8,7)\white(9,7)
    \white(1,8)\white(2,8)\white(3,8)\white(4,8)\white(5,8)\black(6,8)\white(7,8)\white(8,8)\white(9,8)
    \carry(7,1)
    \carry(8,2)
    \carry(6,3)
    \carry(7,4)
    \cell(7,2)\winmove
  \end{hexboard}
  \]
  \end{minipage}

  Hammock (and wheel):
  \[
  \begin{hexboard}[baseline={($(current bounding box.south)-(0,1ex)$)},scale=\scale]
    \rotation{-30}
    \board(4,4)
    \white(1,1)\black(2,1)\white(3,1)\white(4,1)
    \white(1,2)\carry(2,2)\black(3,2)\carry(4,2)
    \black(1,3)\carry(2,3)\carry(3,3)\carry(4,3)
    \carry(1,4)\black(2,4)\carry(3,4)\black(4,4)
    \carry(1,3)
    \carry(2,4)
    \carry(3,2)
    \carry(4,4)
    \cell(2,2)\winmove
  \end{hexboard}
  \offset\hspace{-4\increment}
  \begin{hexboard}[baseline={($(current bounding box.south)-(0,1ex)$)},scale=\scale]
    \rotation{-30}
    \board(5,4)
    \white(1,1)\white(2,1)\white(3,1)\white(4,1)\black(5,1)
    \white(1,2)\carry(2,2)\black(3,2)\carry(4,2)\white(5,2)
    \black(1,3)\carry(2,3)\carry(3,3)\carry(4,3)\white(5,3)
    \carry(1,4)\black(2,4)\carry(3,4)\black(4,4)\white(5,4)
    \carry(1,3)
    \carry(2,4)
    \carry(3,2)
    \carry(4,4)
    \cell(4,2)\winmove
  \end{hexboard}
  \offset\hspace{-7\increment}
  \begin{hexboard}[baseline={($(current bounding box.south)-(0,1ex)$)},scale=\scale]
    \rotation{-30}
    \board(7,7)
    \white(1,1)\white(2,1)\white(3,1)\white(4,1)\white(5,1)\black(6,1)\white(7,1)
    \white(1,2)\white(2,2)\carry(3,2)\black(4,2)\carry(5,2)\nofil(6,2)\black(7,2)
    \white(1,3)\carry(2,3)\carry(3,3)\carry(4,3)\black(5,3)\white(6,3)\black(7,3)
    \black(1,4)\carry(2,4)\black(3,4)\carry(4,4)\white(5,4)\nofil(6,4)\black(7,4)
    \black(1,5)\white(2,5)\white(3,5)\black(4,5)\nofil(5,5)\white(6,5)\black(7,5)
    \black(1,6)\white(2,6)\white(3,6)\white(4,6)\nofil(5,6)\nofil(6,6)\white(7,6)
    \black(1,7)\white(2,7)\white(3,7)\black(4,7)\white(5,7)\white(6,7)\white(7,7)
    \carry(1,4)
    \carry(3,4)
    \carry(4,2)
    \carry(5,3)
    \cell(4,3)\winmove
  \end{hexboard}
\offset\hspace{-6\increment}
  \begin{hexboard}[baseline={($(current bounding box.south)-(0,1ex)$)},scale=\scale]
    \rotation{-30}
    \board(4,6)
    \white(1,1)\white(2,1)\white(3,1)\black(4,1)
    \white(1,2)\black(2,2)\carry(3,2)\black(4,2)
    \carry(1,3)\carry(2,3)\black(3,3)\white(4,3)
    \black(1,4)\carry(2,4)\carry(3,4)\white(4,4)
    \carry(1,5)\carry(2,5)\white(3,5)\white(4,5)
    \black(1,6)\white(2,6)\white(3,6)\white(4,6)
    \carry(1,4)
    \carry(1,6)
    \carry(2,2)
    \carry(3,3)
    \cell(2,4)\winmove
  \end{hexboard}
  \offset\hspace{-6\increment}
  \begin{hexboard}[baseline={($(current bounding box.south)-(0,1ex)$)},scale=\scale]
    \rotation{-30}
    \board(4,6)
    \black(1,1)\white(2,1)\black(3,1)\carry(4,1)
    \black(1,2)\carry(2,2)\carry(3,2)\black(4,2)
    \black(1,3)\black(2,3)\carry(3,3)\carry(4,3)
    \white(1,4)\carry(2,4)\carry(3,4)\nofil(4,4)
    \white(1,5)\black(2,5)\nofil(3,5)\black(4,5)
    \white(1,6)\white(2,6)\white(3,6)\black(4,6)
    \carry(2,3)
    \carry(2,5)
    \carry(3,1)
    \carry(4,2)
    \cell(3,4)\winmove
  \end{hexboard}
  \offset\hspace{-4\increment}
  \begin{hexboard}[baseline={($(current bounding box.south)-(0,1ex)$)},scale=\scale]
    \rotation{-30}
    \board(5,4)
    \white(1,1)\white(2,1)\carry(3,1)\black(4,1)\carry(5,1)
    \white(1,2)\black(2,2)\carry(3,2)\carry(4,2)\carry(5,2)
    \white(1,3)\carry(2,3)\black(3,3)\carry(4,3)\black(5,3)
    \black(1,4)\white(2,4)\white(3,4)\white(4,4)\white(5,4)
    \carry(2,2)
    \carry(3,3)
    \carry(4,1)
    \carry(5,3)
    \cell(2,3)\winmove
  \end{hexboard}
  \offset\hspace{-6\increment}
  \begin{hexboard}[baseline={($(current bounding box.south)-(0,1ex)$)},scale=\scale]
    \rotation{-30}
    \board(4,6)
    \black(1,1)\white(2,1)\white(3,1)\white(4,1)
    \black(1,2)\white(2,2)\black(3,2)\carry(4,2)
    \black(1,3)\carry(2,3)\carry(3,3)\black(4,3)
    \black(1,4)\black(2,4)\carry(3,4)\carry(4,4)
    \white(1,5)\carry(2,5)\carry(3,5)\white(4,5)
    \white(1,6)\black(2,6)\white(3,6)\white(4,6)
    \carry(2,4)
    \carry(2,6)
    \carry(3,2)
    \carry(4,3)
    \cell(2,5)\winmove
  \end{hexboard}
  \]
  \caption{Witnesses for the non-inferiority of probes in various
    templates, part 1. In each position, the white dot denotes the unique
    winning move for White.}
  \label{fig:witnesses-part1}\label{fig:witnesses-ziggurat}
\end{figure}

\begin{figure}
  \def\scale{0.5}
  \setlength{\increment}{0.175cm}
  \newcommand{\offset}{\hspace{1.7em}}
  Shopping cart:
  \[
  \begin{hexboard}[baseline={($(current bounding box.south)-(0,1ex)$)},scale=\scale]
    \rotation{-30}
    \board(4,5)
    \white(1,1)\carry(2,1)\black(3,1)\white(4,1)
    \black(1,2)\carry(2,2)\carry(3,2)\black(4,2)
    \carry(1,3)\carry(2,3)\carry(3,3)\black(4,3)
    \black(1,4)\black(2,4)\white(3,4)\black(4,4)
    \white(1,5)\white(2,5)\white(3,5)\black(4,5)
    \carry(3,1)
    \carry(1,2)
    \carry(1,4)
    \carry(2,4)
    \cell(3,2)\winmove
  \end{hexboard}
  \offset\hspace{-4\increment}
  \begin{hexboard}[baseline={($(current bounding box.south)-(0,1ex)$)},scale=\scale]
    \rotation{-30}
    \board(5,4)
    \white(1,1)\white(2,1)\white(3,1)\white(4,1)\black(5,1)
    \white(1,2)\black(2,2)\carry(3,2)\carry(4,2)\white(5,2)
    \carry(1,3)\carry(2,3)\carry(3,3)\black(4,3)\white(5,3)
    \black(1,4)\carry(2,4)\black(3,4)\white(4,4)\white(5,4)
    \carry(2,2)
    \carry(1,4)
    \carry(4,3)
    \carry(3,4)
    \cell(4,2)\winmove
  \end{hexboard}
  \offset\hspace{-4\increment}
  \begin{hexboard}[baseline={($(current bounding box.south)-(0,1ex)$)},scale=\scale]
    \rotation{-30}
    \board(5,4)
    \black(1,1)\white(2,1)\white(3,1)\white(4,1)\white(5,1)
    \black(1,2)\white(2,2)\black(3,2)\carry(4,2)\carry(5,2)
    \black(1,3)\carry(2,3)\carry(3,3)\carry(4,3)\black(5,3)
    \white(1,4)\black(2,4)\carry(3,4)\black(4,4)\white(5,4)
    \carry(3,2)
    \carry(2,4)
    \carry(4,4)
    \carry(5,3)
    \cell(2,3)\winmove
  \end{hexboard}
  \offset\hspace{-6\increment}
  \begin{hexboard}[baseline={($(current bounding box.south)-(0,1ex)$)},scale=\scale]
    \rotation{-30}
    \board(8,6)
    \black(1,1)\white(2,1)\white(3,1)\white(4,1)\white(5,1)\white(6,1)\white(7,1)\black(8,1)
    \black(1,2)\white(2,2)\nofil(3,2)\black(4,2)\nofil(5,2)\white(6,2)\white(7,2)\nofil(8,2)
    \black(1,3)\nofil(2,3)\nofil(3,3)\nofil(4,3)\black(5,3)\carry(6,3)\carry(7,3)\nofil(8,3)
    \white(1,4)\white(2,4)\black(3,4)\carry(4,4)\carry(5,4)\carry(6,4)\black(7,4)\white(8,4)
    \white(1,5)\white(2,5)\black(3,5)\black(4,5)\carry(5,5)\black(6,5)\white(7,5)\white(8,5)
    \white(1,6)\white(2,6)\white(3,6)\black(4,6)\white(5,6)\white(6,6)\white(7,6)\white(8,6)
    \carry(5,3)
    \carry(4,5)
    \carry(6,5)
    \carry(7,4)
    \cell(5,4)\winmove
  \end{hexboard}
  \offset\hspace{-5\increment}
  \begin{hexboard}[baseline={($(current bounding box.south)-(0,1ex)$)},scale=\scale]
    \rotation{-30}
    \board(6,5)
    \white(1,1)\white(2,1)\black(3,1)\white(4,1)\black(5,1)\nofil(6,1)
    \white(1,2)\black(2,2)\nofil(3,2)\white(4,2)\carry(5,2)\black(6,2)
    \black(1,3)\white(2,3)\nofil(3,3)\black(4,3)\carry(5,3)\carry(6,3)
    \nofil(1,4)\nofil(2,4)\white(3,4)\carry(4,4)\carry(5,4)\carry(6,4)
    \white(1,5)\nofil(2,5)\white(3,5)\black(4,5)\black(5,5)\white(6,5)
    \carry(4,3)
    \carry(6,2)
    \carry(4,5)
    \carry(5,5)
    \cell(5,4)\winmove
  \end{hexboard}
  \offset\hspace{-7\increment}
  \begin{hexboard}[baseline={($(current bounding box.south)-(0,1ex)$)},scale=\scale]
    \rotation{-30}
    \board(6,7)
    \white(1,1)\white(2,1)\white(3,1)\black(4,1)\white(5,1)\white(6,1)
    \white(1,2)\black(2,2)\nofil(3,2)\carry(4,2)\black(5,2)\white(6,2)
    \black(1,3)\white(2,3)\black(3,3)\carry(4,3)\carry(5,3)\white(6,3)
    \black(1,4)\white(2,4)\carry(3,4)\carry(4,4)\carry(5,4)\black(6,4)
    \black(1,5)\white(2,5)\black(3,5)\black(4,5)\white(5,5)\black(6,5)
    \black(1,6)\nofil(2,6)\white(3,6)\white(4,6)\white(5,6)\black(6,6)
    \white(1,7)\black(2,7)\white(3,7)\white(4,7)\white(5,7)\black(6,7)
    \carry(3,3)
    \carry(5,2)
    \carry(3,5)
    \carry(4,5)
    \cell(3,4)\winmove
  \end{hexboard}
  \]
  Scooter:
  \[
  \begin{hexboard}[baseline={($(current bounding box.south)-(0,1ex)$)},scale=\scale]
    \rotation{-30}
    \board(5,5)
    \white(1,1)\white(2,1)\white(3,1)\white(4,1)\black(5,1)
    \white(1,2)\white(2,2)\black(3,2)\carry(4,2)\white(5,2)
    \white(1,3)\black(2,3)\carry(3,3)\carry(4,3)\white(5,3)
    \carry(1,4)\carry(2,4)\carry(3,4)\carry(4,4)\white(5,4)
    \black(1,5)\carry(2,5)\black(3,5)\white(4,5)\white(5,5)
    \carry(3,2)
    \carry(2,3)
    \carry(1,5)
    \carry(3,5)
    \cell(4,2)\winmove
  \end{hexboard}
  \offset\hspace{-11\increment}
  \begin{hexboard}[baseline={($(current bounding box.south)-(0,1ex)$)},scale=\scale]
    \rotation{-30}
    \board(12,11)
    \white(1,1)\white(2,1)\black(3,1)\white(4,1)\white(5,1)\white(6,1)\white(7,1)\black(8,1)\white(9,1)\white(10,1)\white(11,1)\white(12,1)
    \white(1,2)\white(2,2)\black(3,2)\white(4,2)\white(5,2)\white(6,2)\white(7,2)\nofil(8,2)\white(9,2)\black(10,2)\black(11,2)\black(12,2)
    \white(1,3)\white(2,3)\black(3,3)\nofil(4,3)\white(5,3)\white(6,3)\white(7,3)\black(8,3)\carry(9,3)\white(10,3)\white(11,3)\black(12,3)
    \white(1,4)\white(2,4)\nofil(3,4)\black(4,4)\nofil(5,4)\white(6,4)\black(7,4)\carry(8,4)\carry(9,4)\black(10,4)\nofil(11,4)\white(12,4)
    \white(1,5)\black(2,5)\white(3,5)\nofil(4,5)\black(5,5)\carry(6,5)\carry(7,5)\carry(8,5)\carry(9,5)\nofil(10,5)\nofil(11,5)\white(12,5)
    \black(1,6)\white(2,6)\black(3,6)\nofil(4,6)\white(5,6)\black(6,6)\carry(7,6)\black(8,6)\white(9,6)\white(10,6)\black(11,6)\black(12,6)
    \black(1,7)\white(2,7)\black(3,7)\black(4,7)\nofil(5,7)\white(6,7)\white(7,7)\black(8,7)\black(9,7)\nofil(10,7)\white(11,7)\black(12,7)
    \black(1,8)\white(2,8)\white(3,8)\white(4,8)\white(5,8)\white(6,8)\white(7,8)\black(8,8)\white(9,8)\black(10,8)\white(11,8)\black(12,8)
    \black(1,9)\black(2,9)\black(3,9)\black(4,9)\black(5,9)\black(6,9)\white(7,9)\nofil(8,9)\black(9,9)\nofil(10,9)\nofil(11,9)\white(12,9)
    \white(1,10)\white(2,10)\white(3,10)\white(4,10)\white(5,10)\black(6,10)\black(7,10)\nofil(8,10)\white(9,10)\white(10,10)\black(11,10)\white(12,10)
    \white(1,11)\white(2,11)\white(3,11)\white(4,11)\white(5,11)\white(6,11)\white(7,11)\black(8,11)\white(9,11)\white(10,11)\black(11,11)\white(12,11)
    \carry(8,3)
    \carry(7,4)
    \carry(6,6)
    \carry(8,6)
    \cell(8,4)\winmove
  \end{hexboard}
  \offset\hspace{-5\increment}
  \begin{hexboard}[baseline={($(current bounding box.south)-(0,1ex)$)},scale=\scale]
    \rotation{-30}
    \board(5,5)
    \white(1,1)\white(2,1)\black(3,1)\carry(4,1)\white(5,1)
    \white(1,2)\black(2,2)\carry(3,2)\carry(4,2)\black(5,2)
    \carry(1,3)\carry(2,3)\carry(3,3)\carry(4,3)\black(5,3)
    \black(1,4)\carry(2,4)\black(3,4)\white(4,4)\black(5,4)
    \white(1,5)\white(2,5)\white(3,5)\white(4,5)\black(5,5)
    \carry(3,1)
    \carry(2,2)
    \carry(1,4)
    \carry(3,4)
    \cell(4,2)\winmove
  \end{hexboard}
  \offset\hspace{-5\increment}
  \begin{hexboard}[baseline={($(current bounding box.south)-(0,1ex)$)},scale=\scale]
    \rotation{-30}
    \board(5,5)
    \black(1,1)\white(2,1)\white(3,1)\white(4,1)\white(5,1)
    \black(1,2)\white(2,2)\white(3,2)\black(4,2)\carry(5,2)
    \black(1,3)\white(2,3)\black(3,3)\carry(4,3)\carry(5,3)
    \black(1,4)\carry(2,4)\carry(3,4)\carry(4,4)\carry(5,4)
    \white(1,5)\black(2,5)\carry(3,5)\black(4,5)\white(5,5)
    \carry(4,2)
    \carry(3,3)
    \carry(2,5)
    \carry(4,5)
    \cell(2,4)\winmove
  \end{hexboard}
  \]
  \[
  \begin{hexboard}[baseline={($(current bounding box.south)-(0,1ex)$)},scale=\scale]
    \rotation{-30}
    \board(8,6)
    \white(1,1)\white(2,1)\white(3,1)\black(4,1)\white(5,1)\carry(6,1)\black(7,1)\black(8,1)
    \white(1,2)\black(2,2)\nofil(3,2)\nofil(4,2)\black(5,2)\carry(6,2)\carry(7,2)\carry(8,2)
    \black(1,3)\white(2,3)\black(3,3)\nofil(4,3)\carry(5,3)\carry(6,3)\carry(7,3)\white(8,3)
    \black(1,4)\white(2,4)\nofil(3,4)\white(4,4)\black(5,4)\carry(6,4)\white(7,4)\white(8,4)
    \black(1,5)\black(2,5)\white(3,5)\white(4,5)\nofil(5,5)\nofil(6,5)\white(7,5)\white(8,5)
    \white(1,6)\nofil(2,6)\white(3,6)\white(4,6)\white(5,6)\black(6,6)\white(7,6)\white(8,6)
    \carry(8,1)
    \carry(7,1)
    \carry(5,2)
    \carry(5,4)
    \cell(6,2)\winmove
  \end{hexboard}
  \offset\hspace{-6\increment}
  \begin{hexboard}[baseline={($(current bounding box.south)-(0,1ex)$)},scale=\scale]
    \rotation{-30}
    \board(7,6)
    \white(1,1)\white(2,1)\white(3,1)\white(4,1)\white(5,1)\white(6,1)\black(7,1)
    \white(1,2)\white(2,2)\white(3,2)\black(4,2)\carry(5,2)\nofil(6,2)\nofil(7,2)
    \white(1,3)\black(2,3)\black(3,3)\carry(4,3)\carry(5,3)\white(6,3)\black(7,3)
    \black(1,4)\carry(2,4)\carry(3,4)\carry(4,4)\carry(5,4)\black(6,4)\white(7,4)
    \black(1,5)\black(2,5)\carry(3,5)\black(4,5)\nofil(5,5)\white(6,5)\white(7,5)
    \black(1,6)\white(2,6)\white(3,6)\white(4,6)\white(5,6)\white(6,6)\white(7,6)
    \carry(4,2)
    \carry(3,3)
    \carry(2,5)
    \carry(4,5)
    \cell(4,4)\winmove
  \end{hexboard}
  \offset\hspace{-5\increment}
  \begin{hexboard}[baseline={($(current bounding box.south)-(0,1ex)$)},scale=\scale]
    \rotation{-30}
    \board(5,5)
    \white(1,1)\black(2,1)\carry(3,1)\black(4,1)\black(5,1)
    \white(1,2)\black(2,2)\carry(3,2)\carry(4,2)\carry(5,2)
    \black(1,3)\carry(2,3)\carry(3,3)\carry(4,3)\white(5,3)
    \black(1,4)\black(2,4)\carry(3,4)\white(4,4)\white(5,4)
    \white(1,5)\white(2,5)\black(3,5)\white(4,5)\white(5,5)
    \carry(5,1)
    \carry(4,1)
    \carry(2,2)
    \carry(2,4)
    \cell(3,4)\winmove
  \end{hexboard}
  \offset\hspace{-7\increment}
  \begin{hexboard}[baseline={($(current bounding box.south)-(0,1ex)$)},scale=\scale]
    \rotation{-30}
    \board(7,7)
    \black(1,1)\white(2,1)\white(3,1)\white(4,1)\white(5,1)\white(6,1)\black(7,1)
    \black(1,2)\white(2,2)\black(3,2)\black(4,2)\carry(5,2)\white(6,2)\black(7,2)
    \black(1,3)\carry(2,3)\carry(3,3)\carry(4,3)\black(5,3)\nofil(6,3)\white(7,3)
    \white(1,4)\carry(2,4)\carry(3,4)\carry(4,4)\white(5,4)\black(6,4)\white(7,4)
    \white(1,5)\carry(2,5)\black(3,5)\white(4,5)\black(5,5)\white(6,5)\white(7,5)
    \black(1,6)\white(2,6)\nofil(3,6)\black(4,6)\white(5,6)\white(6,6)\white(7,6)
    \black(1,7)\black(2,7)\white(3,7)\white(4,7)\white(5,7)\white(6,7)\white(7,7)
    \carry(3,2)
    \carry(4,2)
    \carry(5,3)
    \carry(3,5)
    \cell(4,4)\winmove
  \end{hexboard}
  \]
  Bicycle:
  \[
  \begin{hexboard}[baseline={($(current bounding box.south)-(0,1ex)$)},scale=\scale]
    \rotation{-30}
    \board(5,5)
    \white(1,1)\white(2,1)\black(3,1)\white(4,1)\white(5,1)
    \white(1,2)\white(2,2)\carry(3,2)\black(4,2)\black(5,2)
    \white(1,3)\black(2,3)\carry(3,3)\carry(4,3)\black(5,3)
    \carry(1,4)\carry(2,4)\carry(3,4)\carry(4,4)\black(5,4)
    \black(1,5)\carry(2,5)\black(3,5)\white(4,5)\black(5,5)
    \carry(2,3)
    \carry(1,5)
    \carry(4,2)
    \carry(3,5)
    \cell(3,2)\winmove
  \end{hexboard}
  \offset\hspace{-7\increment}
  \begin{hexboard}[baseline={($(current bounding box.south)-(0,1ex)$)},scale=\scale]
    \rotation{-30}
    \board(8,7)
    \white(1,1)\white(2,1)\black(3,1)\white(4,1)\white(5,1)\white(6,1)\white(7,1)\black(8,1)
    \white(1,2)\black(2,2)\white(3,2)\black(4,2)\black(5,2)\black(6,2)\white(7,2)\black(8,2)
    \white(1,3)\nofil(2,3)\nofil(3,3)\white(4,3)\white(5,3)\carry(6,3)\carry(7,3)\black(8,3)
    \nofil(1,4)\white(2,4)\nofil(3,4)\white(4,4)\black(5,4)\carry(6,4)\carry(7,4)\carry(8,4)
    \black(1,5)\black(2,5)\black(3,5)\white(4,5)\carry(5,5)\carry(6,5)\black(7,5)\white(8,5)
    \white(1,6)\white(2,6)\nofil(3,6)\black(4,6)\black(5,6)\carry(6,6)\white(7,6)\white(8,6)
    \white(1,7)\black(2,7)\white(3,7)\white(4,7)\white(5,7)\black(6,7)\white(7,7)\white(8,7)
    \carry(5,4)
    \carry(5,6)
    \carry(7,5)
    \carry(8,3)
    \cell(7,4)\winmove
  \end{hexboard}
  \offset\hspace{-5\increment}  
  \begin{hexboard}[baseline={($(current bounding box.south)-(0,1ex)$)},scale=\scale]
    \rotation{-30}
    \board(5,5)
    \white(1,1)\white(2,1)\carry(3,1)\black(4,1)\white(5,1)
    \white(1,2)\black(2,2)\carry(3,2)\carry(4,2)\black(5,2)
    \carry(1,3)\carry(2,3)\carry(3,3)\carry(4,3)\black(5,3)
    \black(1,4)\carry(2,4)\black(3,4)\white(4,4)\black(5,4)
    \white(1,5)\white(2,5)\white(3,5)\white(4,5)\black(5,5)
    \carry(2,2)
    \carry(1,4)
    \carry(4,1)
    \carry(3,4)
    \cell(4,2)\winmove
  \end{hexboard}
  \offset\hspace{-5\increment}
  \begin{hexboard}[baseline={($(current bounding box.south)-(0,1ex)$)},scale=\scale]
    \rotation{-30}
    \board(5,5)
    \white(1,1)\white(2,1)\black(3,1)\white(4,1)\white(5,1)
    \white(1,2)\white(2,2)\carry(3,2)\black(4,2)\carry(5,2)
    \white(1,3)\black(2,3)\carry(3,3)\carry(4,3)\black(5,3)
    \black(1,4)\carry(2,4)\carry(3,4)\carry(4,4)\white(5,4)
    \black(1,5)\black(2,5)\carry(3,5)\white(4,5)\white(5,5)
    \carry(2,3)
    \carry(2,5)
    \carry(4,2)
    \carry(5,3)
    \cell(3,2)\winmove
  \end{hexboard}
  \]
  \[
  \begin{hexboard}[baseline={($(current bounding box.south)-(0,1ex)$)},scale=\scale]
    \rotation{-30}
    \board(8,6)
    \white(1,1)\white(2,1)\white(3,1)\black(4,1)\white(5,1)\carry(6,1)\black(7,1)\carry(8,1)
    \white(1,2)\black(2,2)\nofil(3,2)\nofil(4,2)\black(5,2)\carry(6,2)\carry(7,2)\black(8,2)
    \black(1,3)\white(2,3)\black(3,3)\nofil(4,3)\carry(5,3)\carry(6,3)\carry(7,3)\white(8,3)
    \black(1,4)\white(2,4)\nofil(3,4)\white(4,4)\black(5,4)\carry(6,4)\white(7,4)\white(8,4)
    \black(1,5)\black(2,5)\white(3,5)\white(4,5)\nofil(5,5)\nofil(6,5)\white(7,5)\white(8,5)
    \white(1,6)\nofil(2,6)\white(3,6)\white(4,6)\white(5,6)\black(6,6)\white(7,6)\white(8,6)
    \carry(5,2)
    \carry(7,1)
    \carry(8,2)
    \carry(5,4)
    \cell(6,2)\winmove
  \end{hexboard}
  \offset\hspace{-5\increment}
  \begin{hexboard}[baseline={($(current bounding box.south)-(0,1ex)$)},scale=\scale]
    \rotation{-30}
    \board(7,5)
    \black(1,1)\white(2,1)\white(3,1)\white(4,1)\black(5,1)\black(6,1)\black(7,1)
    \nofil(1,2)\carry(2,2)\black(3,2)\carry(4,2)\white(5,2)\nofil(6,2)\black(7,2)
    \black(1,3)\carry(2,3)\carry(3,3)\black(4,3)\nofil(5,3)\white(6,3)\black(7,3)
    \carry(1,4)\carry(2,4)\carry(3,4)\white(4,4)\nofil(5,4)\nofil(6,4)\white(7,4)
    \carry(1,5)\black(2,5)\white(3,5)\nofil(4,5)\white(5,5)\white(6,5)\white(7,5)
    \carry(1,3)
    \carry(3,2)
    \carry(2,5)
    \carry(4,3)
    \cell(2,4)\winmove
  \end{hexboard}
  \offset\hspace{-5\increment}
  \begin{hexboard}[baseline={($(current bounding box.south)-(0,1ex)$)},scale=\scale]
    \rotation{-30}
    \board(5,5)
    \black(1,1)\white(2,1)\carry(3,1)\black(4,1)\carry(5,1)
    \black(1,2)\black(2,2)\carry(3,2)\carry(4,2)\black(5,2)
    \black(1,3)\carry(2,3)\carry(3,3)\carry(4,3)\white(5,3)
    \black(1,4)\black(2,4)\carry(3,4)\white(4,4)\white(5,4)
    \white(1,5)\white(2,5)\black(3,5)\white(4,5)\white(5,5)
    \carry(2,2)
    \carry(2,4)
    \carry(4,1)
    \carry(5,2)
    \cell(3,4)\winmove
  \end{hexboard}
  \offset\hspace{-7\increment}
  \begin{hexboard}[baseline={($(current bounding box.south)-(0,1ex)$)},scale=\scale]
    \rotation{-30}
    \board(7,7)
    \white(1,1)\white(2,1)\black(3,1)\white(4,1)\white(5,1)\white(6,1)\black(7,1)
    \white(1,2)\black(2,2)\white(3,2)\carry(4,2)\black(5,2)\carry(6,2)\white(7,2)
    \white(1,3)\nofil(2,3)\black(3,3)\carry(4,3)\carry(5,3)\black(6,3)\white(7,3)
    \black(1,4)\white(2,4)\carry(3,4)\carry(4,4)\carry(5,4)\white(6,4)\white(7,4)
    \black(1,5)\white(2,5)\black(3,5)\carry(4,5)\white(5,5)\white(6,5)\white(7,5)
    \black(1,6)\nofil(2,6)\white(3,6)\black(4,6)\white(5,6)\white(6,6)\white(7,6)
    \white(1,7)\black(2,7)\black(3,7)\white(4,7)\white(5,7)\white(6,7)\white(7,7)
    \carry(5,2)
    \carry(3,3)
    \carry(3,5)
    \carry(6,3)
    \cell(3,4)\winmove
  \end{hexboard}
  \]
  Wide parallelogram:
  \[
  \begin{hexboard}[baseline={($(current bounding box.south)-(0,1ex)$)},scale=\scale]
    \rotation{-30}
    \board(4,5)
    \white(1,1)\black(2,1)\white(3,1)\white(4,1)
    \white(1,2)\carry(2,2)\carry(3,2)\carry(4,2)
    \black(1,3)\carry(2,3)\carry(3,3)\black(4,3)
    \black(1,4)\carry(2,4)\carry(3,4)\black(4,4)
    \carry(1,5)\carry(2,5)\carry(3,5)\black(4,5)
    \carry(1,3)
    \carry(1,4)
    \carry(4,3)
    \carry(4,4)
    \cell(2,2)\winmove
  \end{hexboard}
  \offset\hspace{-5\increment}
  \begin{hexboard}[baseline={($(current bounding box.south)-(0,1ex)$)},scale=\scale]
    \rotation{-30}
    \board(4,5)
    \white(1,1)\white(2,1)\nofil(3,1)\nofil(4,1)
    \white(1,2)\carry(2,2)\carry(3,2)\carry(4,2)
    \black(1,3)\carry(2,3)\carry(3,3)\black(4,3)
    \black(1,4)\carry(2,4)\carry(3,4)\black(4,4)
    \carry(1,5)\carry(2,5)\carry(3,5)\black(4,5)
    \carry(1,3)
    \carry(1,4)
    \carry(4,3)
    \carry(4,4)
    \cell(3,2)\winmove
  \end{hexboard}
  \offset\hspace{-5\increment}
  \begin{hexboard}[baseline={($(current bounding box.south)-(0,1ex)$)},scale=\scale]
    \rotation{-30}
    \board(5,5)
    \white(1,1)\white(2,1)\white(3,1)\white(4,1)\black(5,1)
    \white(1,2)\carry(2,2)\carry(3,2)\carry(4,2)\white(5,2)
    \black(1,3)\carry(2,3)\carry(3,3)\black(4,3)\white(5,3)
    \black(1,4)\carry(2,4)\carry(3,4)\black(4,4)\white(5,4)
    \carry(1,5)\carry(2,5)\carry(3,5)\black(4,5)\white(5,5)
    \carry(1,3)
    \carry(1,4)
    \carry(4,3)
    \carry(4,4)
    \cell(4,2)\winmove
  \end{hexboard}
  \offset\hspace{-9\increment}
  \begin{hexboard}[baseline={($(current bounding box.south)-(0,1ex)$)},scale=\scale]
    \rotation{-30}
    \board(6,9)
    \white(1,1)\white(2,1)\black(3,1)\white(4,1)\white(5,1)\black(6,1)
    \white(1,2)\white(2,2)\nofil(3,2)\black(4,2)\white(5,2)\nofil(6,2)
    \white(1,3)\black(2,3)\white(3,3)\nofil(4,3)\black(5,3)\black(6,3)
    \black(1,4)\white(2,4)\nofil(3,4)\white(4,4)\white(5,4)\black(6,4)
    \black(1,5)\white(2,5)\black(3,5)\black(4,5)\carry(5,5)\black(6,5)
    \black(1,6)\carry(2,6)\carry(3,6)\carry(4,6)\carry(5,6)\white(6,6)
    \white(1,7)\carry(2,7)\carry(3,7)\carry(4,7)\carry(5,7)\black(6,7)
    \black(1,8)\carry(2,8)\black(3,8)\black(4,8)\white(5,8)\black(6,8)
    \black(1,9)\white(2,9)\white(3,9)\white(4,9)\white(5,9)\black(6,9)
    \carry(3,5)
    \carry(4,5)
    \carry(3,8)
    \carry(4,8)
    \cell(4,7)\winmove
  \end{hexboard}
  \offset\hspace{-8\increment}
  \begin{hexboard}[baseline={($(current bounding box.south)-(0,1ex)$)},scale=\scale]
    \rotation{-30}
    \board(7,8)
    \black(1,1)\white(2,1)\white(3,1)\white(4,1)\white(5,1)\black(6,1)\white(7,1)
    \black(1,2)\white(2,2)\white(3,2)\white(4,2)\white(5,2)\nofil(6,2)\black(7,2)
    \black(1,3)\white(2,3)\white(3,3)\white(4,3)\black(5,3)\white(6,3)\black(7,3)
    \black(1,4)\white(2,4)\black(3,4)\black(4,4)\carry(5,4)\white(6,4)\black(7,4)
    \black(1,5)\carry(2,5)\carry(3,5)\carry(4,5)\carry(5,5)\white(6,5)\black(7,5)
    \white(1,6)\carry(2,6)\carry(3,6)\carry(4,6)\carry(5,6)\white(6,6)\black(7,6)
    \white(1,7)\carry(2,7)\black(3,7)\black(4,7)\black(5,7)\nofil(6,7)\white(7,7)
    \black(1,8)\white(2,8)\white(3,8)\white(4,8)\white(5,8)\black(6,8)\white(7,8)
    \carry(3,4)
    \carry(4,4)
    \carry(3,7)
    \carry(4,7)
    \cell(4,5)\winmove
  \end{hexboard}
  \]
  \caption{Witnesses for the non-inferiority of probes in various
    templates, part 2. In each position, the white dot denotes the unique
    winning move for White.}
  \label{fig:witnesses-part2}
\end{figure}

\subsubsection{Probes in long crescents}

Some templates form infinite families. An example of this is the
\emph{long crescent}, which is a generalization of the crescent that
can be of any length:
\[
  \begin{array}[b]{c}
    \begin{hexboard}[baseline={($(current bounding box.south)-(0,1ex)$)},scale=0.8]
      \rotation{30}
      \foreach\i in {2,...,3} {\hex(\i,1)}
      \foreach\i in {1,...,3} {\hex(\i,2)}
      \foreach\i in {1,...,3} {\hex(\i,3)}
      \black(2,1)
      \black(1,2)
      \black(1,3)
      \black(3,3)
      \cell(3,1)\sflabel{1}
      \cell(2,2)\sflabel{2}
      \cell(3,2)\sflabel{3}
      \cell(2,3)\sflabel{4}
    \end{hexboard}
    \\
    \mbox{The crescent.}
  \end{array}
  \begin{array}[b]{c}
    \begin{hexboard}[baseline={($(current bounding box.south)-(0,1ex)$)},scale=0.8]
      \rotation{30}
      \foreach\i in {2,...,3} {\hex(\i,1)}
      \foreach\i in {1,...,3} {\hex(\i,2)}
      \foreach\i in {1,...,3} {\hex(\i,3)}
      \foreach\i in {1,...,3} {\hex(\i,4)}
      \black(2,1)
      \black(1,2)
      \black(1,3)
      \black(1,4)
      \black(3,4)
      \cell(3,1)\sflabel{1}
      \cell(2,2)\sflabel{2}
      \cell(3,2)\sflabel{3}
      \cell(2,3)\sflabel{4}
      \cell(3,3)\sflabel{5}
      \cell(2,4)\sflabel{6}
    \end{hexboard}
    \\
    \mbox{The 6-cell long crescent.}
  \end{array}
  \begin{array}[b]{c}
    \begin{hexboard}[baseline={($(current bounding box.south)-(0,1ex)$)},scale=0.8]
      \rotation{30}
      \foreach\i in {2,...,3} {\hex(\i,1)}
      \foreach\i in {1,...,3} {\hex(\i,2)}
      \foreach\i in {1,...,3} {\hex(\i,3)}
      \foreach\i in {1,...,3} {\hex(\i,4)}
      \foreach\i in {1,...,3} {\hex(\i,5)}
      \black(2,1)
      \black(1,2)
      \black(1,3)
      \black(1,4)
      \black(1,5)
      \black(3,5)
      \cell(3,1)\sflabel{1}
      \cell(2,2)\sflabel{2}
      \cell(3,2)\sflabel{3}
      \cell(2,3)\sflabel{4}
      \cell(3,3)\sflabel{5}
      \cell(2,4)\sflabel{6}
      \cell(3,4)\sflabel{7}
      \cell(2,5)\sflabel{8}
    \end{hexboard}
    \\
    \mbox{The 8-cell long crescent.}
  \end{array}
  \begin{array}[b]{c}
    \begin{hexboard}[baseline={($(current bounding box.south)-(0,1ex)$)},scale=0.8]
      \rotation{30}
      \foreach\i in {2,...,3} {\hex(\i,1)}
      \foreach\i in {1,...,3} {\hex(\i,2)}
      \foreach\i in {1,...,3} {\hex(\i,3)}
      \foreach\i in {1,...,3} {\hex(\i,4)}
      \foreach\i in {1,...,3} {\hex(\i,5)}
      \foreach\i in {1,...,3} {\hex(\i,6)}
      \black(2,1)
      \black(1,2)
      \black(1,3)
      \black(1,4)
      \black(1,5)
      \black(1,6)
      \black(3,6)
      \cell(3,1)\sflabel{1}
      \cell(2,2)\sflabel{2}
      \cell(3,2)\sflabel{3}
      \cell(2,3)\sflabel{4}
      \cell(3,3)\sflabel{5}
      \cell(2,4)\sflabel{6}
      \cell(3,4)\sflabel{7}
      \cell(2,5)\sflabel{8}
      \cell(3,5)\sflabel{9}
      \cell(2,6)\sflabel{10}
    \end{hexboard}
    \\
    \mbox{The 10-cell long crescent.}
  \end{array}
\]
In the long crescent of any length, probes 2 and 4 are inferior for
the same reason as in the crescent. The remaining probes (i.e., all
odd probes, and all even probes $\geq 6$) are non-inferior; witnesses
for the 10-cell long crescent are shown in
Figure~\ref{fig:witnesses-long-crescent}. Moreover, these witnesses
can be adjusted to all possible lengths, due to the following
equivalences, which can be verified by computing their
combinatorial values.
\[
\begin{hexboard}[baseline={($(current bounding box.center)-(0,0.55ex)$)},scale=0.6]
  \rotation{30}
  \foreach\i in {2,...,4} {\hex(\i,4)}
  \foreach\i in {1,...,3} {\hex(\i,5)}
  \foreach\i in {1,...,2} {\hex(\i,6)}
  \white(4,4)
  \black(1,5)
  \black(1,6)
  \black(2,4)
\end{hexboard}
~\eq~
\begin{hexboard}[baseline={($(current bounding box.center)-(0,0.55ex)$)},scale=0.6]
  \rotation{30}
  \foreach\i in {2,...,4} {\hex(\i,3)}
  \foreach\i in {1,...,4} {\hex(\i,4)}
  \foreach\i in {1,...,3} {\hex(\i,5)}
  \foreach\i in {1,...,2} {\hex(\i,6)}
  \white(4,3)
  \white(4,4)
  \black(1,4)
  \black(1,5)
  \black(1,6)
  \black(2,3)
\end{hexboard}
\hspace{1cm}
\begin{hexboard}[baseline={($(current bounding box.center)-(0,0.55ex)$)},scale=0.6]
  \rotation{30}
  \foreach\i in {3,...,4} {\hex(\i,3)}
  \foreach\i in {2,...,4} {\hex(\i,4)}
  \foreach\i in {1,...,3} {\hex(\i,5)}
  \foreach\i in {1,...,2} {\hex(\i,6)}
  \black(1,5)
  \black(1,6)
  \white(4,3)
  \white(4,4)
\end{hexboard}
~\eq~
\begin{hexboard}[baseline={($(current bounding box.center)-(0,0.55ex)$)},scale=0.6]
  \rotation{30}
  \foreach\i in {3,...,4} {\hex(\i,2)}
  \foreach\i in {2,...,4} {\hex(\i,3)}
  \foreach\i in {1,...,4} {\hex(\i,4)}
  \foreach\i in {1,...,3} {\hex(\i,5)}
  \foreach\i in {1,...,2} {\hex(\i,6)}
  \black(1,4)
  \black(1,5)
  \black(1,6)
  \white(4,2)
  \white(4,3)
  \white(4,4)
\end{hexboard}
\hspace{1cm}
\begin{hexboard}[baseline={($(current bounding box.center)-(0,0.55ex)$)},scale=0.6]
  \rotation{30}
  \foreach\i in {3,...,4} {\hex(\i,4)}
  \foreach\i in {1,...,3} {\hex(\i,5)}
  \foreach\i in {1,...,3} {\hex(\i,6)}
  \white(4,4)
  \black(1,5)
  \black(1,6)
  \black(3,6)
\end{hexboard}
~\eq~
\begin{hexboard}[baseline={($(current bounding box.center)-(0,0.55ex)$)},scale=0.6]
  \rotation{30}
  \foreach\i in {3,...,4} {\hex(\i,3)}
  \foreach\i in {1,...,4} {\hex(\i,4)}
  \foreach\i in {1,...,3} {\hex(\i,5)}
  \foreach\i in {1,...,3} {\hex(\i,6)}
  \white(4,3)
  \white(4,4)
  \black(1,4)
  \black(1,5)
  \black(1,6)
  \black(3,6)
\end{hexboard}
\]

\subsubsection{Probes in bolstered ziggurats}

The inferiority of probes in some Hex region can be affected by nearby
stones outside of the region. For example, consider the bridge in (a)
below. Both probes in the bridge are non-inferior.  However, if we
\emph{bolster} the bridge by adding a white stone on the left, as in
(b), then probe $1$ becomes inferior, because it is then strongly
reversed by $2$. Only the white intrusion at $2$ remains useful. If
the bridge is bolstered on both sides as in (c), then there are no
useful intrusions at all.
\[
\begin{array}[b]{c}
\begin{hexboard}[baseline={($(current bounding box.south)-(0,1ex)$)},scale=0.8]
  \rotation{-30}
  \foreach\i in {2,...,2} {\hex(\i,1)}
  \foreach\i in {1,...,2} {\hex(\i,2)}
  \foreach\i in {1,...,1} {\hex(\i,3)}
  \black(2,1)
  \black(1,3)
  \cell(1,2)\sflabel{1}
  \cell(2,2)\sflabel{2}
\end{hexboard}
\\
\mbox{(a) The bridge.}
\end{array}
\begin{array}[b]{c}
\begin{hexboard}[baseline={($(current bounding box.south)-(0,1ex)$)},scale=0.8]
  \rotation{-30}
  \foreach\i in {2,...,2} {\hex(\i,1)}
  \foreach\i in {0,...,2} {\hex(\i,2)}
  \foreach\i in {1,...,1} {\hex(\i,3)}
  \black(2,1)
  \black(1,3)
  \white(0,2)
  \cell(1,2)\sflabel{1}
  \cell(2,2)\sflabel{2}
\end{hexboard}
\\
\mbox{(b) A bridge bolstered on the left.}
\end{array}
\begin{array}[b]{c}
\begin{hexboard}[baseline={($(current bounding box.south)-(0,1ex)$)},scale=0.8]
  \rotation{-30}
  \foreach\i in {2,...,2} {\hex(\i,1)}
  \foreach\i in {0,...,3} {\hex(\i,2)}
  \foreach\i in {1,...,1} {\hex(\i,3)}
  \black(2,1)
  \black(1,3)
  \white(0,2)
  \white(3,2)
  \cell(1,2)\sflabel{1}
  \cell(2,2)\sflabel{2}
\end{hexboard}
\\
\mbox{(c) A bridge bolstered on both sides.}
\end{array}
\]
Let us investigate what happens to a ziggurat in the presence of a
single white stone on the perimeter.  We label the cells adjacent to
the ziggurat with letters $a,\ldots,i$, as follows:
\[
\begin{hexboard}[scale=0.8]
  \template(4,3)
  \foreach \i in {3,...,4} {\hex(\i,1)}
  \foreach \i in {2,...,4} {\hex(\i,2)}
  \foreach \i in {1,...,4} {\hex(\i,3)}
  \black(3,1)
  \cell(4,1)\sflabel{1}
  \cell(2,2)\sflabel{2}
  \cell(3,2)\sflabel{3}
  \cell(4,2)\sflabel{4}
  \cell(1,3)\sflabel{5}
  \cell(2,3)\sflabel{6}
  \cell(3,3)\sflabel{7}
  \cell(4,3)\sflabel{8}
  \cell(0,3)\sflabel{a}
  \cell(1,2)\sflabel{b}
  \cell(2,1)\sflabel{c}
  \cell(3,0)\sflabel{d}
  \cell(4,0)\sflabel{e}
  \cell(5,0)\sflabel{f}
  \cell(5,1)\sflabel{g}
  \cell(5,2)\sflabel{h}
  \cell(5,3)\sflabel{i}
\end{hexboard}
\]
For example, a ziggurat with a white stone at $b$ is the following
position, which we call the \emph{half-star}.
\[
\begin{hexboard}[scale=0.8]
  \template(4,3)
  \foreach \i in {3,...,4} {\hex(\i,1)}
  \foreach \i in {2,...,4} {\hex(\i,2)}
  \foreach \i in {1,...,4} {\hex(\i,3)}
  \black(3,1)
  \cell(4,1)\sflabel{1}
  \cell(2,2)\sflabel{2}
  \cell(3,2)\sflabel{3}
  \cell(4,2)\sflabel{4}
  \cell(1,3)\sflabel{5}
  \cell(2,3)\sflabel{6}
  \cell(3,3)\sflabel{7}
  \cell(4,3)\sflabel{8}
  \hex(1,2)\white(1,2)
\end{hexboard}
\]
Henderson and Hayward
showed in {\cite{star-decomposition}} that in the half-star, probe~4
dominates probes 2, 3, 5, and 7. In other words, probes 2, 3, 5, and 7
are inferior in this situation. One may ask whether any of the
remaining probes, 1, 4, 6, or 8 are inferior in the half-star. This is
not the case; in fact all are non-inferior, and our witnesses
for these probes in Figure~\ref{fig:witnesses-part1} are
half-stars.

The following table summarizes the status of each probe 1--8 in the
presence of a white stone at $a$--$i$. 
\[
\begin{array}{c|ccccccccc}
  &   a  & b  & c  & d  & e  & f  & g  & h  & i  \\\hline
  1 & O  & W  & O  & W  & W  & A  & W  & W  & W  \\
  2 & W  &\ifr& W  & O  & W  & W  & W  & W  & W  \\
  3 & O  &\ifr& W  & W  & O  &\ifr& W  &\ifr& W  \\
  4 & W  & W  & W  & W  & W  & B  & B  & W  & W  \\
  5 & W  &\ifr& W  & O  & O  & W  & W  & C  & W  \\
  6 & W  & W  & W  & W  & O  &\ifr& W  & D  & D  \\
  7 & W  &\ifr& W  & E  & W  &\ifr& W  &\ifr& W  \\
  8 & O  & W  & F  & W  & W  & F  & W  &\ifr& W  \\
\end{array}
\]
In this table, ``\,$\ifr$\,'' means that the probe is provably
inferior, and all other entries mean that the probe is non-inferior.
``$W$'' indicates that the corresponding witness in
Figure~\ref{fig:witnesses-ziggurat} works, and ``$O$'' indicates that
an obvious modification of that witness works. By ``obvious
modification'', we mean one that is isomorphic to the original witness
as a set coloring game (see Section~\ref{sec:set-coloring}). For
example, the following is an obvious modification of the probe 3
witness from Figure~\ref{fig:witnesses-ziggurat}, which works in the
presence of a white stone at $e$:
\[
\begin{hexboard}[baseline={($(current bounding box.center)-(0,1ex)$)},scale=0.5]
  \rotation{-30}
  \board(8,6)
  \white(1,1)\white(2,1)\white(3,1)\white(4,1)\white(5,1)\white(6,1)\white(7,1)\black(8,1)
  \white(1,2)\white(2,2)\white(3,2)\white(4,2)\black(5,2)\nofil(6,2)\nofil(7,2)\black(8,2)
  \white(1,3)\white(2,3)\white(3,3)\black(4,3)\white(5,3)\black(6,3)\white(7,3)\black(8,3)
  \white(1,4)\white(2,4)\white(3,4)\black(4,4)\carry(5,4)\white(6,4)\black(7,4)\white(8,4)
  \white(1,5)\black(2,5)\carry(3,5)\carry(4,5)\carry(5,5)\nofil(6,5)\white(7,5)\white(8,5)
  \black(1,6)\carry(2,6)\carry(3,6)\carry(4,6)\carry(5,6)\white(6,6)\white(7,6)\white(8,6)
  \carry(4,4)
  \cell(4,5)\winmove
\end{hexboard}
\]
Finally, the letters $A$--$F$ refer to the six additional witnesses
shown in Figure~\ref{fig:witnesses-ziggurat-extra}.

\begin{figure}
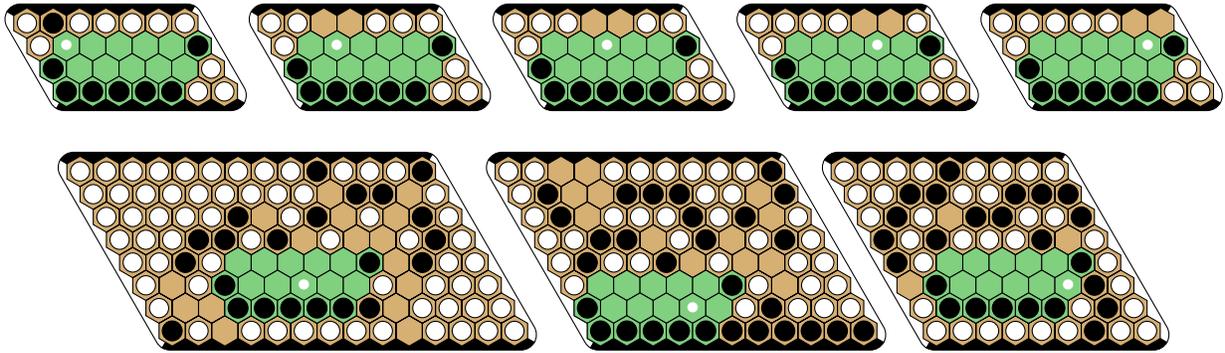

  \def\scale{0.5}
  \setlength{\increment}{0.175cm}
  \newcommand{\offset}{\hspace{2em}}
  \[
  \begin{hexboard}[baseline={($(current bounding box.south)-(0,1ex)$)},scale=\scale]
    \rotation{-30}
    \board(7,4)
    \white(1,1)\black(2,1)\white(3,1)\white(4,1)\white(5,1)\white(6,1)\white(7,1)
    \white(1,2)\carry(2,2)\carry(3,2)\carry(4,2)\carry(5,2)\carry(6,2)\black(7,2)
    \black(1,3)\carry(2,3)\carry(3,3)\carry(4,3)\carry(5,3)\carry(6,3)\white(7,3)
    \black(1,4)\black(2,4)\black(3,4)\black(4,4)\black(5,4)\white(6,4)\white(7,4)
    \carry(1,3)
    \carry(1,4)
    \carry(2,4)
    \carry(3,4)
    \carry(4,4)
    \carry(5,4)
    \carry(7,2)
    \cell(2,2)\winmove
  \end{hexboard}
  \offset\hspace{-4\increment}
  \begin{hexboard}[baseline={($(current bounding box.south)-(0,1ex)$)},scale=\scale]
    \rotation{-30}
    \board(7,4)
    \white(1,1)\white(2,1)\nofil(3,1)\nofil(4,1)\white(5,1)\white(6,1)\white(7,1)
    \white(1,2)\carry(2,2)\carry(3,2)\carry(4,2)\carry(5,2)\carry(6,2)\black(7,2)
    \black(1,3)\carry(2,3)\carry(3,3)\carry(4,3)\carry(5,3)\carry(6,3)\white(7,3)
    \black(1,4)\black(2,4)\black(3,4)\black(4,4)\black(5,4)\white(6,4)\white(7,4)
    \carry(1,3)
    \carry(1,4)
    \carry(2,4)
    \carry(3,4)
    \carry(4,4)
    \carry(5,4)
    \carry(7,2)
    \cell(3,2)\winmove
  \end{hexboard}
  \offset\hspace{-4\increment}
  \begin{hexboard}[baseline={($(current bounding box.south)-(0,1ex)$)},scale=\scale]
    \rotation{-30}
    \board(7,4)
    \white(1,1)\white(2,1)\white(3,1)\nofil(4,1)\nofil(5,1)\white(6,1)\white(7,1)
    \white(1,2)\carry(2,2)\carry(3,2)\carry(4,2)\carry(5,2)\carry(6,2)\black(7,2)
    \black(1,3)\carry(2,3)\carry(3,3)\carry(4,3)\carry(5,3)\carry(6,3)\white(7,3)
    \black(1,4)\black(2,4)\black(3,4)\black(4,4)\black(5,4)\white(6,4)\white(7,4)
    \carry(1,3)
    \carry(1,4)
    \carry(2,4)
    \carry(3,4)
    \carry(4,4)
    \carry(5,4)
    \carry(7,2)
    \cell(4,2)\winmove
  \end{hexboard}
  \offset\hspace{-4\increment}
  \begin{hexboard}[baseline={($(current bounding box.south)-(0,1ex)$)},scale=\scale]
    \rotation{-30}
    \board(7,4)
    \white(1,1)\white(2,1)\white(3,1)\white(4,1)\nofil(5,1)\nofil(6,1)\white(7,1)
    \white(1,2)\carry(2,2)\carry(3,2)\carry(4,2)\carry(5,2)\carry(6,2)\black(7,2)
    \black(1,3)\carry(2,3)\carry(3,3)\carry(4,3)\carry(5,3)\carry(6,3)\white(7,3)
    \black(1,4)\black(2,4)\black(3,4)\black(4,4)\black(5,4)\white(6,4)\white(7,4)
    \carry(1,3)
    \carry(1,4)
    \carry(2,4)
    \carry(3,4)
    \carry(4,4)
    \carry(5,4)
    \carry(7,2)
    \cell(5,2)\winmove
  \end{hexboard}
  \offset\hspace{-4\increment}
  \begin{hexboard}[baseline={($(current bounding box.south)-(0,1ex)$)},scale=\scale]
    \rotation{-30}
    \board(7,4)
    \white(1,1)\white(2,1)\white(3,1)\white(4,1)\white(5,1)\nofil(6,1)\nofil(7,1)
    \white(1,2)\carry(2,2)\carry(3,2)\carry(4,2)\carry(5,2)\carry(6,2)\black(7,2)
    \black(1,3)\carry(2,3)\carry(3,3)\carry(4,3)\carry(5,3)\carry(6,3)\white(7,3)
    \black(1,4)\black(2,4)\black(3,4)\black(4,4)\black(5,4)\white(6,4)\white(7,4)
    \carry(1,3)
    \carry(1,4)
    \carry(2,4)
    \carry(3,4)
    \carry(4,4)
    \carry(5,4)
    \carry(7,2)
    \cell(6,2)\winmove
  \end{hexboard}
  \]
  \[
  \begin{hexboard}[baseline={($(current bounding box.south)-(0,1ex)$)},scale=\scale]
    \rotation{-30}
    \board(14,8)
    \white(1,1)\white(2,1)\white(3,1)\white(4,1)\white(5,1)\white(6,1)\white(7,1)\white(8,1)\white(9,1)\black(10,1)\white(11,1)\white(12,1)\white(13,1)\black(14,1)
    \white(1,2)\white(2,2)\white(3,2)\white(4,2)\white(5,2)\white(6,2)\white(7,2)\white(8,2)\white(9,2)\nofil(10,2)\black(11,2)\black(12,2)\nofil(13,2)\white(14,2)
    \white(1,3)\white(2,3)\white(3,3)\white(4,3)\white(5,3)\black(6,3)\nofil(7,3)\white(8,3)\black(9,3)\nofil(10,3)\white(11,3)\nofil(12,3)\black(13,3)\white(14,3)
    \white(1,4)\white(2,4)\white(3,4)\black(4,4)\black(5,4)\white(6,4)\black(7,4)\nofil(8,4)\white(9,4)\nofil(10,4)\nofil(11,4)\white(12,4)\black(13,4)\white(14,4)
    \white(1,5)\white(2,5)\black(3,5)\white(4,5)\carry(5,5)\carry(6,5)\carry(7,5)\carry(8,5)\carry(9,5)\black(10,5)\nofil(11,5)\black(12,5)\white(13,5)\white(14,5)
    \white(1,6)\nofil(2,6)\white(3,6)\black(4,6)\carry(5,6)\carry(6,6)\carry(7,6)\carry(8,6)\carry(9,6)\white(10,6)\nofil(11,6)\white(12,6)\white(13,6)\white(14,6)
    \white(1,7)\nofil(2,7)\nofil(3,7)\black(4,7)\black(5,7)\black(6,7)\black(7,7)\black(8,7)\black(9,7)\nofil(10,7)\white(11,7)\white(12,7)\white(13,7)\white(14,7)
    \black(1,8)\white(2,8)\nofil(3,8)\white(4,8)\white(5,8)\white(6,8)\white(7,8)\white(8,8)\white(9,8)\nofil(10,8)\white(11,8)\white(12,8)\white(13,8)\white(14,8)
    \carry(4,6)
    \carry(4,7)
    \carry(5,7)
    \carry(6,7)
    \carry(7,7)
    \carry(8,7)
    \carry(10,5)
    \cell(7,6)\winmove
  \end{hexboard}
  \offset\hspace{-8\increment}
  \begin{hexboard}[baseline={($(current bounding box.south)-(0,1ex)$)},scale=\scale]
    \rotation{-30}
    \board(11,8)
    \white(1,1)\white(2,1)\nofil(3,1)\nofil(4,1)\white(5,1)\white(6,1)\white(7,1)\white(8,1)\white(9,1)\white(10,1)\black(11,1)
    \white(1,2)\black(2,2)\nofil(3,2)\nofil(4,2)\black(5,2)\black(6,2)\black(7,2)\white(8,2)\white(9,2)\nofil(10,2)\black(11,2)
    \white(1,3)\black(2,3)\nofil(3,3)\white(4,3)\white(5,3)\white(6,3)\black(7,3)\white(8,3)\black(9,3)\nofil(10,3)\white(11,3)
    \white(1,4)\white(2,4)\black(3,4)\black(4,4)\nofil(5,4)\white(6,4)\black(7,4)\nofil(8,4)\white(9,4)\black(10,4)\white(11,4)
    \white(1,5)\black(2,5)\white(3,5)\white(4,5)\black(5,5)\nofil(6,5)\white(7,5)\nofil(8,5)\nofil(9,5)\white(10,5)\white(11,5)
    \white(1,6)\carry(2,6)\carry(3,6)\carry(4,6)\carry(5,6)\carry(6,6)\black(7,6)\white(8,6)\nofil(9,6)\white(10,6)\white(11,6)
    \black(1,7)\carry(2,7)\carry(3,7)\carry(4,7)\carry(5,7)\carry(6,7)\white(7,7)\black(8,7)\white(9,7)\white(10,7)\white(11,7)
    \black(1,8)\black(2,8)\black(3,8)\black(4,8)\black(5,8)\black(6,8)\black(7,8)\black(8,8)\black(9,8)\black(10,8)\black(11,8)
    \carry(1,7)
    \carry(1,8)
    \carry(2,8)
    \carry(3,8)
    \carry(4,8)
    \carry(5,8)
    \carry(7,6)
    \cell(5,7)\winmove
  \end{hexboard}
  \offset\hspace{-9\increment}
  \begin{hexboard}[baseline={($(current bounding box.south)-(0,1ex)$)},scale=\scale]
    \rotation{-30}
    \board(9,8)
    \white(1,1)\white(2,1)\white(3,1)\white(4,1)\black(5,1)\white(6,1)\white(7,1)\white(8,1)\white(9,1)
    \white(1,2)\white(2,2)\black(3,2)\black(4,2)\white(5,2)\white(6,2)\black(7,2)\black(8,2)\black(9,2)
    \white(1,3)\black(2,3)\white(3,3)\nofil(4,3)\black(5,3)\black(6,3)\white(7,3)\white(8,3)\black(9,3)
    \black(1,4)\white(2,4)\black(3,4)\white(4,4)\white(5,4)\white(6,4)\black(7,4)\nofil(8,4)\white(9,4)
    \black(1,5)\white(2,5)\carry(3,5)\carry(4,5)\carry(5,5)\carry(6,5)\carry(7,5)\black(8,5)\white(9,5)
    \nofil(1,6)\black(2,6)\carry(3,6)\carry(4,6)\carry(5,6)\carry(6,6)\carry(7,6)\black(8,6)\white(9,6)
    \white(1,7)\black(2,7)\black(3,7)\black(4,7)\black(5,7)\black(6,7)\white(7,7)\black(8,7)\white(9,7)
    \white(1,8)\white(2,8)\white(3,8)\white(4,8)\white(5,8)\white(6,8)\black(7,8)\white(8,8)\white(9,8)
    \carry(2,6)
    \carry(2,7)
    \carry(3,7)
    \carry(4,7)
    \carry(5,7)
    \carry(6,7)
    \carry(8,5)
    \cell(7,6)\winmove
  \end{hexboard}
  \]
  \caption{Witnesses for the non-inferiority of all probes except
    probes 2 and 4 of the long crescent.}
  \label{fig:witnesses-long-crescent}
\end{figure}

\begin{figure}
  \def\scale{0.5}
  \setlength{\increment}{0.175cm}
  \newcommand{\offset}{\hspace{1.7em}}
  \newcommand{\vshift}{0.5ex}
  \newcommand{\mylabel}[1]{#1\quad}
  \[
  \mylabel{A:}\hspace{-6\increment}
  \begin{hexboard}[baseline={($(current bounding box.south)+(0,\vshift)$)},scale=\scale]
    \rotation{-30}
    \board(11,6)
    \black(1,1)\white(2,1)\nofil(3,1)\white(4,1)\white(5,1)\white(6,1)\white(7,1)\white(8,1)\white(9,1)\white(10,1)\black(11,1)
    \nofil(1,2)\nofil(2,2)\nofil(3,2)\black(4,2)\black(5,2)\white(6,2)\white(7,2)\white(8,2)\white(9,2)\white(10,2)\nofil(11,2)
    \nofil(1,3)\nofil(2,3)\white(3,3)\white(4,3)\black(5,3)\white(6,3)\white(7,3)\nofil(8,3)\black(9,3)\nofil(10,3)\black(11,3)
    \white(1,4)\black(2,4)\nofil(3,4)\white(4,4)\black(5,4)\carry(6,4)\black(7,4)\nofil(8,4)\nofil(9,4)\black(10,4)\white(11,4)
    \white(1,5)\nofil(2,5)\black(3,5)\carry(4,5)\carry(5,5)\carry(6,5)\white(7,5)\black(8,5)\white(9,5)\white(10,5)\white(11,5)
    \black(1,6)\white(2,6)\carry(3,6)\carry(4,6)\carry(5,6)\carry(6,6)\white(7,6)\black(8,6)\white(9,6)\white(10,6)\white(11,6)
    \carry(5,4)
    \cell(6,4)\winmove
  \end{hexboard}
  \offset\mylabel{B:}\hspace{-4\increment}
  \begin{hexboard}[baseline={($(current bounding box.south)+(0,\vshift)$)},scale=\scale]
    \rotation{-30}
    \board(6,4)
    \white(1,1)\white(2,1)\white(3,1)\nofil(4,1)\white(5,1)\black(6,1)
    \white(1,2)\white(2,2)\black(3,2)\carry(4,2)\white(5,2)\black(6,2)
    \white(1,3)\carry(2,3)\carry(3,3)\carry(4,3)\nofil(5,3)\white(6,3)
    \carry(1,4)\carry(2,4)\carry(3,4)\carry(4,4)\white(5,4)\white(6,4)
    \carry(3,2)
    \cell(4,3)\winmove
  \end{hexboard}
  \offset\mylabel{C:}\hspace{-7\increment}
  \begin{hexboard}[baseline={($(current bounding box.south)+(0,\vshift)$)},scale=\scale]
    \rotation{-30}
    \board(7,7)
    \white(1,1)\white(2,1)\black(3,1)\white(4,1)\white(5,1)\white(6,1)\black(7,1)
    \white(1,2)\nofil(2,2)\white(3,2)\black(4,2)\black(5,2)\nofil(6,2)\black(7,2)
    \white(1,3)\black(2,3)\nofil(3,3)\white(4,3)\nofil(5,3)\nofil(6,3)\white(7,3)
    \black(1,4)\white(2,4)\black(3,4)\white(4,4)\white(5,4)\nofil(6,4)\white(7,4)
    \black(1,5)\white(2,5)\black(3,5)\carry(4,5)\black(5,5)\white(6,5)\white(7,5)
    \black(1,6)\carry(2,6)\carry(3,6)\carry(4,6)\white(5,6)\white(6,6)\white(7,6)
    \carry(1,7)\carry(2,7)\carry(3,7)\carry(4,7)\white(5,7)\white(6,7)\white(7,7)
    \carry(3,5)
    \cell(1,7)\winmove
  \end{hexboard}
  \]\[
  \mylabel{D:}\hspace{-6\increment}
  \begin{hexboard}[baseline={($(current bounding box.south)+(0,\vshift)$)},scale=\scale]
    \rotation{-30}
    \board(7,6)
    \white(1,1)\white(2,1)\black(3,1)\white(4,1)\white(5,1)\black(6,1)\nofil(7,1)
    \white(1,2)\white(2,2)\black(3,2)\white(4,2)\white(5,2)\nofil(6,2)\nofil(7,2)
    \white(1,3)\white(2,3)\nofil(3,3)\white(4,3)\black(5,3)\white(6,3)\black(7,3)
    \white(1,4)\white(2,4)\black(3,4)\carry(4,4)\nofil(5,4)\nofil(6,4)\white(7,4)
    \white(1,5)\carry(2,5)\carry(3,5)\carry(4,5)\white(5,5)\black(6,5)\white(7,5)
    \carry(1,6)\carry(2,6)\carry(3,6)\carry(4,6)\white(5,6)\black(6,6)\white(7,6)
    \carry(3,4)
    \cell(2,6)\winmove
  \end{hexboard}
  \offset\mylabel{E:}\hspace{-8\increment}
  \begin{hexboard}[baseline={($(current bounding box.south)+(0,\vshift)$)},scale=\scale]
    \rotation{-30}
    \board(12,8)
    \white(1,1)\white(2,1)\white(3,1)\white(4,1)\white(5,1)\white(6,1)\black(7,1)\white(8,1)\white(9,1)\black(10,1)\black(11,1)\white(12,1)
    \white(1,2)\white(2,2)\white(3,2)\white(4,2)\black(5,2)\nofil(6,2)\white(7,2)\white(8,2)\nofil(9,2)\white(10,2)\nofil(11,2)\nofil(12,2)
    \white(1,3)\white(2,3)\white(3,3)\black(4,3)\nofil(5,3)\black(6,3)\nofil(7,3)\nofil(8,3)\white(9,3)\nofil(10,3)\nofil(11,3)\black(12,3)
    \white(1,4)\white(2,4)\black(3,4)\white(4,4)\white(5,4)\black(6,4)\nofil(7,4)\black(8,4)\nofil(9,4)\black(10,4)\nofil(11,4)\nofil(12,4)
    \white(1,5)\white(2,5)\nofil(3,5)\nofil(4,5)\white(5,5)\white(6,5)\white(7,5)\black(8,5)\white(9,5)\white(10,5)\black(11,5)\white(12,5)
    \white(1,6)\black(2,6)\white(3,6)\black(4,6)\white(5,6)\black(6,6)\carry(7,6)\white(8,6)\black(9,6)\nofil(10,6)\nofil(11,6)\black(12,6)
    \white(1,7)\nofil(2,7)\nofil(3,7)\black(4,7)\carry(5,7)\carry(6,7)\carry(7,7)\black(8,7)\nofil(9,7)\black(10,7)\nofil(11,7)\black(12,7)
    \black(1,8)\white(2,8)\white(3,8)\carry(4,8)\carry(5,8)\carry(6,8)\carry(7,8)\white(8,8)\white(9,8)\white(10,8)\nofil(11,8)\white(12,8)
    \carry(6,6)
    \cell(6,8)\winmove
  \end{hexboard}
  \offset\mylabel{F:}\hspace{-6\increment}
  \begin{hexboard}[baseline={($(current bounding box.south)+(0,\vshift)$)},scale=\scale]
    \rotation{-30}
    \board(9,6)
    \white(1,1)\black(2,1)\black(3,1)\white(4,1)\white(5,1)\white(6,1)\white(7,1)\white(8,1)\black(9,1)
    \black(1,2)\white(2,2)\nofil(3,2)\white(4,2)\black(5,2)\black(6,2)\black(7,2)\white(8,2)\nofil(9,2)
    \nofil(1,3)\nofil(2,3)\nofil(3,3)\black(4,3)\white(5,3)\white(6,3)\black(7,3)\nofil(8,3)\nofil(9,3)
    \nofil(1,4)\nofil(2,4)\white(3,4)\black(4,4)\carry(5,4)\white(6,4)\nofil(7,4)\black(8,4)\white(9,4)
    \white(1,5)\black(2,5)\carry(3,5)\carry(4,5)\carry(5,5)\black(6,5)\nofil(7,5)\white(8,5)\white(9,5)
    \white(1,6)\carry(2,6)\carry(3,6)\carry(4,6)\carry(5,6)\white(6,6)\white(7,6)\white(8,6)\white(9,6)
    \carry(4,4)
    \cell(5,6)\winmove
  \end{hexboard}
  \]
  \caption{Witnesses for the non-inferiority of some probes in the
    ziggurat in the presence of certain white boundary stones.}
  \label{fig:witnesses-ziggurat-extra}
\end{figure}

\section*{Conclusions and future work}

In this paper, we studied 3-terminal positions in Hex using
combinatorial game theory. It turns out that there are many 3-terminal
positions with interesting and intricate behaviors. We started by
investigating superswitches, an infinite class of positions with
remarkable properties. The basic superswitch is a relatively complex
Hex position with 12 empty cells, but it has the strikingly simple
combinatorial value $\g{a,b|a}$.  This means that under optimal play,
no player can achieve any outcome but $a$ or $b$, and the left player
gets to ``set'' the switch. This allows the basic superswitch to be
used as a building block for making other interesting gadgets, such as
higher superswitches and tripleswitches. Among other things, we used
this to show that there are infinitely many Hex-realizable and
Hex-distinct values for 3-terminal positions. From a theoretical point
of view, superswitches are interesting because they form a strictly
increasing, cofinal sequence among all games in which Left cannot
achieve outcome $\top$ or $c$.  Superswitches (and the related
simpleswitches) also have practical applications, such as the
verification of connects-both templates.

The most useful tool in our toolbox is a large database of
Hex-realizable 3-terminal positions, which we generated by starting
from a few simple positions and repeatedly closing them under
symmetries, duality, and pinwheel composition. This is how we were
able to find Hex implementations of the superswitches and
simpleswitches and many other Hex devices. The database is useful
because many difficult questions in Hex boil down to constructing Hex
positions with carefully crafted combinatorial values. A compelling
example of this is a characterization of pivoting templates, a kind of
Hex template defined by an intricate balance of threats and
answers. We were able to give a combinatorial game theory
characterization of two kinds of pivoting templates. Moreover, we
found particular 3-terminal contexts that act as perfect tests for
pivoting templates: any candidate pivoting template can be
mechanically verified by composing it with the corresponding testing
context.

In our final application of 3-terminal positions, we settled an open
question by Henderson and Hayward on the inferiority of certain probes
in the ziggurat template. Inferiority can often be proved using local
methods, such as domination and strong reversibility of
moves. However, non-inferiority is much harder to show, as it requires
the construction of a witnessing position in which the probe in
question is the unique winning move. Such witnessing positions are
often extremely rare and hard to find. Prior to the present paper,
there was no systematic method for constructing them. Our database of
3-terminal positions, along with a process of stepwise refinement (and
considerable computational efforts) allowed us to determine the status
of all probes in the ziggurat and many other Hex templates.

The paper probably raises as many questions as it answers. Our
database provides a practical way of searching for concrete Hex
realizations of many combinatorial 3-terminal values. But even the
largest such database is necessarily finite. The question remains open
whether \emph{every} passable combinatorial 3-terminal value is
Hex-realizable (and failing that, how to characterize the ones that
are). Progress on this question might allow us to eventually replace
our database by a procedure that can compute Hex realizations of
arbitrary passable game values.

Another area for future improvement is the stepwise refinement method
of Section~\ref{sec:witnesses}. The theory of abstract probes already
proved to be useful in answering open questions, but the process of
stepwise refinement is still very labor intensive. The main problem is
that, given a probe $P$ and a value $A$, there is currently no a
priori way to predict whether the refined probe $P+A$ will be
viable. We also lack a notion of which refinements are ``more'' viable
than others. As a result, there is very little to guide our selection
of candidate positions for subdivisions, and we are left to compute a
large number of candidates in the hope that enough of them will be
viable. It would be nice to have future theorems that can make this
process more goal directed.

\section*{Acknowledgements}

We thank Ryan Hayward for useful comments on an earlier draft of this
paper, and Andreas Tsevas for pointing out a typo in
Figure~\ref{fig:hex-auto}.

\bibliographystyle{abbrv}
\bibliography{hex-3term}

\begin{thebibliography}{10}

\bibitem{WinningWays}
E.~R. Berlekamp, J.~H. Conway, and R.~K. Guy.
\newblock {\em Winning Ways for Your Mathematical Plays}.
\newblock A. K. Peters, 2nd edition, 2001.
\newblock The first edition was published in 1982.

\bibitem{ONAG}
J.~H. Conway.
\newblock {\em On Numbers and Games}.
\newblock A. K. Peters, 2nd edition, 2001.
\newblock The first edition was published in 1976.

\bibitem{Demer-handicap}
The {Demer} handicap system.
\newblock HexWiki article at \url{https://www.hexwiki.net/index.php/Handicap},
  accessed 2025-07-10.

\bibitem{database}
E.~Demer and P.~Selinger.
\newblock Database of {Hex}-realizable 3-terminal values.
\newblock Available as ancillary files from \arxiv{2507.08247}, 2025.

\bibitem{Gardner-Gale}
M.~Gardner.
\newblock Mathematical games: Four mathematical diversions involving concepts
  of topology.
\newblock {\em Scientific American}, 199(4):124--129, Oct. 1958.

\bibitem{Hayward-full-story}
R.~B. Hayward and B.~Toft.
\newblock {\em {Hex}: The Full Story}.
\newblock CRC Press, 2019.

\bibitem{Hein}
P.~Hein.
\newblock Vil de laere {Polygon}?
\newblock {\em Politiken}, December 26, 1942.
\newblock Translated in {\cite{Hayward-full-story}}.

\bibitem{Henderson-Hayward}
P.~Henderson and R.~B. Hayward.
\newblock Probing the 4-3-2 edge template in {Hex}.
\newblock In H.~van~den Herik, X.~Xu, Z.~Ma, and M.~Winands, editors, {\em
  Computers and Games (CG 2008)}, volume 5131 of {\em Lecture Notes in Computer
  Science}. Springer, 2008.

\bibitem{star-decomposition}
P.~Henderson and R.~B. Hayward.
\newblock Captured-reversible moves and star decomposition domination in {Hex}.
\newblock {\em Integers}, 13:G1, 2013.

\bibitem{Henderson-Hayward-2015}
P.~Henderson and R.~B. Hayward.
\newblock A handicap strategy for {Hex}.
\newblock In R.~J. Nowakowski, editor, {\em Games of No Chance 4}, volume~63 of
  {\em MSRI Publications}, pages 129--136. Cambridge University Press, 2015.

\bibitem{Mohex}
S.~Huang, B.~Arneson, R.~Hayward, M.~Mueller, and J.~Pawlewicz.
\newblock Mohex 2.0: a pattern-based {MCTS} {Hex} player.
\newblock In {\em Proc. Computers and Games CG2013}, volume 8427 of {\em
  Lecture Notes in Computer Science}, 2014.

\bibitem{edge-Shannon-solved}
A.~Lehman.
\newblock A solution of the {Shannon} switching game.
\newblock {\em Journal of the Society for Industrial and Applied Mathematics},
  12(4):687--725, 1964.

\bibitem{Nash}
J.~Nash.
\newblock Some games and machines for playing them.
\newblock Technical Report D-1164, Rand Corporation, Feb. 1952.

\bibitem{Reisch}
S.~Reisch.
\newblock {Hex ist PSPACE-vollst\"{a}ndig}.
\newblock {\em Acta Informatica}, 15:167--191, 1981.

\bibitem{S2022-hex-cgt}
P.~Selinger.
\newblock On the combinatorial value of {Hex} positions.
\newblock {\em Integers}, 22:G3, 2022.
\newblock Also available from \arxiv{2101.06694}.

\bibitem{Seymour}
M.~Seymour.
\newblock {\em {Hex}: A Strategy Guide}.
\newblock Online book, available at \url{http://www.mseymour.ca/hex_book/},
  2019.
\newblock Accessed: 2022-03-21.

\bibitem{YLP-decomp}
J.~Yang, S.~Liao, and M.~Pawlak.
\newblock On a decomposition method for finding winning strategy in {Hex} game.
\newblock In {\em Proceedings of the 1st International Conference on
  Application and Development of Computer Games (ADCOG 2001)}, pages 96--111.
  City University of Hong Kong, 2001.

\end{thebibliography}

\end{document}